\newtheorem{thm}{Theorem}[section]
\newtheorem{lem}[thm]{Lemma}
\newtheorem{cor}[thm]{Corollary}
\newtheorem{prop}[thm]{Proposition}
\theoremstyle{definition}
\newtheorem{defn}[thm]{Definition}
\theoremstyle{remark}
\newtheorem{rem}[thm]{Remark}
\newcounter{probno}
\renewcommand{\qed}{\hfill$\Box$ \par \medskip}
\newcommand{\dist}{\mathrm{dist}}
\newcommand{\tto}{\dashrightarrow}
\newcommand{\norm}[2][{}]{\left\|#2\right\|_{#1}}
\newcommand{\pair}[2]{\left\langle #1,#2 \right\rangle}
\newcommand{\R}{\mathbf{R}}
\newcommand{\C}{\mathbf{C}}
\newcommand{\cp}{\mathbf{P}}
\newcommand{\Z}{\mathbf{Z}}
\newcommand{\N}{\mathbf{N}}
\newcommand{\supp}{\mathrm{supp}\,}
\newtheorem*{ackn}{Acknowledgment}
\newcommand{\bP}{\mathbf{P}}
\newcommand{\torus}{\mathbb{T}}
\newcommand{\nvec}{v}
\renewcommand{\div}{\operatorname{Div}}
\newcommand{\ch}[2][{}]{[#2]_{#1}}        % cohomology class of a (1,1) current 
\newcommand{\pcc}{\mathcal{D}_{1,1}}       % positive closed $(1,1)$ currents
\newcommand{\sfn}{\psi}
\newcommand{\ind}{\mathrm{Ind}}
\newcommand{\exc}{\mathrm{Exc}}
\newcommand{\dtop}{\lambda_2}          % topological degree
\newcommand{\ddeg}{\lambda_1} %first dynamical degree
\newcommand{\tdeg}{\lambda} %a little smaller than the first dynamical degree
\newcommand{\hoo}{H^{1,1}_\R} 
\newcommand{\eltwo}{\mathcal{L}^2}
\newcommand{\Log}{\operatorname{Log}}
\newcommand{\ram}{\operatorname{Ram}}
\newcommand{\rzt}{\hat\torus}
\newcommand{\rztO}{\hat\torus^\circ}
\newcommand{\trop}[1]{\mathrm{A}_{#1}}
\newcommand{\tropf}{\trop{f}}
\newcommand{\isect}[2]{\left({#1}\cdot{#2}\right)}
\renewcommand{\N}{\mathbf{Z}_{\geq 0}}
\newcommand{\oone}{\mathbbm{1}}
\newcommand{\ti}{\tilde}
\newcommand{\nrg}[1]{\langle #1 \rangle}
\newcommand{\dnorm}[2][]{\norm[#1]{#2}'}
\newcommand{\rpot}{\varphi}
\DeclareMathOperator{\dpsh}{DPSH}
\DeclareMathOperator{\psh}{PSH}
\DeclareMathOperator{\qpsh}{QPSH}
\DeclareMathOperator{\err}{Err}
\DeclareMathOperator{\eclass}{\mathcal{E}^1}
\DeclareMathOperator{\weclass}{\mathcal{E}^1_{wk}}
\DeclareMathOperator{\htop}{h_{top}}
\DeclareMathOperator{\hmu}{h_{\mu}}
\title{Energy, equilibrium measure and entropy for toric surface maps}
\author{Jeffrey Diller}
\address{Department of Mathematics\\
  University of Notre Dame\\
  Notre Dame, IN 46556\\
  USA}
\email{diller.1@nd.edu}
\author{Roland Roeder}
\address{Department of Mathematics\\ Indiana University Indianapolis \\
         402 North Blackford Street room LD270\\
         Indianapolis, Indiana 46202-3267 \\
         USA}
\email{roederr@iu.edu}
\subjclass[2020]{37F80 (primary), 14E05, 32H50, 32U40 (secondary)}
\begin{document}
\begin{abstract}
We consider the ergodic theory of plane rational maps that preserve the natural holomorphic volume form on the algebraic torus.  Specifically we construct natural invariant probability measures for a large class of such maps by intersecting the equilibrium currents we constructed in our previous work \cite{DiRo24}.  We show further that these measures are mixing and that each admits an underlying geometric product structure.  The main result of \cite{DDG11b} then implies that the topological entropy of each map covered by our results is the log of its first dynamical degree.  In light of examples presented in \cite{BDJ20}, this implies in particular that the entropy of a rational map can equal the log of a transcendental number.   
\end{abstract}

\maketitle
\markboth{\today}{\today}

\section{Introduction}
\label{sec:intro}
% 
% \edit{Things to attend to:
% \begin{itemize}
%  \item Notation: replacement for $\rztO$?  $f$ or $\hat f$? $T^*,T_*$ or $T^+,T^-$?  Etc.
%  \item Consistent choice of format for itemization in thms defns etc.  Bullets, lower case roman number, arabic numbers?  
%  \item $\dtop$ or $\lambda_2$?
%  \item Use of parentheses to enclose arguments of e.g. $\tropf$ and $\Log$.
%  \item *Somewhere* we need to clearly list out all the hypotheses we place on our maps.  Probably the intro is best.  If not that, then maybe just ahead of Theorem 5.1, which is the first place all of them are needed.
%  \item more?
% \end{itemize}
% }

Let $\torus \cong (\C^*)^2 \subset \bP^2$ denote the two dimensional complex algebraic torus and $\eta := \frac{dx_1\wedge dx_2}{x_1x_2}$ denote the natural $\torus$-invariant holomorphic two form on $\torus$.  Here we call a rational self-map $f:\bP^2\tto\bP^2$ \emph{toric} (short for `toric volume preserving') if it satisfies $f^*\eta = \rho\eta$ for some constant $\rho = \rho(f)\in\C^*$.  Our goal in this article and its predecessor \cite{DiRo24} is to shed light on the ergodic theory of such maps, especially in cases where $f$ exhibits pathological degree growth.

Let $\deg(f) := \deg f^* L$ be the degree of the divisor obtained by pulling back a line $L\subset\bP^2$.  The \emph{(first) dynamical degree} of $f$ is then the asymptotic growth rate
$$
\ddeg(f) := \lim_{n\to\infty} \deg(f^n)
$$
of $\deg(f^n)$.  
Though the limit defining $\ddeg(f)$ always exists, it was observed in \cite{DiLi16} (see Theorem F and Example 8.5) that the degree sequence $(\deg(f^n))_{n\geq 0}$ can behave erratically.  This point was amplified in \cite{BDJ20}, which gave explicit examples of toric maps $f$ for which $\ddeg(f)$ is a transcendental number.  

The first dynamical degree $\ddeg(f)$ should be compared with the \emph{topological degree}  $\dtop(f)$ of $f$, i.e. the number of preimages of a general point in $\bP^2$.  We will focus on the case when $f$ has \emph{small topological degree}, i.e. $\dtop(f) < \ddeg(f)$.  A central motivation for this paper, and an easily stated consequence of our main result is the following.

\begin{thm}
\label{thm:punchline}
Suppose that $f:\bP^2\tto\bP^2$ in \cite[Main Theorem]{BDJ20} has small topological degree.  Then the topological entropy of $f$ is given by $\htop(f) = \log\ddeg(f)$.  In particular, $e^{\htop(f)}$ can be transcendental.  
\end{thm}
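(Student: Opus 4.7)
The plan is to deduce the punchline theorem from the main theorem of the paper (yet to be stated in this excerpt), which constructs a mixing invariant probability measure with a geometric product structure, and then to invoke the abstract entropy criterion of \cite{DDG11b}.

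First I would recall from \cite{DiRo24} the construction of $f^*$- and $f_*$-invariant positive closed $(1,1)$ currents $T^+$ and $T^-$ that represent the equilibrium classes for the actions of $f^*$ and $f_*$ on $\hoo$ of a suitable modification of $\bP^2$. The assumption $\dtop(f)<\ddeg(f)$ is exactly what makes the cohomology theory behave well and, combined with the toric volume-preserving hypothesis $f^*\eta=\cov\,\eta$, should let one define the wedge product $\mu := T^+\wedge T^-$ as a genuine probability measure, rather than just a distribution, by controlling the pluripotentials of $T^\pm$ against each other (the toric structure gives explicit local potentials in terms of the Legendre-transform/tropical data used in \cite{BDJ20}). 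Once $\mu$ exists, its $f$-invariance is immediate from the invariance of $T^\pm$ and the multiplicativity $f^*T^+\wedge f^*T^- = \ddeg(f)\dtop(f)\,\mu$ paired against the normalization $\int\mu = 1$.

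Next I would establish the two dynamical properties that feed the hypothesis of \cite{DDG11b}: mixing of $(f,\mu)$ and a local product structure for $\mu$ as the geometric product of a slice of $T^+$ along a stable-type disk and a slice of $T^-$ along an unstable-type disk. Mixing should follow from the standard $dd^c$-method, using that any test form is cohomologous to a multiple of $T^+$ (or $T^-$) modulo $dd^c$ of a bounded potential, plus the fact that the eigenvalues of $f^*$ on the complementary directions of $\hoo$ are strictly smaller than $\ddeg(f)$ (which is the small-topological-degree assumption). The geometric product structure would come from slicing $T^\pm$ along the level sets of local $f^\pm$-Green functions; here the key input is again that the potentials of $T^\pm$ are continuous off a pluripolar set, so that slicing is classical and Fubini-type arguments apply.

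With mixing and the product structure in hand, \cite[Main Theorem]{DDG11b} gives $\hmu(f) = \log\ddeg(f)$, and the variational principle together with Gromov--Yomdin-type upper bounds $\htop(f)\leq\log\ddeg(f)$ (which hold for any meromorphic map with $\dtop(f)<\ddeg(f)$, see Dinh--Sibony) forces equality, proving $\htop(f)=\log\ddeg(f)$. The transcendence consequence is then immediate: \cite[Main Theorem]{BDJ20} exhibits toric maps for which $\ddeg(f)$ is transcendental, and those which moreover satisfy $\dtop(f)<\ddeg(f)$ fall within the scope of this theorem, giving $e^{\htop(f)}$ transcendental.

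I expect the main obstacle to be not the entropy computation itself but verifying the hypotheses of \cite{DDG11b} in this highly non-algebraic setting: the cohomological framework from \cite{DiRo24} lives on an infinite-dimensional inverse limit of blowups (since the BDJ20 examples are precisely those whose dynamics cannot be regularized on any finite modification), so one must show that the potentials of $T^\pm$ have enough regularity, and that slicing and wedge products behave as in the finite-dimensional setting, before the DDG11b machinery can even be invoked.
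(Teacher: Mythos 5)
Your proposal follows the same overall route the paper takes: establish the upper bound $\htop(f)\le\log\ddeg(f)$ from Dinh--Sibony (\cite{DiSi05}), construct the equilibrium measure $\mu=T^*\wedge T_*$ by intersecting the two invariant currents from \cite{DiRo24}, verify mixing and a laminar/woven geometric product structure so that \cite[Theorem B]{DDG11b} yields $\hmu(f)=\log\ddeg(f)$, then invoke the variational principle, and finally cite \cite{BDJ20} for the transcendence of $\ddeg(f)$.

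Two intermediate claims you make, however, would not survive scrutiny and should be corrected. First, the invariance of $\mu$ does not come from ``$f^*T^+\wedge f^*T^-=\ddeg(f)\dtop(f)\,\mu$.'' The backward current satisfies $f_*T_*=\ddeg T_*$, not a pullback relation with $\dtop$, and what one actually uses is the projection-type identity $f_*\bigl(T^*\wedge T_*\bigr)=f_*\bigl(\ddeg^{-1}f^*T^*\wedge T_*\bigr)=\ddeg^{-1}\,T^*\wedge f_*T_*=T^*\wedge T_*$. The formula as you wrote it neither holds nor would it imply $f_*\mu=\mu$ even if it did. Second, the hypothesis $\dtop(f)<\ddeg(f)$ is not the statement that the remaining eigenvalues of $f^*$ on $\hoo$ are strictly smaller than $\ddeg(f)$; those are different quantities, and there is no a priori spectral gap of that type. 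What small topological degree buys in the mixing argument is the decay factor $\bigl(\dtop/\ddeg\bigr)^{n/2}$ that appears when one pulls back the energy $\norm[T^*]{\alpha}$ under $f^n$ (see the paper's Lemma \ref{lem:limitclosed}), and it is this ratio, not secondary cohomological eigenvalues, that drives the $dd^c$-method. With these two fixes your sketch matches the paper's; the remaining work the paper must do, and which you correctly anticipate as the hard part, is making the Bedford--Taylor wedge product and the laminar/woven slicing rigorous on the non-compact inverse limit $\rztO$, which occupies most of \S\ref{sec:products}--\S\ref{sec:mixing}.
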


The foundational result \cite[Th\'eor\`eme 1]{DiSi05} gives $\htop(f)\leq \log\ddeg(f)$ for \emph{any} plane rational map with small topological degree.  Here we construct natural invariant measures $\mu$ of maximal metric entropy $\hmu(f) = \log\ddeg(f)$ for a large class of toric maps including the ones in Theorem \ref{thm:punchline}.  The opposite inequality $\htop(f) \geq \hmu(f) =\log\ddeg(f)$ then follows as in the proof of the classical variational principle \cite[Theorem 4.5.3]{KaHa95}.  

The series of papers \cite{DDG10, DDG11, DDG11b} are an important precedent for us.  They construct and analyze measures of maximal entropy for a broad class of (not necessarily toric) rational self-maps of $\bP^2$ with small topological degree.  For instance, they establish the analogue of Theorem \ref{thm:punchline} for polynomial maps $f:\C^2\to\C^2$.  However, these articles limit attention to maps that are `algebraically stable', or at least those that become so after suitably modifying the domain $\bP^2$.  The toric maps that we consider here all fail the stabilizability hypothesis, and this leads us to lean much more heavily on the toric context to accomplish our aims.  In the remainder of this introduction, we present and provide context for our main result, describing among other things the particular class of maps to which it applies and discussing the main ingredients of the proof.

It is important for our arguments to consider a toric map $f$ not only as a self-map of $\bP^2$ but also a rational map $f:X\tto Y$ between any two toric surfaces $X,Y\to\bP^2$ obtained by $\torus$-invariant blowups of $\bP^2$.  A lift $f_X:X\tto X$ of $f$ to some particular surface $X$ is called \emph{algebraically stable} if the induced pullback operator $f^*:\hoo(X)\to\hoo(X)$ satisfies $(f^*)^n = (f^n)^*$ for all $n \geq 0$.  To cope with unstabilizable maps, we introduce in \S\ref{sec:backgd} the inverse limit $\rzt$ of all toric blowups of $\bP^2$.  The \emph{toric limit space} $\rzt$ is compact and Hausdorff, and it decomposes into a closed nowhere dense subset of $\torus$-invariant points and the \emph{toric limit surface} $\rztO$, a (non-compact, non-algebraic) complex manifold obtained from $\torus$ by adjoining countably many mutually disjoint \emph{poles}, i.e. one dimensional complex tori $C_\tau \cong\C^*$ indexed by the rational rays $\tau\subset\R^2$.  All other properly embedded complex curves $C\subset \rztO$ are compact with $C\setminus\torus$ finite, so we call them \emph{internal}.

For many purposes $\rztO$ is as natural a domain for a toric map as any particular $\torus$-invariant blowup $X\to \bP^2$.  Specifically, a toric map $f$ induces a meromorphic self-map of $f:\rztO\tto\rztO$ whose important properties include the following.
\begin{itemize}
 \item The indeterminacy set $\ind(f)\subset \rzt^\circ$ of $f$ is finite, and the image $f(p)$ of each $p\in\ind(f)$ is a finite union of internal curves;
 \item The exceptional set $\exc(f)\subset\rzt^\circ$ of $f$ consists of finitely many internal curves $C$, each contracted to a point $f(C)\in\rztO\setminus\torus$;
% \item All curves in the branch locus of $f$ are poles;
% \item If $C\subset\rztO\setminus\exc(f)$ is an internal curve, then so is $f(C)$;
 \item There is a positively homogeneous, continuous and piecewise linear map $\tropf:\R^2~\to~\R^2$ of $f$ such that $f(\Z^2) \subset \Z^2$ and $f(C_\tau) = C_{\tropf(\tau)}$ for every pole $C_\tau\subset\rztO$. 
\end{itemize}
We call $\tropf$ the \emph{tropicalization} of $f$.  In \S\ref{sec:maps} we explain (see Theorem \ref{thm:tropapprox}) that beyond merely encoding the action of $f$ on poles, $\tropf$ serves as a good approximation of $f|_\torus$; i.e. away from the exceptional set $\exc(f)$ the natural map $\Log:\torus\to\R^2$ approximately semiconjugates $f|_\torus$ to $A_f$.  This fact is central to our analysis.

All examples of toric maps that we know exhibit two further properties, which we will adopt as assumptions below: $\ind(f)\cap\torus = \emptyset$; and more importantly, $\tropf$ is a homeomorphism.  Homogeneity implies that $\tropf$ descends to a self-map of the unit circle; hence $\tropf$ has a well-defined rotation number whenever it is a homeomorphism.  It was shown in \cite{DiLi16} that the rotation number of $\tropf$ is rational if and only if there is a smooth rational surface $X$ with a birational morphism $X\to \bP^2$ that lifts $f$ to an \emph{algebraically stable} self-map of $X$.

In contrast, even when $\tropf$ is a homeomorphism with irrational rotation number, $f$ often satisfies a weaker condition that we call `internal stability.'  Let $\pcc^+(\rzt)$ denote the cone of positive closed $(1,1)$-currents on $\rztO$ and $\pcc(\rzt) = \pcc^+(\rzt) - \pcc^+(\rzt)$ denote its linear span.  Let $\hoo(\rzt)$ denote the (infinite dimensional) quotient obtained by identifying $S,T\in\pcc(\rzt)$ when $S-T$ is $dd^c$-exact.  As with rational maps on smooth compact surfaces, a toric map $f$ induces linear pushforward/pullback operators $f_*$ and $f^*$ on $\pcc(\rzt)$ and $\hoo(\rzt)$.  We call $f$ \emph{internally stable} if, again, $(f^*)^n = (f^n)^*$ for all $n\in\N$.  Equivalently, $f^n(\exc(f)) \cap\ind(f)~=~\emptyset$ for all $n\geq 0$.

We can state our main result as follows.

\begin{thm}
\label{thm:mainthm}
Let $f$ be an internally stable toric map with small topological degree.  Assume further that $\tropf$ is a homeomorphism with irrational rotation number and that $\ind(f)\cap\torus = \emptyset$.  Then there is a Borel probability measure $\mu$ on $\rztO$ with the following properties.
\begin{enumerate}
 \item $\mu(C) = 0$ for every properly embedded curve $C\subset \rztO$. 
 \item $\mu$ is $f$-invariant and mixing.
 \item The metric entropy of $f$ relative to $\mu$ is given by $\hmu(f) = \log\ddeg(f)$. 
\end{enumerate}
Consequently, the topological entropy $f$ is also given by $\htop(f) = \log\ddeg(f)$.
\end{thm}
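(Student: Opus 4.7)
The plan is to obtain $\mu$ as an intersection $\mu = T^+\wedge T^-$ of two equilibrium currents associated to $f$ on the toric limit surface $\rztO$. The forward current $T^+$ from \cite{DiRo24} is a positive closed $(1,1)$-current satisfying $f^*T^+ = \ddeg(f)\,T^+$, obtained as a pluripotential limit of normalized pullbacks of a reference form. A dual backward current $T^-$ with $f_* T^- = \ddeg(f)\,T^-$ is to be produced by the same toric/pluripotential machinery applied to normalized pushforwards, exploiting the small topological degree hypothesis to ensure the pushforward operator behaves well on the appropriate subcone of $\pcct$. The intersection $T^+\wedge T^-$ is then defined pluripotentially: first verify that the local potentials of $T^-$ lie in the domain of the Monge--Amp\`ere operator with respect to $T^+$---most naturally the finite-energy class $\eclass$ or its weak analogue $\weclass$ already appearing in our setup---and then normalize the resulting positive measure to have total mass one.

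Property (1) will then follow from general pluripotential theory: finite-energy Monge--Amp\`ere measures place no mass on pluripolar sets, and every properly embedded complex curve in $\rztO$ is pluripolar. For invariance in (2), a projection-formula computation
\[
f_*(T^+\wedge T^-) = \ddeg(f)^{-1}\, f_*\!\left(f^*T^+\wedge T^-\right) = \ddeg(f)^{-1}\,T^+\wedge f_*T^- = T^+\wedge T^-
\]
gives $f_*\mu = \mu$, provided the pluripotential identities survive iteration; this is where internal stability of $f$ enters decisively, by ruling out the cancellations between $\exc(f)$ and $\ind(f)$ that would otherwise spoil the cohomological bookkeeping on $\rzt$. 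Mixing is the crux of the analytic work: the plan is to prove quantitative equidistribution $\ddeg(f)^{-n}(f^n)^*S \to \cst\cdot T^+$ for test currents $S\in\pcct$ in a sufficiently rich subclass, and to pair this convergence against $T^-$ to extract decay of correlations for observables expressible as differences of potentials.

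The entropy identity $\hmu(f) = \log\ddeg(f)$ will then be obtained by invoking the main result of \cite{DDG11b}, which deduces maximal metric entropy once one knows mixing together with a local geometric product structure for $\mu$ arising from laminar or woven decompositions of $T^\pm$. Producing these structures in the non-compact, non-algebraic, infinite-cohomology setting of $\rztO$ is the principal obstacle, and one cannot simply import the arguments of \cite{DDG10,DDG11,DDG11b}. The way forward is to exploit the approximate semiconjugacy $\Log\circ f \approx \tropf\circ\Log$ from Theorem~\ref{thm:tropapprox}, together with the irrational rotation number of $\tropf$, to control potentials uniformly across the infinitely many poles $C_\tau$ and to manufacture the stable/unstable laminar pieces from dynamical preimages of curves disjoint from $\exc(f)$. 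Once $\hmu(f) = \log\ddeg(f)$ is established, the variational principle combined with the upper bound $\htop(f)\leq\log\ddeg(f)$ from \cite[Th\'eor\`eme 1]{DiSi05} yields $\htop(f) = \log\ddeg(f)$, and the transcendentality consequence in Theorem~\ref{thm:punchline} follows immediately from the examples of \cite{BDJ20}.
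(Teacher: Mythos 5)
Your overall outline tracks the paper's strategy closely: equilibrium currents $T^*,T_*$ from \cite{DiRo24}, a Bedford--Taylor intersection $\mu=T^*\wedge T_*$, invariance via a projection formula, mixing via equidistribution of pullbacks, and entropy from \cite[Theorem B]{DDG11b} after establishing a laminar/woven geometric product structure, with the tropical approximation Theorem~\ref{thm:tropapprox} doing the heavy lifting. But there are a few genuine gaps in where you think the technical weight falls.

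The most serious is your treatment of the total mass. You propose to ``normalize the resulting positive measure to have total mass one,'' as if this were a benign final step. On the non-compact surface $\rztO$ there is a real possibility that mass leaks out toward the ends (equivalently, that the measures $T^*\wedge_X T_*$ keep concentrating mass on the $\torus$-invariant points of $X$ as $X$ increases). Proving that the limit has \emph{full} mass $\isect{T^*}{T_*}=1$, rather than some smaller quantity (possibly zero), is the second of the three main steps of the argument; it rests on showing $T_*$ has a nearly homogeneous support function (Theorem~\ref{thm:eqnearhom}) together with the weakly-finite-energy bound for $\rpot_{T^*}$ (Theorem~\ref{thm:weakenergy1}) and the Lelong-number estimate of Proposition~\ref{prop:smalllelong}. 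Simply renormalizing would not repair a failure here.

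Relatedly, your claim that Property~(1) ``follows from general pluripotential theory: finite-energy Monge--Amp\`ere measures place no mass on pluripolar sets'' does not apply as stated. Because $T^*$ and $T_*$ both represent K\"ahler classes in $\hoo(\rzt)$, both have strictly positive Lelong numbers at every $\torus$-invariant point of every toric surface $X$; hence $\rpot_{T^*}$ is emphatically \emph{not} a difference of qpsh functions in the usual finite-energy class $\eclass(T_{*,X})$, which is precisely why the weaker notion $\weclass$ of Definition~\ref{defn:WEAKLY_FINITE_ENERGY} has to be introduced. That weaker notion does not by itself grant you negligibility of pluripolar sets; the argument in Theorem~\ref{thm:eqmeasureexists} instead combines the uniform-continuity result Theorem~\ref{THM:CONTINUITY} for $\rpot_{T^*}$ off $\ind(f^\infty)$ (a result specific to $T^*$ and absent from your sketch), Corollary~\ref{cor:massonpps}, and the translation of Lelong numbers of $T_*$ into point-mass estimates via Proposition~\ref{prop:lelongtoptmass}. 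Finally, your suggestion that one must ``manufacture the stable/unstable laminar pieces from dynamical preimages'' misplaces the difficulty: the laminar structure of $T^*$ and woven structure of $T_*$ are already available from \cite{DiRo24}; the genuinely new step here is showing that the wedge product is \emph{geometric} in the sense that $T^*_\epsilon\wedge T_{*,\epsilon}\nearrow\mu$, which again relies crucially on the weak-energy tail estimates (Corollary~\ref{COR:WEAK_ENERGY_BND_TAIL}).
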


\noindent 
Conclusion (1) and the fact that $\rztO\setminus\torus$ is a countable union of poles imply that the domain of the map $f$ in Theorem \ref{thm:mainthm} is flexible; i.e. the theorem remains valid if we consider $f$ as a self-map of the algebraic torus $\torus$, of $\cp^2$, of $\rztO$ or of some other compact toric surface.  

\subsection{Proof of the main result}
To prove Theorem \ref{thm:mainthm}, we follow a general approach \cite{BeSm91} created by Bedford and Smillie to understand the complex dynamics of plane polynomial automorphisms.  By Theorems 10.1 and 10.6 in \cite{DiRo24}, a toric map satisfying the hypotheses of Theorem \ref{thm:mainthm} admits a pair of \emph{equilibrium currents} $T^*,T_*\in\pcc^+(\rzt)$.  These are distinguished by invariance $f^*T^* = \ddeg T^*, f_*T_* = \ddeg T_*$ and the fact (Corollary \ref{cor:nidinfinity}) that neither dominates the integration current associated to any curve $C\subset\rztO$.  In particular, both are \emph{internal}, with no mass outside $\torus$.  We normalize so that the classes $\ch{T^*},\ch{T_*}\in\hoo(\rzt)$, have intersection numbers $\ch{T^*}^2 = \ch{T^*}\cdot\ch{T_*}=1$.  Then we realize $\mu$ as an `intersection' $T^*\wedge T_*$ of $T^*$ and $T_*$.  However, $T^*$ and $T_*$ are distributional rather than pointwise objects, so we need to take a great deal of care when defining and employing this intersection.  There are three main steps in our approach.

\subsubsection{Regularity of potentials} The first step is to prove the following, a consequence of Theorem \ref{THM:CONTINUITY} below.

\begin{thm}
\label{thm:ctyresult}
Let $f$ be as in Theorem \ref{thm:mainthm} and $p\in\rztO$ be a point whose forward orbit $\{f^n(p):n\geq 0\}$ does not meet $\ind(f)$.  Then there is a neighborhood $U\ni p$ and a continuous function $u:U\to\R$ such that $T = dd^c u$ near $p$.
\end{thm}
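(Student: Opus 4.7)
The plan is to exploit the invariance $f^*T^* = \lambda T^*$ (with $\lambda := \ddeg(f) > 1$) to express a local potential of $T^*$ near $p$ as a uniformly convergent telescoping series, following a standard template due to Bedford--Smillie, Forn\ae ss--Sibony, and Dinh--Sibony in related settings. The first step is to produce a smooth reference form $\omega$ in the class $[T^*]\in\hoo(\rzt)$ together with a continuous function $\gamma$ satisfying $f^*\omega = \lambda\omega + dd^c\gamma$. In the toric setting the natural choice is $\omega = dd^c(\phi\circ\Log)$, where $\phi:\R^2\to\R$ is a convex piecewise-linear function representing $[T^*]$ via the construction of $T^*$ in \cite{DiRo24}. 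By Theorem \ref{thm:tropapprox}, the approximate semiconjugacy $\Log\circ f\approx\tropf\circ\Log$ combined with the eigen-relation $\phi\circ\tropf=\lambda\phi$ makes $\gamma=\phi\circ\Log\circ f-\lambda\phi\circ\Log$ continuous and uniformly bounded on any relatively compact subset of $\torus$ away from a neighborhood of $\exc(f)$.

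Writing $T^*=\omega+dd^c u_0$ locally with $u_0$ quasi-plurisubharmonic, the invariance translates (up to a pluriharmonic summand $h$) into the functional equation $\lambda u_0=u_0\circ f+\gamma+h$. Iterating gives
$$u_0(z)=\sum_{k=0}^{n-1}\lambda^{-(k+1)}(\gamma+h)(f^k(z))+\lambda^{-n}u_0(f^n(z)).$$
Since the forward orbit of $p$ avoids $\ind(f)$, one can choose a neighborhood $U\ni p$ on which every $f^n$ is holomorphic and, using internal stability together with Theorem \ref{thm:tropapprox}, can further arrange that $\gamma\circ f^k$ is uniformly bounded in $k$ on $U$. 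Combining $\lambda>1$ with the local upper bound on the quasi-plurisubharmonic $u_0$, the partial sums converge uniformly on $U$ to a continuous limit while the remainder $\lambda^{-n}u_0\circ f^n$ vanishes uniformly; hence $u_0$ is continuous on $U$. Absorbing a continuous local potential of the smooth form $\omega$ then produces the desired $u$ with $T^*=dd^c u$.

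The principal difficulty is ensuring the uniform $n$-independent bound on $\gamma\circ f^n$ over the whole neighborhood $U$, rather than merely along the forward orbit itself. This requires that the tube $\bigcup_{n\geq 0}f^n(U)$ stay a definite distance from $\exc(f)$ so that the tropical approximation holds with uniformly bounded error. Internal stability---the condition that no orbit of an exceptional curve meets $\ind(f)$---is essential here: it lets one shrink $U$ compatibly with all iterates simultaneously. A secondary technicality concerns the case $p\in C_\tau\subset\rztO\setminus\torus$, where the reference form $\omega=dd^c(\phi\circ\Log)$ is singular along the pole; there one first replaces $\omega$ by a smooth local model adapted to $C_\tau$ before carrying out the identical telescoping argument. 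I expect both of these points to be consolidated inside the more general Theorem \ref{THM:CONTINUITY} to which Theorem \ref{thm:ctyresult} is reduced.
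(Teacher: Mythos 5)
Your telescoping-series strategy based on the invariance $f^*T^*=\ddeg T^*$ is exactly the one the paper uses: write $\rpot_{T^*}=\sum_{n\ge 0}\ddeg^{-n}\,\rpot\circ f^n$, where $\rpot$ is a potential for $\ddeg^{-1}f^*\bar T^*-\bar T^*$ and is continuous with (at worst logarithmic) singularities at $\ind(f)$ and bounded off any neighborhood of $\ind(f)$ (Theorem \ref{thm:pullback}); then prove uniform convergence on compacta in $\rztO\setminus\ind(f^\infty)$ (Theorem \ref{THM:CONTINUITY}), from which the stated local continuity follows because $\bar T^*$ has continuous local potentials (Proposition \ref{prop:bartonpole}). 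You also correctly flag the crux: one needs $n$-uniform control of the terms $\rpot\circ f^n$ over an entire compact neighborhood of $p$, not merely along the orbit of $p$.

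But the resolution you propose is precisely where the proof breaks down. You claim internal stability lets one ``shrink $U$ compatibly with all iterates simultaneously'' so that $\bigcup_{n\ge 0}f^n(U)$ stays a definite distance from $\exc(f)$, hence from $\ind(f)$, making $\rpot\circ f^n$ uniformly bounded on $U$. That is not achievable. Choosing $U$ disjoint from the closed discrete set $\ind(f^\infty)$ only ensures that orbits of points of $U$ never \emph{hit} $\ind(f)$, not that they stay a fixed positive distance from it; orbits escape to infinity in the $\Log$-picture and can pass arbitrarily close to $\ind(f)\subset\rztO\setminus\torus$, so the sup of $|\rpot\circ f^n|$ over $U$ genuinely depends on $n$. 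The actual content of Theorem \ref{THM:CONTINUITY} is a quantitative bound on the \emph{rate} of approach: since $|\rpot|$ grows like $|\log\dist(\cdot,\ind(f))|$ near $\ind(f)$, uniform convergence reduces to showing $|\log\dist(f^n(p),\ind(f))|=o(\tdeg^n)$ uniformly for $p$ in a compact $K\subset\rztO\setminus\ind(f^\infty)$, where $e^{3\delta}\dtop<\tdeg<\ddeg$. This is established by contradiction using the iterative tropical-approximation estimates (Proposition \ref{PROP:ITERATIVE_TROP_APPROX1}), the near-conformality of $\tropf$ after replacing $f$ by an iterate (Lemma \ref{LEM:ALMOST_CONFORMAL}), and the irrationality of the rotation number: a hypothetical fast approach would force $\|\Log\circ f^n(p)\|\gtrsim\tdeg^n$ and then force $\tropf^\ell(\tau)$ to lie impossibly close to one of the finitely many rational rays $\Sigma_1(Q)$ directing a star $Q\supset K$. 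The small-topological-degree hypothesis $\dtop<\ddeg$ is what makes the estimates close. None of this quantitative machinery appears in your sketch, so the asserted uniform bound on $\gamma\circ f^n$ over $U$ is unjustified and the uniform convergence of the series does not follow.
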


Theorem \ref{thm:ctyresult} guarantees that $uT_*$ is a well-defined current on
$U$ and allows us to use the classical idea of Bedford and Taylor
\cite{BeTa76}, setting $T^*\wedge T_*|_U$ equal to the weak derivative
$dd^c(uT_*)$.  The hypotheses of Theorem \ref{thm:mainthm} ensure that the
countable set of points $\ind(f^\infty) := \bigcup f^{-n}(\ind(f))$ excluded in
Theorem \ref{thm:ctyresult} is closed and discrete in $\rztO$.  Hence an
integrability result (see Proposition \ref{prop:boundedish}) for potentials
with isolated singularities implies that this definition of $T^*\wedge T_*$
applies even near points in $\ind(f^\infty)$.  

The proof of Theorem \ref{thm:ctyresult} occupies \S\ref{sec:currents_II} and relies heavily on some key preliminary results.  The first (see Theorem \ref{thm:homogenization} and the preceding discussion) is that the class $\ch{T}\in\hoo(\rztO)$ of any internal current $T\in\pcc(\rzt)$ includes a canonical `homogeneous' representative $\bar T\in\pcc(\rzt)$.  By definition of $\hoo(\rzt)$, we have $T-\bar T = dd^c\rpot_T$ for some function $\rpot_T\in L^1_{loc}(\rztO)$.  The potential $\rpot_T$ is unique up to additive constants, and homogeneous currents have continuous local potentials about any point in $\rztO$ (Proposition \ref{prop:bartonpole}).  Hence $T$ has continuous local potentials on an open set $U\subset\rztO$ if and only if $\rpot_T|_U$ is continuous.  In the case of interest $T=T^*$, we constructed $T^*$ in \cite{DiRo24} by proving $L^1_{loc}$-convergence
$$
\rpot_{T^*} = \lim_{n\to\infty} \rpot_{\ddeg^{-n} f^{n*} \bar T^*}.
$$
Moreover, the potentials $\rpot_{\ddeg^{-n} f^{n*}\bar T^*}$ on the right are all continuous away from $\ind(f^\infty)$.  To prove Theorem \ref{thm:ctyresult}, we use our tropical approximation result Theorem \ref{thm:tropapprox} to go further and demonstrate that the limit is achieved uniformly on compact subsets.  The starting point is Theorem \ref{thm:pullback} which implies that though $\rztO$ is not compact, the potentials $\rpot_{\ddeg^{-n}f^{n*}\bar T^*}$ are decreasing in $n$ and uniformly bounded off any neighborhood of $\ind(f^\infty)$.

\subsubsection{Mass and energy}
The second step in constructing the measure $\mu = T^*\wedge T_*$ in Theorem~\ref{thm:mainthm} is to show the result is a probability measure, i.e. that in accord with the normalization $\isect{\ch{T^*}}{\ch{T_*}}=1$, we get total mass $\mu(\rztO) = 1$.  The difficulty here is that $\rztO$ is not compact, so that the relationship between mass and intersection number is not (or at least not obviously) as close as in the compact setting.  On the other hand, the currents $T^*,T_*$ restrict to well-defined positive closed $(1,1)$ currents on any given toric surface $X$.  The intersection number $\isect{\ch[X]{T^*}}{\ch[X]{T_*}}$ and mass of the product $T^*\wedge_X T_*$ of the restrictions on $X$ are easily seen to agree.  Moreover, the intersection numbers $\isect{\ch[X]{T^*}}{\ch[X]{T_*}}$ converge by definition to $\isect{\ch{T^*}}{\ch{T_*}}$ as $X$ converges to $\rzt$.  So the problem becomes to establish convergence of measures $T^*\wedge_X T_* \to T^*\wedge T_*$.  More precisely, we have a natural inclusion $X^\circ\hookrightarrow\rztO$ of the open set $X^\circ\subset X$ obtained by removing $\torus$-invariant points, and the measures $T^*\wedge_X T_* |_{X^\circ}$ increase to $T^*\wedge T_*$ as 
$X^\circ$ increases to $\rztO$.  However, $T^*\wedge_X T_*$ assigns non-zero mass to each of the $\torus$-invariant points of $X$, so we need to argue that the sum of those masses diminishes to zero as $X$ increases.

It is fairly straightforward to do this if we replace $T^*$ and $T_*$ with their homogeneous counterparts $\bar T^*$ and $\bar T_*$ (see Conclusion (3) in Theorem \ref{thm:wedgewhomogeneous}).  In that case, the  limiting measure $\bar T^*\wedge \bar T_*$ on $\rztO$ is just Haar measure on the maximal real subtorus $\torus_\R\subset \torus$.  The next step (Conclusion (4) in \ref{thm:wedgewhomogeneous}) is to show that convergence succeeds for $\bar T^*\wedge T_*$, i.e. if we homogenize only $T^*$ in the product.  Here we employ Theorem \ref{thm:eqnearhom} which says that $T_*$ is not in some sense too far from its homogenization (see Definition \ref{defn:nearlyhomogeneous}).  This allows us to relate point masses of $T^*\wedge_X \bar T_*$ at $\torus$-invariant points of $X$ directly to Lelong numbers of $\bar T^*$ and use that the Lelong numbers disappear in the limit (Proposition \ref{prop:smalllelong}).  The most difficult step is to argue from $\bar T^*\wedge T_*$ to $T^*\wedge T_*$.  The main ingredient for this is the following.

\begin{thm}
\label{thm:weakenergy1}
The potential $\rpot_{T^*}$ for $T^*-\bar T^*$ has weakly finite $T_*$-energy on $\rztO$.
\end{thm}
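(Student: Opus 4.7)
The plan is to leverage the construction of $\rpot_{T^*}$ as the decreasing $L^1_{loc}$-limit of the approximants $\rpot_n := \rpot_{\ddeg^{-n}f^{n*}\bar T^*}$, normalized so that $\rpot_0 = 0$ and $\rpot_{n+1} = \rpot_1 + \ddeg^{-1}\,\rpot_n\circ f$, and to bound the Dirichlet-type $T_*$-energies
$$
J(\rpot_n) \;:=\; \int_{\rztO} d\rpot_n \wedge d^c\rpot_n \wedge T_*
$$
uniformly in $n$. Theorem \ref{thm:pullback} guarantees that the $\rpot_n$ are continuous and uniformly locally bounded on $\rztO\setminus\ind(f^\infty)$, so these integrals are legitimate Bedford--Taylor objects. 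A standard lower-semicontinuity argument for decreasing sequences of quasi-psh functions then transfers a uniform bound on $J(\rpot_n)$ to weakly finite $T_*$-energy of the monotone limit $\rpot_{T^*}$.

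The core computation is a telescoping identity for the successive differences $v_k := \rpot_{k+1} - \rpot_k$. The chosen recursive normalization gives $v_{k+1} = \ddeg^{-1}\,v_k\circ f$ exactly, so $v_k = \ddeg^{-k}\,v_0\circ f^k$ with $v_0 = \rpot_1$. The push-pull identity
$$
\int f^*\alpha \wedge T_* \;=\; \int \alpha \wedge f_*T_* \;=\; \ddeg\int \alpha \wedge T_*,
$$
which is an immediate consequence of $f_*T_* = \ddeg\,T_*$, then yields
$$
J(v_k) \;=\; \ddeg^{-2k}\int_{\rztO} f^{k*}(dv_0\wedge d^c v_0) \wedge T_* \;=\; \ddeg^{-k}\,J(v_0).
$$
Since $\ddeg>1$, the triangle inequality $\sqrt{J(\rpot_n)}\leq \sum_{k=0}^{n-1}\sqrt{J(v_k)}$ produces a convergent geometric series, giving $\sup_n J(\rpot_n) < \infty$.

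The principal obstacle lies in rigorously justifying the preceding manipulations on the non-compact manifold $\rztO$, where both $T_*$ and the $\rpot_n$ carry genuine singularities. I would carry out the computation by exhausting $\rztO$ via open toric subsurfaces $X^\circ$ arising from finite $\torus$-invariant blowups of $\bP^2$, performing the integration-by-parts and change-of-variables steps on each $X^\circ$, and only then passing to the limit. Boundary contributions at polar divisors should vanish because $T_*$ is internal and the $\rpot_n$ are uniformly bounded near those divisors; singular contributions at points of $\ind(f^\infty)$ are controlled by the discreteness of that set together with Proposition \ref{prop:boundedish}, which supplies the integrability needed to excise shrinking neighborhoods of indeterminacy points with vanishing error. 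Finally, the monotonicity of $(\rpot_n)$ ensures that the lower-semicontinuity step transfers the uniform energy bound to $\rpot_{T^*}$ in the weak sense required by the statement.
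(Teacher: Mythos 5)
Your telescoping decomposition $\rpot_{T^*}=\sum_{k\geq 0}(\rpot_{k+1}-\rpot_k)$, the identity $\rpot_{k+1}-\rpot_k=\ddeg^{-k}(\rpot_1\circ f^k)$, and the use of $f_*T_*=\ddeg T_*$ to generate a convergent geometric series in $\ddeg^{-1/2}$ are indeed the skeleton of the paper's argument. The step you describe as "the principal obstacle" and then wave away, however, is precisely what the bulk of the paper's proof (Theorem \ref{thm:cancelation}, Proposition \ref{prop:integrable}, Corollary \ref{cor:integrability}, Theorem \ref{thm:finiteenergy1} and Lemma \ref{lem:dnormbndonx}) is devoted to, and your heuristic for why the boundary contributions vanish is not correct as stated.

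The gap is that the quantity you call $J(\rpot_n)=\int d\rpot_n\wedge d^c\rpot_n\wedge T_*$ is \emph{not} a legitimate Bedford--Taylor energy on any fixed toric surface $X$, nor is the "triangle inequality" $J(\rpot_n)^{1/2}\leq\sum_k J(v_k)^{1/2}$ available. On a fixed $X$, $\rpot_n=u_{n,X}-u_{0,X}$ with $u_{n,X},u_{0,X}\in\psh(\omega_X)$, and since $\ch{T^*}$ is a K\"ahler class, \emph{both} $u_{n,X}$ and $u_{0,X}$ have strictly positive Lelong numbers at every $\torus$-invariant point of $X$ (Proposition \ref{prop:smalllelong}), hence lie outside $\eclass(T_{*,X})$. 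Boundedness of the difference $\rpot_n$ near those points does not make the Dirichlet integral converge; the paper shows instead (Lemma \ref{lem:dnormbndonx}) that the naive integral $\int-\Delta\,dd^c\Delta\wedge_X T_*$ and the correct dual quantity $\dnorm[T_{*,X}]{\Delta}^2$ differ by up to $2M\sum_{p\in X\setminus X^\circ}(\bar T^*\wedge_X T_*)(p)$, a quantity that is \emph{not} zero on a fixed $X$ and only tends to zero as $X$ increases because $T_*$ has a nearly homogeneous support function (Theorem \ref{thm:eqnearhom}) and because of Proposition \ref{prop:smalllelong}. So "$T_*$ internal, $\rpot_n$ bounded near poles" is not enough; the relevant boundary points are the $\torus$-invariant points, and controlling their contribution requires the near-homogeneity input and the small-Lelong-number estimate. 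Likewise your final "lower-semicontinuity for decreasing sequences of quasi-psh functions" does not apply, since $\rpot_n$ is a difference of qpsh functions, not qpsh; the paper avoids this by estimating $\int\kappa\,dd^c\rpot_{T^*}\wedge T_*$ directly against a fixed test function $\kappa$ via the triangle inequality for the dual seminorm $\dnorm[T_*]{\cdot}$. In short: your outline is the right outline, but what you call the "obstacle to rigorously justify" is in fact the whole theorem, and the reasons you offer for why it resolves are not the reasons it does.
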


Definitions \ref{defn:WEAKLY_FINITE_ENERGY} and \ref{DEF:WEAKLY_FINITE_ENERGY_ON_RZTO} specify the meaning of weakly finite energy on a compact surface $X$ and on $\rztO$, respectively.  This notion builds on the idea of Dirichlet energy relative to a positive closed current (see \eqref{EQN:DEF_SMOOTH_PAIRING} and \eqref{eqn:epairing} below) that was introduced to complex dynamics in \cite{BeDi05b} and enlarged upon in \cite{DDG11}.  The main novelty here is that the notion of Dirichlet energies defined in those papers applies only to functions that are not far from plurisubharmonic, whereas our more delicate notion applies to \emph{differences} $u-v$ of plurisubharmonic functions even when neither $u$ nor $v$ separately have finite Dirichlet energy.  In any case, we use Theorem \ref{thm:weakenergy1} to tie point masses of $(T^*-\bar T^*) \wedge_X T_*$ to Lelong numbers of $T_*$ at $\torus$-invariant points which can, again, be seen to disappear as $X$ increases to $\rzt$.

\subsubsection{Geometry and dynamics of $\mu$}
Beyond its use in establishing the mass of $T^*\wedge T_*$, Theorem \ref{thm:weakenergy1} also plays an important role in \S\ref{sec:mixing} where we prove the three enumerated conclusions of Theorem \ref{thm:mainthm}.  Our arguments for invariance and mixing follow precedents in \cite{BeSm91} and \cite{DDG11}.  The computation of $h_\mu(f)$ depends on Conclusions (1) and (2) together with a geometric structure theorem for $\mu$ modeled on \cite[Theorem 5.2]{Duj06}.  To state the theorem, we recall from \cite[Theorem 1.4]{DiRo24} that $T^*$ can be approximated arbitrarily well by so-called `uniformly laminar' currents $T^*_\epsilon \leq T^*$, obtained by averaging currents of integration over a family of mutually disjoint analytic disks.  Likewise we have arbitrarily good approximations $T_{*,\epsilon}\leq T_*$ by `uniformly woven' currents, obtained by averaging a family of disks that are allowed to intersect each other.  The intersections $T^*_\epsilon \wedge T_{*,\epsilon} \leq T^*\wedge T_*$ therefore have a natural product structure based on intersections between disks in the two families.  This is discussed at greater length in \S\ref{ss:geometric} where we prove

\begin{thm}
\label{thm:geometric}
The wedge product $T^*\wedge T_*$ is `geometric', equal to an increasing limit of the measures $T^*_\epsilon\wedge T_{*,\epsilon}$ obtained by intersecting the uniformly laminar and uniformly woven approximants of $T^*$ and $T_*$, respectively.
\end{thm}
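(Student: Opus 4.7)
The plan is to prove Theorem \ref{thm:geometric} by localizing the problem using the continuity result of Theorem \ref{thm:ctyresult}, reducing to a standard potential-theoretic argument on small open bidisks, then handling the bad set $\ind(f^\infty)$ separately via integrability.

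First, I would fix an open set $U \subset \rztO \setminus \ind(f^\infty)$ on which Theorem \ref{thm:ctyresult} gives a continuous local potential $u$ with $T^* = dd^c u$ (up to a smooth homogeneous correction that can be absorbed).  Since each approximant $T^*_\epsilon \leq T^*$ is uniformly laminar over a measurable family of mutually disjoint holomorphic disks, $T^*_\epsilon$ carries a continuous local potential $u_\epsilon$ on $U$ satisfying $u_\epsilon \leq u$; an analogous statement holds for $T_{*,\epsilon} \leq T_*$ on any open set where $T_*$ has continuous potentials, which is generic by the same reasoning applied to $T_*$.  In this regime, the Bedford--Taylor wedge product $T^*_\epsilon \wedge T_{*,\epsilon} := dd^c(u_\epsilon T_{*,\epsilon})$ is well-defined.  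The key observation, following the standard laminar-current calculus of Bedford--Lyubich--Smillie and the variant in \cite{Duj06}, is that because the disks supporting $T^*_\epsilon$ are pairwise disjoint, this Bedford--Taylor product coincides with the geometric intersection of the laminar disks of $T^*_\epsilon$ against the woven disks of $T_{*,\epsilon}$; this would be the first nontrivial step.

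Next, I would establish monotone convergence of these geometric intersections to $T^* \wedge T_*$ on $U$.  Because $T^*_\epsilon \nearrow T^*$ and $T_{*,\epsilon}\nearrow T_*$ with the approximations obtained by restricting the laminar/woven decompositions to progressively larger families of disks, the corresponding potentials $u_\epsilon \nearrow u$ increase monotonically to a continuous limit (uniformly on compacts, by Dini's theorem applied to continuous monotone convergence).  Standard Bedford--Taylor continuity of the complex Monge--Amp\`ere operator under monotone convergence of continuous plurisubharmonic differences then yields weak convergence $T^*_\epsilon \wedge T_{*,\epsilon} \to T^* \wedge T_*$ on $U$, while positivity gives the desired monotonicity $T^*_\epsilon \wedge T_{*,\epsilon} \leq T^*\wedge T_*$.

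The remaining step handles the countable closed discrete set $\ind(f^\infty)$ where continuity may fail.  Proposition \ref{prop:boundedish} guarantees that the potential for $T^*$ remains locally integrable there, and combined with conclusion (1) of Theorem \ref{thm:mainthm} — in fact its proof, which is largely the content of \S\ref{sec:mass} and uses the weak finite energy statement Theorem \ref{thm:weakenergy1} — one shows that $T^*\wedge T_*$ charges no point of $\ind(f^\infty)$.  Consequently, the measure $T^* \wedge T_*$ is determined by its restriction to $\rztO \setminus \ind(f^\infty)$, where the previous paragraph already proved the increasing geometric convergence.  Since both $T^*_\epsilon\wedge T_{*,\epsilon}$ and $T^*\wedge T_*$ give zero mass to $\ind(f^\infty)$, the monotone convergence extends globally.

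The main obstacle I anticipate is the first step: verifying that the Bedford--Taylor product of a uniformly laminar current against a uniformly woven one literally coincides with the geometric intersection of their defining disks.  This requires genuine care because in the woven case the disks are allowed to intersect each other, so one cannot directly invoke the classical laminar-laminar identification; rather one must slice the woven current disk-by-disk along the laminar family and use that the slicing measures of a current with continuous potentials are well-behaved, integrating the resulting point-mass intersection measures against the transverse measure of the laminar family.  The no-atom statement at points of $\ind(f^\infty)$ is the second place where care is needed, and it is precisely where Theorem \ref{thm:weakenergy1} enters through the Lelong-number control developed in \S\ref{sec:mass}.
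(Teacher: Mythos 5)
Your proposal diverges substantially from the paper's argument, and there are genuine gaps that would prevent it from working as written.

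The most serious issue is that your argument relies on $T_*$ having continuous local potentials on generic open subsets of $\torus$, obtained ``by the same reasoning applied to $T_*$'' as in Theorem \ref{thm:ctyresult}. But the paper states explicitly, just before Theorem \ref{THM:CONTINUITY}, that the continuity result is specific to $T^*$ and that they do \emph{not} know whether the analogue holds for $T_*$. The actual proof of Theorem \ref{thm:geometricIntersection} is engineered precisely to avoid needing this: it replaces $T_*$ by the approximants $T_n := \ddeg^{-n} f^n_* \bar T_*$, which \emph{do} have continuous potentials on $\torus$, and controls the remaining piece $T_*-T_n$ not by continuity but by the weak-energy estimate of Corollary \ref{COR:WEAK_ENERGY_BND_TAIL}. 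That estimate, together with the quantitative bound $\norm[\infty]{d\chi}^2 \leq C_\delta\epsilon^{-2}$ on the cutoff, is what makes the tail term $\int\chi\,(T_*-T_n)\wedge(T^*-T^*_\epsilon)$ small uniformly in $\epsilon$. You relegate the weak finite energy Theorem \ref{thm:weakenergy1} to the secondary role of killing atoms at $\ind(f^\infty)$, but it is actually the engine of the entire convergence.

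A second, independent gap is your claim that the uniformly laminar current $T^*_\epsilon$ carries a \emph{continuous} local potential $u_\epsilon$. A uniformly laminar current $\int[\Delta_\alpha]\,d\nu(\alpha)$ need not have continuous potentials even when $\nu$ is non-atomic; continuity requires a logarithmic modulus of continuity on the transverse measure that is not given. What the paper extracts from $T^*_\epsilon\leq T^*$ and Corollary \ref{cor:nodisk} is only that the transverse measure has no atoms, which is exactly what is needed for the geometric product $T^*_\epsilon\wedge T_{*,\epsilon}$ to assign no mass to points --- a much weaker statement than potential continuity. Relatedly, your step ``$u_\epsilon\nearrow u$ increase monotonically'' has no justification: $T^*_\epsilon\leq T^*$ with $\int\omega\wedge(T^*-T^*_\epsilon)<\epsilon^2$ does not give nested approximants or monotone potentials, and the Dini / Bedford--Taylor continuity argument you want to invoke would require both the monotonicity and the continuity, neither of which you have established. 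The paper does not take the monotone-potential route at all; instead it proves mass convergence $\int\,d\mu - \int T^*_\epsilon\wedge T_{*,\epsilon}\to 0$ directly via the cutoff-plus-energy decomposition, with $T^*_\epsilon\wedge T_{*,\epsilon}\leq\mu$ following from $T^*_\epsilon\leq T^*$, $T_{*,\epsilon}\leq T_*$ and \cite[Proposition 2.6]{DDG11}.

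Finally, you reference a ``\S\ref{sec:mass}'' that does not exist in the paper; the no-atom statement at $\ind(f^\infty)$ is handled in the proof of Theorem \ref{thm:eqmeasureexists} via Proposition \ref{prop:lelongtoptmass} and Corollary \ref{cor:cvgceonpoles}, and it is a far smaller part of the overall argument than your sketch suggests.
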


Given this and Conclusions (1) and (2) of Theorem \ref{thm:mainthm}, the entropy bound in Conclusion~(3) then follows immediately from the main result Theorem B of \cite{DDG11b}.  We stress that unlike Theorem A in the same paper, Theorem B requires only that $f$ be a rational map with small topological degree and that $\mu$ be an $f$-invariant Borel probability measure that is $f$-invariant and mixing and has the geometric structure guaranteed by Theorem \ref{thm:geometric}. 

\subsection{Context and related work}

Dynamical systems that are stable under perturbation and/or whose dynamics can be described well, e.g. Axiom A diffeomorphisms, often have topological entropy equal to the log of an algebraic number.  On the other hand, in a family of maps where entropy is non-constant and varies continuously with parameters, e.g. $d$-modal piecewise linear interval maps, the Intermediate Value Theorem guarantees that most maps will have entropy equal to the log of a transcendental number.  The dynamics of rational maps $f:\bP^2~\tto~\bP^2$ with given degree $\deg(f) = d$ is very poorly understood at present, but the quantity $\htop(f)$ seems nevertheless quite rigid.  There is substantial evidence supporting the conjecture (see \cite[Conjecture 3.2]{GUEDJ_ENTROPY}) that when $\ddeg(f) \neq \dtop(f)$, one has
\begin{align}\label{EQN:CONJ_FMLA}
\htop(f) = \log\max\{\ddeg(f),\dtop(f)\}.
\end{align}
Since there are only countably many possible values for $\ddeg(f)$ and $\dtop(f)$ (see \cite{BF00}), it is a priori less obvious that there should exist rational maps $f:\bP^2~\tto~\bP^2$ with $e^{\htop(f)}$ transcendental.  Theorem \ref{thm:mainthm} confirms \eqref{EQN:CONJ_FMLA} for a new class of rational maps, and in doing so identifies specific rational maps for which $e^{\htop(f)}$ is transcendental.  For instance, it applies to the map from $f = g\circ h$ given by
$$
g(x_1,x_2) = \left(-x_1\frac{1-x_1+x_2}{1-x_1-x_2},-x_2\frac{1+x_1-x_2}{1-x_1-x_2}\right)\quad\text{and}\quad h(x_1,x_2) = (x_1 x_2^2,x_1^{-2} x_2),
$$
which is shown in \cite{BDJ20} to have transcendental first dynamical degree $\ddeg(f) > \dtop(f)$.

Spaces created by performing infinitely many blowups on a compact complex surface have previously been used in several ways to study dynamics of rational maps.  We mention in particular the work of Hubbard and Papadappol \cite{HPmemoir} on Newton's method for polynomials in two variables; of Cantat \cite{Can11} on groups of plane birational maps; and of Boucksom, Favre and and Jonsson \cite{BFJ08}, Favre and Jonsson \cite{FaJo11}, and Blanc and Cantat \cite{BlCa16} aimed at better understanding dynamical degrees of various
classes of plane rational maps; and of \cite{DETHELIN} which proposes the space obtained by resolving all points of indeterminacy of all iterates of a rational map $f$ as an appropriate setting for understanding the entropy of $f$.  As far as we know, this article and its predecessor \cite{DiRo24} are the first to effectively use an infinitely blown up space to construct and investigate a measure of maximal entropy for a rational map.  What makes this possible, and what distinguishes the toric limit surface $\rztO$ from the spaces used in earlier works is that $\rztO$ is a complex manifold in its own right.  This allows us to work directly with $\rztO$ much of the time rather than only with, say, classes in $\hoo(\rzt)$ or toric surfaces that approximate $\rztO$.

\subsection{Organization of the paper}
Following is a quick summary of the rest of this article.  
\begin{itemize}
\item \S\ref{sec:backgd} provides background on toric surfaces, on the non-compact surface $\rztO$ and on the associated spaces $\pcc(\rzt)$ and $\hoo(\rzt)$ of currents and classes.
\item \S\ref{sec:maps} reviews facts about toric maps and proves the tropical approximation result Theorem \ref{thm:tropapprox}.
\item \S\ref{sec:currents} discusses pullbacks and pushforwards of toric currents, including several facts about the equilibrium currents and their classes which were not observed in \cite{DiRo24}.
\item \S\ref{sec:currents_II} establishes the continuity result Theorem \ref{THM:CONTINUITY} for potentials of $T^*$.
\item \S\ref{sec:products} reviews the Bedford-Taylor approach to defining wedge products and energy pairings for positive closed $(1,1)$ currents on compact K\"ahler surfaces.  While this material is well-known in some quarters, we hope that it will be useful to readers less acquainted with pluripotential theory.  In \S\ref{ss:toricwedge}, we adapt everything to the non-compact surface $\rztO$.
\item \S\ref{sec:measure} constructs the wedge product $\mu = T^*\wedge T_*$ of interest.   
\item \S\ref{sec:mixing} establishes the dynamical and geometric properties of $\mu$, completing the proof of Theorem \ref{thm:mainthm}.
\end{itemize}

\medskip
\begin{ackn}
We thank Eric Bedford for his interesting comments.  Both authors gratefully acknowledge support from the NSF, the
first by grant DMS-2246893 and the second by grant DMS-2154414.
\end{ackn}

\section{Inverse limits of toric surfaces, divisors and currents}
\label{sec:backgd}
In this section, we review toric surfaces and their inverse limits.  For substantially more detail we refer readers to \cite{CoSc11} and \S3 in our previous paper \cite{DiRo24} on this subject.

Let $\torus\cong(\C^*)^2$ denote the two dimensional complex algebraic torus and $\eta$ be the canonical invariant holomorphic $2$-form on $\torus$.  As usual, we let $N\cong \Z^2$ denote the dual of the character lattice $M$ of $\torus$, $N_\R = N\otimes_\Z \R \cong\R^2$, and $\Log:\torus\to N_\R$ (the \emph{logarithm map}) be the surjective group homomorphism that assigns to a point $p\in\torus$ the linear functional 
$$
\Log(p):m\in M_\R \mapsto -\log|m(p)|.
$$
The real (i.e. maximal compact) subtorus of $\torus$ is then the set $\torus_\R := \Log^{-1}(0).$

A ray $\tau\in N_\R$ is \emph{rational} if $\tau\cap N$ is non-empty, and the unique primitive vector $v\in\tau\cap N$ is its \emph{generator}.  A \emph{sector} for us will be a closed, strictly convex two dimensional cone $\sigma\subset N_\R$.  We say that $\sigma$ is \emph{rational} if its bounding rays are rational, and then call $\sigma$ \emph{regular} if the generators for these rays form a basis for $N$.  Identifying the generators of a regular rational cone $\sigma$ with the standard basis for $\Z^2$, we obtain a unique isomorphism $x_\sigma = (x_1,x_2):\torus \mapsto (\C^*)^2$ satisfying
$$
\Log(p) = (-\log|x_1(p)|,- \log|x_2(p)|).
$$
We call $(x_1,x_2)$ the \emph{$\sigma$-coordinates} for $\torus$, observing that in these coordinates
\begin{itemize}
\item $\Log^{-1}(\sigma) = \{0<|x_1|,|x_2|\leq 1\}$ is the intersection of $(\C^*)^2$ with the closed unit polydisk;
\item $\eta = \frac1{4\pi^2}\frac{dx_1\wedge dx_2}{x_1x_2}$.
\end{itemize}
If $\tilde\sigma\in N_\R$ is another regular rational sector, then we have the change of coordinate formula $x_{\tilde\sigma} = h_A\circ x_\sigma$, where $A = (a_{ij}) \in\mathop{SL}(2,\Z)$ is the change of coordinate matrix from the basis for $N_\R$ determined by $\sigma$ to the basis determined by $\tilde\sigma$ and $h_A:(x_1,x_2) \mapsto (x_1^{a_{11}}x_2^{a_{12}},x_1^{a_{21}} x_2^{a_{22}})$ is monomial map determined by $A$.

For our purposes, a \emph{toric surface} will be a smooth complex manifold $X$ that compactifies $\torus$ and to which the action of $\torus$ on itself extends to a holomorphic action of $\torus$ on $X$.  Any such $X$ is determined by its \emph{fan} $\Sigma(X) = \{0\} \cup\Sigma_1\cup \Sigma_2$, a collection of closed and strictly convex $0$, $1$ and $2$ dimensional cones whose relative interiors partition $N_\R$.  Concretely, $X = \bigcup_{\sigma\in\Sigma_2} U_\sigma$, where $x_\sigma:U_\sigma \to \C^2$ are coordinate charts that extend the above $\sigma$-coordinates on $\torus$.  Letting $\tau_1,\tau_2\in\Sigma_1(X)$ denote the boundary rays of $\sigma$, we have that the coordinate axes in $\C^2$ are the images by $x_\sigma$ of $\torus$-invariant curves (which we call \emph{poles} for short) $C_{\tau_2}, C_{\tau_1}\subset X\setminus\torus$ that meet in a single $\torus$-invariant point $p_\sigma\in X\setminus\torus$.  If distinct sectors $\sigma,\tilde\sigma\in\Sigma_2$ share a boundary ray $\tau$ then $U_\sigma\cap U_{\tilde\sigma} = \torus \cup C_\tau^\circ$ where $C_\tau^\circ := C_\tau\setminus\{p_\sigma,p_{\tilde\sigma}\}$ denotes the complement of the torus invariant points in $C_\tau$.  We similarly write $X^\circ = X\setminus\{p_\sigma:\sigma\in\Sigma_2\}$.

If $X$ and $Y$ are both toric surfaces, we let $\pi_{XY}:X\tto Y$ denote the canonical \emph{transition}---i.e. the birational extension of the identity map on $\torus$.  The map $\pi_{XY}$ is a morphism if and only if $\Sigma(X)$ refines $\Sigma(Y)$, in which case we write $X \succ Y$.  In any case $\pi_{XY}$ contracts precisely those poles $C_\tau\subset X$ for which $\tau$ is an interior ray of $\sigma\in\Sigma(Y)$.  For any two toric surfaces $X_1,X_2$ there is another $X \succ X_j$ that dominates both.  We therefore define the \emph{toric limit space} $\rzt$ to be the inverse limit of all toric surfaces with respect to transitions.  The following summarizes the discussion in \S3.1 of \cite{DiRo24}.

\begin{thm}
The toric limit space $\rzt$ is a Hausdorff topological compactification of $\torus$ on which $\torus$ acts by homeomorphisms compatible with the natural map $\rzt\to X$ onto any toric surface $X$.  The complement $\rzt\setminus\torus$ consists of
\begin{itemize}
\item a $\torus$-invariant \emph{irrational point} $p_\tau$ for every irrational ray $\tau\subset N_\R$,
\item two $\torus$-invariant points and a \emph{pole} $C_\tau \cong \C^*$ for each rational ray $\tau\subset N_\R$. 
\end{itemize}
Distinct poles and $\torus$-invariant points are disjoint from each other, and every pole $C_\tau$ is compactified to $\bP^1$ by the two \emph{rational} $\torus$-invariant points indexed by the same ray $\tau$.  

For any toric surface $X$, the natural projection $\rzt\to X$ may be (partially) inverted to give an inclusion $X^\circ\hookrightarrow \rzt$.  The complement of the $\torus$-invariant points of $\rzt$ is a (non-compact, non-algebraic) complex manifold $\rztO$ equal to the union of the images of all such inclusions.
\end{thm}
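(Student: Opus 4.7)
The plan is to establish the topological properties first, then classify points of $\rzt \setminus \torus$, and finally put a complex manifold structure on $\rztO$.

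First, as an inverse limit of compact Hausdorff spaces, $\rzt$ is compact Hausdorff, sitting as a closed subspace of $\prod_X X$. The diagonal inclusions $\torus \hookrightarrow X$ commute with every transition $\pi_{XY}$ (which is the identity on $\torus$), so they assemble to a continuous injection $\torus \hookrightarrow \rzt$; density follows because $\torus$ is dense in every $X$ and basic open sets in $\rzt$ come from basic open sets in some $X$. The $\torus$-action on each $X$ is equivariant under transitions, so it extends to a continuous action on $\rzt$ that commutes with every projection $\rzt \to X$.

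Next, given $p = (p_X)_X \in \rzt \setminus \torus$, I would attach to each toric surface $X$ a nonzero cone $\sigma_p(X) \in \Sigma(X)$: if $p_X = p_\sigma$ is $\torus$-invariant set $\sigma_p(X) = \sigma \in \Sigma_2(X)$; otherwise $p_X$ lies on a unique pole $C_\tau \subset X$ and set $\sigma_p(X) = \tau \in \Sigma_1(X)$. Coherence of $p$ forces $\sigma_p(X') \subseteq \sigma_p(X)$ whenever $X' \succ X$, so $\kappa_p := \bigcap_X \sigma_p(X)$ is a nonempty closed subcone of $N_\R$. A central combinatorial step is to show $\kappa_p$ is one-dimensional: any 2-cone can be refined by introducing a new interior rational ray, so a strictly shrinking nested sequence of 2-cones intersects in a ray. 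Case analysis then proceeds. If $\tau := \kappa_p$ is irrational, it never belongs to any $\Sigma_1(X)$, so every $\sigma_p(X)$ is a 2-cone; conversely every irrational $\tau$ determines a unique coherent family by taking $\sigma_p(X)$ to be the 2-cone of $\Sigma_2(X)$ containing $\tau$ in its interior, which produces the irrational point $p_\tau$. For rational $\tau$, passing to $X$ with $\tau \in \Sigma_1(X)$ yields $\sigma_p(X) \in \{\tau\} \cup \{\text{the two 2-cones adjacent to } \tau\}$: the first case describes points on a pole canonically identified with $C_\tau^\circ \cong \C^*$ in every sufficiently fine $X$, and the other two cases give the two rational $\torus$-invariant points bounding $\tau$. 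These two points compactify $C_\tau$ to $\bP^1$ because they do so already in any toric surface containing $\tau$ in its 1-skeleton, and disjointness of all strata is automatic from the parametrization by $\kappa_p$.

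For $\rztO$, I would define the inclusion $X^\circ \hookrightarrow \rzt$ by sending $q \in X^\circ$ to the unique coherent family extending $q$: for $Y \prec X$ set $q_Y = \pi_{XY}(q)$, and for $Y \succ X$ lift $q$ uniquely along $\pi_{YX}$, using that the exceptional locus of $\pi_{YX}$ projects onto the $\torus$-invariant points of $X$ and thus misses $X^\circ$. The image lies in $\rztO$, and every point of $\rztO$ appears in some $X^\circ$ (for a point on a pole $C_\tau$, choose $X$ refining $\tau$ into $\Sigma_1(X)$). Gluing the complex manifold structures on the various $X^\circ$'s via the biholomorphic transitions, which are defined between $X^\circ \cap \pi_{XY}^{-1}(Y^\circ)$ and its image, gives $\rztO$ its complex manifold structure. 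The main obstacle is the combinatorial classification in the previous paragraph: demonstrating that the refinement system of fans is cofinal enough to force $\kappa_p$ to be one-dimensional and that a coherent family is uniquely reconstructed from its $\kappa_p$ and cone assignments. Both hinge on the density of rational rays in $N_\R$ and the freedom to refine fans along any rational direction.
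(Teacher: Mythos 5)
The paper does not prove this theorem here; it is cited from \S3.1 of \cite{DiRo24}, so there is no in-paper proof to compare against. Your reconstruction is, however, structurally sound and reaches the stated conclusions.

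One phrase deserves a small warning. You write that ``a strictly shrinking nested sequence of $2$-cones intersects in a ray,'' but this is false as a general statement: a strictly decreasing nested sequence of $2$-cones can perfectly well have $2$-dimensional intersection (e.g.\ cones with apertures $[0,\pi/2 + 1/n]$ intersect in a full quarter-plane). What actually forces $\kappa_p$ to be $1$-dimensional is the argument you gesture at in your closing sentence: for \emph{any} rational ray $\tau$ interior to some $\sigma_p(X)$ there is a further refinement $X'\succ X$ with $\tau\in\Sigma_1(X')$, which expels $\tau$ from the interior of $\sigma_p(X')$. Since rational rays are dense, $\kappa_p$ can have no interior, and since it meets the unit circle it is a ray. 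I would recast the one-dimensionality step in exactly these terms rather than appealing to a claim about arbitrary nested sequences; otherwise the cone classification, the identification of the three orbit types over each rational ray, and the gluing of the $X^\circ$ into the complex manifold $\rztO$ are all handled correctly.
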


We call $\rztO$ the \emph{toric limit surface}.  For simplicity, we will write $X^\circ\subset\rztO$ for any toric surface $X$, implicitly identifying $X^\circ$ with its image in $\rztO$. Given $R>0$ and a finite collection of rational rays $\tau_j\subset N_\R$, $1\leq j\leq J$ we define the \emph{star} of width $R$ along $\tau_1,\dots,\tau_J$ to be the $\torus_\R$-invariant set
$$
Q = Q(\tau_1,\dots,\tau_J,R) := \overline{\{p\in \torus: \dist(\Log(p),\tau_j) \leq R \text{ for some } 1\leq j \leq J\}}.
$$
The closure here is understood to take place in $\rzt$.

\begin{cor}
\label{cor:star}
Every star is a compact subset of $\rztO$, and every compact subset of $\rzt^\circ$ is contained in a star.
\end{cor}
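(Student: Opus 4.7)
My approach is to reduce both halves of the corollary to chart computations on a single suitably chosen toric surface $X$, using that in the $\sigma$-coordinates attached to a regular rational cone $\sigma$ with generators $v_1,v_2$, the identity $\Log(p) = -\log|x_1(p)|\,v_1 - \log|x_2(p)|\,v_2$ translates the condition $\dist(\Log(p),\tau)\leq R$ on a boundary ray $\tau$ of $\sigma$ directly into two-sided bounds of the form $e^{-R}\leq|x_i(p)|\leq e^R$ on the chart coordinate transverse to $\tau$.

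For the first assertion I take any toric surface $X$ whose fan $\Sigma(X)$ contains each $\tau_j$ as a ray. In each of the finitely many charts $U_\sigma\cong\C^2$ of $X$, I intersect the defining set $\{p\in\torus:\dist(\Log(p),\tau_j)\leq R \text{ for some }j\}$ with $U_\sigma$, partition the intersection by which of the four quadrants of $N_\R$ (relative to the basis $v_1,v_2$) contains $\Log(p)$, and use the translation above to rewrite each piece as an explicit annular region in $(x_1,x_2)$-coordinates. Each piece has closure in $U_\sigma$ that is a compact subset avoiding the $\torus$-invariant corner $p_\sigma=(0,0)$; their union over all charts is then a compact subset of $X^\circ$, and taking closure in $\rzt$ gives $Q\subset X^\circ\subset\rztO$, as required.

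For the converse direction I start with compact $K\subset\rztO$ and first produce a toric surface $X$ with $K\subset X^\circ$. Since the cover $\rztO=\bigcup_X X^\circ$ is directed under refinement of fans, finitely many $X_i^\circ$ suffice to cover $K$ and any common refinement $X$ satisfies $K\subset X^\circ$. Then $K$ is a compact subset of the Hausdorff compactum $X$ disjoint from the finite set of $\torus$-invariant points, so in each chart $U_\sigma$ of $X$ the set $K\cap U_\sigma$ is bounded away from $p_\sigma$ both in that chart and in neighboring charts sharing the poles of $\sigma$. Applying the coordinate-versus-$\Log$ translation in reverse, this amounts to a uniform $R$ such that $\Log(K\cap\torus)$ lies within Euclidean distance $R$ of $\bigcup_{\tau\in\Sigma_1(X)}\tau$, so $K\subset Q(\Sigma_1(X),R)$.

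\textbf{Main obstacle.} I expect the main technical obstacle to be the chart-by-chart bookkeeping in both halves, in particular verifying that the local closures inside each $U_\sigma$ do not sneak up on a $\torus$-invariant corner belonging to a neighboring chart, and that compactness of $K$ in $X$ really yields a \emph{single} uniform $R$ valid simultaneously across all charts and all boundary rays of $\Sigma(X)$.
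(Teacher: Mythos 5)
Your proof is correct and follows essentially the same route as the paper's: pick a toric surface $X$ whose fan contains the directing rays $\tau_j$, check that the closure of $Q\cap\torus$ in $X$ avoids the $\torus$-invariant corners (hence is compact in $X^\circ\subset\rztO$), and for the converse use that $\{X^\circ\}$ is an open exhaustion of $\rztO$ together with compactness in $X$ to extract a uniform width $R$ along $\Sigma_1(X)$. The paper simply leaves the chart-level translation between $\dist(\Log(p),\tau_j)\leq R$ and coordinate bounds implicit, which you spell out.
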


\begin{proof}
If $X$ is a toric surface whose fan includes the rays $\tau_1,\dots,\tau_J$, then the closure $\bar Q_X$ of $Q\cap\torus$ in $X$ is a $\torus_\R$ invariant compact subset of $X^\circ\subset\rztO$.   If $K\subset\rztO$ is some other compact set, then in fact $K\subset X^\circ$ for some toric surface $X$, since the sets $X^\circ$ form an open exhaustion of $\rztO$.  The proof is therefore completed by observing that any given $X^\circ$ is exhausted by the interiors of stars of increasing width $R$ directed by the rays in $\Sigma_1(X)$.
\end{proof}

\subsection{Currents and divisors}
The following mostly just summarizes \S's 2, 6 and 7 in \cite{DiRo24}, with a shift in viewpoint that places greater emphasis on the complex manifold $\rztO$.  We refer readers to that article for more details and arguments.

Given a complex surface $X$, we let $\pcc^+(X)$ denote the set of positive closed $(1,1)$ currents\footnote{When $X$ is not compact, we allow currents with non-compact support; hence currents act on \emph{compactly supported} test forms.} on $X$ and $\pcc(X)$ denote the set of differences $S-T$ of elements $S,T\in\pcc^+(X)$.  There are of course many closed $(1,1)$ currents that cannot be expressed this way, so our definition of $\pcc(X)$ is a little non-standard.  We regard the set $\div(X)$ of $\R$-divisors on $X$ as a subset of $\pcc(X)$ by identifying each divisor $D$ with the current obtained by integrating over $D$. 

If $X$ is projective (in particular, compact and K\"ahler), then each $T\in\pcc(X)$ represents a class $\ch[X]{T} \in \hoo(X)\subset H^2(X,\R)$.  The class $\ch[X]{T}$ is trivial precisely when $T = dd^c \rpot$ (in the distributional sense) for some $\rpot\in L^1(X)$; locally $\rpot = u - v$ is a difference between two psh functions $u,v$.  The class $\ch{T}$ is \emph{effective} if it is represented by a positive current) and \emph{K\"ahler} if it is represented by a K\"ahler form.  A limit $\ch{T}$ of K\"ahler classes is \emph{nef}, characterized by the fact that its intersection number $\isect{T}{D}_X$ with every effective class is non-negative.

Abusing notation slightly we write $\pcc(\rzt)$ in place of $\pcc(\rztO)$ and call its elements \emph{toric currents}.  If $X$ is a toric surface and $T$ a toric current, then standard extension theorems (see e.g. \cite{Sib85}) for positive closed currents imply that the restriction $T|_{X^\circ}$ extends uniquely (by zero) to a current $T_X\in \pcc(X)$, which we continue to call the \emph{restriction} of $T$ to $X$.  One checks easily that for toric surfaces $X\succ Y$, we have $\pi_{XY*} T_X = T_Y$.  Though $\rztO$ is not compact, or even an open subset of a compact complex manifold, we continue to declare toric currents $S$ and $T$ to be cohomologous in $\rztO$ if $S-T = dd^c \rpot$ for some real-valued $\rpot\in L^1_{loc}(\rztO)$.  We let $\hoo(\rzt)$ denote the resulting quotient of $\pcc(\rzt)$, writing $\ch{T}$ for the class of $T$ in $\hoo(\rzt)$.

\begin{prop} $S,T\in\pcc(\rzt)$ are cohomologous if and only if $S_X$ and $T_X$ are cohomologous in every toric surface $X$.  In either case, relative potentials $\rpot\in L^1_{loc}(\rztO)$ for $S-T$ and $\rpot_X\in L^1(X)$ for $S_X-T_X$ are unique up to additive constants and so can be chosen so that
$$
\rpot_X = \rpot|_{X^\circ}.
$$
\end{prop}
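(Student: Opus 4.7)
The plan is to exploit the fact that for any toric surface $X$, the complement $X\setminus X^\circ$ is a finite set of isolated smooth points, and that a small coordinate ball $U\cong\C^2$ about each torus-invariant point $p_\sigma$ has the property that $U\setminus\{p_\sigma\}$ is simply connected (it deformation retracts to $S^3$).  Together with Hartogs' extension theorem, this will let me move relative potentials back and forth between $\rztO$ and individual toric surfaces.

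For the forward direction, suppose $\rpot\in L^1_{loc}(\rztO)$ is a relative potential for $S-T$, and fix a toric surface $X$.  I work locally near each torus-invariant point $p_\sigma\in X\setminus X^\circ$: choose a coordinate ball $U$ about $p_\sigma$ and local psh potentials $u$ and $v$ for $S_X$ and $T_X$ on $U$.  The function $h \eqdef \rpot|_{U\setminus\{p_\sigma\}} - (u-v)$ lies in $L^1_{loc}(U\setminus\{p_\sigma\})$, satisfies $dd^c h = 0$ distributionally, and is therefore smooth and pluriharmonic by elliptic regularity.  Simple connectedness of $U\setminus\{p_\sigma\}$ lets me write $h = \re F$ for a single-valued holomorphic $F$, and Hartogs' theorem extends $F$ across $p_\sigma$ so that $h$ extends pluriharmonically to $U$.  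Setting $\rpot_X\eqdef (u-v) + h$ on $U$ gives an $L^1_{loc}$ extension with $dd^c \rpot_X = S_X - T_X$, and these local definitions glue unambiguously because they agree on $X^\circ$ by construction.  Integrability on $X$ is automatic by compactness, and $\rpot_X|_{X^\circ} = \rpot|_{X^\circ}$ by design.

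For the converse, I would fix an exhausting chain of toric blowups $X_1\prec X_2\prec\cdots$ whose open sets $X_n^\circ$ cover $\rztO$.  When $X\succ Y$ one has $Y^\circ\subset X^\circ\subset\rztO$, so $\rpot_X|_{Y^\circ}$ and $\rpot_Y|_{Y^\circ}$ are two relative potentials for $(S-T)|_{Y^\circ}$; their difference is pluriharmonic on $Y^\circ$, extends across the finitely many torus-invariant points of $Y$ by the same Hartogs argument, and is therefore a global constant on the compact surface $Y$.  Subtracting these constants inductively normalizes $\rpot_{X_{n+1}}$ so that it restricts to $\rpot_{X_n}$ on $X_n^\circ$, and defining $\rpot$ on $\rztO$ by the common values produces an element of $L^1_{loc}(\rztO)$ (since any compact subset of $\rztO$ lies in some $X_n^\circ$) satisfying $dd^c\rpot=S-T$.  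Uniqueness up to additive constants follows the same template: the difference of two potentials on $\rztO$ is pluriharmonic, restricts to a constant on every compact $X$, and those constants must coincide because they agree on the shared dense subtorus $\torus$.  The essential obstacle throughout is the Hartogs-style extension of pluriharmonic functions across isolated torus-invariant points; once this is in place, the remainder is bookkeeping about constants of integration.
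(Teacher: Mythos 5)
Your proof is correct. The forward direction (cohomologous on $\rztO$ implies cohomologous on every $X$) follows essentially the same route as the paper: write local psh potentials $u,v$ for $S_X,T_X$ near each $\torus$-invariant point, observe that $\rpot-(u-v)$ is pluriharmonic on the punctured ball, and extend it across the puncture by a Hartogs-type argument. You spell out the Hartogs step a bit more explicitly by passing to a holomorphic primitive on the simply connected punctured ball, where the paper just invokes ``Hartogs' extension'' directly; both routes are standard. The converse direction is where you diverge: the paper does not re-prove it but cites \cite[Proposition 7.4]{DiRo24}. You instead give a self-contained argument, picking a countable cofinal chain $X_1\prec X_2\prec\cdots$ (which exists since there are only countably many toric surfaces and the system is directed), showing that $\rpot_{X_{n+1}}|_{X_n^\circ}-\rpot_{X_n}$ is pluriharmonic, extends across the isolated $\torus$-invariant points of $X_n$ by the same Hartogs mechanism, and is hence constant on the compact surface $X_n$, so the constants can be normalized away inductively and the $\rpot_{X_n}$ glue to a potential on $\rztO$. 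This is a genuine, and arguably cleaner, alternative: it keeps the whole proposition self-contained at the cost of having to justify the countable exhaustion, which the paper sidesteps by deferring to earlier work. Your observation that uniqueness of the global potential up to constants reduces to the same Hartogs-plus-compactness argument on each $X$, with the constants pinned by agreement on the common dense subtorus $\torus$, is also correct and mirrors what the paper implicitly uses.
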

 
\begin{proof}
It was shown in \cite[Proposition 7.4]{DiRo24} that if $S_X$ and $T_X$ are cohomologous in every $X$, then $S$ and $T$ are cohomologous in $\rztO$.  For the other direction, we suppose without loss of generality that $S$ and $T$ are positive, and suppose that $S-T= dd^c\varphi$ for some locally integrable $\varphi:\rztO\to \R$.  Fix a toric surface $X$ and let $S_X,T_X\in \pcc^+(X)$ be the restrictions of $S$ and $T$ to $X$.  Let $U\subset X$ be a union of pairwise disjoint coordinate balls centered at the finitely many $\torus$-invariant points of $X$.  Then we have psh functions $u,v:U\to \R$ such that $dd^c u = S_X$ and $dd^c v = T_X$.  Let $U^\circ = X^\circ \cap U$.  Then restriction gives on $U^\circ$ that 
$$
dd^c (u-v) = (S_X-T_X) = S-T = dd^c \varphi.
$$
Hence $h = \varphi-u-v$ is pluriharmonic on $U^\circ$.  Hartog's extension gives that $h$ and therefore $\varphi = h+ v-u$ extends to a potential for $S_X-T_X$ on all of $U$.  Since $\varphi$ is already a potential for $S_X-T_X$ on $X-U \subset X^\circ$, it follows that $S_X-T_X = dd^c\varphi$ on $X$. 
\end{proof}

From this discussion, it follows that $\pcc(\rzt)$ and $\hoo(\rzt)$ are the inverse limits of the corresponding objects $\pcc(X)$ and $\hoo(X)$ on toric surfaces.  We will therefore say that $\alpha\in\hoo(\rzt)$ is \emph{effective} (or \emph{nef}, or \emph{K\"ahler}) if the same property adheres to its restriction $\alpha_X\in \hoo(X)$ in each toric surface $X$.  This is non-standard since e.g. we are \emph{not} saying that the K\"ahler classes in $\hoo(\rzt)$ correspond to K\"ahler forms on $\rztO$.

We adopt the convention that, unless otherwise noted, a \emph{curve} in a complex surface $X$ is a reduced and irreducible, properly embedded, one dimensional analytic subvariety $C\subset X$.  With this convention, every curve corresponds to a current of integration on $X$, so the above discussion implies that every curve $C\subset\rztO$ is either
\begin{itemize}
 \item \emph{internal}, i.e. compact and given by $C = \overline{C\cap\torus}$; or
 \item the complement $C_\tau^\circ := C_\tau\cap\rzt^\circ\cong \C^*$ of the $\torus$-invariant points in the pole associated to some rational ray $\tau\subset N$.
\end{itemize}
A \emph{toric divisor} $D\in\div(\rzt)$ is then given by $D = D_{int}+D_{ext}$, with \emph{internal} part $D_{int}$ supported on finitely many internal curves, and \emph{external} part $D_{ext} = \sum_{\tau} c_\tau C_\tau^\circ$, where the sum is (countably) infinite, over \emph{all} rational rays $\tau\subset N_\R$.  

Similarly, any current $T\in\pcc(\rzt)$ decomposes as $T = T_{int}+T_{ext}$ where $T_{ext}$ is an external divisor and $T_{int}\in\pcc(\rzt)$ is \emph{internal}, equal to the trivial extension to $\rztO$ of $T|_{\torus}$.  The cohomology class of a positive internal current $T = T_{int}$ is always nef \cite[Corollary 6.10]{DiRo24}.

Every internal current $T\in\pcc(\rzt)$ can be averaged by the action of $\torus_\R$ to produce a $\torus_\R$-invariant internal current $T_{ave}$ representing $\ch{T}$.  

\begin{prop}
\label{prop:average}
For any internal current $T\in\pcc(\rzt)$, there is a continuous function $\sfn_T:N_\R\to\R$, unique up to the addition of an affine function, such that
$$
T_{ave}|_{\torus} = dd^c(\sfn_T\circ\Log).
$$
\end{prop}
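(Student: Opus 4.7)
The plan is to exploit that $\torus\cong(\C^*)^2$ is Stein, so positive closed $(1,1)$-currents there admit global psh potentials, and then to average over the compact subgroup $\torus_\R$ and invoke the standard correspondence between $\torus_\R$-invariant psh functions on $\torus$ and convex functions on $N_\R$ via $\Log$.

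First I would decompose $T = T^+ - T^-$ with $T^\pm\in\pcc^+(\rzt)$ internal, so that linearity reduces matters to the positive case; we then set $\sfn_T := \sfn^+ - \sfn^-$. For a positive internal $T^+$, the restriction $T^+|_\torus$ is a positive closed $(1,1)$-current on the Stein manifold $\torus$ and hence admits a global psh potential $u^+\in L^1_{loc}(\torus)$ with $T^+|_\torus = dd^c u^+$. Averaging against normalized Haar measure on $\torus_\R$,
\[
\tilde u^+(p) := \int_{\torus_\R} u^+(\theta\cdot p)\,d\theta,
\]
produces a $\torus_\R$-invariant psh function with $dd^c\tilde u^+ = T^+_{ave}|_\torus$. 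Since the fibers of $\Log$ are exactly the $\torus_\R$-orbits, one may write $\tilde u^+ = \sfn^+\circ\Log$ for a unique $\sfn^+:N_\R\to[-\infty,+\infty)$. In any $\sigma$-coordinates with $s_j = -\log|x_j|$, a direct computation yields $\partial\bar\partial(\sfn^+\circ\Log) = \tfrac14\sum_{j,k}(\partial_{s_j}\partial_{s_k}\sfn^+)\frac{dx_j\wedge d\bar x_k}{x_j\bar x_k}$, which is a positive $(1,1)$-form if and only if $\sfn^+$ is convex.

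The main obstacle is to show $\sfn^+$ is finite-valued, after which continuity is automatic for a finite convex function on $N_\R\cong\R^2$. The pushforward of Lebesgue measure on $\torus$ under $\Log$ is equivalent on compact sets to Lebesgue measure on $N_\R$, so $\tilde u^+\in L^1_{loc}(\torus)$ descends to $\sfn^+\in L^1_{loc}(N_\R)$. I would then argue by elementary convex analysis on the set $A := \{\sfn^+ = -\infty\}$: $A$ is convex, and for any $x_0\in A$ and $x_1\notin A$, convexity of $\sfn^+$ forces $\sfn^+\equiv-\infty$ on the open segment $(x_0,x_1)$, so $N_\R\setminus A\subset\bar A$. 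If $A$ has positive Lebesgue measure then $\sfn^+\notin L^1_{loc}(N_\R)$; if $A$ has measure zero then $A$ has empty interior and lies in an affine line, so $\bar A$ lies in that line, forcing $N_\R\setminus A$ to lie in the line as well---again contradicting $L^1_{loc}$ (unless $\sfn^+\equiv-\infty$, which is excluded). Hence $A=\emptyset$.

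For uniqueness, suppose $\sfn_1,\sfn_2$ are two continuous candidates. Then $h := \sfn_1 - \sfn_2$ makes $h\circ\Log$ pluriharmonic and $\torus_\R$-invariant on $\torus$, and the same coordinate computation as above reduces $\partial\bar\partial(h\circ\Log)=0$ to $\partial_{s_j}\partial_{s_k}h\equiv 0$ for all $j,k$, so $h$ is affine. No step beyond the finiteness argument above relies on anything deeper than standard Stein theory, averaging of psh functions, and the toric–convex dictionary.
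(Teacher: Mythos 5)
The step that fails is the assertion that $T^+|_\torus$ ``admits a global psh potential because $\torus$ is Stein.''  On a Stein manifold $X$, a closed positive $(1,1)$-current $S$ is $dd^c$-exact if and only if its de Rham class $[S]\in H^2_{dR}(X,\R)$ vanishes; Stein-ness alone does not force this.  For $\torus\cong(\C^*)^2$ one has $H^2(\torus,\Z)\cong\Z\neq0$, and this obstruction is actually realized by positive closed $(1,1)$-forms: by the exponential sheaf sequence $\operatorname{Pic}(\torus)\cong H^2(\torus,\Z)$, so there exist holomorphic line bundles $L\to\torus$ with $c_1(L)\neq0$, and on any Stein manifold such an $L$ admits a Hermitian metric of strictly positive curvature (start from any smooth metric and twist by $e^{-\psi}$ for a sufficiently strictly psh exhaustion $\psi$).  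The resulting curvature form is positive, closed, of type $(1,1)$, and has nonzero class, hence no global psh potential.  So the inference as you state it is false, and in particular you never use the hypothesis that $T^+$ is internal.

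The hypothesis is exactly what saves the argument.  Because $T^+$ is internal it restricts to a positive closed current $T^+_X\in\pcc^+(X)$ on every compact toric surface $X$, e.g.\ $X=\bP^2$.  The restriction $H^2(\bP^2,\R)\to H^2(\torus,\R)$ is the zero map: the generator of $H_2(\torus,\Z)$ is $[\torus_\R]$, and $\torus_\R$ is disjoint from a projective line, so $[\torus_\R]=0$ in $H_2(\bP^2,\Z)$.  Hence $[T^+|_\torus]=0$ in $H^2(\torus,\R)$, and the $dd^c$-lemma for $d$-exact closed currents on a Stein manifold (write $T^+|_\torus=d\alpha$, use $H^{0,1}_{\bar\partial}(\torus)=0$ to solve $\alpha^{0,1}=\bar\partial g$, and obtain $T^+|_\torus=dd^c u$ with $u$ a real multiple of $\operatorname{Im}g$) supplies the potential.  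More concretely one may write $T^+_{\bP^2}=c\,\omega_{FS}+dd^c v$ and use $\omega_{FS}|_\torus=dd^c\log(1+|x_1|^2+|x_2|^2)$.  Once the potential exists, the remaining steps — averaging over $\torus_\R$, the dictionary between $\torus_\R$-invariant psh functions and convex functions via $\Log$, finiteness and hence continuity of the descended convex function, and uniqueness up to affine functions via the same $\Log$-computation — are correct and standard.  (The finiteness step can also be shortened: if a convex $\sfn:\R^2\to[-\infty,\infty)$ had $\sfn(x_0)=-\infty$ and $\sfn(x_1)$ finite, then with $x_2=2x_1-x_0$ convexity forces $\sfn(x_1)\leq\tfrac12\sfn(x_0)+\tfrac12\sfn(x_2)=-\infty$; so $\sfn\equiv-\infty$ or $\sfn$ is everywhere finite, and the first is ruled out by $L^1_{loc}$.)
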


We call $\sfn_T$ a \emph{support function} for $T$.  Where possible, we will choose support functions to be partially normalized, satisfying $\sfn_T(0) = 0$.  We have the following characterization of support functions for positive internal currents.

\begin{thm}[See \cite{DiRo24}, Theorem 6.9]
A function $\sfn:N_\R\to \R$ is the support function of a positive internal current $T\in\pcc(\rzt)$ if and only if it is convex and has `linear growth', i.e.
$$
\frac{\norm{\sfn_T(v)}}{\norm{v}+1}
$$
is uniformly bounded on $N_\R$.
\end{thm}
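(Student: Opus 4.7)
The theorem is an equivalence, so I would argue the two directions separately and aim to use only standard toric/pluripotential facts together with the formalism already set up in \S\ref{sec:backgd}.

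For the \emph{only if} direction, assume $T\in\pcc(\rzt)$ is positive internal with averaged version $T_{ave}$. On $\torus$ we have $T_{ave}|_{\torus}=dd^c(\sfn_T\circ\Log)\geq 0$, so $\sfn_T\circ\Log$ is psh on $\torus$. Because this function is $\torus_\R$-invariant and factors through $\Log$, convexity of $\sfn_T$ on $N_\R$ follows from the standard correspondence between psh functions on Reinhardt domains and convex functions of $\Log$-coordinates. To get linear growth, I would fix a rational ray $\tau$ with primitive generator $v$, pick a regular rational sector $\sigma\in\Sigma(X)$ having $\tau$ as a boundary ray on some toric surface $X$, and work in $\sigma$-coordinates $(x_1,x_2)$ in which $C_\tau=\{x_1=0\}$. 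The restriction $T_{ave,X}$ is positive closed with finite mass in a neighborhood of $C_\tau$, and a direct local mass computation (using that $T_{ave,X}=dd^c(\sfn_T\circ\Log)$ on $\torus$ and that $\sfn_T$ is convex) identifies this mass with the one-sided slope of $\sfn_T$ at $v$ in the direction of $\sigma$. Finite mass therefore bounds these slopes; convexity then pushes the bound to every radial direction, and the bound can be made uniform by covering a slice of the unit circle with finitely many such sectors.

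For the \emph{if} direction, suppose $\sfn$ is convex with linear growth. Define $T_0:=dd^c(\sfn\circ\Log)$ on $\torus$; convexity guarantees $\sfn\circ\Log$ is psh, hence $T_0\in\pcc^+(\torus)$. The main task is to show that for every toric surface $X$, the trivial extension $T_{0,X}$ of $T_0$ across the finitely many poles and torus-invariant points of $X$ is a positive closed current of locally finite mass; then $T:=T_0$ (as an element of $\pcc^+(\rzt)$) has $\sfn_T=\sfn$ by construction. Near a pole $C_\tau$ of $X$, working in local coordinates with $C_\tau=\{x_1=0\}$, the linear growth hypothesis gives $\sfn(-\log|x_1|,-\log|x_2|)\leq C(-\log|x_1|+|\log|x_2||+1)$, so $\sfn\circ\Log$ is locally bounded above by a psh function with only logarithmic singularity along $C_\tau$, and hence extends as a psh function across $C_\tau^\circ$. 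The same local-growth estimate handles the torus-invariant points $p_\sigma$, where a two-variable logarithmic bound and the Skoda--El Mir-type extension (or, more elementarily, the fact that psh functions bounded above extend across analytic sets) apply. Consequently $T_{0,X}\in\pcc^+(X)$ for every $X$, and these restrictions are compatible under transitions $\pi_{XY}$ because they agree on $\torus$; this identifies $T$ as an element of $\pcc^+(\rzt)$.

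The main obstacle I anticipate is the quantitative step in the necessity direction: converting finite mass of $T_{ave,X}$ near a pole into a numerical bound on the directional slopes of $\sfn_T$. This requires a genuine local computation in regular cone coordinates (rather than a purely cohomological argument), leveraging convexity so that one-sided radial derivatives of $\sfn_T$ exist at every rational direction and comparing the resulting slope jumps to the mass of $T_{ave,X}$ on each pole. Once this quantitative link is in place, the remaining pieces—convexity, pshness across poles and invariant points, and compatibility across toric surfaces—are routine consequences of the framework built in \S\ref{sec:backgd}.
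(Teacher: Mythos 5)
The paper itself does not prove this statement; it simply cites \cite[Theorem 6.9]{DiRo24}, so there is no proof here to compare against. Evaluating your proposal on its own terms: the overall strategy (psh/convex correspondence for the qualitative statement; a local-mass/Lelong-number computation near poles for the quantitative \emph{only if} bound; a logarithmic growth estimate plus trivial extension for the \emph{if} direction) is sound and consistent with the framework of \S\ref{sec:backgd}.

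One imprecision worth flagging: in the \emph{only if} direction you say a local mass computation ``identifies this mass with the one-sided slope of $\sfn_T$ at $v$ in the direction of $\sigma$.'' The quantity that is actually controlled by the finite local mass of $T_{ave,X}$ near $C_\tau^\circ$ is the \emph{asymptotic} slope $\sfn_{\bar T}(v_\tau)=\lim_{t\to\infty}\sfn_T(tv_\tau)/t$ (equivalently, the total second variation of $\sfn_T$ along $\tau$), not the one-sided radial derivative of $\sfn_T$ at the finite point $v$; for instance $\sfn(v)=\|v\|^2$ gives a perfectly fine psh function on $\torus$ with arbitrary slopes at each finite $v$, yet fails to have finite local mass near any pole precisely because its asymptotic slopes are infinite. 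Once the target quantity is corrected to the asymptotic slope, the rest of your argument goes through: for a fixed $X$ (say $\bP^2$) the finitely many asymptotic slopes at the rays of $\Sigma_1(X)$ are bounded by the total mass of $T_X$; Proposition \ref{prop:changes} shows the masses of $T_Y$ do not grow as $Y\succ X$ increases; and convexity together with positive homogeneity of $\sfn_{\bar T}$ then gives the uniform bound over all directions. The \emph{if} direction and the compatibility of the trivial extensions across transitions are stated and justified correctly.
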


We call an internal current $T\in\pcc(\rzt)$ \emph{homogeneous} if it is $\torus_\R$-invariant and its support function is \emph{positively homogeneous}, satisfying $\sfn_T(tv) = t\sfn(v)$ for all $v\in N_\R$ and $t>0$.  We will usually signify that a current is homogeneous by writing $\bar T$ instead of $T$.  Homogeneous currents serve as canonical proxies for the cohomology classes that they represent.

\begin{thm}[See \cite{DiRo24}, Theorem 1.3]
\label{thm:homogenization}
Any internal current $T\in\pcc(\rzt)$ is cohomologous to a unique homogeneous current $\bar T \in\pcc(\rzt)$.  If $T$ is positive, then so is $\bar T$.  In any case, if $\sfn_T$ is a support function for $T$, then the limit
\begin{equation}
\label{eqn:homogenization}
\sfn_{\bar T}(v) := \lim_{t\to\infty} \frac{\sfn_T(tv)}{t}
\end{equation}
converges uniformly on compact subsets of $N_\R$ to a support function for $\bar T$.  If $T$ is positive and $\sfn_T(0) = 0$, then 
\begin{itemize}
 \item $\sfn_T \leq \sfn_{\bar T}$ on $N_\R$;
 \item $\sfn_T-\sfn_{\bar T}$ is non-increasing along each ray $\tau\subset N_\R$, i.e. if $v\in\tau$ is non-zero, then 
 $t\mapsto (\sfn_T-\sfn_{\bar T})(tv)$ is non-increasing for $t>0$.
\end{itemize}
 \end{thm}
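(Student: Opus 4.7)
The plan is to construct $\bar T$ as the positive homogeneous current associated with the recession (asymptotic cone) function of the support function of $T$. First use Proposition~\ref{prop:average} to replace $T$ with its $\torus_\R$-average and fix a support function $\sfn_T$ normalized by $\sfn_T(0)=0$. Treating the positive case first, the preceding characterization theorem tells us that $\sfn_T$ is convex with linear growth, so two elementary consequences carry the argument: (i) by convexity plus $\sfn_T(0)=0$, the quotient $t\mapsto\sfn_T(tv)/t$ is non-decreasing in $t>0$, and (ii) by the linear growth bound it is uniformly bounded above. The pointwise limit $\sfn_{\bar T}(v):=\sup_{t>0}\sfn_T(tv)/t$ therefore exists, is positively homogeneous, and inherits convexity and linear growth from $\sfn_T$. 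Uniform convergence on compact subsets $K\subset N_\R$ follows from a Dini-type argument: a pointwise non-decreasing family of convex (hence continuous) functions on $\R^2$ with continuous limit converges uniformly on each compact set. The signed case $T\in\pcc(\rzt)$ then reduces to the positive case by writing $T=T_1-T_2$ with $T_j$ internal positive and applying the limit separately to $\sfn_{T_1}$ and $\sfn_{T_2}$.

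Next, appeal again to the characterization theorem to produce a unique $\torus_\R$-invariant positive internal current $\bar T\in\pcc^+(\rzt)$ realizing $\sfn_{\bar T}$ as its support function; this $\bar T$ is homogeneous by construction, and positivity of $\bar T$ when $T$ is positive is built in. To establish $T\sim\bar T$ in $\hoo(\rzt)$, set $\varphi:=(\sfn_T-\sfn_{\bar T})\circ\Log$ on $\torus$; Proposition~\ref{prop:average} gives $dd^c\varphi=T_{ave}-\bar T$ there. The core of the cohomology claim is that $\varphi$ extends to an element of $L^1_{loc}(\rztO)$ and that the distributional equality persists across every pole $C_\tau^\circ$. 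This is handled by passing to local $\sigma$-coordinates near each pole and exploiting the sublinear control $\sfn_T(v)-\sfn_{\bar T}(v)=o(\|v\|)$ along rays (an immediate consequence of the recession construction) together with a Hartogs-type extension across the pole, much as in the proof of the preceding proposition identifying local potentials on $\rztO$ with local potentials on each $X$. For uniqueness, suppose $\bar T_1,\bar T_2$ are both homogeneous and cohomologous to $T$. Their support functions $\sfn_1,\sfn_2$ are positively homogeneous, and the cohomology relation forces $\sfn_1-\sfn_2$ to be affine. A positively homogeneous affine function is linear, and a linear function of $\Log$ is the log-modulus of a monomial and hence pluriharmonic on $\torus$; so $\bar T_1|_\torus=\bar T_2|_\torus$, and trivial extension gives $\bar T_1=\bar T_2$ on $\rztO$.

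The two monotonicity bullets reduce to one-variable convexity along rays: fix $v\in N_\R\setminus\{0\}$ and set $\phi(t):=\sfn_T(tv)$, convex on $[0,\infty)$ with $\phi(0)=0$. The first bullet $\sfn_T\leq\sfn_{\bar T}$ is the case $t=1$ of the non-decreasing property of $\phi(t)/t$. For the second, let $c:=\sfn_{\bar T}(v)=\lim_t\phi(t)/t=\lim_t\phi'_+(t)$; since $\phi'_+$ is non-decreasing (convexity of $\phi$) and converges from below to $c$, one has $\phi'_+(t)\leq c$ for all $t$, so $(\phi(t)-ct)'\leq 0$ and $t\mapsto(\sfn_T-\sfn_{\bar T})(tv)=\phi(t)-ct$ is non-increasing. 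The principal obstacle in the plan is the extension of $\varphi$ across poles: because $\rztO$ is non-compact and the poles lie at infinity in $\torus$ as sensed by $\Log$, one must verify in toric coordinates that $\varphi$ is $L^1_{loc}$ near each $C_\tau^\circ$ and satisfies the requisite $dd^c$-identity there. The sublinear behaviour of $\sfn_T-\sfn_{\bar T}$ along each ray is precisely the analytic input that makes this extension possible, but translating it into a clean statement about psh differences in local coordinates is the technical heart of the argument.
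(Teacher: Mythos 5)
The paper does not prove this theorem; it is quoted verbatim from \cite{DiRo24}, Theorem~1.3, so there is no in-paper argument to compare against. Your reconstruction via the recession function $\sfn_{\bar T}(v) = \sup_{t>0}\sfn_T(tv)/t$ is the natural one and almost certainly mirrors the cited proof. The monotonicity of $\sfn_T(tv)/t$ in $t$ (from convexity and $\sfn_T(0)=0$), the characterization Theorem~6.9 of \cite{DiRo24} supplying a current $\bar T$ realizing $\sfn_{\bar T}$, the Dini-type argument for uniform convergence on compacts, and the one-variable derivation of the two bullet points are all correct.

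Two places where the sketch leaves genuine work undone. First, the extension of $\varphi := (\sfn_T - \sfn_{\bar T})\circ\Log$ across the poles is not merely a technicality to be deferred: the precise fact you need, which the present paper records in the two displayed formulas immediately following the theorem, is that $dd^c(\sfn_{\bar T}\circ\Log) = \bar T - D$ and $dd^c(\sfn_T\circ\Log) = T_{ave} - D$ hold on $\rztO$ with the \emph{same} external divisor $D = \sum_\tau \sfn_{\bar T}(v_\tau)\,C_\tau$. The reason the divisor is unchanged is that the Lelong number of $dd^c(\sfn_T\circ\Log)$ along a pole $C_\tau$ is the asymptotic slope $\lim_{t\to\infty}\sfn_T(tv_\tau)/t = \sfn_{\bar T}(v_\tau)$, and your observation that $\sfn_T-\sfn_{\bar T} = o(\|v\|)$ along each ray is exactly the statement that the difference contributes zero Lelong mass. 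This should be stated explicitly rather than flagged as a remaining obstacle. Second, in the uniqueness argument the step ``the cohomology relation forces $\sfn_1-\sfn_2$ to be affine'' does not follow from Proposition~\ref{prop:average}, which only governs support functions for one and the same current. The correct mechanism is again the cancellation of external divisors: after averaging a potential for $\bar T_1-\bar T_2$ to $h\circ\Log$ one has $h-(\sfn_1-\sfn_2)$ affine on $\torus$, but for $dd^c(h\circ\Log)$ to equal the internal current $\bar T_1-\bar T_2$ on all of $\rztO$ the external divisors attached to $\sfn_1$, $\sfn_2$ and the affine correction must cancel; this forces $\sfn_1(v_\tau)-\sfn_2(v_\tau)$ to agree with a linear functional on primitive lattice vectors, hence (by density and continuity) everywhere. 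Your conclusion is right, but that step needs to be supplied.
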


It follows from this theorem that any internal current $T\in\pcc(\rzt)$ can be written
$$
T - \bar T = dd^c \rpot_T,
$$
where the potential $\rpot_T:\rztO\to\R$ is unique up to additive constants and varies continuously with $T$ if we normalize by e.g. $\rpot_T(0) = 0$.

As a global function on $\rztO$, the composition $\sfn_{\bar T}\circ\Log$ is locally integrable even about points in $\rztO\setminus\torus$.  More precisely $\sfn_{\bar T}$ has a logarithmic singularity along each pole $C_\tau^\circ\subset\rztO$, and if we regard $\sfn_{\bar T}\circ \Log$ as a locally integrable function on the larger domain $\rztO$,  we have the following enhancement of the formula in Proposition \ref{prop:average}.
$$
dd^c(\sfn_{\bar T}\circ\Log) = \bar T - D,
$$
where the external divisor $D$ is given by
$$
D = \sum_{\tau} \sfn_{\bar T}(v_\tau) C_\tau.
$$
The sum is, as usual, over rational rays $\tau\subset N_\R$ and $v_\tau$ is the primitive vector in $\tau\cap N$.  For the same divisor $D$, we also have
$$
dd^c(\sfn_T\circ\Log) = T_{ave} - D.
$$
It follows that $\rpot_{T_{ave}} := (\sfn_T-\sfn_{\bar T})\circ\Log\in L^1_{loc}(\rzt^\circ)$ is a potential for $T_{ave}-\bar T$.

In what follows we will take advantage of a distinction that was not remarked on in \cite{DiRo24}.

\begin{defn}
\label{defn:nearlyhomogeneous}
We say that a support function $\sfn_T$ for a positive internal current $T\in\pcc^+(\rzt)$ is \emph{nearly homogeneous} if $|\sfn_T-\sfn_{\bar T}|$ is uniformly bounded on $N_\R$.  Here $\sfn_{\bar T}$ is assumed to be the support function for $\bar T$ derived from $\sfn_T$ via \eqref{eqn:homogenization}.
\end{defn}

Not all positive internal currents $T\in\pcc^+(\rzt)$ have nearly homogeneous support functions.  But since the difference $\sfn_T - \sfn_{\bar T}$ is unique up to additive constants, one support function for $T$ is (nearly) homogeneous if and only if all of them are.  Since $\sfn_T-\sfn_{\bar T}\leq 0$ is continuous on $N_\R$ and decreasing along every ray, one can verify that $\sfn_T$ is nearly homogeneous by checking it on a `large enough' subset of $N_\R$.

\begin{prop}
\label{prop:nearlyhomogenough} The support function $\sfn_T$ for $T\in\pcc^+(X)$ is nearly homogeneous if there exists a constant $C>0$ and a dense set $\mathcal{S}$ of rays $\tau\subset N_\R$ such that for each ray $\tau\in S$, we have $|\sfn_T-\sfn_{\bar T}| \leq C$ outside a compact subset of $\tau$.
\end{prop}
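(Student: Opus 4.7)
My plan is to leverage the monotonicity and positivity properties from Theorem \ref{thm:homogenization} to first promote ray-wise asymptotic boundedness on $\mathcal{S}$ to global boundedness on each ray in $\mathcal{S}$, and then pass from a dense collection of rays to all of $N_\R$ by continuity.

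The first (preparatory) move is to normalize so that $\sfn_T(0) = 0$. This is harmless: replacing $\sfn_T$ by $\sfn_T - \sfn_T(0)$ shifts $\sfn_T - \sfn_{\bar T}$ by an additive constant (the homogenization formula \eqref{eqn:homogenization} is unchanged by adding a constant), so the hypothesis and the conclusion are preserved after enlarging $C$ by $|\sfn_T(0)|$. With this normalization, set $g := \sfn_{\bar T} - \sfn_T$. Theorem \ref{thm:homogenization} then provides the two facts that drive the argument: $g \geq 0$ on $N_\R$, and $t \mapsto g(tv)$ is non-decreasing on $[0,\infty)$ for each nonzero $v \in N_\R$.

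The second step upgrades the hypothesis along a single ray. Fix $\tau \in \mathcal{S}$ with primitive generator $v_\tau$. The hypothesis gives a compact $K_\tau \subset \tau$ with $g \leq C$ on $\tau \setminus K_\tau$, so choosing $T_\tau$ large enough that $tv_\tau \notin K_\tau$ for all $t > T_\tau$ yields $g(tv_\tau) \leq C$ for such $t$. Monotonicity of $t \mapsto g(tv_\tau)$ then forces $g(sv_\tau) \leq g(tv_\tau) \leq C$ for every $0 \leq s \leq t$. Thus $g \leq C$ on all of $\tau$, and hence $g \leq C$ on the union $\bigcup_{\tau \in \mathcal{S}} \tau$.

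The third step is the density argument. Since $\sfn_T$ and $\sfn_{\bar T}$ are convex functions on $N_\R \cong \R^2$ with linear growth, they are continuous, and therefore so is $g$. Density of $\mathcal{S}$ in the space of rays implies that $\bigcup_{\tau \in \mathcal{S}} \tau$ is dense in $N_\R$, so continuity of $g$ propagates the bound $g \leq C$ to all of $N_\R$. Combined with $g \geq 0$, this yields $|\sfn_T - \sfn_{\bar T}| \leq C$ on $N_\R$, which is the definition of nearly homogeneous.

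There is no real obstacle: once one recognizes that the monotonicity conclusion of Theorem \ref{thm:homogenization} automatically converts a bound ``at infinity'' along a ray into a bound everywhere along that ray, the statement reduces to a one-line density argument. The only point requiring care is the initial normalization to $\sfn_T(0) = 0$, without which the sign and monotonicity conclusions of Theorem \ref{thm:homogenization} would not be directly applicable.
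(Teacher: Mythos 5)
Your proof is correct and is precisely the argument the paper gestures at in the sentence immediately preceding the proposition (``Since $\sfn_T-\sfn_{\bar T}\leq 0$ is continuous on $N_\R$ and decreasing along every ray, one can verify that $\sfn_T$ is nearly homogeneous by checking it on a `large enough' subset of $N_\R$''). You spell out exactly the right chain: normalize so Theorem \ref{thm:homogenization} gives sign and monotonicity of $\sfn_T-\sfn_{\bar T}$, use monotonicity to convert the asymptotic bound on each $\tau\in\mathcal{S}$ into a global bound on $\tau$, and then pass to all of $N_\R$ by density and continuity.
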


\begin{prop}
\label{prop:changes}
If $T\in\pcc^+(\rzt)$ represents a nef class and $Y\succ X$ are toric surfaces, then we have
$$
\pi_{YX}^* T_X = T_Y + D
$$
where $D$ is an effective external divisor with $\supp D\subset \exc(\pi_{YX})$.  If $\ch{T}$ is K\"ahler then $\supp D = \exc(\pi_{YX})$, and in particular $\pi_{XY}^* T_X > T_Y$ when $Y\neq X$.
\end{prop}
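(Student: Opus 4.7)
\medskip
\noindent\textbf{Proof plan.} The strategy is to set $D := \pi_{YX}^* T_X - T_Y$ and verify the three required properties in turn: support in $\exc(\pi_{YX})$, effectiveness, and (in the K\"ahler case) full support on the exceptional set.

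First I would observe that $\pi_{YX}\colon Y\to X$ is a biholomorphism away from $\exc(\pi_{YX})$. Under the identification of $Y^\circ\setminus\exc(\pi_{YX})$ with its image in $\rztO$, both $\pi_{YX}^* T_X$ and $T_Y$ restrict to $T|_{\rztO}$ on this common open set. Hence $D$ is a closed real $(1,1)$ current supported on $\exc(\pi_{YX})$. As a difference of two positive closed currents, $D$ has order zero, so by the standard support theorem for closed order-zero currents on an analytic set of pure dimension one, $D = \sum_i a_i [E_i]$, where $E_i$ are the finitely many irreducible components of $\exc(\pi_{YX})$. Since each such $E_i$ is a $\torus$-invariant contracted curve, it is a pole of $Y$, so $D$ is an external divisor.

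Next, to establish effectiveness, I would compute intersection numbers on the smooth projective surface $Y$. Because $\pi_{YX*}[E_i]=0$, the projection formula gives $\pi_{YX}^*\ch[X]{T_X}\cdot [E_i] = \ch[X]{T_X}\cdot \pi_{YX*}[E_i] = 0$, so
$$
D \cdot E_i \;=\; -\ch[Y]{T_Y}\cdot [E_i].
$$
When $\ch{T}$ is nef, $\ch[Y]{T_Y}$ is nef and hence $\ch[Y]{T_Y}\cdot [E_i]\geq 0$, giving $D\cdot E_i\leq 0$ for every $i$. The intersection matrix $M_{ij}:=E_i\cdot E_j$ is negative definite by the classical theorem on exceptional divisors of birational morphisms between smooth projective surfaces; moreover $-M$ is a non-singular M-matrix (positive definite with non-positive off-diagonal entries, since $E_j\cdot E_i\geq 0$ for $i\neq j$), so $(-M)^{-1}$ has non-negative entries. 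Writing $Ma=b$ with $b\leq 0$ componentwise then yields $a=(-M)^{-1}(-b)\geq 0$, so $D$ is effective.

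Finally, if $\ch{T}$ is K\"ahler then $\ch[Y]{T_Y}\cdot[E_i]>0$ strictly for every irreducible curve, whence $D\cdot E_i<0$ for each $i$. Suppose for contradiction that $a_{i_0}=0$ for some $i_0$. Then, using $a_j\geq 0$ and $E_j\cdot E_{i_0}\geq 0$ for $j\neq i_0$,
$$
D\cdot E_{i_0} \;=\; \sum_{j\neq i_0} a_j\,(E_j\cdot E_{i_0}) \;\geq\; 0,
$$
contradicting $D\cdot E_{i_0}<0$. Hence all $a_i>0$ and $\supp D=\exc(\pi_{YX})$, which also yields $\pi_{YX}^* T_X > T_Y$ when $Y\neq X$. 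The main obstacle is Step~1: checking that the pullback $\pi_{YX}^* T_X$ is well defined as a current agreeing with $T_Y$ outside $\exc(\pi_{YX})$, and that its cohomology class is the pullback class so that the projection formula applies. Once this bookkeeping is in place, everything reduces to classical surface intersection theory.
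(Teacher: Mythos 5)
Your proof is correct, but it follows a genuinely different route from the paper's. The paper reduces to the case of a single toric blowup, writes $T_Y = \pi^*T_X + cE$ for the exceptional $\bP^1$, and then uses the one-line computation $0 \leq \isect{T_Y}{E} = -c\isect{E}{E}$ together with $E^2=-1$ and $\isect{\pi^*T_X}{E}=0$ to conclude $c\leq 0$ (with strict inequality in the K\"ahler case); the general statement then follows by composing blowups. You instead handle an arbitrary toric birational morphism at once: you identify $D=\pi_{YX}^*T_X-T_Y$ as a divisor supported on $\exc(\pi_{YX})$, show $D\cdot E_i = -\isect{T_Y}{E_i}\leq 0$ by the projection formula and nefness, and then invoke negative definiteness of the exceptional intersection matrix together with the M-matrix fact that $(-M)^{-1}\geq 0$ entrywise (i.e.\ the Zariski-type positivity lemma) to deduce $D\geq 0$; your strict-positivity argument in the K\"ahler case is a clean contradiction using nonnegativity of off-diagonal intersections. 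What the paper's approach buys is brevity and minimal machinery, at the cost of an implicit induction on blowups; what your approach buys is a single uniform argument that does not require decomposing $\pi_{YX}$, at the cost of citing Mumford/Artin negative definiteness and the M-matrix inverse-positivity lemma. Both are valid. One small caveat you already flagged correctly: you do need to know that $\pi_{YX}^*T_X$ is well defined as a positive closed $(1,1)$ current on $Y$ agreeing with $T_Y$ off $\exc(\pi_{YX})$ and with class $\pi_{YX}^*\ch[X]{T_X}$; this is standard for birational morphisms between smooth surfaces (pull back local potentials), so the gap is merely bookkeeping, not a real obstruction.
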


\begin{proof}
It suffices to consider the case where $\pi_{YX}$ is the blowup of a single $\torus$-invariant point $p_\sigma\in X$.  Let $E = \pi_{XY}^{-1}(p_\sigma)$ denote the contracted curve.  Then
$T_Y = \pi_{XY}^* T_X + cE$ for some $c\in\R$, and since $\ch{T_Y}$ is nef, we have
$$
0\leq \isect{T_Y}{E} = \isect{\pi_{XY}^*T_X}{E} - c\isect{E}{E} = \isect{T_X}{\pi_{XY*} E} + c = c.
$$
If $\ch{T_X}$ is K\"ahler, the inequality is strict.
\end{proof}
As discussed in \cite{DiRo24}, we have a well-defined intersection product on a subspace $\eltwo(\rzt)\subset\hoo(\rzt)$ that includes all nef (hence all internal) classes.  Indeed if $\alpha,\beta\in \hoo(\rzt)$ are nef classes, then the intersection $\isect{\alpha}{\beta}_X$ of $\alpha_X,\beta_X\in\hoo(X)$ decreases as the toric surface $X$ increases, and we have
$$
\isect{\alpha}{\beta} := \inf_X \isect{\alpha}{\beta}_X.
$$

Given a positive closed current $T$ on a surface $X$ and a point $p\in X$, let $u$ be a local potential for $T$ near $p$.  The \emph{Lelong number} of $T$ at $p$ is the quantity
$$
\nu(T,p) := \max\{s:u(\cdot)-s\log\dist(\cdot,p) \text{ is bounded above near }p\}\geq 0,
$$
which is finite and does not depend on the choice of $u$.  A deep theorem of Siu \cite{Siu74} asserts that $p\mapsto \nu(p,T)$ is upper semicontinuous in the Zariski topology on $X$.

\begin{prop}
\label{prop:smalllelong}
Let $T\in\pcc^+(\rzt)$ be an internal current.  Given $\epsilon>0$ we have
$$
\sum_{\sigma \in\Sigma_2(X)} \nu(T_X,p_\sigma) < \epsilon
$$
for sufficiently dominant $X$.  If $\ch{T} \in \hoo(\rzt)$ is K\"ahler, then we also have $\nu(T_X,p_\sigma)>0$ for all $\torus$-invariant $p_\sigma\in X$.
\end{prop}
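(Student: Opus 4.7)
Reduce to the homogeneous representative $\bar T$ and then extract $S(X) := \sum_{\sigma\in\Sigma_2(X)} \nu(\bar T_X, p_\sigma)$ as a controlled drop in the monotone sequence of intersection numbers $\isect{\bar T_X}{-K_X}_X$.

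\emph{Reduction to $\bar T$.} The Lelong number $\nu(T_X, p_\sigma)$ is invariant under biholomorphisms fixing $p_\sigma$, and $\torus_\R$ acts unitarily in $\sigma$-coordinates around $p_\sigma$. A mass-over-ball computation with a $\torus_\R$-invariant K\"ahler form then shows that averaging $T$ over $\torus_\R$ preserves the Lelong number, so $\nu(T_X, p_\sigma) = \nu(T_{ave,X}, p_\sigma)$. By Theorem \ref{thm:homogenization}, $\sfn_T - \sfn_{\bar T}$ grows sublinearly along every ray, so the potential of $T_{ave}$ differs from that of $\bar T$ near $p_\sigma$ by a term that is $o(\log(1/|x|))$, which cannot alter the Lelong number. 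Hence $\nu(T_X, p_\sigma) = \nu(\bar T_X, p_\sigma)$, and it suffices to prove both conclusions for $\bar T$.

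\emph{Decrement under blowup.} For a toric blowup $\pi : Y \to X$ at $p_\sigma$ with exceptional divisor $E$, Proposition \ref{prop:changes} together with the standard identification $\pi^* S = S^{\mathrm{strict}} + \nu(S, p)\,[E]$ for positive currents on a complex surface yields $\bar T_Y = \pi^* \bar T_X - \nu(\bar T_X, p_\sigma)\, E$. Combined with $-K_Y = \pi^*(-K_X) - E$ and the projection formula (using $\pi_* E = 0$ and $E \cdot E = -1$), this produces the key identity
\[
  \isect{\bar T_X}{-K_X}_X - \isect{\bar T_Y}{-K_Y}_Y \;=\; \nu(\bar T_X, p_\sigma).
\]
Since $\bar T_X$ has nef class and $-K_X = \sum_\tau D_\tau$ is effective, $\isect{\bar T_X}{-K_X}_X \geq 0$. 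Hence the quantity is non-negative and monotonically decreasing in $X$, converging to some $L \geq 0$ along any cofinal sequence of toric surfaces.

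\emph{Conclusion and K\"ahler case.} Given $X_n$, enumerate its torus-fixed points as $p_{\sigma_1}, \ldots, p_{\sigma_m}$ and blow them up in sequence to produce $X_n = X_n^{(0)} \prec X_n^{(1)} \prec \cdots \prec X_n^{(m)}$. Because a blowup at one torus-fixed point leaves the germ of $\bar T$ unchanged at every other torus-fixed point, each step's decrement equals $\nu(\bar T_{X_n}, p_{\sigma_j})$; summing gives
\[
  \isect{\bar T_{X_n}}{-K_{X_n}}_{X_n} - \isect{\bar T_{X_n^{(m)}}}{-K_{X_n^{(m)}}}_{X_n^{(m)}} \;=\; S(X_n).
\]
Bounding the second term below by $L$ yields $S(X_n) \leq \isect{\bar T_{X_n}}{-K_{X_n}}_{X_n} - L$, which tends to $0$ as $X_n$ exhausts $\rzt$. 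For the K\"ahler part, applying the decrement identity to a single blowup $\pi : X' \to X$ at $p_\sigma$ identifies $\nu(\bar T_X, p_\sigma)$ with $\isect{\bar T_{X'}}{D_{v_1+v_2}}_{X'}$, the intersection of $\bar T_{X'}$ with the newly introduced pole; when $\ch{\bar T}$ is K\"ahler on $X'$, this intersection is strictly positive. The main technical obstacle is the reduction to $\bar T$: preservation of Lelong numbers under $\torus_\R$-averaging and under sublogarithmic potential perturbations both require careful pluripotential-theoretic justification.
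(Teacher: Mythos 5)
Your core argument---controlling the sum of Lelong numbers via the monotone decrement of the anticanonical intersection number $\isect{-K_X}{T_X}$ under toric blowups---is precisely the paper's argument, and your iterated-blowup bookkeeping and the K\"ahler case are both handled correctly. However, the preliminary reduction to the homogeneous representative $\bar T$ is a detour that is both unnecessary and, as written, leaves a genuine gap. It is unnecessary because the only property needed for the decrement formula $\pi^* T_X = T_Y + \nu(T_X,p_\sigma)\,C_\tau$ is that the restriction of an internal current to a toric surface never charges poles (so $T_Y$ is the strict transform of $T_X$ and the coefficient of the exceptional pole equals the Lelong number); this holds for $T$ directly, exactly as for $\bar T$, and the paper runs the whole argument on $T$ without any homogenization. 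It is a gap because you never actually prove $\nu(T_X, p_\sigma) = \nu(\bar T_X, p_\sigma)$---you yourself flag it as the main technical obstacle. Both ingredients (Lelong-number invariance under $\torus_\R$-averaging, and under the perturbation by $(\sfn_T - \sfn_{\bar T})\circ\Log$) need justification; the second in particular requires uniform control of $\sfn_T - \sfn_{\bar T}$ over the entire sector $\sigma$, not merely raywise sublinearity, because as $x \to p_\sigma$ the direction $\Log(x)/\norm{\Log(x)}$ is not fixed. Dropping the reduction and applying the decrement argument directly to $T$ closes the gap and recovers exactly the paper's proof.
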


\begin{proof}
Fix an initial toric surface $X$ and let $-K_X := \sum_{\tau\in\Sigma_1(X)} C_\tau$ denote the anticanonical divisor on $X$.  Since $\ch{T}$ is nef, we have $\isect{-K_X}{T_X} \geq 0$.  Now let $\pi:Y\to X$ denote the blowup of $X$ at some $\torus$-invariant point $p_\sigma\in X$ and $C_\tau\subset Y$ be the pole contracted by $\pi$.  Then $K_Y = \pi^*K_X + C_\tau$, and since the restriction of an internal current to a toric surface does not charge poles,
$$
\pi^* T_X = T_Y + \nu(T_X,p_\sigma) C_\tau.
$$
If in particular, $\ch{T}$ is K\"ahler, then $\pi^* T_X > T_Y$ by Proposition \ref{prop:changes}, so $\nu(T_X,p_\sigma) >0$ in that case.  Regardless, we infer
\begin{align}\label{EQN:DECREASING_INTERSECTION}
0 \leq \isect{-K_Y}{T_Y} = \isect{-K_X}{T_X} - \nu(T_X,p_\sigma).
\end{align}
So $\isect{-K_X}{T_X}$ is uniformly bounded below and non-increasing in $X$.  If $\isect{-K_X}{T_X}$ is within $\epsilon$ of ${\rm inf}_X(-[K_X] \cdot [T_X])$, then (\ref{EQN:DECREASING_INTERSECTION}) shows that $\sum_{\sigma\in\Sigma_2(X)} \nu(T_X,p_\sigma)<\epsilon$.
\end{proof}

The following restates the second conclusion of \cite[Theorem 6.7]{DiRo24}.

\begin{prop} 
\label{prop:bartonpole}
A homogeneous current $\bar T\in\pcc^+(\rzt)$ has continuous local potentials about all points in $\rztO$.   
\end{prop}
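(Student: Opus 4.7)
The plan is to produce an explicit continuous local potential for $\bar T$ near each point $p\in\rztO$, derived from the support function $\sfn_{\bar T}$, and to verify continuity using positive homogeneity together with one-sided differentiability of convex functions.

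For $p\in\torus$, the function $u:=\sfn_{\bar T}\circ\Log$ already works: $\sfn_{\bar T}$ is convex and positively homogeneous on $N_\R\cong\R^2$, hence continuous, $\Log$ is continuous, and $dd^c u=\bar T|_\torus$ by Proposition~\ref{prop:average}. For $p\in C_\tau^\circ$, fix a sector $\sigma\in\Sigma(X)$ having $\tau$ as a boundary ray for some toric surface $X$ and choose $\sigma$-coordinates $(x_1,x_2)$ so that $v_\tau=(1,0)$; then $C_\tau\cap U_\sigma=\{x_1=0\}$, and $x_1(p)=0$, $x_2(p)\neq 0$. Set $c_\tau:=\sfn_{\bar T}(v_\tau)$ and define, on a neighborhood $U\subset U_\sigma$ of $p$ that avoids $p_\sigma$ and the other pole $C_{\tau'}$,
$$
\tilde u(x_1,x_2) \;:=\; \sfn_{\bar T}(-\log|x_1|,-\log|x_2|) + c_\tau\log|x_1|.
$$
Combining the Poincar\'e--Lelong identity $dd^c\log|x_1|=[C_\tau]$ with the formula $dd^c(\sfn_{\bar T}\circ\Log)=\bar T - \sum_{\tau'}\sfn_{\bar T}(v_{\tau'}) C_{\tau'}$, and noting that only $C_\tau$ meets $U$, yields $dd^c\tilde u=\bar T$ on $U$.

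The crux is continuity of $\tilde u$ across $C_\tau\cap U$. Writing $s:=-\log|x_1|$, $t:=-\log|x_2|$, and $g(r):=\sfn_{\bar T}(1,r)$ (a convex function of one real variable), positive homogeneity of $\sfn_{\bar T}$ gives $\sfn_{\bar T}(s,t)=s\,g(t/s)$ with $g(0)=c_\tau$, hence
$$
\tilde u \;=\; s\bigl[g(t/s)-g(0)\bigr] \;=\; t\cdot\frac{g(t/s)-g(0)}{t/s}\qquad(t\neq 0).
$$
As $(x_1,x_2)\to(0,x_2(p))$ one has $s\to\infty$, $t\to t_0:=-\log|x_2(p)|$, and $t/s\to 0$. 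Convexity of $g$ guarantees that its difference quotient $\frac{g(r)-g(0)}{r}$ is monotone in $r$ on each side of $0$, tending to the finite one-sided derivative $g'_+(0)$ (resp.\ $g'_-(0)$) as $r\to 0^+$ (resp.\ $0^-$); monotonicity together with continuity of $g$ upgrades this to uniform convergence on a small one-sided neighborhood of $0$. So $\tilde u$ has a continuous limit along the pole, equal to $g'_+(0)\,t_0$ for $t_0\geq 0$ and $g'_-(0)\,t_0$ for $t_0\leq 0$.

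The main obstacle, and the only delicate case, is $t_0=0$ (i.e.\ $|x_2(p)|=1$), where $\sfn_{\bar T}$ can fail to be smooth across $\tau$ so that the limit function has a corner. Continuity survives because the two linear pieces agree (both equal $0$) at $t_0=0$, and the estimate $|\tilde u|\leq M|t|$---from boundedness of the difference quotient of $g$ on a compact neighborhood of $0$---gives $\tilde u\to 0$ uniformly as $t\to 0$. This is precisely where the argument must use convexity rather than any differentiability of $\sfn_{\bar T}$.
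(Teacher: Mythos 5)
Your proof is correct, and it is essentially the natural argument: after peeling off the Poincar\'e--Lelong singularity via $c_\tau\log|x_1|$, continuity across the pole reduces, by positive homogeneity, to the existence of finite one-sided derivatives of the single-variable convex function $g(r)=\sfn_{\bar T}(1,r)$ at $r=0$, plus the elementary bound $|\tilde u|\leq M|t|$ near $t_0=0$. Since the paper cites this statement from \cite[Theorem 6.7]{DiRo24} without reproducing a proof, I cannot compare line by line, but your reduction to one-sided differentiability of the support function is surely the intended mechanism. One small remark: the appeal to ``uniform convergence on a small one-sided neighborhood of $0$'' is both slightly imprecise (the difference quotient $\frac{g(r)-g(0)}{r}$ is a function of one variable $r$, so there is no extra parameter over which uniformity is needed) and unnecessary --- for $t_0>0$ the factorization $\tilde u=t\cdot\frac{g(t/s)-g(0)}{t/s}$ converges directly because $t\to t_0$ and $t/s\to 0^+$, while the $t_0=0$ case is handled by your Lipschitz bound; so the conclusion follows from pointwise limits alone.
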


\section{Toric maps}
\label{sec:maps}
As noted in the introduction, we will call a rational map $f:\bP^2\tto\bP^2$ \emph{toric} if it preserves the canonical two-form up to scale, i.e. if $f^*\eta = \rho(f)\eta$ for some $\rho\in\C^*$.  Such a map extends naturally to a meromorphic self-map $f_{XY}:X\tto Y$ between any two toric surfaces $X$ and $Y$.  Passing to the limit, we obtain that $f$ extends to a meromorphic self-map (which we will continue to denote by $f$) of the complex manifold $\rztO$.  Even though $\rztO$ is not compact, this extension behaves much like a rational self-map of a compact projective surface.   Let $\ind(f)\subset \rztO$ denote the indeterminacy set of $f:\rztO\tto\rztO$ and $\exc(f)$ denote the exceptional set, i.e. the union of curves contracted by $f$ to points.

\begin{thm}[\cite{DiRo24} Theorem 4.3 through Corollary 4.7]
\label{thm:tmapbasics}
If $f:\bP^2\tto\bP^2$ is toric, then the induced map $f:\rztO\tto\rztO$ has the following properties
\begin{enumerate}
 \item $\ind(f)$ is a finite subset of $\exc(f)$; and $f(\ind(f))$ is a finite union of internal curves.
 \item $\exc(f)$ consists of finitely many internal curves $C$, each with image
 $f(C):=\overline{f(C\setminus\ind(f))}$ equal to a point in $\rztO\setminus\torus$.
 \item If $p\in\torus\setminus\exc(f)$, then $f(p)\in\torus$ and $Df(p)$ is non-singular.
 \item If $C\not\subset\exc(f)$ is an internal curve, then so is $f(C)$.
 \item If $C_\tau\subset\rztO\setminus\torus$ is a pole, then so is $f(C_\tau)$.
 \item \label{item:proper} $f$ extends continuously to a self-map $\hat f:\rzt\setminus\ind(f)\to \rzt$ for which $\rzt\setminus\rztO$ is totally invariant.
\end{enumerate}
\end{thm}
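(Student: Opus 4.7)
The plan is to transfer the algebraic picture on a single toric surface to the inverse limit $\rzt$. Starting from $f:\bP^2\tto\bP^2$, I would first lift $f$ to a toric self-map $f_X:X\tto X$ of any refinement $X\succ\bP^2$ and recall that for a rational map between smooth projective surfaces both $\ind(f_X)$ and $\exc(f_X)$ are algebraic (hence $\ind(f_X)$ is finite and $\exc(f_X)$ is a finite union of irreducible curves), with the standard containment $\ind(f_X)\subset\exc(f_X)$ coming from the fact that every indeterminacy point of a rational map to a surface is a point on which some curve is contracted. The challenge is then to organize this data compatibly as $X$ varies, and to control what happens on the newly added curves (poles) and $\torus$-invariant points of $\rzt$.

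For items (3)--(5), the key input is the defining relation $f^*\eta=\rho\eta$ with $\rho\in\C^*$. Since $\eta$ is nowhere vanishing and has simple poles exactly along the complement of $\torus$, pulling back shows that wherever $f$ is holomorphic on $\torus$ the image must lie in $\torus$ and the Jacobian is nonvanishing. In particular $f|_\torus$ is dominant and does not contract curves contained in $\torus$, so any internal curve $C$ that is not exceptional has $f(C)$ a curve meeting $\torus$ in a Zariski-dense set, hence internal, proving (4). Poles are handled via the tropicalization: the monomial change-of-chart formulas show that, on any regular sector, $f$ is given by composition of a monomial map with a toric map defined at the origin, and the linear part of the monomial map gives the value of $\tropf$ on the rays of that sector. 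This produces the continuous positively homogeneous piecewise-linear map $\tropf$ mentioned in the introduction, with $\tropf(\Z^2)\subset\Z^2$, and one checks directly in local coordinates that $f$ extends across a generic point of any pole $C_\tau$ with image in $C_{\tropf(\tau)}$, yielding (5).

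For (1) and (2), the finiteness claims pass through a single sufficiently dominant toric model. Fixing $X$ large enough to resolve the indeterminacy of $f_{\bP^2}$ as far as the torus and to realize each ray $\tropf(\tau)$ appearing as the image of a ray in $\Sigma_1(X)$, the algebraic exceptional divisor of $f_X$ has only finitely many components. Those components which are poles of $X$ are shown by the tropical analysis above not to be truly contracted on $\rztO$ once we refine $X$ further, so the genuinely exceptional components are internal, and by item (5) each contracted curve has image in a torus-invariant curve of $X$ (necessarily a point in $\rztO\setminus\torus$ after passing to the limit). Item (1) follows by the same stabilization argument applied to the indeterminacy loci, using that the image of each $p\in\ind(f)$ is a connected union of contracted curves for some lift, and only the internal components of this union persist after sufficient blowup.

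Item (6) is the main obstacle, since $\torus$-invariant points in $\rzt\setminus\rztO$ indexed by irrational rays are not accessible by any algebraic model. The plan is to establish continuity of $\hat f$ at such a point $p_\tau$ by showing that nearby points in $\torus$ map into a neighborhood of $\hat f(p_\tau)$, where the image point is the $\torus$-invariant point indexed by $\tropf(\tau)$. The natural tool is the tropical approximation result (Theorem \ref{thm:tropapprox} stated later in the paper), which says that $\Log$ approximately semiconjugates $f|_\torus$ to $\tropf$ away from $\exc(f)$; combined with continuity of $\tropf$, this forces $\Log(f(p))$ to lie near the ray $\tropf(\tau)$ whenever $\Log(p)$ lies near $\tau$, which is exactly the neighborhood structure of $p_{\tropf(\tau)}$ in $\rzt$. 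Total invariance of $\rzt\setminus\rztO$ then follows because the tropical image of a rational ray is rational, and the tropical image of an irrational ray is irrational except on the finitely many rays where $\tropf$ fails to be linear, which must be handled by a separate compactness argument combined with the explicit monomial form of $f$ near those rays.
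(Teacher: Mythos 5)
The paper itself does not prove Theorem \ref{thm:tmapbasics}; it is imported from \cite{DiRo24} (Theorem 4.3 through Corollary 4.7), so there is no in-paper argument to compare against. Judged on its own, your outline has a genuine circularity at item (6). You propose to invoke the tropical approximation result, Theorem \ref{thm:tropapprox}, to get continuity of $\hat f$ at points of $\rzt\setminus\rztO$. But the proof of Theorem \ref{thm:tropapprox} in \S\ref{sec:maps} is built on Theorem \ref{thm:tmapbasics}: its opening sentence recalls item (3) of the very theorem you are proving, and its entire setup --- that $f$ is a well-defined meromorphic self-map of $\rztO$ with finite $\ind(f)$, that the break set $\Sigma_1(f)$ exists and is finite, that $f$ has the local monomial normal form \eqref{eq:localmap} near poles --- presupposes conclusions (1)--(5). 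You cannot use the quantitative semiconjugacy estimate to establish the qualitative structure it is proved from. What (6) actually needs is a direct coordinate computation: choose a source surface $X$ whose fan is compatible with $\Sigma_1(f)$, write $f$ in the local form \eqref{eq:localmap} on each chart $U_\sigma$, and read off that points $p$ with $\Log(p)$ far out along a ray $\tau\subset\sigma$ map to points with $\Log(f(p))$ far out along $\tropf(\tau)$, which is exactly the inverse-limit neighborhood condition at the corresponding $\torus$-invariant point of $\rzt$. (You should also address the rational $\torus$-invariant points, not just the irrational ones $p_\tau$; the neighborhood basis there is described differently.)

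Two lesser points. Your stabilization argument for (1)--(2) asserts that $\ind(f_{XY})$ and $\exc(f_{XY})$ stabilize to finite internal objects once $X$ is ``large enough,'' but you do not explain why further refinement of the target cannot keep producing new indeterminacy points on the strict transforms of poles of $X$; this is precisely the content that makes $\ind(f)$ a finite set on $\rztO$ rather than a countably infinite one, and it should be argued, not asserted. Finally, your worry in the last sentence about irrational rays ``where $\tropf$ fails to be linear'' is vacuous: the break rays $\Sigma_1(f)$ are rational, so every irrational ray lies in the open interior of a linearity sector, where $\tropf$ is an integer matrix of nonzero determinant and hence automatically preserves irrationality; the ``separate compactness argument'' you gesture at is not needed.
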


\begin{cor}\label{COR:PROPER_AND_COVER} If $f:\rztO\tto\rztO$ is a toric rational map, then 
\begin{enumerate}
\item $f:\rztO\tto \rztO$ is \emph{proper} in the following sense: for any compact $K \subset \rztO$ the set $\{p \in \rztO \, : \, f(p) \cap K \neq \emptyset\}$ is compact.   
\item $f:\rztO \setminus f^{-1}(f(\ind(f))) \to \rztO\setminus f(\ind(f))$ is a finite holomorphic covering with branch locus contained in $\rztO\setminus \torus$.
\end{enumerate}
\end{cor}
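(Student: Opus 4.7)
\medskip
\noindent\textbf{Proof plan.} The strategy is to leverage the continuous extension $\hat f:\rzt\setminus\ind(f)\to\rzt$ furnished by Theorem \ref{thm:tmapbasics}(\ref{item:proper}), together with the total invariance of the $\torus$-invariant set $\rzt\setminus\rztO$, to lift $f$ from a meromorphic self-map of the non-compact manifold $\rztO$ to essentially a map between compact spaces. Finiteness on the relevant open set will then come from ruling out contracted curves, and the branch-locus claim is immediate from Theorem \ref{thm:tmapbasics}(3).

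For (1), fix a compact $K\subset\rztO$ and set $S:=\{p\in\rztO:f(p)\cap K\neq\emptyset\}$. By Theorem \ref{thm:tmapbasics}(1), $\ind(f)\subset\rztO$ is finite, so it contributes only finitely many points to $S$; outside $\ind(f)$, $f(p)$ is a single point and the defining condition becomes $f(p)\in K$. I would show $S$ is sequentially compact in $\rztO$: given $p_n\in S$, extract a subsequence converging to some $p_\infty\in\rzt$ by compactness of $\rzt$. If $p_\infty$ lay in $\rzt\setminus\rztO$, then in particular $p_\infty\notin\ind(f)$, so $\hat f$ would be continuous at $p_\infty$ and $\hat f(p_n)\to \hat f(p_\infty)$; since $\hat f(p_n)=f(p_n)\in K$ for all but finitely many $n$ and $K$ is closed in the Hausdorff space $\rzt$, we would get $\hat f(p_\infty)\in K\subset\rztO$, contradicting the total invariance of $\rzt\setminus\rztO$ under $\hat f$. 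Hence $p_\infty\in\rztO$, and continuity of $f$ at $p_\infty$ (or a graph-closure argument if $p_\infty\in\ind(f)$) shows $p_\infty\in S$.

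For (2), write $X':=\rztO\setminus f^{-1}(f(\ind(f)))$ and $Y':=\rztO\setminus f(\ind(f))$; both are open in $\rztO$ because $f(\ind(f))$ is a finite union of internal curves. Since $\ind(f)\subset f^{-1}(f(\ind(f)))$ by construction, $f|_{X'}:X'\to Y'$ is a well-defined holomorphic map. Properness of $f|_{X'}$ follows directly from (1): for any compact $K\subset Y'$, the preimage $(f|_{X'})^{-1}(K)$ is a closed subset of $X'$ contained in the compact set supplied by (1), and it is automatically disjoint from $f^{-1}(f(\ind(f)))$ because $K$ misses $f(\ind(f))$. To upgrade this proper holomorphic map between equidimensional complex manifolds to a finite branched covering, it remains to verify $\exc(f)\cap X'=\emptyset$. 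For each $C\in\exc(f)$, contracted to a point $q\in\rztO\setminus\torus$ by Theorem \ref{thm:tmapbasics}(2), I would pick $p_0\in C\cap\ind(f)$ and let $p_n\in C\setminus\ind(f)$ tend to $p_0$; then $(p_n,f(p_n))=(p_n,q)$ converges to $(p_0,q)$ in the closure of the graph of $f$, which forces $q\in f(p_0)\subset f(\ind(f))$ and hence $C\subset f^{-1}(q)\subset f^{-1}(f(\ind(f)))$. With no contracted curves in $X'$, Remmert's proper mapping theorem (or a direct dimension-count) gives that $f|_{X'}$ is a finite branched covering. Finally, Theorem \ref{thm:tmapbasics}(3) says that $Df$ is non-singular throughout $\torus\setminus\exc(f)\supset X'\cap\torus$, so the branch locus is contained in $X'\setminus\torus\subset\rztO\setminus\torus$.

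The main obstacle is the assertion that every $C\in\exc(f)$ contains at least one point of $\ind(f)$; without this the graph-closure step has nothing to latch onto and one could in principle have contracted curves surviving in $X'$. I expect this to be a consequence of the detailed structure of toric maps established in \cite{DiRo24}, where both $\exc(f)$ and $\ind(f)$ arise from the combinatorics of the fan and the tropicalization $\tropf$, so that the indeterminacy locus is precisely where exceptional curves cross. Alternatively, one could derive it from the inclusion $\ind(f)\subset\exc(f)$ in Theorem \ref{thm:tmapbasics}(1) together with the fact that in our setting any curve fully contracted by $f$ must be traced out by limits of $f$ along some indeterminacy direction of the graph.
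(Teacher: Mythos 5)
Your argument for part (1) is correct and almost certainly the intended one: lift to the compactification $\rzt$, use the continuous extension $\hat f$ from Theorem~\ref{thm:tmapbasics}(\ref{item:proper}) together with total invariance of $\rzt\setminus\rztO$ to show any limit point of a sequence in $S$ must actually lie in $\rztO$, and then handle the finitely many indeterminacy points separately via the graph closure. The paper offers no explicit proof, but this is the natural argument.

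For part (2), you have put your finger on exactly the right difficulty, and it is a genuine one. The assertion you need --- that every irreducible component of $\exc(f)$ meets $\ind(f)$ --- is precisely Proposition~\ref{prop:tmapmore}(1), which appears \emph{later} in the same section and whose proof there relies essentially on the hypothesis that $\tropf$ is injective: an exceptional curve $C$ meets two distinct poles $C_\tau$ and $C_{\tau'}$, and if $C$ avoids $\ind(f)$ then both poles must map onto the pole through $f(C)$, forcing $\tropf(\tau)=\tropf(\tau')$. Without injectivity of $\tropf$ there is no contradiction, and I do not see how to close this gap from Theorem~\ref{thm:tmapbasics} alone. So your instinct to flag this step rather than wave it away was right; the resolution is not something implicit in the fan combinatorics in general, but the specific statement of Proposition~\ref{prop:tmapmore}(1). (As stated, the corollary carries no injectivity hypothesis on $\tropf$; the corollary is only ever applied later in the paper to maps where $\tropf$ is a homeomorphism, so in context the gap closes, but strictly speaking you should either add the injectivity hypothesis or explain why the argument survives without it.) Once the claim $\exc(f)\cap X'=\emptyset$ is granted, the rest of your part (2) argument --- openness of $X'$ and $Y'$ via part (1), properness of the restriction, Remmert's theorem to upgrade to a finite branched cover, and Theorem~\ref{thm:tmapbasics}(3) to locate the branch locus outside $\torus$ --- is sound.

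One small point: when you argue that a contracted curve $C$ with $p_0\in C\cap\ind(f)$ satisfies $q\in f(p_0)$, note that this also requires knowing $C\setminus\ind(f)$ accumulates on $p_0$, which holds because $\ind(f)$ is finite; you could state this explicitly for completeness.
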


\begin{defn} 
\label{defn:intstable}
A toric map $f:\rztO\tto\rztO$ is \emph{internally stable} if $f^n(\exc(f))\cap \ind(f) = \emptyset$ for all $n\geq 0$.
\end{defn}

If $f$ is internally stable, then $\ind(f^n) = \bigcup_{j=0}^{n-1} f^{-j}(\ind(f))$ and $\exc(f^n) = \bigcup_{j=0}^{n-1} f^{-j}(\exc(f))$.  In particular, $f^n$ is also internally stable.  We further set $\ind(f^\infty) = \bigcup_{n>0} f^{-n}(\ind(f))$ and $\ind(f^{-\infty}) = \bigcup_{n>0} f^n(\exc(f))$.

\subsection{Tropical Approximation}
An important feature of toric maps is that they are almost semiconjugate, via the logarithm map, to  piecewise linear real maps of $N_\R$.  The following is Theorem 4.8 in \cite{DiRo24} and mostly taken from \cite{DiLi16}.

\begin{thm}
\label{thm:tropicalization}
If $f:\rztO\tto\rztO$ is a toric map, then there exists a finite set $\Sigma_1(f)$ of rational rays $\tau\in N_\R$ and a continuous self-map $A_f:N_\R\to N_\R$ with the following properties.
\begin{enumerate}
\item $A_f$ is `integral', i.e. $A_f(N)\subset N$.
\item If $\sigma\subset N_\R$ is a sector that omits all $\tau\in\Sigma_1(f)$, the restriction $A_f|_\sigma$ is linear with $\det A_f|_\sigma = \pm\rho(f)$.  Hence $\rho(f)$ is an integer.
\item For any pole $C_\tau\subset \rzt\setminus\torus$, we have $f(C_{\tau}) = C_{A_f(\tau)}$.
\item If $\nvec,\nvec'\in N$ are the primitive vectors generating the rational rays $\tau, A_f(\tau) \subset N_\R$, then the ramification of $f$ about the pole $C_\tau$ is given by
$$
\ram(f,C_{\tau}) = \frac{\norm{A_f(\nvec)}}{\norm{\nvec'}},
$$
and $f:C^\circ_\tau\to C^\circ_{A_f(\tau)}$ is a covering of degree $|\rho(f)|/\ram(f,C_\tau)$.  Hence $f$ has local topological degree $\rho(f)$ on a neighborhood of any pole.
\item $A_f:N_\R\setminus\{0\}\to N_\R\setminus\{0\}$ is a covering map with degree $\dtop(f)/|\rho(f)|$.
\end{enumerate}
\end{thm}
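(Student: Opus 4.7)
My plan is to construct $A_f$ by tropicalizing a coordinate expression of $f|_\torus$ and then to verify the five stated properties in turn. Fix $\sigma$-coordinates $(x_1,x_2)$ on $\torus$ and write $f|_\torus=(F_1,F_2)$ with each $F_i=P_i/Q_i$ a ratio of Laurent polynomials (cleared of common factors). Define
$$
A_f(v) := \bigl(\min_{m\in\supp P_1}\langle m,v\rangle-\min_{m\in\supp Q_1}\langle m,v\rangle,\; \min_{m\in\supp P_2}\langle m,v\rangle-\min_{m\in\supp Q_2}\langle m,v\rangle\bigr).
$$
This formula makes $A_f:N_\R\to N_\R$ continuous, piecewise linear, and linear on each maximal closed sector where a single monomial of each $P_i,Q_i$ realizes its minimum; the boundaries of these sectors form finitely many rational rays which I collect into $\Sigma_1(f)$. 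Property (1) is immediate because each $\langle m,\cdot\rangle$ takes integer values on $N$. The analytic bridge to $f$ is the elementary estimate $\Log\circ f(p) = A_f\circ\Log(p) + O(1)$ as $\Log(p)\to\infty$ inside a fixed sector disjoint from $\Sigma_1(f)$, obtained by identifying the dominant monomial in each $P_i$ and $Q_i$.

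Property (2) follows because on such a sector $\sigma$, each $F_i$ is asymptotic to a single monomial $c_i x^{m_i}$, so $f$ is asymptotic to the monomial map $h_M$ whose rows are $m_1,m_2$. Pulling back $\eta = \tfrac{1}{4\pi^2}d\log x_1\wedge d\log x_2$ through $h_M$ gives $\det M\cdot \eta$, so the scaling identity $f^*\eta = \rho(f)\eta$ forces $\det A_f|_\sigma = \pm\rho(f)$ and in particular $\rho(f)\in\Z$. For properties (3) and (4), I work locally near a pole $C_\tau$, refining the ambient fan so that $\tau$ lies in the interior of a regular sector $\sigma$ that is also disjoint from $\Sigma_1(f)$. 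In local coordinates adapted to $\sigma$, the pole $C_\tau$ is $\{x_1=0\}$ and $f$ is a holomorphic perturbation of the monomial map $h_M$ with $M = A_f|_\sigma$. The image of $\{x_1=0\}$ under $h_M$ is the pole whose generating direction is $A_f(v_\tau)$, proving $f(C_\tau)=C_{A_f(\tau)}$. The multiplicity with which $h_M$ wraps $C_\tau$ around $C_{A_f(\tau)}$ is the lattice index $\|A_f(v_\tau)\|/\|v_{A_f(\tau)}\|$, giving the ramification formula; the residual covering degree on $C_\tau^\circ$ is then $|\rho(f)|/\ram(f,C_\tau)$ because $|\det M|=|\rho(f)|$.

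Property (5) is where I expect the main obstacle. Granted the previous items, $A_f$ is positively homogeneous, continuous, integral, and linear with nonzero determinant on each sector, hence a piecewise linear surjection of $N_\R\setminus\{0\}$. The first subtlety is to show that no folding occurs across rays of $\Sigma_1(f)$, i.e.\ that $\det A_f|_\sigma$ has a constant sign independent of $\sigma$; I would deduce this from orientation-consistency of the asymptotic monomial maps forced by the holomorphicity of $f$ on $\torus$. The second is to compute the degree. For this I would count preimages under $f$ of a generic point $q\in\torus$ two ways: on one hand there are $\dtop(f)$ of them and, by properness (Corollary \ref{COR:PROPER_AND_COVER}), they lie in a compact subset of $\torus$, so none escape to infinity; on the other, the semiconjugacy $\Log\circ f\approx A_f\circ\Log$ partitions them by preimage sector of $A_f$ over the ray of $\Log(q)$, each sector contributing exactly $|\rho(f)|$ points because $\Log$ has real-torus fibers on which $f$ acts with local topological degree $|\rho(f)|$ (as witnessed by $|\det A_f|_\sigma|=|\rho(f)|$ and the covering statement already recorded in (4)). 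Dividing yields $\deg A_f=\dtop(f)/|\rho(f)|$, completing the proof.
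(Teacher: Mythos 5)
The paper itself does not prove this theorem: immediately preceding the statement it says ``The following is Theorem 4.8 in \cite{DiRo24} and mostly taken from \cite{DiLi16},'' so there is no internal proof to compare against. Your construction of $A_f$ via the support functions (mins of $\langle m,v\rangle$ over $\supp P_i$ and $\supp Q_i$) is the standard tropicalization and is almost certainly the same route taken in \cite{DiLi16}; the verifications of (1)--(4) via the local form of $f$ in $\sigma$-coordinates match the coordinate framework the paper sets up in \S\ref{sec:maps} (cf.\ the display \eqref{eq:localmap}).

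A few things you should tighten. First, the phrase ``refining the ambient fan so that $\tau$ lies in the interior of a regular sector $\sigma$'' does not parse: if $\tau$ is a ray of the fan it cannot be interior to a two-dimensional cone of that fan. What you want is to pass to a toric surface whose fan contains $\tau$ as a ray, and then (provided $\tau\notin\Sigma_1(f)$) $A_f$ is linear on a neighborhood of $\tau$ and the analysis you describe applies; the finitely many $\tau\in\Sigma_1(f)$ require a slightly different argument, since $A_f$ is only piecewise linear near such a ray, though the conclusion of (3) still holds (one verifies from the local form \eqref{eq:localmap} that $c=0$ there and $f(\{x_1=0\})=\{y_1=0\}$). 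Second, your two worries about property (5) are resolved more cleanly than you suggest. The ``no-folding'' issue is already settled by your own computation in (2): the identity $\det A_f|_\sigma = \pm\rho(f)$ holds with the \emph{same} sign on every sector (it comes from the single constant $\rho$ in $f^*\eta=\rho\eta$), so $A_f$ is an orientation-preserving or orientation-reversing local homeomorphism uniformly, hence a covering. For the degree count, the remark about ``$\Log$ has real-torus fibers on which $f$ acts with local topological degree $|\rho(f)|$'' is a non sequitur; the correct statement is that $f$ has local topological degree $|\rho|$ in a neighborhood of each pole (which is part of your (4)). The cleanest version of the count: take a generic $q_0\in C^\circ_{\tau'}$ and use properness and total invariance of $\rzt\setminus\torus$ to see that for nearby $q\in\torus$ every preimage of $q$ is close to some preimage $p\in f^{-1}(q_0)\subset\bigcup_{A_f(\tau)=\tau'}C^\circ_\tau$. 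For each such $\tau$, $f|_{C^\circ_\tau}$ has degree $d_\tau=|\rho|/\ram(f,C_\tau)$ and each of those $d_\tau$ preimages has transverse local multiplicity $\ram(f,C_\tau)$, contributing $d_\tau\cdot\ram(f,C_\tau)=|\rho|$ preimages of $q$; summing over the $\deg A_f$ rays $\tau$ gives $\dtop(f)=|\rho|\cdot\deg A_f$.
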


We call $\tropf$ the \emph{tropicalization} of $f$. If $g:\rzt\tto\rzt$ is another toric map, then $\tropf \circ\trop{g} = \trop{f\circ g}$.  The tropicalization of a monomial or birational toric map $f$ is always injective (i.e. a homeomorphism), and in fact all other examples of toric maps we know are obtained as finite compositions of these.  So as far as we know, $\tropf$ is always injective.  In any case, when $\tropf$ is a homeomorphism, it induces a homeomorphism on the parameter space $S^1$ of rays in $\R^2$, and we refer to the rotation number of this quotient map as the \emph{rotation number of $\tropf$}.  

To demonstrate just how well $\tropf$ approximates $f$, we make a few further observations.

\begin{prop}
\label{prop:tmapmore}
The following hold for any toric map $f:\rzt\tto\rzt$.
\begin{enumerate}
\item If $\tropf$ is injective, then every component of $\exc(f)$ meets $\ind(f)\setminus\torus$.
\item For each $p\in\ind(f)\cap \torus$, we have $f(p)\setminus\torus\subset f(\exc(f))$.
\item For each $p \in \ind(f)\setminus\torus$, let $C_\tau$ be the unique pole containing $p$ and $q=f|_{C_\tau}(p)\in C_{\tropf(\tau)}$.  Then $f(p)\setminus\torus \subset f(\exc(f)) \cup \{q\}$.
\end{enumerate}
\end{prop}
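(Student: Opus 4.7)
The plan for part (1) is to combine the injectivity of $\tropf$ with a local preimage count. Theorem \ref{thm:tropicalization}(5) identifies the covering degree of $\tropf$ on $N_\R\setminus\{0\}$ as $\dtop(f)/|\rho(f)|$, so injectivity forces $\dtop(f) = |\rho(f)|$. Assume for contradiction that a component $C \subset \exc(f)$ satisfies $C\cap\ind(f)\subset\torus$; then every boundary point $x \in C\cap(\rztO\setminus\torus)$ has $f$ defined at it, and $f(x) = f(C) =: q \in C_\sigma^\circ$. Writing $x \in C_\tau^\circ$, Theorem \ref{thm:tropicalization}(3) gives $f(x) \in C_{\tropf(\tau)}$, so $\tropf(\tau) = \sigma$; by injectivity, $\tau$ must equal $\tau_0 := \tropf^{-1}(\sigma)$, and $C$ meets the boundary only along $C_{\tau_0}$. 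Now count preimages of a generic $q'\in\torus$ close to $q$: Theorem \ref{thm:tropicalization}(4) supplies exactly $|\rho(f)|$ preimages inside a narrow tubular neighborhood $U$ of $C_{\tau_0}$, while $f^*\eta = \rho(f)\,\eta$ with $\rho(f)\neq 0$ makes $Df$ generically of rank $2$; hence $f$ is locally open on a neighborhood $V$ of $C\cap\torus$ chosen disjoint from $U$, and $f(C) = \{q\}$ yields at least one additional preimage of $q'$ in $V\setminus C$. The total preimage count then strictly exceeds $|\rho(f)| = \dtop(f)$, a contradiction.

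For parts (2) and (3) the plan is to use a common local resolution argument. Choose a toric surface $X\ni p$ and a sequence of blowups $\pi : Z \to X$ so that $g := f\circ\pi^{-1} : Z\to Y$ is holomorphic on a neighborhood of $E := \pi^{-1}(p)$; then $f(p) = g(E)$, a finite union of internal curves by Theorem \ref{thm:tmapbasics}(1). Given $q\in f(p)\cap C_\sigma^\circ$, pick $e\in E$ with $g(e)=q$ and let $F$ be the connected component through $e$ of the compact fiber $g^{-1}(q)$. Set $C := \pi(F) \subset \rztO$: this is compact, connected, and contains $p$. If $C$ contains any irreducible curve, then that curve must be a component of $\exc(f)$ contracted by $f$ to $q$ (its strict transform in $Z$ sits inside $F$), so $q\in f(\exc(f))$ at once. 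Otherwise $C=\{p\}$, since $\ind(f)$ is finite and $C$ is connected, which forces $F\subset E$.

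The remaining case $F\subset E$ is the hard part of (2) and (3). For (3), with $p\in C_\tau\setminus\torus$, the strict transform $\tilde C_\tau \subset Z$ meets $E$ at a single point $e_0$ and $g(\tilde C_\tau) \subset C_{\tropf(\tau)}$ contains $q_0 := f|_{C_\tau}(p)$; taking $\pi$ to be minimal (by iteratively contracting $g$-constant $(-1)$-components of $E$) so that every remaining component of $E$ is mapped non-constantly by $g$, the only $e\in E$ with $g(e)\in\rztO\setminus\torus$ is $e_0$, forcing $q = q_0$. For (2), $p\in\torus$ admits no analogous pole-tracking companion, so the same minimality argument instead identifies the boundary-reaching branch of $F$ with the strict transform of an exceptional curve of $f$ through $p$ (such a curve exists because $p\in\ind(f)\subset\exc(f)$), yielding $q\in f(\exc(f))$. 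The delicate step is rigorously carrying out the minimal-resolution reduction: one must verify that iterated $g$-compatible blowdowns can be performed while remaining a resolution of $f$ at $p$, and that each surviving component of $E$ whose $g$-image leaves $\torus$ arises as the strict transform of either $C_\tau$ (in case (3)) or a component of $\exc(f)$ through $p$ (in case (2)).
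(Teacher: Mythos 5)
Your part (1) correctly derives from injectivity of $\tropf$ that such a component $C$ could meet the boundary $\rztO\setminus\torus$ only along the single pole $C_{\tau_0} = \tropf^{-1}(C_\sigma)$. The paper stops precisely there and invokes \cite[Corollary 3.4]{DiRo24}, which says that every irreducible component of $\exc(f)$ meets at least \emph{two} distinct poles; that fact contradicts your intermediate conclusion immediately, and the proof is done. Your attempt to finish instead with a preimage count has a genuine flaw: the claim that $f$ is locally open on a neighborhood $V$ of $C\cap\torus$ is false at exceptional points. At $p_0\in C\cap\torus\subset\exc(f)$ the differential $Df(p_0)$ is degenerate (it kills $T_{p_0}C$), and a holomorphic map that contracts a curve is not open there: in the model $f(x,y)=(x,xy)$ near $(0,1)$, the image of a small ball is a thin wedge $\{|v-u|<\epsilon|u|\}$, not a neighborhood of the image point. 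Consequently a generic $q'$ close to $q$ need have no preimage in $V\setminus C$, and the ``at least one additional preimage'' step fails. Without \cite[Corollary 3.4]{DiRo24} (or some substitute), your argument for part (1) does not close.

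For parts (2) and (3), your reduction to the case $F\subset E$ (by noticing that if $C=\pi(F)$ contains a curve, that curve is in $\exc(f)$ and is contracted to $q$) is fine and matches the paper in spirit. The gap is exactly where you flag it: the claim that, after contracting $g$-constant $(-1)$-components, the \emph{only} $e\in E$ with $g(e)\in\rztO\setminus\torus$ is $e_0=\tilde C_\tau\cap E$. That is not forced by minimality. After contraction every remaining component $E_i\subset E$ has $g(E_i)$ equal to a compact internal curve in $\rztO$; every such curve necessarily meets $\rztO\setminus\torus$ (compact curves cannot lie in the affine torus), so every remaining $E_i$ has points mapping to the boundary, not just $e_0$. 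What actually rules out those other boundary points as candidates for $q$ (after one has assumed $q\notin f(\exc(f))$) is the toric $2$-form: since $\hat f^*\eta=\rho\,\pi^*\eta$, \cite[Proposition 5.3]{DiLi16} says the poles of $\pi^*\eta$ are exactly the strict transforms of the poles of $\eta$ by $\pi$ \emph{and} by $\hat f$. This forces the relevant part of $\hat f^{-1}(q)$ inside $\pi^{-1}(p)$ to meet a $\pi$-strict-transform of a pole; pushing forward by $\pi$ then places $p$ on a pole of $\rztO$ (impossible in case (2), and identifying that pole with $C_\tau$ and $q$ with $q_0$ in case (3)). That $2$-form identity is the missing ingredient; without it, the minimal-resolution reduction does not determine which boundary-hitting components of $E$ can contribute.
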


\begin{proof}
If $C\subset\exc(f)$ is irreducible, then $C$ meets at least two distinct poles $C_\tau,C_{\tau'}\in\rzt$ by \cite[Corollary 3.4]{DiRo24}. If neither intersection is indeterminate for $f$, then both map to the same point $f(C)\in f(\exc(f))$.  This implies, however, that both $C_\tau$ and $C_\tau'$ map onto the pole containing $f(C)$, which contradicts injectivity of $\tropf$.  This proves the first assertion.

To prove the last two assertions, we fix $p\in\ind(f)$ and let $\pi:S\to \rztO$ be a modification that resolves that indeterminacy of $f$ at $p$.  That is, $\pi:S\setminus\pi^{-1}(p)\to \rztO\setminus\{p\}$ is an isomorphism and $\hat f := f\circ\pi$ is holomorphic on a neighborhood of $\pi^{-1}(p)$.  The fact that $f$ is toric implies $\hat f^*\eta = \rho\,\pi^*\eta$, and in particular \cite[Proposition 5.3]{DiLi16} that the poles of $\pi^*\eta$ are precisely the strict transforms of the poles of $\eta$ by both $\pi$ and $\hat f$.  

Now let $q\in \hat f(\pi^{-1}(p))\cap C_\tau$ be a point in some pole $C_\tau\in\rzt$.  We may suppose that $q\notin f(\exc(f))$.  Hence we can choose an open set $U\ni p$ such that $U\setminus\{p\}$ contains no points in $\ind(f)$ and no preimages of $q$.  It follows that $Z := \hat f^{-1}(q)\cap \pi^{-1}(U)$ is a non-empty (possibly reducible) subvariety of $\pi^{-1}(p)$.  Since $q\in C_\tau$, we have that $Z$ meets an irreducible component $C\subset S$ of the strict transform of $C_\tau$ by $\hat f$.  Since $\hat f(C) = C_\tau$, we have $C\not\subset \pi^{-1}(q)$.  Hence $\pi(C)\subset\rztO$ is a curve rather than a point, and $f(\pi(C)) = C_\tau$.  Hence by Theorem \ref{thm:tmapbasics}, $\pi(C)$ is a pole containing $\pi(Z\cap C) = p$, and $f|_{\pi(C)}:\pi(C) \to C_\tau$ maps $p$ to $\hat f(C\cap Z) = q$.
\end{proof}

Our main approximation result is the following.  A version of this was given in \cite{DiRo24}, but the proof given there is incomplete in a couple of places, so we take the opportunity to prove it fully here.

\begin{thm}
\label{thm:tropapprox}
Suppose that $f$ is toric.  Let $U,U',V\subset\rzt$ be neighborhoods of $\exc(f),$ $\ind(f),$ and $f(\exc(f))$ respectively.  Then there exists $C>0$ such that for any $p\in\torus$,
\begin{enumerate}
 \item \label{item:ta1} $\norm{\Log\circ f(p) - \tropf\circ\Log(p)} \leq C$ unless $p\in U$.
 \item \label{item:ta2} $\norm{\Log\circ f(p)} \geq \norm{\tropf\circ\Log(p)} - C$ unless $p\in U'$.
 \item \label{item:ta3} $\norm{\Log\circ f(p)} \leq \norm{\tropf\circ\Log(p)} + C$ unless $p\in U$ and $f(p)\in V$.
\end{enumerate}
\end{thm}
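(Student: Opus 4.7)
My approach is to argue all three assertions by contradiction, using the compactness of $\rzt$, the continuous extension $\hat f : \rzt \setminus \ind(f) \to \rzt$ from Theorem \ref{thm:tmapbasics}(6), and an explicit local description of $f$ in sector coordinates. Given a regular rational sector $\sigma \subset N_\R$ on which $\tropf$ is linear with integer matrix $A = (a_{ij})$, and given compatible source/target sector charts, the map $f$ takes the monomial-correction form
\[
(y_1, y_2) \;=\; \bigl( x_1^{a_{11}} x_2^{a_{12}} \phi_1(x),\; x_1^{a_{21}} x_2^{a_{22}} \phi_2(x) \bigr),
\]
where the $\phi_i$ are rational functions, holomorphic and non-vanishing at any point of the chart that is neither indeterminate for $f$ nor sent by $f$ outside the chart. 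Taking $-\log|\cdot|$ coordinatewise then yields
\[
\Log \circ f(p) \;-\; \tropf \circ \Log(p) \;=\; \bigl(-\log|\phi_1(p)|,\; -\log|\phi_2(p)|\bigr),
\]
so the difference is bounded wherever the $\phi_i$ are bounded away from $0$ and $\infty$.

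For assertion (1), suppose the bound fails along a sequence $p_n \in \torus \setminus U$ with $\|\Log f(p_n) - \tropf \Log(p_n)\| \to \infty$. Passing to a subsequence, $p_n \to p \in \rzt \setminus U$. Since $U \supset \exc(f) \supset \ind(f)$, $\hat f$ is continuous at $p$ and $f(p_n) \to q := \hat f(p)$. If $p \in \torus$, then $p \notin \exc(f)$, so $q \in \torus$ by Theorem \ref{thm:tmapbasics}(3) and both $\Log(p_n)$ and $\Log f(p_n)$ remain bounded, a contradiction. If $p \in \rzt\setminus\torus$, then after passing to sufficiently refined toric surfaces $X, Y$ one can place $p$ and $q$ in adapted sector charts on which the displayed monomial-correction form is valid with $\phi_i(p) \in \C^*$, whence the right-hand side stays bounded near $p$, again a contradiction.

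For (2), the same scheme applies with $p_n \in \torus \setminus U'$ and $\|\Log f(p_n)\| - \|\tropf \Log(p_n)\| \to -\infty$. The one additional case is $p \in \exc(f) \setminus \ind(f)$: by Theorem \ref{thm:tmapbasics}(2), $q \in \rzt \setminus \torus$, which forces $\|\Log f(p_n)\| \to +\infty$ while $\|\tropf \Log(p_n)\|$ remains bounded, opposite to the assumed divergence. For (3), we may assume $p \in U$ (else (1) applies), so the new case is $q \notin V$, i.e.\ $q \notin f(\exc(f))$. Proposition \ref{prop:tmapmore}(2)--(3) then constrains $q$ either to lie in $\torus$ or to coincide with the unique point $f|_{C_\tau}(p) \in C_{\tropf(\tau)}$ predicted by the tropicalization; in either situation the monomial-correction form still applies, ruling out the blowup of $\|\Log f(p_n)\| - \|\tropf \Log(p_n)\|$ to $+\infty$.

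The hardest step is the coordinate bookkeeping underlying the local monomial form and the verification that the correction functions $\phi_i$ remain holomorphic and non-vanishing at the limit point. This requires simultaneously refining the source and target toric surfaces so that $p$, $q$, and the relevant trailing ends of $\Log(p_n)$, $\Log f(p_n)$ all lie in single sectors where $\tropf$ is linear and the matrix $A$ is unambiguous. The three exempted sets---neighborhoods of $\exc(f)$, $\ind(f)$, and $f^{-1}(f(\exc(f)))$ respectively---are calibrated precisely to rule out the degenerate configurations in which $\phi_i$ would blow up or vanish, as quantified by Proposition \ref{prop:tmapmore}.
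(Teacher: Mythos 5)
Your contradiction/compactness framework is a reasonable alternative to the paper's direct approach, and your treatment of assertion (1) and of the purely internal cases is sound. However, there are two genuine gaps, both at precisely the points where the paper has to do serious work.

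For assertion (2), your "one additional case" handling is wrong when the limit point $p$ lies in $\exc(f)\setminus\torus$. You argue that $q\in\rzt\setminus\torus$ forces $\norm{\Log f(p_n)}\to+\infty$ while $\norm{\tropf\Log(p_n)}$ remains bounded, but the latter claim requires $p\in\torus$: if $p_n\to p\in\rzt\setminus\torus$ then $\Log(p_n)\to\infty$ and so $\tropf\Log(p_n)\to\infty$ as well, and both norms diverge. In that regime the correction $-\log|g_1|$ is also blowing up (to $+\infty$), and you need a delicate sign comparison to see that it only helps the inequality — specifically, that near such a point, $\Log\circ f$ is uniformly near $(-a\log|x_1|-\log|g_1|,\,0)$ and $\tropf\circ\Log$ is uniformly near $(-a\log|x_1|,\,0)$, with both $-a\log|x_1|$ and $-\log|g_1|$ nonnegative. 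This is exactly the content of the paper's Lemma \ref{lem:c2}; your proposal simply asserts the conclusion without the sign analysis that makes it true.

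For assertion (3), the residual case is $p\in\ind(f)\cap C_\tau$ with $f(p_n)\to q'=f|_{C_\tau}(p)\in C_{\tropf(\tau)}$, so both $\Log(p_n)$ and $\Log f(p_n)$ escape to infinity. You claim the "monomial-correction form still applies," but near an indeterminate point the correction $g_1$ is not holomorphic or non-vanishing — it may fail to be defined at $p$, and it is precisely the danger that $|g_1|$ might tend to $0$ along the subsequence (making $-\log|g_1|\to+\infty$ and pushing $\norm{\Log f}$ above $\norm{\tropf\Log}$) that has to be ruled out. The paper handles this in Lemma \ref{lem:c3} by passing to a non-toric blowup $\pi\colon\hat X\to X$ resolving the indeterminacy, tracking the proper transform of $\{g_1=0\}$, and using a compactness argument in $\hat X$ to show $|g_1|$ stays bounded below on the relevant set $W\cap\torus$. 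None of this is present in your proposal; "ruling out the blowup" is the entire difficulty, not a consequence of the monomial form.

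In short, your scaffolding is fine, but the two lemmas the paper isolates (sign analysis near $\exc(f)\setminus\torus$ for (2), the blowup/properness argument near $\ind(f)$ for (3)) are the actual mathematical content, and both are missing.
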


\proof
Recall that if $p\in\torus\setminus\exc(f)$ then $f(p)\in\torus$.  In order to compare the behavior of $f$ with that of $\tropf$, we choose
toric surfaces $X$ and $Y$ whose fans have the following properties.
\begin{itemize}
\item $\Sigma_1(f) \subset \Sigma_1(X)$.
 \item for each sector $\sigma\in\Sigma_2(X)$ there is a sector $\sigma'\in\Sigma_2(Y)$ such that $\tropf(\sigma)\subset \sigma'$.
 \item if $C_\tau\subset\rzt$ is a pole that meets $\ind(f_{XY})$ or $\exc(f)$, then $\tau\in \Sigma_1(X)$; i.e. $C_\tau\subset X$.
 \item if $C_\tau'\subset\rzt$ is a pole that meets $f(\ind(f))$ or $f(\exc(f))$, then $\tau'\in\Sigma_1(Y)$; i.e. $C_{\tau'}\subset Y$.
\end{itemize}
Since $X$ is covered by the finitely many closed polydisks $\overline{\Log^{-1}(\sigma)}$, $\sigma\in\Sigma_2(X)$, it will suffice to work in local coordinates to prove each of conclusions \eqref{item:ta1}-\eqref{item:ta3} of Theorem \ref{thm:tropapprox} on such a polydisk.

Fix $\sigma\in\Sigma_2(X)$ and $\sigma'\in\Sigma_2(Y)$ such that $\tropf(\sigma)\subset \sigma'$, and let $(x_1,x_2)$, $(y_1,y_2)$ be the $\sigma,\sigma'$-coordinates on $\torus$.  Under the above assumptions, we have that $\tropf|\sigma$ is linear, and $f_{XY}$ is well-defined and locally finite near $p_\sigma\subset X$, given by
\begin{equation}
\label{eq:localmap}
(y_1,y_2) = f_{\sigma\sigma'}(x_1,x_2) = (g_1(x)x_1^a x_2^b,g_2(x)x_1^c x_2^d).
\end{equation}
where 
\begin{itemize}
 \item $A=A_{\sigma\sigma'} = \begin{pmatrix} a & b \\ c & d\end{pmatrix}$ is the non-negative $2\times 2$ integer matrix that represents $\tropf$ in the bases for $N_\R$ determined by $\sigma$ and $\sigma'$;
 \item $g_1,g_2:\C^2\tto\C$ are rational functions, well-defined and non-vanishing near $(0,0)$;
 \item the zeroes and poles of $g_1,g_2$ are the components of $\exc(f)$.
\end{itemize}
In the coordinates on $N_\R \cong \R^2$ determined by identifying $\sigma$ with the first quadrant, we have
\begin{eqnarray*}
\err(p) 
&:= & 
\Log\circ f(p) - \tropf\circ\Log(p) 
= 
\Log\circ f_{\sigma\sigma'}(x(p)) - A \Log(x(p)) \\
& = & 
(-\log|g_1(x)|,-\log|g_2(x)|).
\end{eqnarray*}   
Hence if $U\subset X$ is any neighborhood of $\exc(f)$, we have that $\norm{\err}$ is uniformly bounded on $\Log^{-1}(\overline\sigma)\setminus U$.  This is conclusion \eqref{item:ta1} of Theorem \ref{thm:tropapprox}.  

For the conclusion \eqref{item:ta2}, we note first the following special case.

\begin{lem}
\label{lem:c2}
For any point $q\in \exc(f) \setminus\torus$ that is not indeterminate for $f$, there exists a neighborhood $U_q\ni q$ such that conclusion (2) holds for $p\in U_q$.
\end{lem}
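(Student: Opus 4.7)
The plan is to verify conclusion (2) of Theorem \ref{thm:tropapprox} in a local coordinate chart adapted to $q$ and its image $f(q)$.  Since $q \in \exc(f) \setminus \torus$ is not indeterminate, it lies on a unique pole $C_\tau$, and it is a non-corner point of that pole because $q \in \rztO$.  By Theorem \ref{thm:tmapbasics} the image $f(q)$ again belongs to $\rztO \setminus \torus$, and by Theorem \ref{thm:tropicalization}(3) it lies on $C_{\tropf(\tau)}$.  I would begin by choosing toric surfaces $X$ and $Y$ dominant enough that $\tau \in \Sigma_1(X)$, $\tropf(\tau) \in \Sigma_1(Y)$, that $f_{XY}$ is holomorphic at $q$, and that $q$ and $f(q)$ both lie at non-corner points of their respective poles in $X$ and $Y$.

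Next I would select sectors $\sigma \in \Sigma_2(X)$ with $\tau$ as a boundary ray and $\sigma' \in \Sigma_2(Y)$ with $\tropf(\tau)$ as a boundary ray, orienting coordinates so that $\tau$ is the positive $x_2$-axis and $\tropf(\tau)$ is the positive $y_2$-axis.  Theorem \ref{thm:tropicalization}(4) then forces $A$ to send $(0,1)$ to $(0,r)$, where $r := \ram(f, C_\tau) \geq 1$, so formula \eqref{eq:localmap} takes the form
\begin{equation*}
f_{XY}(x_1, x_2) = \bigl(g_1(x)\, x_1^a,\; g_2(x)\, x_1^c\, x_2^{r}\bigr).
\end{equation*}
With $q = (x_1^0, 0)$ and $f(q) = (y_1^0, 0)$ where $x_1^0, y_1^0 \in \C^*$, finiteness and non-vanishing of $y_1^0$ force $g_1$ to be holomorphic and non-vanishing at $q$, while the hypothesis $q \notin \ind(f)$ together with $y_2(q) = 0$ rules out poles of $g_2$ at $q$ along internal curves through $q$.

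The estimate is then a direct computation.  Near $q$ one has $-\log|x_1| = O(1)$ and $-\log|x_2| \to +\infty$, so in the sup-norm on $N_\R$,
\begin{equation*}
\|\tropf \circ \Log(p)\| = r \cdot (-\log|x_2|) + O(1).
\end{equation*}
On the other hand $-\log|y_1| = O(1)$, while
\begin{equation*}
-\log|y_2| \;=\; -\log|g_2(x)| - c\log|x_1| - r\log|x_2|,
\end{equation*}
and since $g_2$ is holomorphic at $q$, the term $-\log|g_2|$ is bounded below on a neighborhood of $q$.  Combining these gives $-\log|y_2| \geq r\cdot(-\log|x_2|) + O(1)$, so $\|\Log \circ f(p)\| \geq |{-}\log|y_2|| \geq \|\tropf \circ \Log(p)\| - C$ on a sufficiently small neighborhood $U_q$ of $q$.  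The main obstacle I anticipate is justifying the clean local form above — specifically verifying that $g_2$ has no poles at $q$ despite $q$ potentially sitting on the intersection of $C_\tau$ with an internal curve of $\exc(f)$.  This step relies essentially on the hypothesis $q \notin \ind(f)$, which precludes the joint behavior of a pole of $g_2$ and a path-dependent limit of $y_2$ at $q$; all remaining subcases (zeros of $g_2$ at $q$) only strengthen the lower bound, since they make $-\log|g_2|$ tend to $+\infty$ rather than merely being bounded below.
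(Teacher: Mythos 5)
Your proof is correct and follows essentially the same route as the paper's: choose adapted $\sigma,\sigma'$-coordinates near $q$ and $f(q)$, write $f$ locally as a monomial map times a pair of rational functions, and observe that the hypothesis $q\notin\ind(f)$ forces the rational factor multiplying the transverse coordinate to be pole-free at $q$, so that the error term in the dominant direction is bounded below. The only difference is a transposed labeling: you put the ray $\tau$ along the $e_2$-axis (so $q$ sits on $\{x_2=0\}$ and $g_2$ plays the role of the paper's $g_1$), whereas the paper puts $\tau$ along $e_1$ (so $q$ is at $(0,\alpha)$); the paper additionally notes that $g_1(0,\alpha)=0$ (the zero corresponding to the contracted curve through $q$), which in your convention is the observation that $g_2$ vanishes at $q$, a fact you correctly remark only strengthens the lower bound.
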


Taking Lemma \ref{lem:c2} for granted momentarily, we finish verifying
conclusion \eqref{item:ta2}.  Lemma~\ref{lem:c2} provides an open neighborhood
$U''$ of $(\exc(f) \setminus \torus) \setminus \ind(f)$ on which the desired
estimate holds.  Shrinking the neighborhood $U$ of $\exc(f)$, we may suppose
that $\overline{U\setminus (U' \cup U'')}$ is compact in $\torus$ and therefore
$K:=\Log(\overline{U\setminus (U' \cup U'')})$ is compact in $N_\R$.  So since $p\notin U'$ by hypothesis, if $p
\not \in U''$, then either $p\notin U$ or $\Log(p)\in K$.  In the first case,
Conclusion~(2) follows from Conclusion (1).  In the second case $\norm{A_f\circ
\Log(p)} \leq R$ for some $R = R(K)$ independent of $p$; i.e. conclusion
\eqref{item:ta2} holds for $p\in K$ with constant $C = C(K) = R$.

\begin{proof}[Proof of Lemma \ref{lem:c2}]
By our choice of $X$, $q\in C_\tau$ for some ray $\tau\in\Sigma_1(X)$.  Let $\sigma\subset\Sigma_2(X)$ be the sector with $\tau$ as its `righthand' boundary ray, i.e. so that $q\in\overline{\Log^{-1}(\sigma)}$ has $\sigma$-coordinates $x(q) = (0,\alpha)$ where $0<|\alpha|\leq 1$.  By choice of $Y$, we further have that $\tropf(\tau)\in\Sigma_1(Y)$ and $f_{XY}(q) \in C^\circ_{\tropf(\tau)}$.  Necessarily $\tropf(\tau)$ is one of the rays bounding the sector $\sigma'\in\Sigma_2(Y)$ that contains $\tropf(\sigma)$.  We suppose for argument's sake that this is again the righthand ray so that $f(q)$ has $\sigma'$-coordinates $y(f(q)) = (0,\beta)$ for some $0<|\beta|< 1$.  Since $f_{\sigma\sigma'}$ preserves $(0,0)$ and maps $\{x_1=0\}$ onto $\{y_1=0\}$ implies that the matrix $A$ in \eqref{eq:localmap} has the form $\begin{pmatrix} a & b \\ 0 & d \end{pmatrix}$ where $a,d>0$ and $b\geq 0$.  Finally, since $q\in\exc(f)\setminus\ind(f)$, we must have that $g_1(0,\alpha)=0$ and $(0,\alpha)$ is not a zero or pole of $g_2$.

If, in $\sigma$-coordinates, $U_q\subset X$ is a small ball about $(0,\alpha)$, then $\left|g_1|_{U_q}\right|$ is bounded above by~$1$ and $U_q \cap\exc(f) = \{g_1=0\}$.  For $p\in U_q\setminus\exc(f)$, we have that
$$
A\Log(x_1,x_2) = (-a\log|x_1|-b\log|x_2|,-d\log|x_2|) \approx (-a\log|x_1|-b\log|\alpha|,-d\log|\alpha|)
$$
is uniformly near $(-a\log|x_1|,0)$; moreover,
$$
\Log\circ f_{\sigma\sigma'}(x_1,x_2) - \tropf(x_1,x_2) \approx (-\log|g_1|,-\log|g_2(0,\alpha)|)
$$
is uniformly near $(-\log|g_1|,0)$.  Since $-a\log|x_1|$ and $-\log|g_1|$ are both positive on $U_q$, the lemma follows.
\end{proof}

It remains to establish the third conclusion of Theorem \ref{thm:tropapprox}. Since $f^{-1}(V)$ contains a neighborhood of $\exc(f)\setminus\ind(f)$, Conclusion~(1) reduces the problem to verifying Conclusion~(3) on a small enough neighborhood $U_q$ of each of the finitely many points $q \in\ind(f)$.  When $q\in\torus$, we have from Proposition \ref{prop:tmapmore} that $f(q)\setminus\torus\subset f(\exc(f))$.  Hence Conclusion (3) holds near $q$ if the constant $C$ is large enough that $f(q)\cap\{\norm{\Log}\geq C\} \subset V$.  

If, on the other hand, $q\in\ind(f)\cap C_\tau$ is external, then again by Proposition \ref{prop:tmapmore} the same argument works except that the point $q' := f|_{C_\tau}(q) \in f(q)\setminus\torus$ need not lie in $f(\exc(f))$.  That is, we have reduced to verifying Conclusion (3) at all points $p\in W\cap\torus$, where $W := U_q\cap f^{-1}(V_{q'})$ and $U_q$ and $V_{q'}$ are small coordinate neighborhoods of $q$ and $q'$, respectively.  As in the proof of Lemma \ref{lem:c2} our choices of $X$ and $Y$ allow us to choose sectors $\sigma\in\Sigma_2(X)$, $\sigma'\in\Sigma_2(Y)$ such that $q$ has $\sigma$-coordinate $x(q)=(0,\alpha)$ and $q'$ has $\sigma'$-coordinates $(0,\beta)$ where $0<|\alpha|,|\beta|\leq 1$.  

The matrix $A$ in \eqref{eq:localmap} again has entries $a,d>0$, $b\geq 0$ and $c=0$; therefore $A\Log(x_1,x_2)$ is uniformly close to $(-a\log|x_1|,0)$ for $p=(x_1,x_2)\in U_q$.  If also, $(y_1,y_2) \in f_{\sigma\sigma'}(x_1,x_2)\in V_{q'}$, then $|\log|y_2||$ is bounded by definition.  Likewise, $\log|y_1|$ is bounded above.  Hence
$
\Log\circ f(p) = \tropf\circ\Log(p) + \err(p),
$
where $\err(p)$ is uniformly close to $(-\log|g_1|,0)$ for $p\in W\cap\torus$, and the proof of Theorem \ref{thm:tropapprox} concludes with the following.

\begin{lem}
\label{lem:c3}
If $U_q, V_{q'}$ are small enough, then $|g_1|$ is uniformly bounded below on $W\cap\torus$.
\end{lem}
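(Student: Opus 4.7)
My plan is to show that on a sufficiently small $U_q$, the function $g_1$ does not vanish, so that $|g_1|$ is automatically bounded below by a positive constant on $U_q$ and hence on $W \cap \torus \subset U_q$. The lemma is only needed when $q' \notin f(\exc(f))$: otherwise $V \supset f(\exc(f))$ already contains a neighborhood of $q'$, making the conclusion of Theorem~\ref{thm:tropapprox}~(3) vacuous on $W$ by choice of a small $V_{q'}\subset V$. For the argument I need $U_q$ small enough to avoid any components of the vanishing locus of $g_1$ that do not pass through $q$; $V_{q'}$ itself is merely required to be a small coordinate neighborhood.

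The heart of the proof is the claim that no irreducible component of $\{g_1=0\}$ passes through $q$. Suppose for contradiction that $E$ were such a component. Since the zeros and poles of $g_1,g_2$ are components of $\exc(f)$, we have $E\subset \exc(f)$, so $E$ is contracted by $f$ to a single point $r\in\rztO\setminus\torus$. In $\sigma'$-coordinates, $y_1 = g_1\,x_1^a x_2^b$ vanishes on $E\cap\torus$, so $r_1=0$. For $r_2$, the well-definedness of $f|_{C_\tau}(q) = q'$ forces $g_2(0,\alpha)\,\alpha^d = \beta$, so $g_2$ is finite and nonzero at $q$. Consequently $y_2 = g_2\, x_2^d$ restricts near $q$ to a holomorphic function on any smooth local branch of $E$, takes the value $\beta$ at $q$, and is constant on $E\cap\torus$ because $f$ contracts $E$. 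Hence $y_2 \equiv \beta$ on $E$, giving $r=(0,\beta)=q'$ and contradicting $q'\notin f(\exc(f))$.

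With $q$ outside the vanishing locus of $g_1$, the conclusion is immediate: either $g_1$ is holomorphic and nonzero at $q$ (so $|g_1|$ is continuous and positive near $q$), or $g_1$ has a pole at $q$ (so $|g_1|\to\infty$ as $p\to q$). In either case, shrinking $U_q$ so that it meets no zero locus of $g_1$ (a closed set not containing $q$) yields a uniform lower bound $|g_1|\geq c>0$ on $U_q$, and in particular on $W\cap\torus$. The main delicacy lies in the constancy step for $y_2$ when $E$ has a singularity at $q$: one should pass to a local smooth branch of $E$, or to its normalization, in order to treat $y_2|_E$ as a genuine holomorphic function, use contractibility of $E$ to deduce that this function is constant, and take a limit along $E$ at $q$ to identify the constant as $\beta$.
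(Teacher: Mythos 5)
Your strategy differs from the paper's in an important way: you try to prove the \emph{pointwise} statement that $q\notin\{g_1=0\}$, whereas the paper does not attempt this. Instead, the paper passes to a blowup $\pi:\hat X\to X$ at $q$ that resolves the indeterminacy of $f$ and of $g_1$, and shows the weaker (and correct) statement that $\{\hat g_1=0\}$ avoids $\hat W=\pi^{-1}(W)$. This distinction matters, because your argument has a genuine gap precisely where the two approaches diverge.

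The gap is in the identification $r_2=\beta$. You argue that because $f|_{C_\tau}(q)=q'$ is well-defined, $g_2(0,\alpha)\alpha^d=\beta$, and therefore ``$g_2$ is finite and nonzero at $q$,'' from which you conclude that $y_2=g_2x_2^d$ is continuous at $q$ with value $\beta$ and hence that $y_2|_E\to\beta$ along $E$. But the covering-map structure of $f|_{C_\tau}$ only controls the \emph{restriction} $g_2|_{\{x_1=0\}}$; it does not show that $g_2$ is holomorphic at $q$ as a function of two variables. Since $q\in\ind(f)$, $g_2$ can perfectly well have a point of indeterminacy at $q$ (both a zero component and a pole component of $\div(g_2)$ passing through $q$), in which case $g_2|_{\{x_1=0\}}(\alpha)$ and $\lim_{E\ni p\to q}g_2(p)$ are simply unrelated. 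In fact the situation is worse: if $E\subset\{g_1=0\}$ passes through $q$ and $y_2$ \emph{were} holomorphic at $q$ with value $\beta$, then $f_{XY}$ would be holomorphic near $q$ with image $(0,\beta)=q'\in Y^\circ$, contradicting $q\in\ind(f)$. So in the very case you are arguing against, $y_2$ is forced \emph{not} to extend continuously to $q$, and the value of $y_2|_E$ at $q$ genuinely depends on the curve $E$. Your closing remark about singularities of $E$ and normalization addresses a different (and less serious) issue and does not repair this.

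The paper sidesteps the problem entirely: after blowing up, each strict transform of a component of $\exc(f)$ maps under the now-holomorphic $\hat f$ to a point of $f(\exc(f))$, and since $q'\notin f(\exc(f))$ one simply shrinks $V_{q'}$ to be disjoint from $f(\exc(f))$; no computation of where a particular $E$ lands is needed. The remaining components of $\{\hat g_1=0\}$ lie over $q$, and are excluded by the compactness argument from Proposition~\ref{prop:tmapmore}. You would need some version of this mechanism (or a different way to rule out zero components of $g_1$ through $q$ that works even when $g_2$ is indeterminate there) to make your route go through.
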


\begin{proof}
Since $p\in\ind(f)$, $g_1$ need not be defined at $(0,\alpha)$ or finite on $U_q\cap\torus$.  Let $\pi:\hat X\to X$ be a (necessarily non-toric) blowup of $X$ at $p$ chosen so that neither $\hat f := f\circ\pi^{-1}$ and $\hat g_1 = g\circ \pi^{-1}$ have points of indeterminacy in $\pi^{-1}(U_q)$.  From here the argument resembles the one used to obtain the last two assertions in Proposition \ref{prop:tmapmore}.  Let $\hat W = \pi^{-1}(U_q) \cap \hat f^{-1}(V_{q'}) = \pi^{-1}(W)$.  Note that $\hat f$ maps $\{\hat g_1 = 0\}$ into $\{y_1=0\}$, i.e. outside $\torus$.  

On the other hand $\{\hat g_1 = 0\}$ properly intersects the proper transform of the pole $\{x_1=0\}$.  Hence $\{\hat g_1=0\}$ consists of components of $\pi^{-1}(q)$ together with components of the strict transforms of $\exc(f)$.  The latter map by $\hat f$ outside $V_{q'}$ and so do not meet $\hat W$.  Hence $\{\hat g_1=0\}\cap \hat W \subset \pi^{-1}(q)$ is a compact subset of $\hat W$ and must therefore be empty by the same reasoning used in the third paragraph of the proof of Proposition \ref{prop:tmapmore}.  Shrinking $U_q$ and $U_{q'}$ a bit if necessary, we conclude that $|\hat g_1|$ is bounded away from $0$ on $\hat W$; i.e. $|g_1|$ is bounded away from $0$ on $W$.
\end{proof}

\section{Pushing forward and pulling back}
\label{sec:currents}
Any toric map $f:\rztO\tto\rztO$ induces linear operators $f^*,f_*:\pcc(\rzt)\to \pcc(\rzt)$.  In \cite[\S 8]{DiRo24} we defined these indirectly, working first on toric surfaces and then passing to inverse limits.  Here we give an equivalent definition using local potentials and working directly on $\rztO$.

Let $T\in\pcc(\rzt)$ be a toric current.  Since, by definition, $T$ is a difference of positive toric currents, we can assume $T\geq 0$.  Given $p\in \rztO\setminus\ind(f)$, let $U,V\subset \rztO$ be neighborhoods of $p,f(p)$ small enough that $f(U)\subset V$ and $T|_V = dd^c u$ for some $u\in\psh(V)$.  Then $f^*T|_U := dd^c (u\circ f)$.  The resulting current on $\rztO\setminus\ind(f)$ extends trivially across the finite set $\ind(f)$ to give a toric current $f^*T\in \pcc^+(\rzt)$.  Similarly, if $p\in \rztO\setminus f(\exc(f))$, we choose neighborhoods $V\supset p$ and $U\supset f^{-1}(V)$ small enough that $T = dd^c u$ for some $u\in\psh(U)$ and then set $f_* T|_V = dd^c f_*u$, where
$$
f_*u(p') := \sum_{f(q) = p'} m(f,q)u(q),
$$
and $m(f,q)$ denotes the local multiplicity of $f$ at $q$.  The resulting current on $\rztO\setminus f(\exc(f))$ extends again to a toric current $f_* T\in\pcc^+(\rzt)$.

\begin{prop}[\cite{DiRo24}, Proposition 8.3]
\label{prop:intext}
If $T\in\pcc(\rzt)$ is internal, so are $f^*T$ and $f_*T$.  If $T$ is an external divisor, then $f^*T = D + E$, where $D$ is an external divisor and $E$ is an internal divisor supported on $\exc(f)$.  Likewise, $f_* T = D' + E'$, where $D'$ is an external divisor and $E'$ is an internal divisor supported on $f(\ind(f))$.
\end{prop}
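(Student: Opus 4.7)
The plan is to verify each assertion locally on $\rztO$ at points outside the finite sets---namely $\ind(f)$ for pullback and $f(\exc(f))$ for pushforward---across which the two definitions extend trivially. The local analysis will use the structural results of Theorem \ref{thm:tmapbasics}: $f$ maps each pole $C_\tau$ onto a pole $C_{\tropf(\tau)}$, contracts each exceptional component to a point off $\torus$, and sends each indeterminate point to a finite union of internal curves.

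For the internal claim, suppose $T\in\pcc^+(\rzt)$ is internal and fix a pole $C_\tau$. Internality means that if $T|_V = dd^c u$ on a neighborhood $V$ of a pole point, with the pole locally cut out by $\{y_1=0\}$, then $u$ contains no $\log|y_1|$ singular term. First I would pick $p\in C_\tau^\circ\setminus\ind(f)$; by Theorem \ref{thm:tmapbasics}(5), $f(p)\in C_{\tropf(\tau)}^\circ$. Choose local coordinates $(y_1,y_2)$ near $f(p)$ and $(x_1,x_2)$ near $p$ with the two poles cut out by $\{y_1=0\}$ and $\{x_1=0\}$ respectively, so that $f^*y_1 = g\cdot x_1^{\ram(f,C_\tau)}$ for a non-vanishing holomorphic $g$. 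Then $u$ having no $\log|y_1|$ component forces $u\circ f$ to have no $\log|x_1|$ component, so $f^*T = dd^c(u\circ f)$ gives no mass to $C_\tau$ near $p$. Since $\ind(f)\cap C_\tau$ is finite, the trivial extension step shows $f^*T$ gives zero mass to all of $C_\tau$. The argument for $f_*T$ is analogous: preimages of a generic $q\in C_{\tau'}^\circ\setminus f(\exc(f))$ lie on the finitely many poles $C_\tau$ with $\tropf(\tau)=\tau'$, and pushing forward a psh local potential with no log-singularity along those source poles produces a psh function with no log-singularity along $C_{\tau'}$.

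For the external claim, suppose $T=\sum c_{\tau'} C_{\tau'}^\circ$. For each $C_{\tau'}$, Theorem \ref{thm:tmapbasics} implies $f^{-1}(C_{\tau'})$ decomposes off $\ind(f)$ into the poles $C_\tau$ with $\tropf(\tau)=\tau'$, each of multiplicity $\ram(f,C_\tau)$, together with the components of $\exc(f)$ contracted to points on $C_{\tau'}$. The standard divisor pullback formula, which agrees with the potential-theoretic pullback off $\ind(f)$, then gives
\[
f^* C_{\tau'} = \sum_{\tropf(\tau)=\tau'} \ram(f,C_\tau)\, C_\tau + E_{\tau'},
\]
whose first piece is external and whose remainder $E_{\tau'}$ is an internal divisor supported on $\exc(f)$. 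Summing over $\tau'$ yields $f^*T = D + E$ with $D$ external and $E$ internal on $\exc(f)$. For the pushforward, the divisor pushforward of each $C_{\tau'}$ is the external contribution $(|\rho(f)|/\ram(f,C_{\tau'}))\cdot C_{\tropf(\tau')}$ from Theorem \ref{thm:tmapbasics}(4), which lives in $D'$; any internal discrepancy $E'$ can arise only at the finite set where $\supp T$ meets $\ind(f)$, and by Theorem \ref{thm:tmapbasics}(1) its support is pinned to $f(\ind(f))$.

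The hard part will be the final step, precisely locating the internal piece $E'$ in the pushforward. The plan is a local coordinate analysis at each $q\in\ind(f)\cap\supp T$, blowing up to resolve the indeterminacy and tracking how the pushforward of the log-singular local potential of $T$ behaves. One expects that any extra integration currents so produced live exactly along the internal curves making up $f(q)$, as predicted by Theorem \ref{thm:tmapbasics}(1). The internal claim and the pullback version present no real obstacle beyond the simple local potential computation described above.
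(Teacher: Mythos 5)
Your proposal works directly with the local-potential definitions of $f^*$ and $f_*$ given in \S\ref{sec:currents}, which is a legitimate route; the cited source \cite{DiRo24} defines these operators surface-by-surface and passes to the inverse limit, so the argument there proceeds differently, but the present paper does explicitly advertise the local definitions as equivalent. The pullback half of the statement (both internal and external) is handled correctly: for an internal $T$, the coordinate identity $f^*y_1 = g\,x_1^{\ram(f,C_\tau)}$ with $g$ non-vanishing away from $\exc(f)$ shows $u\circ f$ acquires no log singularity along the pole, and the divisor-pullback formula you write for $f^*C_{\tau'}$ is standard once one knows the preimage of a pole consists of poles plus exceptional curves.

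The gap is exactly where you flag it: for the pushforward of an external divisor you never actually produce the decomposition $f_*T = D' + E'$, you only describe what a proof would do. The assertion that $E'$ is supported on $f(\ind(f))$ is the nontrivial content here, and it cannot be dispatched by "a simple local potential computation" because the definition of $f_*$ by $f_*T|_V = dd^c(f_*u)$ is only given on $V$ avoiding $f(\exc(f))$, while the curves of $f(\ind(f))$ are \emph{not} contained in the finite set $f(\exc(f))$; one must genuinely track the log-singular potential of $C_{\tau'}$ as preimages collapse into $\ind(f)$, e.g.\ by resolving the indeterminacy with $\pi:S\to\rztO$, decomposing $\pi^*C_{\tau'}$ into strict transform plus exceptional components over $\ind(f)\cap C_{\tau'}$, pushing forward by $\hat f = f\circ\pi$, and then identifying the resulting internal piece with $f(\ind(f))$. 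None of this is carried out. Less seriously, in the internal pushforward case you claim that pushing forward a psh potential with no log singularity along the source poles yields one with none along the target pole: this is true, but it rests on the transformation rule for generic Lelong numbers of currents along divisors under finite pushforward (the Lelong number of $f_*T$ along $C_{\tau'}$ is a non-negative combination of those of $T$ along the preimage poles), which is not as transparent as the explicit monomial formula you used for pullback and should be cited or sketched.
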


With these definitions it is readily apparent that $f^*,f_*$ preserve cohomological equivalence and so descend to operators $f^*,f_*:\hoo(\rzt)\to\hoo(\rzt)$.  These preserve effective and nef classes and are adjoint for the intersection product $\isect{f^*\alpha}{\beta} = \isect{\alpha}{f_*\beta}$ between classes $\alpha,\beta\in\eltwo(\rzt)$.

\begin{prop}[\cite{DiRo24}, Proposition 8.6] The following are equivalent for a toric map $f$
\begin{itemize}
 \item $f$ is internally stable;
 \item $(f^*)^n = (f^n)^*$ for all $n > 0$;
 \item $(f_*)^n = (f^n)_*$ for all $n > 0$.
\end{itemize}
\end{prop}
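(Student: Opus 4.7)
My plan follows the usual algebraic-stability strategy adapted to the non-compact toric setting: prove a composition identity for pullbacks of toric currents, iterate it, and transfer to pushforwards.

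The key step is a composition identity: for any toric maps $g,h:\rztO\tto\rztO$ and any positive toric current $T\in\pcc^+(\rzt)$,
\[
h^* g^* T \;=\; (g\circ h)^* T + E(g,h;T),
\]
where $E(g,h;T)$ is an effective internal divisor supported on components of $\exc(h)$ that $h$ maps into $\ind(g)$. I would prove this by comparing the local-potential definitions from the start of \S\ref{sec:currents}: off the finite bad set $\ind(h)\cup h^{-1}(\ind(g))$, both currents locally equal $dd^c(v\circ g\circ h)$ for a psh potential $v$ of $T$ near the image, and across $\ind(h)$ and $\ind(g\circ h)$ the uniqueness of trivial extension for positive closed currents forces agreement. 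The residual discrepancy is concentrated on the curve components of $h^{-1}(\ind(g))\cap\exc(h)$, and on the toric coordinate charts around points of $\ind(g)$ provided by Theorem~\ref{thm:tmapbasics}, the monomial form of $f_{XY}$ in \eqref{eq:localmap} together with Proposition~\ref{prop:intext} identifies the residual as an effective internal divisor. In particular $E(g,h;T)=0$ as soon as $h(\exc(h))\cap\ind(g)=\emptyset$.

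Applying the identity with $h=f^k$, $g=f$ and inducting on $k$,
\[
(f^*)^{k+1} T - (f^{k+1})^* T \;=\; f^*\bigl((f^*)^k T-(f^k)^* T\bigr) + E(f,f^k;T)
\]
is a sum of effective internal divisors. It vanishes for all $k$ and all $T$ if and only if $f^k(\exc(f^k))\cap\ind(f)=\emptyset$ for every $k$; using $\exc(f)\subset\exc(f^k)$ and a short inductive unpacking, this condition is equivalent to internal stability as formulated in Definition~\ref{defn:intstable}. For the pushforward equivalence I would run the analogous induction on the dual composition identity $(g\circ h)_* T = g_* h_* T + E'(g,h;T)$, whose failure mode is governed by the same exceptional-meets-indeterminate condition; alternatively, the cohomological form of the pushforward statement can be deduced from the pullback one via the adjointness $\isect{f^*\alpha}{\beta}=\isect{\alpha}{f_*\beta}$ on $\eltwo(\rzt)$.

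The main obstacle is the composition identity itself: the textbook algebraic-stability proofs on compact K\"ahler surfaces use the global intersection calculus in $H^{1,1}(X,\R)$ and compactness of the ambient space, neither of which is available on $\rztO$. The toric structure compensates. By Theorem~\ref{thm:tmapbasics}, $\ind(g)$ and $\exc(h)$ are finite collections of points and internal curves contained in finitely many toric coordinate patches, so the verification reduces to an explicit local computation with monomial maps combined with a routine application of Sibony-style extension theorems for positive closed currents.
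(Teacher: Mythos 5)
Your strategy — a composition identity for pullbacks followed by an induction, then a dual argument for pushforwards — is the standard approach to this kind of stability criterion, and the outline is sound; but two points need tightening. First, the displayed induction line does not match your stated instantiation. With $h=f^k$, $g=f$ the identity reads $(f^k)^*f^*T = (f^{k+1})^*T + E(f,f^k;T)$, which rearranges to
\begin{align*}
(f^*)^{k+1}T - (f^{k+1})^*T = \bigl[(f^*)^k - (f^k)^*\bigr](f^*T) + E(f,f^k;T),
\end{align*}
with the induction hypothesis applied to the positive current $f^*T$. What you actually wrote, $f^*\bigl((f^*)^kT-(f^k)^*T\bigr)+E$, comes from the other factorization $h=f$, $g=f^k$, whose error term is $E(f^k,f;T)$ and is controlled by $f(\exc(f))\cap\ind(f^k)$ rather than $f^k(\exc(f^k))\cap\ind(f)$. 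Both routes unwind to internal stability, but labels, error terms and vanishing conditions must be kept consistent, and the second route also uses (silently) that $f^*$ carries effective internal divisors to effective internal divisors.

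Second, the clause ``vanishes for all $k$ and all $T$ if and only if $f^k(\exc(f^k))\cap\ind(f)=\emptyset$'' asserts the harder converse without argument. Since $E(g,h;T)$ depends on $T$ — it vanishes whenever the Lelong numbers of $g^*T$ at the relevant points of $\ind(g)$ are zero — you must exhibit a $T$ that actually witnesses a failure when the geometric condition breaks. A current of integration along a curve through a generic point of the image $f(p)$, with $p\in\ind(f)$ in the offending orbit, gives $f^*T$ a positive Lelong number at $p$ and hence a genuinely effective component in the error; that is the source of the converse implication. Without such a witness your argument only establishes that internal stability implies $(f^*)^n=(f^n)^*$. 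Your caveat about adjointness is well taken: it yields only the cohomological form of the pushforward equivalence, so the parallel $f_*$ induction is the route that works at the level of currents.
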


\subsection{Pullbacks and pushforwards of homogeneous currents}

The next result, which ensures good control of potentials for pullbacks and pushforwards of homogeneous currents, will play a key role in allowing us to construct measures of maximal entropy for toric maps.

\begin{thm}
\label{thm:pullback}
Let $f:\rztO\tto\rztO$ be a toric map, $T \in\pcc(\rzt)$ be an internal current and $\bar T$ be its homogenization.
\begin{enumerate}
 \item The potential $\rpot_{f^*\bar T}$ for $f^*\bar T - \overline{f^* T}$ is continuous and bounded outside any neighborhood of $\ind(f)$.
 \item If $T$ is positive, then $\rpot_{f^*\bar T}$ can be chosen to be negative. 
 \item Any support function $\sfn_{f^* \bar T}$ for $f^*\bar T$ is nearly homogeneous.
\end{enumerate}
\end{thm}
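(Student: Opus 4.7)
The plan is to derive a global formula
\[
\rpot_{f^*\bar T} \;=\; \sfn_{\bar T}\circ\Log\circ f \;-\; \sfn_{\bar T}\circ \tropf\circ \Log \;+\; c
\]
on $\rztO\setminus\ind(f)$, for some constant $c$, and then bound its right-hand side using Theorem \ref{thm:tropapprox} together with the Lipschitz continuity of $\sfn_{\bar T}$ (which follows from $\sfn_{\bar T}$ being positively homogeneous and convex when $T\geq 0$, or a difference of two such in general). First I would identify $\sfn_{\overline{f^*\bar T}}$. On $\torus\setminus\exc(f)$, $\sfn_{\bar T}\circ\Log$ is a local potential for $\bar T$, so $\sfn_{\bar T}\circ\Log\circ f$ is a local potential for $f^*\bar T$ there; its $\torus_\R$-average descends to a support function $\sfn_{f^*\bar T}$ on $N_\R$, and Theorem \ref{thm:tropapprox}(1) combined with the Lipschitz property gives $\sfn_{f^*\bar T}(v) = \sfn_{\bar T}(\tropf(v)) + O(1)$. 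Applying the limit formula \eqref{eqn:homogenization} and using the positive homogeneity of $\sfn_{\bar T}$ and $\tropf$ yields $\sfn_{\overline{f^*\bar T}}(v) = \sfn_{\bar T}(\tropf(v))$ up to an affine term.

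Next I would establish the formula itself. Proposition \ref{prop:bartonpole} provides continuous local potentials $u$ for $\bar T$ and $\bar u$ for $\overline{f^*\bar T}$ at every point of $\rztO$, from which $\rpot_{f^*\bar T}$ is defined locally as $u\circ f - \bar u$ and hence is continuous on $\rztO\setminus\ind(f)$---this already delivers the continuity half of (1). Globally, $\sfn_{\bar T}\circ\Log$ is a potential for $\bar T - D$ with external divisor $D = \sum_\tau \sfn_{\bar T}(v_\tau)C_\tau$, and $\sfn_{\bar T}\circ\tropf\circ\Log$ is a potential for $\overline{f^*\bar T} - D'$ with $D' = \sum_\tau \sfn_{\bar T}(\tropf(v_\tau))C_\tau$. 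A direct calculation using $\tropf(v_\tau) = \ram(f,C_\tau)\,v_{\tropf(\tau)}$ from Theorem \ref{thm:tropicalization}(4) shows $f^*D = D'$, so the difference $\sfn_{\bar T}\circ\Log\circ f - \sfn_{\bar T}\circ\tropf\circ\Log$ is a potential for $(f^*\bar T - f^*D) - (\overline{f^*\bar T} - D') = f^*\bar T - \overline{f^*\bar T}$, justifying the global formula modulo an additive constant.

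With the formula in hand, conclusion (1) follows: Theorem \ref{thm:tropapprox}(1) gives $\|\Log\circ f(p) - \tropf\circ\Log(p)\|\leq C$ outside a small neighborhood $U$ of $\exc(f)$, so Lipschitz continuity of $\sfn_{\bar T}$ bounds $|\rpot_{f^*\bar T}|$ uniformly on $\rztO\setminus U$. To promote this to boundedness outside an arbitrary neighborhood $W$ of $\ind(f)$, I would use that $\ind(f)\subset\exc(f)$ (Theorem \ref{thm:tmapbasics}(1)) and that $\exc(f)$ is a finite union of compact internal curves, so $\exc(f)\setminus W$ is compact in $\rztO\setminus\ind(f)$, where $\rpot_{f^*\bar T}$ is continuous; patching gives a uniform bound on $\rztO\setminus W$. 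For conclusion (2), when $T\geq 0$ the convex positively homogeneous $\sfn_{\bar T}$ is subadditive, yielding
\[
\sfn_{\bar T}(\Log\circ f(p)) \;\leq\; \sfn_{\bar T}(\tropf\circ\Log(p)) \;+\; \sfn_{\bar T}\bigl(\Log\circ f(p)-\tropf\circ\Log(p)\bigr) \;\leq\; \sfn_{\bar T}(\tropf\circ\Log(p)) + LC,
\]
so shifting $c$ down by $LC$ makes $\rpot_{f^*\bar T}\leq 0$. Conclusion (3) is then Proposition \ref{prop:nearlyhomogenough} applied to the bound $|\sfn_{f^*\bar T}(v) - \sfn_{\overline{f^*\bar T}}(v)|\leq C'$ valid on a dense set of rays outside compact subsets.

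The main obstacle is the global formula step: it requires establishing $f^*D = D'$ (which reduces to the ramification formula in Theorem \ref{thm:tropicalization}(4)) and verifying that the logarithmic singularities of the two potentials really cancel at every pole, so that the difference of two a priori singular functions extends to a single continuous function across the countably many poles of $\rztO$. Once this piece of bookkeeping is settled, the remaining steps reduce to a routine combination of tropical approximation, the Lipschitz/subadditive properties of support functions of positive currents, and a compactness argument on $\exc(f)\setminus W$.
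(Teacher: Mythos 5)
Your overall strategy---express $\rpot_{f^*\bar T}$ in terms of the $\Log$-potentials $\sfn_{\bar T}\circ\Log\circ f$ and $\sfn_{\bar T}\circ\tropf\circ\Log$, then control the difference via Theorem~\ref{thm:tropapprox} and the Lipschitz property of $\sfn_{\bar T}$---is the same as the paper's. But there is a genuine gap in the step you dismiss as ``bookkeeping'': the claim $f^*D = D'$ is false. Proposition~\ref{prop:intext} (and the local picture near $\exc(f)$) shows that pulling back the external divisor $D$ produces $f^*D = D' + E$, where $D'$ is the external part you correctly compute via the ramification identity, but $E$ is a generically nonzero \emph{internal} divisor supported on $\exc(f)$: if $C\subset\exc(f)$ is contracted to a point $q\in C_\tau$, then near $C$ the pullback of the local equation for $C_\tau$ vanishes to some order along $C$, so $C$ appears in $f^*C_\tau$ with a positive coefficient. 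Consequently $\sfn_{\bar T}\circ\Log\circ f - \sfn_{\bar T}\circ\tropf\circ\Log$ is a potential not for $f^*\bar T - \overline{f^*\bar T}$, but for $f^*\bar T - \overline{f^*\bar T} - E$, and since $E\neq 0$ is effective (hence its class is nontrivial), the discrepancy cannot be absorbed into an additive constant. A related consequence is that your identification $\sfn_{\overline{f^*T}} = \sfn_{\bar T}\circ\tropf$ (up to affine) is also off by the support function $\sfn_{\bar E}$ of the homogenization $\bar E$.

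This gap is visible in the internal inconsistency of your own writeup: you correctly argue from Proposition~\ref{prop:bartonpole} that $\rpot_{f^*\bar T}$ is continuous on $\rztO\setminus\ind(f)$, in particular near $\exc(f)\setminus\ind(f)$; but your proposed formula $\rpot_{f^*\bar T} = \sfn_{\bar T}\circ\Log\circ f - \sfn_{\bar T}\circ\tropf\circ\Log + c$ would force $\rpot_{f^*\bar T}$ to blow up as $p\to\exc(f)$ (since $\Log\circ f(p)\to\infty$ while $\tropf\circ\Log(p)$ stays bounded). The resolution is exactly the paper's Lemma~\ref{lem:bdedepot}: one must add back the potential $\rpot_E$ for $E-\bar E$, whose logarithmic singularities along $\exc(f)$ cancel the blow-up of the $\Log$-term. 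After that the argument you sketch---Theorem~\ref{thm:tropapprox}, Lipschitz control of $\sfn_{\bar T}$, and compactness of $\exc(f)\setminus W$---does finish the proof, but the paper's homogenization argument (that both sides of \eqref{eqn:bothsideszero} vanish) is needed to justify the residual identity cleanly, and Lemma~\ref{lem:bdedepot} is an essential ingredient rather than a formality.
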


Note that since each class in $\hoo(\rzt)$ has a unique homogeneous representative, we have
for any internal $T\in\pcc(\rzt)$ that $\overline{f^* T} = \overline{f^* \bar T}$ and $\overline{f_* T} = \overline{f_*\bar T}$.

\begin{proof}
Any internal toric current is a difference of two positive internal currents, so we can assume that $T$ is positive.  Thus $\ch{T}$ is nef, and $\sfn_{\bar T}$ is  convex.  

We have on $\rztO$ that
$
dd^c (\sfn_{\bar T}\circ\Log) = \bar T - D_T$ and
$
dd^c (\sfn_{\overline{f^* T}}\circ\Log) = \overline{f^* T} - D_{f^* T}
$,
where $D_T, D_{f^*T}\in\pcc(\rzt)$ are nef external divisors. Hence
\begin{equation}
\label{eqn:id1}
dd^c \rpot_{f^*\bar T} = f^*\bar T - \overline{f^* T}
=
dd^c (\sfn_{\bar T}\circ\Log\circ f - \sfn_{\overline{f^*T}}\circ\Log) + f^* D_T - D_{f^*T}.
\end{equation}
In particular $f^*D_T$ is cohomologous to $D_{f^*T}$. So Proposition \ref{prop:intext} tells us that
$
f^* D_T - D_{f^*T} = E - D_E,
$
where $E$ is an internal divisor supported on $\exc(f)$, and $D_E$ is an external divisor cohomologous to $E$.

\begin{lem}
\label{lem:bdedepot}
Let $\bar E$ be the homogeneous internal current cohomologous to $E$.  Then the potential $\rpot_E$ for $E-\bar E$ is bounded outside any neighborhood $U\subset\rztO$ of $\supp E$, in particular outside a compact subset of $\rztO$.
\end{lem}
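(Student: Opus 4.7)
My plan is to reduce the claim to the case of a single internal curve by linearity, and then to produce an explicit local description of $\rpot_E$ on toric charts that shows it extends continuously to the compact space $\rzt$.

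\textbf{Reduction.} Since the assertion is linear in $E$, decompose $E = E_+ - E_-$ with $E_\pm$ effective divisors supported on $\exc(f)$. Because homogenization is linear, $\bar E = \bar E_+ - \bar E_-$ and correspondingly $\rpot_E = \rpot_{E_+} - \rpot_{E_-}$. Expanding each $E_\pm$ as a non-negative combination of integration currents on irreducible internal curves reduces the problem to the case $E = [C]$ for a single internal curve $C$.

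\textbf{Local form of $\rpot_C$.} Let $g = \sum_{m \in \supp g} a_m x^m$ be a Laurent polynomial with $C\cap\torus=\{g=0\}$. On $\torus$, $\log|g|$ is a local potential for $[C]$, while the support function of $\bar C$ is $\sfn_{\bar C}(v) = \max_m(-m\cdot v)$, so $\log\max_m|x^m|$ is a local potential for $\bar C$. Thus $\rpot_C = \log|g| - \log\max_m|x^m|$ is a potential for $[C] - \bar C$. Now choose a smooth toric surface $X$ whose fan $\Sigma(X)$ refines the normal fan of the Newton polytope of $g$; then on each sector $\sigma\in\Sigma_2(X)$ there is a unique vertex $m^*(\sigma)\in\supp g$ that minimizes $m\cdot v$ over $v\in\sigma$. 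In the associated $\sigma$-coordinates $(y_1,y_2)$ on the closed polydisk $\overline{\Log^{-1}(\sigma)}$, one has $\log\max_m|y^m| = \log|y^{m^*(\sigma)}|$, so
\[
\rpot_C(y) = \log\bigl|g(y)/y^{m^*(\sigma)}\bigr| = \log\Bigl|\textstyle\sum_{m} a_m y^{m-m^*(\sigma)}\Bigr|.
\]
Each exponent $m-m^*(\sigma)$ lies in $\sigma^\vee\cap M$ and so has non-negative coordinates in $\sigma$-coordinates. Hence $g/y^{m^*(\sigma)}$ is a holomorphic function on the entire polydisk, with value $a_{m^*(\sigma)}\neq 0$ at the $\torus$-invariant point $p_\sigma$ and zero locus equal to the strict transform of $C$ in $X$.

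\textbf{Continuous extension and compactness.} The formula $\log|g/y^{m^*(\sigma)}|$ defines a continuous function on $\overline{\Log^{-1}(\sigma)}$ away from the strict transform of $C$. On an overlap along a shared ray $\tau = \sigma\cap\tilde\sigma$, the two choices of dominant monomial satisfy $(m^*(\sigma)-m^*(\tilde\sigma))\cdot v = 0$ for all $v\in\tau$ (both minimize there), so $|y^{m^*(\sigma)-m^*(\tilde\sigma)}| = 1$ on the pole $C_\tau^\circ$ and the two local formulas agree. Gluing thus produces a continuous function on $\rzt\setminus C$ that restricts to $\rpot_C$ on $\rztO\setminus C$. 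Since $\rzt$ is compact, $\rzt\setminus U$ is compact for any open $U\supset C$, so $\rpot_C$ is bounded on it, and hence on $\rztO\setminus U\subset \rzt\setminus U$. Summing the single-curve bounds produces the bound for $\rpot_E$ outside any neighborhood of $\supp E$, and in particular outside some compact subset of $\rztO$.

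\textbf{Main obstacle.} The substantive step is arranging that the fan $\Sigma(X)$ is simultaneously smooth and refines the normal fan of the Newton polytope of $g$, so that $m^*(\sigma)$ is unambiguous on each maximal cone. After that, the delicate point is verifying that the local holomorphic expressions $g/y^{m^*(\sigma)}$ glue continuously across shared rays to yield a single continuous function on $\rzt\setminus C$—i.e. checking that $|y^{m^*(\sigma)-m^*(\tilde\sigma)}| = 1$ on the relevant boundary, which follows directly from both vertices attaining the same minimum of $m\cdot v$ on $\tau$.
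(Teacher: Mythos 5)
Your proof is correct, but it takes a genuinely different, more computational route than the paper's. You reduce to a single internal curve $C = \overline{\{g=0\}\cap\torus}$, choose a toric surface $X$ whose fan refines the normal fan of the Newton polytope of $g$, and paste the explicit local potentials $\log\bigl|g/y^{m^*(\sigma)}\bigr|$ across the closed polydisks $\overline{\Log^{-1}(\sigma)}$. The paper argues more softly: since $\supp E$ and (via the homogenization formula \eqref{eqn:homogenization}) $\supp\bar E$ are both compactly contained in $X^\circ$ for \emph{any} sufficiently dominant toric $X$, the potential $\rpot_E$ is pluriharmonic near every $\torus$-invariant point of $X$ and so extends across those points by Hartogs; combined with continuity of local potentials for $\bar E$ on $X^\circ$ (Proposition~\ref{prop:bartonpole}), this yields continuity of $\rpot_E$ on the compact set $X\setminus\supp E$ with no choice of adapted fan or defining polynomial needed. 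Your version costs extra bookkeeping but buys an explicit formula for the extension. Two small points to watch: if the Newton polytope of $g$ is a segment rather than two-dimensional (as happens when $C$ is the closure of a translated subtorus), the normal fan consists of two half-planes and must be subdivided before $m^*(\sigma)$ is uniquely determined on the interior of each cone; and your claim that $|y^{m^*(\sigma)-m^*(\tilde\sigma)}|=1$ ``on the pole $C_\tau^\circ$'' is stronger than what is true, since $k:=m^*(\sigma)-m^*(\tilde\sigma)$ annihilates $v_\tau$ but need not vanish, so $y^k$ restricts to a nonconstant character of $C_\tau^\circ$---the correct and sufficient statement is that $|y^k|=1$ on the circle $\overline{\Log^{-1}(\sigma)}\cap\overline{\Log^{-1}(\tilde\sigma)}\cap C_\tau^\circ$ where the two closed polydisks actually meet, which is all the pasting lemma requires.
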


\begin{proof}
Since $\supp E$ is compact in $\rztO$, we can choose a toric surface $X$ be a such that $E$ is compactly contained in $X^\circ$.  It follows that the $\torus_\R$-average $E_{ave}$ of $E$ is also compactly supported in $X^\circ$. Formula (\ref{eqn:homogenization}) implies that $\supp\bar E \setminus \torus \subset \supp E_{ave}\setminus\torus $, so it further follows that $\bar E$ is compactly supported in $X^\circ$.  Thus, $\rpot_E|_{X^\circ}$ extends to $X$ as a function that is pluriharmonic in a neighborhood of each $\torus$-invariant point of $X$.  On the other hand, local potentials for $\bar E$ are continuous near any point in $X^\circ$, so it follows that $\rpot_E$ is continuous on $X\setminus\supp E$ and therefore bounded outside any open $U\subset X$ that contains $\supp E$.  Shrinking so that $U\subset X^\circ \subset \rztO$ concludes the proof.
\end{proof}

We return to the proof of Theorem \ref{thm:pullback}.  Potentials on $\rztO$ are unique up to additive constants, so adding and subtracting $\bar E$ on the right side of \eqref{eqn:id1}, applying Lemma \ref{lem:bdedepot} and dropping $dd^c$'s, leads to the identity
$$
\rpot_{f^*\bar T}
=
\sfn_{\bar T}\circ\Log\circ f - \sfn_{\overline{f^*T}}\circ\Log + \rpot_E - \sfn_{\bar E}\circ\Log + C_1
$$
for some constant $C_1$ and support function $\sfn_{\bar E}$ for $\bar E$.  Rearranging gives
\begin{equation}
\label{eqn:bothsideszero}
\rpot_{f^*\bar T} - v - \rpot_E
=
(\sfn_{\bar T}\circ \tropf - \sfn_{\overline{f^*T}} + \sfn_{\bar E})\circ \Log,
\end{equation}
where the function in parentheses on the right is positively homogeneous; and on the left side
$$
v := \sfn_{\bar T}\circ \Log\circ f - \sfn_{\bar T}\circ \tropf\circ \Log - C_1.
$$
We claim that both sides of \eqref{eqn:bothsideszero} actually vanish.

To see this, note that the left side of \eqref{eqn:bothsideszero} must also be homogeneous.  Hence it does not change if we average with the action of $\torus_\R$ and then homogenize (as in \eqref{eqn:homogenization}) each of the three terms on the left side.  On the other hand, $dd^c \rpot_{f^*T}$ and $dd^c \rpot_E$ are internal currents representing the trivial class in $\hoo(\rzt)$.  Since $0$ is the unique homogeneous current representing this class, it follows that the homogenizations of $\rpot_{f^*T}$ and $\rpot_E$ both vanish.

Concerning $v$, we note that since $\sfn_{\bar T}$ is convex and positively homogeneous, it is uniformly Lipschitz on $N_\R$.  Let $C_2<\infty$ be the Lipschitz norm.  Since $\exc(f)$ is internal and therefore compactly supported in $\rztO$, Corollary \ref{cor:star} tells us that it lies in the interior of a star $Q$ directed by finitely many rays in $N_\R$.  We invoke Theorem \ref{thm:tropapprox} to obtain a constant $C_3(Q)$ such that
$$
|v| \leq C_1 + C_2\norm{\Log\circ f - \tropf\circ \Log} \leq C_1 + C_2C_3.
$$
on $\torus\setminus Q$.  It follows that the homogenization of $v$ vanishes outside
$\Log(Q\cap\torus)$.  In particular, it vanishes asymptotically along all but finitely many rays of $N_\R$.  By homogeneity and continuity it vanishes altogether, proving our claim about the left side of \eqref{eqn:bothsideszero}.  Thus
$$
\rpot_{f^*\bar T} = \rpot_E + v.
$$
Proposition \ref{prop:bartonpole} tells us that local potentials for $\bar T$ are continuous, so local potentials for $f^*\bar T$ are also continuous except at points in $\ind(f)$.  Thus $\rpot_{f^*\bar T}$ is continuous on $\rztO\setminus\ind(f).$  On the other hand, we have already seen that $\rpot_E$ and then $v$ are bounded outside the compact set $Q\subset\rztO$.  Hence $\rpot_{f^*\bar T}$ is bounded outside $Q$.  Since $Q$ is compact, and $\rpot_{f^*\bar T}$ is continuous away from $\ind(f)$, we conclude that $\rpot_{f^*\bar T}$ is bounded outside any neighborhood of $\ind(f)$, which is the first conclusion of Theorem \ref{thm:pullback}.

Assume now, to get the second conclusion that $T$ is positive.  Then $\bar T$ and $f^*\bar T\geq 0$ are positive, too, so that local potentials for each are bounded above on compact sets.  On the other hand, we have from Proposition \ref{prop:bartonpole}, that local potentials for $\bar T^*$ are also uniformly bounded below on compact sets.  It follows that we can subtract a constant from $\rpot_{f^*\bar T^*}$ to arrange $\rpot_{f^*\bar T^*} \leq 0$ on a neighborhood of each of the finitely many points of $\ind(f)$.  By the first conclusion we can increase the constant so that $\rpot_{f^*\bar T^*}\leq 0$ everywhere on $\rztO$.

To obtain the third conclusion, we recall from the discussion after Theorem \ref{thm:homogenization} that if $\rpot_{f^*\bar T,ave}$ is the $\torus_\R$-average of $\rpot_{f^*\bar T}$, then (up to an additive constant)
$$
\sfn_{f^*\bar T,ave}\circ \Log = \sfn_{\overline{f^* T}}\circ \Log + \rpot_{f^* \bar T,ave}.
$$
Hence $\sfn_{f^*\bar T}$ differs from a positively homogeneous support function by a function that is bounded outside $\Log(Q \cap \torus)$ for the compact star $Q$.  So Proposition \ref{prop:nearlyhomogenough} implies that $\sfn_{f^*\bar T}$ is nearly homogeneous.
\end{proof}

The analog of Theorem \ref{thm:pullback} for pushforwards also holds.

\begin{thm}
\label{thm:pushforward}
Suppose that $f:\rztO\to\rztO$ is a toric map whose tropicalization $\tropf:N_\R\to N_\R$ is injective (hence a homeomorphism).  Let $T \in\pcc(\rzt)$ be an internal current and $\bar T$ be its homogenization.
\begin{enumerate}
 \item The potential $\rpot_{f_*\bar T}$ for $f_*\bar T - \overline{f_* T}$ is continuous and bounded outside any neighborhood of $f(\exc(f))$.
 \item If $T$ is positive, then $\rpot_{f_*\bar T}$ can be chosen to be negative everywhere on $\rztO$.
 \item The support function $\sfn_{f_* \bar T}$ for $f_*\bar T$ is nearly homogeneous.
\end{enumerate}
\end{thm}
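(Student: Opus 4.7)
The proof would parallel that of Theorem~\ref{thm:pullback}, with pushforwards replacing pullbacks, $\tropf^{-1}$ replacing $\tropf$, and $f(\exc(f))$ replacing $\ind(f)$. Since $\tropf$ is a positively homogeneous, piecewise-linear homeomorphism of $N_\R$ with finitely many linear pieces, its inverse $\tropf^{-1}$ is positively homogeneous, continuous, and globally Lipschitz on $N_\R$; the covering formula of Theorem~\ref{thm:tropicalization}(5) further gives $\dtop(f)=|\rho(f)|$. As in the pullback proof, I would reduce to the case $T\geq 0$, so that $\sfn_{\bar T}$ is convex and hence Lipschitz on $N_\R$.

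I would push forward the identity $\bar T-D_T=dd^c(\sfn_{\bar T}\circ\Log)$ and subtract the analogous identity for $\overline{f_*T}$ to obtain
\begin{equation*}
dd^c\rpot_{f_*\bar T}=dd^c\bigl(f_*(\sfn_{\bar T}\circ\Log)-\sfn_{\overline{f_*T}}\circ\Log\bigr)+f_*D_T-D_{\overline{f_*T}}.
\end{equation*}
By Proposition~\ref{prop:intext}, $f_*D_T-D_{\overline{f_*T}}=E-D_E$, where $E$ is an internal divisor supported on the finite union of internal curves $f(\ind(f))$ and $D_E$ is an external divisor cohomologous to $E$; Lemma~\ref{lem:bdedepot} then supplies a potential $\rpot_E$ for $E-\bar E$ that is bounded outside any neighborhood of $\supp E$. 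To extract the tropical contribution, I would set
\begin{equation*}
v:=f_*(\sfn_{\bar T}\circ\Log)-\dtop(f)\,\sfn_{\bar T}\circ\tropf^{-1}\circ\Log-C_1
\end{equation*}
for an appropriate constant $C_1$ and show $|v|$ is uniformly bounded outside a compact star $Q\subset\rztO$. Fixing a relatively compact open neighborhood $U$ of $\exc(f)$, properness of $f$ (Corollary~\ref{COR:PROPER_AND_COVER}) gives that $f(\bar U)$ is compact, hence contained in some compact star $Q$. For $p\in\torus\setminus Q$, every preimage $q\in f^{-1}(p)$ lies outside $U$, so Theorem~\ref{thm:tropapprox}(1) gives $\|\Log f(q)-\tropf\Log q\|\leq C$; applying the Lipschitz map $\tropf^{-1}$ yields $\|\Log q-\tropf^{-1}\Log p\|\leq LC$, and Lipschitz continuity of $\sfn_{\bar T}$ then controls each of the $\dtop(f)$ summands in $f_*(\sfn_{\bar T}\circ\Log)(p)$ within $O(1)$ of $\sfn_{\bar T}(\tropf^{-1}\Log p)$. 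Hence $|v|=O(1)$ on $\torus\setminus Q$.

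Substituting into the earlier identity and rearranging as in the pullback proof (using Lemma~\ref{lem:bdedepot} and $dd^c(\sfn_{\bar E}\circ\Log)=\bar E-D_{\bar E}$) gives
\begin{equation*}
\rpot_{f_*\bar T}-v-\rpot_E=\bigl(\dtop(f)\,\sfn_{\bar T}\circ\tropf^{-1}-\sfn_{\overline{f_*T}}+\sfn_{\bar E}\bigr)\circ\Log,
\end{equation*}
whose right-hand side is a positively homogeneous function composed with $\Log$; the averaging/homogenization argument from the pullback proof then forces both sides to vanish, so $\rpot_{f_*\bar T}=\rpot_E+v$ up to an additive constant. For Conclusion~(1), I would verify that $f_*\bar T$ has continuous local potentials on $\rztO\setminus f(\exc(f))$ by factoring through a resolution $\pi:\tilde X\to\rztO$ of $\ind(f)$: each exceptional divisor $\pi^{-1}(q)$ for $q\in\ind(f)$ maps under $\tilde f:=f\circ\pi$ onto the $1$-dimensional internal curve $f(q)\subset f(\ind(f))$ and is therefore not contracted, so $\tilde f(\exc(\tilde f))=f(\exc(f))$; pushing the continuous potential of $\pi^*\bar T$ forward by $\tilde f$ then yields continuous local potentials of $f_*\bar T=\tilde f_*(\pi^*\bar T)$ on $\rztO\setminus f(\exc(f))$. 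Combining this with $\rpot_{f_*\bar T}=\rpot_E+v+\text{const}$ and the boundedness of $\rpot_E+v$ outside the compact star $Q$ establishes Conclusion~(1). Conclusions~(2) and~(3) follow exactly as in the pullback proof: positivity and Proposition~\ref{prop:bartonpole} permit normalizing $\rpot_{f_*\bar T}\leq 0$, and Proposition~\ref{prop:nearlyhomogenough} certifies near-homogeneity of $\sfn_{f_*\bar T}$. I expect the main technical obstacle to be executing the resolution-of-indeterminacy argument precisely at points in $f(\ind(f))\setminus f(\exc(f))$, where the preimage structure of $f$ demands a careful analysis of how continuous potentials of $\pi^*\bar T$ interact with ramification of $\tilde f$ along poles meeting the fiber.
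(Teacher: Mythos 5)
Your proposal follows essentially the same approach as the paper's: reduce to $T\geq 0$, push forward the support-function identity, isolate the internal divisor $E$ supported on $f(\ind(f))$ via Proposition~\ref{prop:intext}, bound the tropical defect $v=f_*(\sfn_{\bar T}\circ\Log)-\dtop\,\sfn_{\bar T}\circ\tropf^{-1}\circ\Log$ by applying Theorem~\ref{thm:tropapprox}(1) to each of the $\dtop$ preimages (using that these stay away from $\exc(f)$ once $p$ leaves a compact star containing $f(\overline U)$) together with the Lipschitz constants of $\tropf^{-1}$ and $\sfn_{\bar T}$, and then run the same homogenization cancellation to conclude $\rpot_{f_*\bar T}=\rpot_E+v$. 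Conclusions (2) and (3) then follow exactly as in the pullback case.

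The one place you supply more detail than the paper is the continuity of $\rpot_{f_*\bar T}$ off $f(\exc(f))$, via the resolution-of-indeterminacy argument. Your assertion $\tilde f(\exc(\tilde f))=f(\exc(f))$ is not automatic: a resolution $\pi$ of a single $q\in\ind(f)$ generally has several irreducible components over $q$, and while their union surjects onto $f(q)$, individual components may be contracted by $\tilde f=f\circ\pi$ to points lying on $f(\ind(f))\setminus f(\exc(f))$. The continuity conclusion nevertheless survives: any such contracted component $C\subset\pi^{-1}(q)$ is a compact rational curve, so a continuous local psh potential $u\circ\pi$ for $\pi^*\bar T$ restricts to a constant on $C$, and hence $\tilde f_*(u\circ\pi)$ extends continuously through $\tilde f(C)$. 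You flag this exact technicality in your closing paragraph; the paper leaves it implicit in ``Mostly the argument is the same as for Theorem~\ref{thm:pullback},'' so you are filling a genuine (if small) gap rather than deviating from the intended argument.
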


\begin{proof}
Mostly the argument is the same as for Theorem \ref{thm:pullback}, with $f^*$ replaced by $f_*$ throughout.  The extra hypothesis gives us the necessary control of the difference 
\begin{align*}
v = f_*(\sfn_{\bar T}\circ\Log) - \dtop(\tropf^{-1} \sfn_{\bar T}) \circ\Log
\end{align*} 
as follows.   
If $U\subset \rztO$ is any neighborhood of $f(\ind(f))$, then by Theorem \ref{thm:tmapbasics}(1) there is a neighborhood $V\subset\rztO$ of $\exc(f)$ such that $f(V)\subset U$.  Hence any $p\in \torus \setminus U$ has exactly $\dtop$ distinct preimages $q\in \rztO$, and all such $q$ lies in $\torus\setminus V$.  Hence by Theorem \ref{thm:tropapprox}
$$
\norm{\tropf\circ\Log(q) - \Log(p)} \leq C
$$
for some constant $C = C(V)$.  Since $\tropf$ is a piecewise linear homeomorphism, it has an inverse that is piecewise linear and therefore uniformly Lipschitz.  Hence, after scaling $C$ by the Lipschitz constant of $\tropf^{-1}$, we obtain
$$
\norm{\Log(q) - \tropf^{-1}\circ \Log(q)} \leq C.
$$
Thus for $p \in \torus \setminus V$ the definition of pushfoward of a function gives us
\begin{align*}
v(p) = &\left\|\sum_{q \in f^{-1}(p)} \sfn_{\bar T} \circ \Log(q) - \sum_{q \in f^{-1}(p)} (\sfn_{\bar T} \circ \tropf^{-1}) \circ \Log p\right\| \\
&\leq
{\rm Lip}(\sfn_{\bar T}) \norm{\sum_{q \in f^{-1}(p)} (\Log(q) - \tropf^{-1}\circ\Log(p))}
\leq
C\dtop,
\end{align*}
where ${\rm Lip}(\sfn_{\bar T})$ denotes the Lipschitz constant of $\sfn_{\bar T}$.
\end{proof}

%%%%%% The few lines below here are the old proof of Theorem \label{thm:pushforward}.
%\begin{proof}
%Mostly the argument is the same as for Theorem \ref{thm:pullback}, with $f^*$ replaced by $f_*$ throughout.  The extra hypothesis gives us the necessary control of the difference $v = f_*(\sfn_{\bar T}\circ\Log) - A_{f*}\circ\Log$ as follows.
%If $U\subset \rztO$ is any neighborhood of $f(\ind(f))$, then there is a neighborhood $V\subset\rztO$ of $\exc(f)$ such that $f(V)\subset U$.  Hence any $p\in U\cap\torus$ has exactly $\dtop$ distinct preimages $q\in \rztO$, and all such $q$ lies in $\torus\setminus V$.  Hence by Theorem \ref{thm:tropapprox}
%$$
%\norm{\tropf\circ\Log(q) - \Log(p)} \leq C
%$$
%for some constant $C = C(V)$.  Since $\tropf$ is a piecewise linear homeomorphism, it has an inverse that is piecewise linear and therefore uniformly Lipschitz.  Hence, after scaling $C$ by the Lipschitz constant of $\tropf^{-1}$, we obtain
%$$
%\norm{\Log(q) - \tropf^{-1}(q)} \leq C.
%$$
%Thus
%$$
%\norm{f_* \Log(p) - \dtop A_{f*}\circ\Log(p)}
%=
%\norm{\sum_{f(q) = p} (\Log(q) - \tropf^{-1}\circ\Log(p))}
%\leq
%C\dtop.
%$$
%The fact that $\sfn_{\bar T}$ is Lipschitz then implies that $|v(p)|$ is uniformly bounded on $\torus\setminus U$ as in the proof of Theorem \ref{thm:pullback}.
%\end{proof}

\subsection{Equilibrium currents.}
\label{subsec:eq}

% 
% \edit{The following paragraph needs to be edited because we already defined $\dtop$ and $\ddeg$ in the introduction in the special
% case that $f: \bP^2 \rightarrow \bP^2$.}
% If $f:\rztO\tto \rztO$ is a toric rational map, then the topological degree $\dtop(f)$ of $f$ counts the number $\# f^{-1}(p)$ of preimages of a general point $p\in \rztO$.  We now recall an analogous quantity that tracks asymptotic growth of preimages of curves.  If $X$ is any toric surface, $f_X:X\tto X$ is the rational map obtained by extending $f|_\torus$ to $X$ and $C,C'\subset \rztO$ are internal curves, then the the \emph{(first) dynamical degree} of $f$ is given by
% $$
% \ddeg(f) := \lim_{n\to \infty} \isect{(f_X^n)^* C_X}{C'_X}_X^{1/n} = \lim_{n\to\infty} \isect{(f^n)^*C}{C'}^{1/n},
% $$
% In particular, the limit on the right exists and is independent of the choices of $X$, $C$, and $C'$.  

Following ideas of \cite{BFJ08} we showed in \cite{DiRo24} that when $f$ is internally stable, the first dynamical degree $\ddeg(f)$ described in \S\ref{sec:intro} is the leading eigenvalue of the operators $f^*$ and $f_*$ on $\hoo(\rztO)$.  With more restrictions, we showed that this can be seen in a particularly strong way on the level of currents.

\begin{thm}
\label{thm:invcurrentsexist}
Let $f:\rztO\tto\rztO$ be an internally stable toric rational map  with small topological degree whose tropicalization $\tropf$ is a homeomorphism with irrational rotation number. Then there are internal currents $T^*,T_*\in\pcc^+(\rzt)$ uniquely determined by the following conditions
\begin{itemize}
 \item $\isect{T^*}{T^*} = \isect{T^*}{T_*} = 1$.
 \item for any other internal current $T\in\pcc(\rzt)$, we have
 $$
  \lim_{n\to\infty} \frac{f^{n*} T}{\ddeg^n} = \isect{T}{T_*}T^*
 \qquad\text{and}\qquad
 \lim_{n\to\infty}\frac{f^n_* T}{\ddeg^n} = \isect{T}{T^*} T_*
 $$
\end{itemize}
\end{thm}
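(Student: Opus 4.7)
The plan is to prove existence by first analyzing $f^*$ and $f_*$ at the cohomological level, then lifting to the level of currents; uniqueness will follow because the convergence formulas together with the normalization determine $T^*$ and $T_*$ individually. At the cohomology level, I would use that $f^*$ and $f_*$ act on $\eltwo(\rzt)$, preserve the cone of nef classes, and are adjoint with respect to the intersection pairing. Small topological degree controls the action on the orthogonal complement of the leading eigendirection, while the irrational rotation number of $\tropf$ prevents any finite collection of pole classes from spanning an invariant subspace. A Perron--Frobenius-type argument in this infinite-dimensional cone setting then yields unique (up to scale) nef classes $\alpha^*, \alpha_* \in \hoo(\rzt)$ with $f^*\alpha^* = \ddeg(f)\,\alpha^*$ and $f_*\alpha_* = \ddeg(f)\,\alpha_*$. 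Normalize these by $\isect{\alpha^*}{\alpha^*} = \isect{\alpha^*}{\alpha_*} = 1$, using nefness and nontriviality to guarantee nonvanishing.

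To realize $T^*$ as an actual current in the class $\alpha^*$, let $\bar T^*$ be the homogeneous representative of $\alpha^*$ given by Theorem \ref{thm:homogenization}. Set $S_n := \ddeg^{-n}(f^n)^*\bar T^* = \ddeg^{-n} f^{*n} \bar T^*$ (equality by internal stability) and write $S_n = \bar T^* + dd^c \rpot_n$ with $\rpot_n$ normalized by $\rpot_n(p_0) = 0$ at a fixed $p_0 \in \torus_\R$. Iterated application of Theorem \ref{thm:pullback} shows that each step $S_n \mapsto S_{n+1}$ modifies the potential by a continuous nonpositive function that is bounded outside a neighborhood of $\ind(f)$. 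This yields a (nearly) decreasing sequence $\rpot_n$ with uniform lower bound coming from positivity of $S_n$ together with boundedness of local potentials of $\bar T^*$ (Proposition \ref{prop:bartonpole}). Monotone convergence in $L^1_{loc}(\rztO)$ produces $\rpot_{T^*} := \lim_n \rpot_n$, and one sets $T^* := \bar T^* + dd^c \rpot_{T^*}$, which is positive because each $S_n$ is. The construction of $T_*$ is entirely analogous using Theorem \ref{thm:pushforward}.

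For convergence with arbitrary internal $T$, decompose $T = \bar T + dd^c \rpot_T$. Then
$$
\frac{f^{n*}T}{\ddeg^n} = \frac{f^{n*}\bar T}{\ddeg^n} + dd^c \frac{\rpot_T \circ f^n}{\ddeg^n}.
$$
The first term converges to $\isect{\bar T}{T_*}\,T^* = \isect{T}{T_*}\,T^*$ by repeating the construction of $T^*$ applied to the class of $\bar T$; the scaling comes from the cohomology-level fact that $\ddeg^{-n} f^{n*}$ acts on $\eltwo(\rzt)$ by projection onto $\alpha^*$ with coefficient $\isect{\cdot}{\alpha_*}$. The correction term tends to $0$ weakly because small topological degree bounds $\|\rpot_T \circ f^n\|_{L^1_{loc}}$ by $\dtop(f)^n \|\rpot_T\|_{L^1}$ on compact sets (using that $f$ is $\dtop(f)$-to-$1$ off a subvariety, per Corollary \ref{COR:PROPER_AND_COVER}), and $\dtop(f)/\ddeg(f) < 1$. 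The pushforward statement is dual. Uniqueness is then immediate: any candidate $T^*$ must equal the limit of $\ddeg^{-n}f^{n*}T^*$, and the normalization fixes the scale.

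The main obstacle will be the convergence of the potentials $\rpot_n$ in $L^1_{loc}(\rztO)$. Because $\rztO$ is non-compact, standard compactness of families of plurisubharmonic functions is unavailable; instead one must establish uniform control on each star (Corollary \ref{cor:star}). Theorem \ref{thm:pullback}(1), applied $n$ times, gives boundedness only outside $\ind(f^\infty) = \bigcup_{k\geq 0} f^{-k}(\ind(f))$, and upgrading this to convergence of currents requires that this set be closed and discrete in $\rztO$. This discreteness is what the hypotheses of internal stability and irrational rotation number of $\tropf$ are designed to guarantee: they prevent orbits of indeterminacy points from accumulating inside $\rztO$, since under irrational rotation the preimage rays never concentrate onto a single ray. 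Combining discreteness of $\ind(f^\infty)$, the near-homogeneity of Theorem \ref{thm:pullback}(3), and the affine ambiguity in each $\rpot_n$ into a clean monotonicity argument is the delicate technical heart of the proof.
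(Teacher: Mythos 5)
The paper does not actually prove Theorem \ref{thm:invcurrentsexist} here; it cites it from \cite{DiRo24} (Theorems 10.1 and 10.6). The discussion following the statement, however, does sketch the construction of $T^*$ by the telescoping series \eqref{eqn:series}, and the paragraph of your proposal that sets $S_n := \ddeg^{-n} f^{n*}\bar T^*$, telescopes the potentials $\rpot_n$, and invokes Theorem \ref{thm:pullback} to make the increments nonpositive matches that sketch. So the construction of $T^*$ (and dually $T_*$) itself is essentially the intended argument, modulo the non-degeneracy of the limit, which the paper points to Corollary \ref{cor:cvgceonpoles} for rather than the ``positivity of $S_n$'' you invoke.

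Where your proposal breaks down is the full convergence statement for arbitrary internal $T\in\pcc(\rzt)$, and the paper itself flags exactly this. First, the claim that the ``first term'' $\ddeg^{-n}f^{n*}\bar T$ converges to $\isect{T}{T_*}T^*$ ``by repeating the construction of $T^*$ applied to the class of $\bar T$'' does not go through: the telescoping identity $\rpot_n = \sum_{j<n}\ddeg^{-j}\rpot\circ f^j$ relies on $\ch{\bar T^*}$ being $\ddeg^{-1}f^*$-invariant, so that all the currents $\ddeg^{-n}f^{n*}\bar T^*$ share a common homogeneous reference. For a generic $\bar T$, the classes $\ddeg^{-n}f^{n*}\ch{\bar T}$ all differ, there is no single telescope, and cohomological convergence of the classes (your Perron--Frobenius step) does not by itself give convergence of the currents. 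Second, your bound $\norm[L^1_{loc}]{\rpot_T\circ f^n}\lesssim \dtop(f)^n$ is false. The change-of-variables identity gives $\int_K (g\circ f^n)\,\omega^2 = \int g\, f^n_*\omega^2$, and $f^n_*\omega^2$ need not be dominated by a multiple of $\omega^2$ because $f$ contracts curves; in fact for a qpsh $\rpot_T$ the correct growth of $\norm[L^1]{\rpot_T\circ f^n}$ on a fixed compactification is governed by $\ddeg^n$, not $\dtop^n$, so $\ddeg^{-n}\rpot_T\circ f^n$ is merely bounded, not vanishing, by the naive estimate. The paper's Remark following equation~\eqref{eqn:series} states explicitly that ``pullbacks of more general positive internal currents, in particular those not cohomologous to multiples of $\bar T^*$,'' cannot be handled by this shorter argument and ``still seem to require the methods of our earlier paper,'' namely the volume estimates of \cite[Theorem 6.11]{DiRo24}. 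You would need to supply that ingredient (or an equivalent) to make the general convergence go through.
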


We will call $T_*$ and $T^*$ the forward and backward \emph{equilibrium currents} for $f$.  The hypotheses imposed on $f$ by Theorem \ref{thm:invcurrentsexist} will be default assumptions for the remainder of this section and throughout the rest of this paper.  The condition $\ddeg(f) > \dtop(f)$ implies among other things that $f$ is not a monomial map.  The condition that $\tropf$ has irrational rotation number implies that $f$ is not invertible, i.e. that $\dtop(f)>1$.

Our goal in this subsection is to prove several results about the equilibrium currents $T^*$ and $T_*$ that will be used below but were not established in \cite{DiRo24}.

\begin{prop}
\label{prop:iskahler}
Both currents $T^*$ and $T_*$ in Theorem \ref{thm:invcurrentsexist} represent K\"ahler classes in $\hoo(\rzt)$.
\end{prop}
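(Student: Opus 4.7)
The strategy is to exploit invariance and the irrational rotation hypothesis to establish Kählerness of $[T^*]_X$ and $[T_*]_X$ on each toric surface $X$, using the toric description of the Kähler cone in terms of strictly convex support functions. Since $T^*,T_*$ are internal positive currents, their classes are nef, and on each toric $X$ we have $[T^*]_X=[\bar T^*]_X$ and $[T_*]_X=[\bar T_*]_X$. For a smooth projective toric surface, the Kähler cone in $\hoo(X)$ coincides with the set of classes whose $\torus_\R$-invariant support function is strictly convex across each ray of $\Sigma_1(X)$. Demanding Kählerness on every $X$ therefore reduces to showing that $\sfn_{\bar T^*}$ and $\sfn_{\bar T_*}$ are strictly convex across \emph{every} rational ray of $N_\R$.

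To set up a dynamical argument, I would derive a functional equation for each support function. By Theorem~\ref{thm:pullback} the homogenization $\overline{f^*\bar T^*}$ has support function equal to $\sfn_{\bar T^*}\circ\tropf$ modulo a linear functional. The invariance $f^*T^*=\ddeg T^*$ descends to $\overline{f^*T^*}=\ddeg\bar T^*$, so
\[
\sfn_{\bar T^*}\circ\tropf \;=\; \ddeg\,\sfn_{\bar T^*} + \ell
\]
for some linear $\ell:N_\R\to\R$. Analogously, Theorem~\ref{thm:pushforward} combined with $f_*T_*=\ddeg T_*$ gives
\[
\dtop\,\sfn_{\bar T_*}\circ\tropf^{-1} \;=\; \ddeg\,\sfn_{\bar T_*} + \ell'
\]
for some linear $\ell'$, with scaling factor $\ddeg/\dtop>1$ by the small-topological-degree hypothesis.

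The crux is a minimality-plus-rigidity step, which I would carry out by contradiction. Suppose $\sfn_{\bar T^*}$ is affine on some open cone $\sigma\subset N_\R$. Since $\tropf$ is piecewise linear with only finitely many break rays (namely $\Sigma_1(f)$), the functional equation propagates affineness from $\sigma$ to affineness on every open subsector of $\tropf^{-n}(\sigma)$ that avoids the break rays of $\tropf^n$. The hypothesis that $\tropf$ has irrational rotation number ensures that the induced homeomorphism on the circle of rays $S^1$ is minimal, so the open backward-$\tropf$-invariant set $\bigcup_{n\geq 0}\tropf^{-n}(\sigma)\subset S^1$ must equal all of $S^1$. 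Thus $\sfn_{\bar T^*}$ is affine on an open dense subset of $N_\R$, and I would then conclude from the scaling $\ddeg>1$ in the functional equation that $\sfn_{\bar T^*}$ must be globally affine, contradicting $[T^*]\ne 0$ (indeed $[T^*]^2=1$). The parallel argument with $\tropf$ replaced by $\tropf^{-1}$ and scaling $\ddeg/\dtop>1$ treats $\sfn_{\bar T_*}$.

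The main obstacle I anticipate is the last rigidity step: a convex positively homogeneous function affine on an open dense subset of $S^1$ need not be globally affine in the absence of further constraints, since the complementary (countable or Cantor-like) set of break rays could still carry convex bending. Resolving this requires leveraging the scaling factor $\ddeg>1$ in the functional equation — concretely, the positive measure $dd^c\sfn_{\bar T^*}$ on $S^1$ must pull back under the circle homeomorphism $\tropf|_{S^1}$ to $\ddeg$ times itself (modulo finite correction from break rays), whereas a homeomorphism of $S^1$ preserves total mass, forcing $dd^c\sfn_{\bar T^*}=0$ and hence global affineness. Verifying this scaling relation carefully, especially at the break rays of $\tropf$, is where the bulk of the work lies.
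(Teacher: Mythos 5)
Your approach is genuinely different from the paper's, and while the dynamical intuition (propagate non-degeneracy around the circle of rays using irrational rotation and invariance) is the right one, both of the ingredients you rely on have problems. The paper's proof is considerably shorter and avoids support functions entirely: it starts from $\isect{T^*}{C_\tau}>0$ for some pole $C_\tau$, propagates this to $\isect{T^*}{C_{\tropf^n(\tau)}}=c_n\ddeg^n\isect{T^*}{C_\tau}>0$ via the adjunction $\isect{f^{n*}T^*}{C_\tau}=\isect{T^*}{f^n_*C_\tau}$, observes that the orbit $\{\tropf^n(\tau)\}$ is dense by the irrational rotation hypothesis, and then for any toric surface $X$ and pole $C_{\tau'}\subset X$ finds $\tropf^n(\tau)$ in the interior of a sector bounded by $\tau'$ so that $\isect{T^*_X}{C_{\tau'}}_X\geq\isect{T^*}{C_{\tropf^n(\tau)}}>0$ on a dominating surface $Y\succ X$. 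This avoids any rigidity lemma about convex functions.

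There are two concrete problems with your version. First, your functional equation $\sfn_{\bar T^*}\circ\tropf=\ddeg\,\sfn_{\bar T^*}+\ell$ is not quite correct. The proof of Theorem~\ref{thm:pullback} (equation~\eqref{eqn:bothsideszero} and the surrounding argument) gives
\[
\sfn_{\overline{f^*T^*}} \;=\; \sfn_{\bar T^*}\circ\tropf \;+\; \sfn_{\bar E}
\]
up to a linear function, where $E$ is the internal divisor supported on $\exc(f)$ coming from $f^*D_{T^*}$ in Proposition~\ref{prop:intext} and $\bar E$ is its homogenization. Since $\ch{E}$ is a nontrivial nef class (the exceptional curves are compact internal curves), $\sfn_{\bar E}$ is a nontrivial piecewise-linear convex function, not a linear one. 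With $\sfn_{\overline{f^*T^*}}=\ddeg\,\sfn_{\bar T^*}$ the correct equation is $\sfn_{\bar T^*}\circ\tropf=\ddeg\,\sfn_{\bar T^*}-\sfn_{\bar E}+\ell$. This adds to the set of break rays you must route around at each step, and after iterating $n$ times the set of exceptional break rays grows without bound, so the density argument for the backward orbit of an affine cone becomes more delicate than you describe. Second, the gap you acknowledge yourself is real: a convex positively homogeneous function can perfectly well be affine on an open dense subset of the circle of rays and still be strictly convex. Your proposed repair via a mass-transport argument on the curvature measure is plausible in spirit, but it is not a routine verification — the pullback of the second-derivative measure by the piecewise-linear circle map $\tropf|_{S^1}$ involves Jacobian factors and atomic corrections at the break rays of both $\tropf$ and $\bar E$, and you would need to control these precisely to extract the contradiction. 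The paper's proof dispenses with both of these difficulties at once by working directly with intersection numbers.
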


\begin{proof}
We give the argument for $\ch{T^*}$.  The argument for $\ch{T_*}$ is identical.  Fix a toric surface $X$.  Since $T^*$ is positive, non-trivial and internal, we have $\isect{T^*}{C_\tau} > 0$ for \emph{some} pole $C_\tau\subset\rztO$.  So for every $n\geq 0$, we have
$$
\isect{T^*}{C_{\tropf^n(\tau))}}
=
\isect{T^*}{f^n(C_{\tau})}
=
c_n \isect{T^*}{f^n_* C_\tau}
=
c_n\isect{f^{n*} T^*}{C_\tau}
=
c_n\ddeg^n \isect{T^*}{C_\tau} > 0,
$$
where $c_n = \deg(f^n|_{C_\tau})^{-1}$.  Since $\tropf$ has irrational rotation number, we conclude that $T^*$ has positive intersection number with poles indexed by a dense set of rational rays in $N_\R$.

Now fix a K\"ahler surface $X$, a sector $\sigma\in \Sigma_2(X)$, and $n\geq 0$ such that $\tropf^n(\tau)$ lies in the interior of $\sigma$.  Let $Y\succ X$ be a toric surface such that $f^n(\tau)\in\Sigma_1(Y)$.  If $\tau'\in\Sigma_1(X)$ is one of the rays bounding $\sigma$, then $\pi_{YX}^* C_{\tau'}\in \div(Y)$ is an effective $\Z$-divisor that includes $C_{f^n(\tau)}$ in its support.  Hence
$$
\isect{T^*_X}{C_{\tau'}}_X = \isect{\pi_{YX*} T^*_Y}{C_{\tau'}}_X = \isect{T^*_Y}{\pi_{YX}^* C_{\tau'}}_Y \geq \isect{T^*_Y}{C_{f^n(\tau)}}_Y
\geq 
\isect{T^*}{C_{f^n(\tau)}} > 0.
$$
Thus $\ch{T^*}$ has positive intersection with every pole of $X$, which implies that $\ch[X]{T^*_X}$ is K\"ahler.  We conclude that, by our definition, $\ch{T^*}$ is K\"ahler in $\hoo(\rzt)$.
\end{proof}

To prove existence of the equilibrium currents in \cite{DiRo24}, we first established the existence of invariant \emph{classes} $\ch{\bar T^*}, \ch{T_*} \in \hoo(\rztO)$ and then employed a telescoping series argument, which goes as follows for $T^*$.  By uniqueness of the homogeneous current $\bar T^*$ representing $\ch{T^*}$ and invariance, we have that $\overline{f^*\bar T^*} = \ddeg \bar T^*$.  Let, therefore, $\rpot := \rpot_{\ddeg^{-1} f^*T^*}$ be a potential for $\ddeg^{-1} f^*T^*-\bar T^*$.  We showed that the sum
\begin{equation}
\label{eqn:series}
\rpot_{T^*} := \sum_{n=0}^\infty \frac{\rpot\circ f^n}{\ddeg^n}
\end{equation}
converges in $L^1_{loc}(\rztO)$, and that the backward equilibrium current is then $T^* := \bar T^* + dd^c\rpot_{T^*}$.  

Theorem \ref{thm:pullback} allows us to assume that $\rpot$ is negative.  Since all partial sums $S_n$ of the series~\eqref{eqn:series} satisfy $dd^c S_n \geq - \bar T^*$, we have the following strengthening of the results in \cite{DiRo24}.

\begin{cor} The partial sums of the series \eqref{eqn:series} defining $\rpot_{T^*}$ are monotone decreasing and converge pointwise to $\rpot_{T^*}$.  The same is true of the analogous series defining $T_*$.
\end{cor}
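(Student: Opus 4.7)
The plan is to combine the sign control from Theorem~\ref{thm:pullback} with the curvature bound that the excerpt already records. Applying part~(2) of Theorem~\ref{thm:pullback} to the positive internal current $T = T^*$, I would first arrange that $\rpot \leq 0$ everywhere on $\rztO$. Then each summand $\ddeg^{-n}\rpot\circ f^n$ is nonpositive, so the partial sums
$$
S_N := \sum_{n=0}^N \frac{\rpot\circ f^n}{\ddeg^n}
$$
automatically satisfy $S_{N+1} \leq S_N$ pointwise on $\rztO$. This is the monotone decreasing assertion.

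For pointwise convergence to $\rpot_{T^*}$, the key observation is that monotonicity forces the limit $S_\infty(x) := \lim_{N\to\infty} S_N(x)$ to exist at every point $x \in \rztO$, as an element of $[-\infty,\infty)$. The curvature inequality $dd^c S_N \geq -\bar T^*$ flagged in the excerpt says each $S_N$ is $\bar T^*$-quasiplurisubharmonic with a uniform lower bound on the curvature. A standard fact from pluripotential theory then applies: a monotone decreasing sequence of qpsh functions with a common curvature lower bound has pointwise limit either identically $-\infty$ or itself qpsh, and in the latter case this limit coincides almost everywhere with any $L^1_{loc}$ limit of the sequence. Since \cite{DiRo24} already provides an $L^1_{loc}$ limit $\rpot_{T^*}$ that is finite almost everywhere, the first alternative is ruled out and $S_\infty = \rpot_{T^*}$ almost everywhere. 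Because $\rpot_{T^*}$ is determined by the series only up to almost everywhere equivalence, I would (re)define it to equal its monotone pointwise limit $S_\infty$, which is the canonical upper semicontinuous representative; the claimed pointwise convergence is then immediate.

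For the forward current $T_*$, the argument has the same shape: substitute Theorem~\ref{thm:pushforward}(2) for Theorem~\ref{thm:pullback}(2) to arrange that the analogous potential for $\ddeg^{-1} f_*\bar T_* - \bar T_*$ is nonpositive, and use internal stability in the form $(f_*)^n = (f^n)_*$ so that the partial sums telescope to give $dd^c S_N \geq -\bar T_*$. I do not expect a genuine obstacle anywhere; the entire corollary is a soft consequence of the negativity of the base potentials provided by Theorems~\ref{thm:pullback} and \ref{thm:pushforward} together with the compatibility of monotone decreasing and $L^1_{loc}$ limits for qpsh sequences. The only place requiring mild care is the bookkeeping to ensure that the chosen upper semicontinuous representative of $\rpot_{T^*}$ (respectively $\rpot_{T_*}$) is consistent with the one implicitly used in \cite{DiRo24}.
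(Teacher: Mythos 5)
Your proposal is correct and takes essentially the same approach as the paper, which compresses the entire argument into the sentence preceding the corollary: negativity of $\rpot$ from Theorem~\ref{thm:pullback}(2) gives monotonicity, and the uniform curvature bound $dd^c S_N \geq -\bar T^*$ combined with the known $L^1_{loc}$ limit from \cite{DiRo24} forces the pointwise limit to be finite a.e.\ and to agree with $\rpot_{T^*}$. You have simply spelled out the standard pluripotential-theoretic step (a monotone decreasing sequence of $\bar T^*$-qpsh functions either tends to $-\infty$ or has a qpsh limit agreeing a.e.\ with the $L^1_{loc}$ limit) that the paper leaves implicit.
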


Recall the `extended indeterminacy sets' $\ind(f^\infty)$ and $\ind(f^{-\infty})$ defined for internally stable toric maps after Definition \ref{defn:intstable}.

\begin{cor}
\label{cor:cvgceonpoles}
Under the hypotheses of Theorem \ref{thm:invcurrentsexist}, we have the following for any pole $C_\tau\subset\rztO$.
\begin{enumerate}
\item The intersections $C_\tau\cap \ind(f^\infty)$ and $C_\tau\cap \ind(f^{-\infty})$ are finite.
\item $\rpot_{T^*}|_{C_\tau}$ is continuous off $\ind(f^\infty)$ and $\rpot_{T_*}|_{C_\tau}$ is continuous off $\ind(f^{-\infty})$.
\item There is a uniform constant $C$, independent of $\tau$, such that if $C_\tau$ does not meet $\ind(f^\infty)$, then $|\rpot_{T^*}|\leq C$ on $C_\tau$, and if $C_\tau$ does not meet $\ind(f^{-\infty})$ then $|\rpot_{T_*}|\leq C$ on $\tau$. 
\end{enumerate}
\end{cor}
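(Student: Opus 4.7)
The plan is to exploit the monotone series $\rpot_{T^*}=\sum_{n\geq 0}\ddeg^{-n}\rpot\circ f^n$, where by Theorem \ref{thm:pullback} the potential $\rpot$ can be chosen nonpositive, is continuous outside $\ind(f)$, and satisfies $-M\leq\rpot\leq 0$ on $\rztO\setminus U_0$ for any fixed neighborhood $U_0\supset\ind(f)$. The crucial combinatorial input is the following: letting $\tau_1,\dots,\tau_k$ denote the finitely many rays of poles meeting $\ind(f)$, irrationality of the rotation number of $\tropf$ guarantees that the $\tropf$-orbit of any ray $\tau$ visits $\{\tau_1,\dots,\tau_k\}$ only finitely many times.

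For (1), any $p\in C_\tau\cap\ind(f^\infty)$ admits a smallest $j\geq 0$ with $f^j(p)\in\ind(f)$; because $f$ maps $C_\tau$ to $C_{\tropf(\tau)}$ and poles are disjoint from $\torus$, this forces $f^j(p)\in C_{\tropf^j(\tau)}\cap\ind(f)$ and hence $\tropf^j(\tau)\in\{\tau_1,\dots,\tau_k\}$. Only finitely many $j$ qualify, and for each the finite covering $f^j|_{C_\tau^\circ}$ from Theorem \ref{thm:tropicalization}(4) contributes only finitely many $p$. For (3), when $C_\tau$ misses $\ind(f^\infty)$ entirely, the same surjectivity forbids $\tropf^n(\tau)\in\{\tau_1,\dots,\tau_k\}$ for every $n$ (else a preimage of $\ind(f)\cap C_{\tau_i}$ in $C_\tau$ would sit in $C_\tau\cap\ind(f^\infty)$). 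Choosing $U_0$ small enough that it meets no pole outside $C_{\tau_1},\dots,C_{\tau_k}$, we obtain $f^n(C_\tau)\cap U_0=\emptyset$ and hence $-M\leq\rpot\circ f^n\leq 0$ on $C_\tau$ for every $n$, so the geometric series sums to the $\tau$-uniform bound $|\rpot_{T^*}|\leq M\ddeg/(\ddeg-1)$ on $C_\tau$.

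For (2), I will fix $p\in C_\tau\setminus\ind(f^\infty)$ and take a compact neighborhood $K\subset C_\tau$ of $p$ disjoint from the finite set $C_\tau\cap\ind(f^\infty)$. There are only finitely many ``exceptional'' iterates $n$ with $\tropf^n(\tau)\in\{\tau_1,\dots,\tau_k\}$; for all other $n$ the previous bound $|\rpot\circ f^n|\leq M$ persists on $K$, while for each exceptional $n$ the image $f^n(K)$ is a compact subset of $C_{\tau_i}\setminus\ind(f)$ (by disjointness of $K$ from $f^{-n}(\ind(f))\cap C_\tau$), on which $\rpot$ is continuous and hence bounded. Combining these finitely many individual bounds with $M$ yields uniform convergence of the series on $K$, and since each term is continuous on $K$, so is the limit. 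The arguments for $\rpot_{T_*}$ are strictly parallel, substituting Theorem \ref{thm:pushforward} for Theorem \ref{thm:pullback}, the finite set $f(\exc(f))\subset\rztO\setminus\torus$ for $\ind(f)$, and backward orbits under $f$ for forward orbits. The step requiring the most care, and the one where the irrational rotation hypothesis is essential, is the finiteness of the exceptional iterates: in the rational rotation case a pole's $\tropf$-orbit would be periodic and could revisit $\{\tau_1,\dots,\tau_k\}$ infinitely often, ruining the geometric tail estimate.
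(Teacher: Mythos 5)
Your proof is correct and follows essentially the same strategy as the paper: use the negative, continuous potential $\rpot$ (bounded off any neighborhood of $\ind(f)$ by Theorem~\ref{thm:pullback}), the irrationality of the rotation number to get that the $\tropf$-orbit of $\tau$ meets the finitely many rays of poles carrying $\ind(f)$ only finitely often, and the disjointness of poles in $\rztO$ to build a $\tau$-independent neighborhood avoiding the whole orbit of $C_\tau$. The only presentational difference is that you bound the finitely many ``exceptional'' iterates separately rather than shrinking $U$ once to get a single neighborhood $V$ of $\ind(f)$ avoided by all of $f^n(U\cap C_\tau)$ as the paper does; the two are equivalent.
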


We will prove a much stronger version of this result for $\rpot_{T^*}$ below (see Theorem \ref{THM:CONTINUITY}).

\begin{proof}
The hypothesis that $\tropf$ is a homeomorphism with irrational rotation number implies that every ray in the bi-infinite sequence $(\tropf^n(\tau))_{n\in\Z}$ is distinct.  Since there are only finitely many points in $\ind(f)$ and $f(\exc(f))$, and $f^n(C_\tau) = C_{A^n_f(\tau)}$, we see that $f^n(C_\tau)$ contains points in $\ind(f)$ and $f(\exc(f))$ for only finitely many $n$.  This proves the first assertion. 

Given $p\in C_\tau\setminus \ind(f^\infty)$, it further follows that there are
neighborhoods $U,V\subset \rztO$ of $p$ and $\ind(f)$ and such that $f^n(U\cap
C_\tau)\cap V = \emptyset$ for all $n\in\N$.  Theorem \ref{thm:pullback} tells
us that $\rpot$ is continuous off $V$, so $\rpot\circ f^n$ is continuous on
$U\cap C_\tau$ for all $n\geq 0$.  Moreover, the values $|(\rpot\circ f^n)(q)|$
are uniformly bounded in both $n$ and $q$.  The series \eqref{eqn:series}
therefore converges uniformly to a continuous function $\rpot_{T^*}$ on $U$.
In the particular case that $f^n(C_\tau)\cap \ind(f) = \emptyset$ for all
$n\geq 0$, we have that $f^n(C_\tau) = C_{\tropf^n(\tau)}$ can only intersect a pole $C_{\tau'}$ containing some point of $\ind(f)$ at one of the two 
$\torus$-invariant points of $C_{\tau'}$.
We therefore can choose $U$ to contain all of $C_\tau$ and $V$ to be any neighborhood of $\ind(f)$ that meets only poles containing points in $\ind(f)$.  So $V$ is independent of $\tau$ in this case, which implies that the series \eqref{eqn:series} is uniformly bounded by a constant that does not depend on $\tau$.  This proves the second and third assertions for $\rpot_{T^*}$.  The arguments for $\rpot_{T_*}$ are identical.
\end{proof}

\begin{rem}
Theorem \ref{thm:pullback} and Corollary \ref{cor:cvgceonpoles} together give an alternative proof of the existence of the equilibrium current $T^*$, i.e. of the convergence of the series \eqref{eqn:series}.  As noted above, Theorem \ref{thm:pullback} allows one to assume that the partial sums of \eqref{eqn:series} are decreasing.  Corollary \ref{cor:cvgceonpoles} shows that the limit is not $-\infty$ everywhere on $\rztO$.  The fact that the partial sums are all $\bar T^*$-psh and Hartog's Compactness Theorem \cite[Theorem 1.26]{GZ_book} for psh functions therefore imply that the series \eqref{eqn:series} converges both pointwise and in $L^1$.  While this is shorter than the argument in \cite{DiRo24}, it is not sufficient for our purposes.  We need that pullbacks of more general positive internal currents, in particular those not cohomologous to multiples of $\bar T^*$, converge to multiples of $T^*$.  Showing this still seems to require the methods of our earlier paper.
\end{rem}

\begin{prop}
\label{prop:comesdetached}
Suppose that $f$ is a toric map satisfying the hypotheses of Theorem \ref{thm:invcurrentsexist} and $C\subset\rztO$ is a curve.  Then for all $n\geq 0$ large enough, we have
$$
(f^n(C)\setminus\torus)\cap \ind(f^\infty) = \emptyset = (f^{-n}(C)\setminus\torus)\cap \ind(f^{-\infty}).
$$ 
\end{prop}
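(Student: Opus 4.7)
The strategy is to identify the poles of $\rztO$ whose intersection with $\ind(f^\infty)$ is nonempty and then to use the irrational rotation number of $\tropf$ to show $f^n(C)\setminus\torus$ eventually avoids them. First I observe that, since $f$ maps pole points to pole points via $\tropf$ (Theorem \ref{thm:tmapbasics}(5)), any $p\in\ind(f^\infty)\setminus\torus$ lying on a pole $C_\beta$ has some iterate $f^k(p)$ in $\ind(f)\setminus\torus$, forcing $\tropf^k(\beta) = \tau_i$ for one of the finitely many rays $\tau_1,\dots,\tau_r$ of poles meeting $\ind(f)\setminus\torus$. Hence $\ind(f^\infty)\setminus\torus$ lies on poles indexed by rays in
$$
\mathcal{B} := \bigcup_{k\geq 0}\tropf^{-k}(\{\tau_1,\dots,\tau_r\}).
$$

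If $C = C_\tau^\circ$ is a pole, then $f^n(C)\setminus\torus \subset C_{\tropf^n(\tau)}$, so it suffices to show $\tropf^n(\tau)\notin\mathcal{B}$ for large $n$, i.e.\ $\tropf^{n+k}(\tau)\neq\tau_i$ for all $k\geq 0$ and all $i$. Because $\tropf$ has irrational rotation number, it has no periodic orbits on rays, so for each $i$ there is at most one $m_i\geq 0$ with $\tropf^{m_i}(\tau)=\tau_i$; taking $n$ past $\max_i m_i$ settles the pole case.

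If $C$ is internal, then $C\cap(\rztO\setminus\torus)$ is a finite set of points on poles $C_{\alpha_1},\dots,C_{\alpha_l}$, and the direct iterates of these contribute to $f^n(C)\setminus\torus$ only points lying on $C_{\tropf^n(\alpha_j)}$, handled as in the pole case. Any additional contribution to $f^n(C)\setminus\torus$ must come from points of $C\cap\torus$ whose orbits encounter $\exc(f)$ or $\ind(f)\cap\torus$; these are carried by forward iterates of the fixed finite set $f(\exc(f))\cup(f(\ind(f))\setminus\torus)$. For each such initial point $q_0$ either $q_0\notin\ind(f^\infty)$ and no iterate of $q_0$ lies in $\ind(f^\infty)$, or $q_0\in\ind(f^\infty)$ and its forward orbit ceases to be a well-defined point beyond the first indeterminacy time, dropping out of $f^n(C)\setminus\torus$ for large $n$.

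The backward equality follows by a parallel argument: $\ind(f^{-\infty})\setminus\torus$ lies on poles indexed by the forward $\tropf$-orbit of the rays of poles through $f(\exc(f))$, and $f^{-n}(C)\setminus\torus$ lies on poles indexed by the backward $\tropf$-orbit of the rays $\alpha_j$ (plus controllable contributions from $\exc(f^n)$). Irrational rotation of $\tropf^{-1}$ again produces the required disjointness for $n$ large. I expect the principal difficulty to be the careful bookkeeping of these secondary contributions in the internal case; a clean way to handle it is to first establish, analogously to Corollary \ref{cor:cvgceonpoles}(1), that $C\cap\ind(f^\infty)$ and $C\cap\ind(f^{-\infty})$ are finite for \emph{any} curve $C$, thereby reducing the proposition to verifying that the finitely many eventually-indeterminate points of $C$ disappear from the (pre)image after enough iterations.
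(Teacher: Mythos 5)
Your pole-case argument is correct and is essentially the paper's argument, but your internal case has a gap that you yourself acknowledge, and the paper closes it with a single clean inclusion that you have not quite found.

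The paper's proof does not split into a pole case and an internal case. It observes that for \emph{any} curve $C\subset\rztO$, $C\setminus\torus$ lies on finitely many poles (for a pole $C=C_\tau^\circ$ this is the one pole $C_\tau$; for an internal curve $C\setminus\torus$ is a finite point set). It then uses the inclusion
$$
f^n(C)\setminus\torus \;\subset\; \tilde f^n(C\setminus\torus) \;\cup\; f^n(\exc(f^n)),
$$
where $\tilde f$ is the restriction of $f$ to $\rztO\setminus\torus$ (which is globally well-defined and holomorphic on the poles, even at $\ind(f)$). Internal stability kills the second part at a stroke: $f^n(\exc(f^n))\cap\ind(f^\infty)=\emptyset$. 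The first part is then controlled exactly as in your pole case: $\tilde f^n(C\setminus\torus)$ sits on the poles $C_{\tropf^n(\alpha_j)}$ for the finitely many rays $\alpha_j$ that $C\setminus\torus$ touches, and irrationality of the rotation number rules out any recurrence to the finitely many rays of poles meeting $\ind(f)$. The crucial extra step — which also makes the quantifier bookkeeping you were worried about unnecessary — is that avoiding $\ind(f)$ for \emph{all} $n\geq N$ automatically gives avoidance of $\ind(f^\infty)$: if $q\in\tilde f^n(C\setminus\torus)\cap\ind(f^\infty)$ with $n\geq N$ then $f^m(q)\in\tilde f^{n+m}(C\setminus\torus)\cap\ind(f)$ for some $m\geq 0$, contradicting the choice of $N$.

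Your internal-case analysis of ``additional contributions from points of $C\cap\torus$ whose orbits encounter $\exc(f)$ or $\ind(f)\cap\torus$'' is where the argument is not closed. The claim that a point $q_0\in\ind(f^\infty)$ ``drops out'' of $f^n(C)\setminus\torus$ because ``its forward orbit ceases to be a well-defined point'' is not a proof: $f^n(C)$ is a closure of $f^n(C\setminus\ind(f^n))$, and a closure can reintroduce points at or near indeterminacy loci, so nothing automatically drops out. The paper's inclusion and the internal-stability step replace this with a rigorous argument; you should either prove that inclusion or fold your ``extra'' contributions into a form where internal stability applies directly, as the paper does. The suggestion at the end of your write-up — first prove finiteness of $C\cap\ind(f^\infty)$ — does not actually settle the matter either, since the proposition is about $f^n(C)\setminus\torus$ avoiding $\ind(f^\infty)$, not about $C$ itself.
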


\begin{proof}
The restriction $\tilde f:\rztO\setminus\torus \to \rztO\setminus\torus$ of $f$ to the countably many poles of $\rztO$ is a well-defined (even at points in $\ind(f)$) finite holomorphic map satisfying $\tilde f(C_\tau) = C_{\tropf(\tau)}$ for each pole $C_\tau$.  And for any $n\geq 0$, we have
$$
f^n(C)\setminus\torus \subset \tilde f^n(C\setminus\torus) \cup f^n(\exc(f^n)).
$$
By internal stability $f^n(\exc(f^n))\cap \ind(f^\infty) = \emptyset$.  So $\ind(f^n)\cap(f^n(C)\setminus\torus) \subset \tilde f^n(C\setminus\torus)$.  On the other hand, only finitely many poles meet $\ind(f)$, and since $\tropf$ has irrational rotation number no pole is periodic.  So for any pole $C_\tau$ that there exists $N\geq 0$ such that $C_{\tropf^n(\tau)}$ is disjoint from $\ind(f)$ for all $n\geq N$.  Since $C$ itself meets only finitely poles of $\rztO$, it follows that $\tilde f^n(C\setminus\torus)\cap \ind(f) = \emptyset$ for all $n$ large enough.  Hence
$
(f^n(C)\setminus\torus)\cap \ind(f^\infty) = \emptyset
$
for all such $n$.  The proof that $f^{-n}(C)\setminus\torus$ is disjoint from $\ind(f^{-\infty})$ for large $n$ is similar.
\end{proof}

\begin{cor} 
\label{cor:nidinfinity}
Suppose that $f$ is a toric map as in Theorem \ref{thm:invcurrentsexist} and $C\subset\rztO$ is a curve.  Then the restrictions $\rpot_{T^*}|_C$, $\rpot_{T_*}|_C$ are not identically $-\infty$.  In particular neither $T^*$ nor $T_*$ charge curves in $\rztO$.
\end{cor}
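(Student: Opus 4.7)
For each curve $C\subset\rztO$, the task is to exhibit a point $p\in C$ with $\rpot_{T^*}(p)>-\infty$; the argument for $\rpot_{T_*}$ is dual. The ``in particular'' assertion then follows: since $u_{T^*}=u_{\bar T^*}+\rpot_{T^*}$ with $u_{\bar T^*}$ continuous by Proposition~\ref{prop:bartonpole}, finiteness of $\rpot_{T^*}(p)$ yields finiteness of a local potential $u_{T^*}$ at $p$; standard psh estimates then give $\nu(T^*,p)=0$, forcing the generic Lelong number of $T^*$ along $C$ to vanish, hence $T^*$ does not charge $C$.

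\emph{$T^*$ case.} By Proposition~\ref{prop:comesdetached}, pick $N$ with $(f^N(C)\setminus\torus)\cap\ind(f^\infty)=\emptyset$. For any curve $C$ the set $f^N(C)\cap(\rztO\setminus\torus)$ is nonempty: all of $f^N(C)$ if $C$ is a pole; the single collapsed point if $C\subset\exc(f^N)$; or else the finite set of pole-intersections of the internal curve $f^N(C)$. At any such $q$, Corollary~\ref{cor:cvgceonpoles}(2) gives $\rpot_{T^*}(q)>-\infty$. Pick $p\in C\setminus\ind(f^N)$ with $f^N(p)=q$. From $f^{N*}T^*=\ddeg^N T^*$, if $u$ is a local potential of $T^*$ near $q$, then $u\circ f^N$ is a local potential of $\ddeg^N T^*$ near $p$, so $u\circ f^N=\ddeg^N u_{T^*}+h$ for some pluriharmonic $h$; evaluating yields $u_{T^*}(p)=\ddeg^{-N}(u(q)-h(p))>-\infty$. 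Only holomorphy of $f^N$ at $p$ is used, not local finiteness, so this treats poles, non-collapsed internal curves, and collapsed internal curves uniformly.

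\emph{$T_*$ case.} The argument is dual. Apply Proposition~\ref{prop:comesdetached} in the backward direction to pick $N$ with $(f^{-N}(C)\setminus\torus)\cap\ind(f^{-\infty})=\emptyset$, and choose a non-torus point $p\in C$ (available for any curve: poles consist entirely of non-torus points and internal curves meet at least one pole). Provided $p\notin f^N(\exc(f^N))$---a finite set---every preimage $q\in f^{-N}(p)$ is isolated and non-torus (preimages of non-torus points remain non-torus under a toric map whose tropicalization is a homeomorphism), so lies in $f^{-N}(C)\setminus\torus$ and avoids $\ind(f^{-\infty})$. Corollary~\ref{cor:cvgceonpoles}(2) then gives $\rpot_{T_*}(q)>-\infty$ at each preimage. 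The invariance $f^N_*T_*=\ddeg^N T_*$, translated pointwise at $p$, gives $\sum_{q\in f^{-N}(p)}m(f^N,q)\,u_{T_*}(q)=\ddeg^N u_{T_*}(p)+H(p)$ with $H$ pluriharmonic; finiteness of every $u_{T_*}(q)$ on the left forces $u_{T_*}(p)>-\infty$.

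\emph{Main obstacle.} The delicate step is the $T_*$ case when $C$ is internal and every one of its non-torus points lies in the contracted image $f^N(\exc(f^N))$ (equivalently, in $\ind(f^{-\infty})$), so that no choice of $p$ allows the clean pointwise pushforward identity. We expect to resolve this either by fine-tuning $N$ (the detachment threshold may activate before the finite contracted image absorbs all non-torus points of $C$) or by switching to the analog of series~\eqref{eqn:series} for $T_*$, namely $\rpot_{T_*}=\sum_n\ddeg^{-n}f^n_*\rpot'$ with $\rpot'\leq 0$ provided by Theorem~\ref{thm:pushforward}: for generic $p\in C$ avoiding the countable set $\bigcup_{n}f^n(f(\exc(f)))$, irrationality of the rotation number of $\tropf$ ensures that backward iterates of $p$ cross each pole containing a point of $f(\exc(f))$ at most once, and the hypothesis $\dtop<\ddeg$ makes the resulting series absolutely convergent.
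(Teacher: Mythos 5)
Your direct-transfer argument has a genuine gap in both cases, only one of which you acknowledge. In the $T^*$ case you need a point $p\in C\setminus\ind(f^N)$ with $f^N(p)$ equal to one of the finitely many non-torus points of $f^N(C)$; but when $C$ is an internal non-exceptional curve, the only candidates for such $p$ are the finitely many points of $(C\setminus\torus)\cup(C\cap\exc(f^N))$ (a torus point outside $\exc(f^N)$ maps into $\torus$), and all of these may lie in $\ind(f^N)$. Increasing $N$ only enlarges $\ind(f^N)$, so it cannot help. The dual obstruction for $T_*$---that the finitely many non-torus points of an internal $C$ could all lie in $f^N(\exc(f^N))$---you do notice, but neither of your suggested remedies is carried through. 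Both gaps are repairable: e.g.\ detachment forces $\rpot_{T^*}|_{f^N(C)}\not\equiv-\infty$, so its $-\infty$ set is polar and misses a full-measure subset of $f^N(C)$, and a generic such $q$ (no longer required to lie outside $\torus$) does have a preimage in $C\setminus\ind(f^N)$. But that is a different argument from the one you wrote.

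The paper sidesteps the issue entirely by arguing by contradiction rather than by transporting a finite value to a chosen point of $C$. Assuming $\rpot_{T_*}|_C\equiv-\infty$, the invariance identity $\ddeg\rpot_{T_*}=f_*\rpot_{T_*}+\rpot_{f_*\bar T_*}$, together with the regularity of $\rpot_{f_*\bar T_*}$ from Theorem \ref{thm:pushforward}, forces $\rpot_{T_*}|_{C_1}\equiv-\infty$ on some irreducible component $C_1$ of $f^{-1}(C)$ with $f(C_1)=C$; iterating yields curves $C_n$ with $f(C_{n+1})=C_n$ and $\rpot_{T_*}|_{C_n}\equiv-\infty$ for all $n$. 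Proposition \ref{prop:comesdetached} and Corollary \ref{cor:cvgceonpoles} then produce the contradiction directly on $C_n$ for $n$ large, since every curve meets some pole. Because the contradiction is reached on the detached curve $C_n$ rather than on the original $C$, there is never any need to exhibit a single well-positioned point of $C$ with a controlled image or preimage---precisely the step your construction cannot guarantee.
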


\begin{proof}  This time we focus on $T_*$ instead of $T^*$.
Suppose to get a contradiction that $\rpot_{T_*}|_C \equiv -\infty$.  By invariance of $T_*$, we have that (up to additive constants)
$$
\ddeg\rpot_{T_*} = \rpot_{f_* T_*} = \ddeg f_*\rpot_{T_*} + \rpot_{f_* \bar T_*}.
$$
The function $\rpot_{f_*\bar T_*}$ is continuous everywhere on $\rztO\setminus f(\exc(f))$ by Theorem \ref{thm:pushforward}.  Hence $f_*\rpot_{T_*}|_C \equiv -\infty$.  From the definition of pushforward, we infer that $\rpot_{T_*}|_{C_1} \equiv -\infty$, for some curve $C_1$ (an irreducible component of $f^{-1}(C)$) such that $f(C_1) = C_0$.  Repeated application of the same argument gives a sequence of curves $(C_j)_{j\geq 0}$ with $C_0 = C$, such that $f(C_{j+1}) = C_j$ and $\rpot_{T_*}|_{C_j} \equiv -\infty$ for all $j$.  But Proposition \ref{prop:comesdetached} implies that $C_n\setminus \torus$ is disjoint from $\ind(f^{-\infty})$ when $n$ is large enough.  Hence Corollary \ref{cor:cvgceonpoles} tells us that $\rpot_{T_*}$ cannot be infinite at any point of $C_n\setminus\torus$.  Since every curve contains \emph{some} point outside $\torus$, we have our contradiction.  It follows immediately that $T_*$ does not charge $C$.
\end{proof}

In the case of $T^*$ we can improve a bit on Corollary \ref{cor:nidinfinity}.

\begin{cor}
\label{cor:nodisk}
The Lelong numbers $\nu(T^*,p)$ of the equilibrium current $T^*$ vanish except when $p\in\ind(f^\infty)$.  Hence $T^*$ does not charge analytic disks in $\rztO$.
\end{cor}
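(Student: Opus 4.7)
The plan is to reduce the first conclusion to showing that $\rpot_{T^*}(p) > -\infty$ for every $p \in \rztO \setminus \ind(f^\infty)$. Since $\bar T^*$ has continuous local potentials by Proposition \ref{prop:bartonpole}, the Lelong number of $T^* = \bar T^* + dd^c \rpot_{T^*}$ at $p$ coincides with that of the $\bar T^*$-plurisubharmonic function $\rpot_{T^*}$; and any plurisubharmonic function with a finite value at $p$ has Lelong number zero there. So the first conclusion reduces to finiteness of $\rpot_{T^*}(p)$.

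Two subcases can be handled directly. If $p$ lies on a pole $C_\tau$ with $p \notin \ind(f^\infty)$, then Corollary \ref{cor:cvgceonpoles}(2) provides continuity of $\rpot_{T^*}|_{C_\tau}$ at $p$, giving $\rpot_{T^*}(p) > -\infty$. If instead $p \in \torus$ and the forward orbit $\{f^n(p)\}$ remains in $\torus \setminus \exc(f)$ for all $n \geq 0$, then by Theorem \ref{thm:tmapbasics}(3), $f$ is a local biholomorphism at every iterate; combining the invariance $(f^n)^* T^* = \ddeg^n T^*$ (internal stability) with the standard identification of Lelong numbers under local biholomorphism gives $\nu(T^*, f^n(p)) = \ddeg^n \nu(T^*, p)$. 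Upper semicontinuity of Lelong numbers on any fixed compact toric surface $X$ (coupled with the fact that $\nu(T^*_X, q) = \nu(T^*, q)$ for $q \in \torus$) bounds the left side uniformly in $n$, forcing $\nu(T^*, p) = 0$.

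The heart of the argument is the case $p \in \torus$ whose orbit enters $\rztO \setminus \torus$. Since $f$ maps $\torus \setminus \exc(f)$ into $\torus$, this entry can only happen via $\exc(f)$: there is a least $n_0 \geq 0$ with $f^{n_0}(p) \in \exc(f)$, and then $f^{n_0+1}(p) \in f(\exc(f)) \subset \rztO \setminus \torus$ lies on some pole; by forward invariance of the complement of $\ind(f^\infty)$ under $f$, we further have $f^{n_0+1}(p) \notin \ind(f^\infty)$. From the series representation \eqref{eqn:series} of $\rpot_{T^*}$, separating out the first $n_0 + 1$ terms gives
\begin{equation*}
\rpot_{T^*}(p) \;=\; \sum_{k=0}^{n_0}\ddeg^{-k}\rpot(f^k(p)) \;+\; \ddeg^{-(n_0+1)}\rpot_{T^*}(f^{n_0+1}(p)).
\end{equation*}
Each term in the finite sum is finite because $f^k(p) \notin \ind(f)$ and $\rpot$ is continuous off $\ind(f)$ by Theorem \ref{thm:pullback}(1), and the tail is finite by the already-established pole case. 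So $\rpot_{T^*}(p) > -\infty$, completing the first conclusion.

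The main obstacle I anticipate is justifying that the displayed identity holds pointwise at $p$ rather than merely in $L^1_{loc}$; this however follows from the monotone decreasing convergence of the partial sums of \eqref{eqn:series} together with the continuity of each $\rpot \circ f^k$ at $p$ once $f^k(p) \notin \ind(f)$. For the second conclusion, a disk $D \subset \rztO$ charged by $T^*$ would have $\nu(T^*, \cdot) \geq c > 0$ at each of its smooth points; combined with Corollary \ref{cor:nidinfinity} (which rules out charging of closed curves in $\rztO$), this forces all the smooth points of $D$ into $\ind(f^\infty)$, but $\ind(f^\infty)$ meets each pole in a finite set and within $\torus$ is contained in a countable union of proper analytic subsets, precluding any such inclusion.
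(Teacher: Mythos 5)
Your proof is correct but takes a genuinely different route from the paper's. The paper proves the Lelong number statement simply by citing \cite[Theorem~10.2]{DiRo24}, and then derives the disk conclusion from the fact that $\ind(f^\infty)$ is countable whereas a charged disk would force uncountably many points of positive Lelong number. You instead give a self-contained re-derivation from tools established in the present paper: Corollary~\ref{cor:cvgceonpoles}(2) when $p$ lies on a pole; telescoping the series \eqref{eqn:series} to the first time the orbit leaves $\torus$ through $\exc(f)$ (which, because $\ind(f)\cap\torus=\emptyset$, is the only way a $\torus$-orbit can leave $\torus$), reducing to the pole case; and for orbits that remain in $\torus$, the invariance $(f^n)^*T^* = \ddeg^n T^*$ together with boundedness of Lelong numbers of $T^*_X$ on a fixed compact toric surface (this is a finite-mass/compactness fact rather than ``upper semicontinuity'' per se, though the conclusion is the same). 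Note that this last case gives $\nu(T^*,p)=0$ directly rather than finiteness of $\rpot_{T^*}(p)$, so the opening claim that everything ``reduces to finiteness of $\rpot_{T^*}(p)$'' is not quite honored, but that does not affect correctness. Your self-contained version is useful for readers who want the argument internal to this paper, at the cost of length; the paper's citation is shorter but less transparent. One small cleanup: the invocation of Corollary~\ref{cor:nidinfinity} in the final disk paragraph is extraneous --- once $\nu(T^*,\cdot)$ is known to vanish off the countable discrete set $\ind(f^\infty)$, no disk can be charged, since a charged disk would have positive Lelong numbers at its uncountably many points.
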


\begin{proof}
The statement about Lelong numbers is \cite[Theorem 10.2]{DiRo24}.  Hence the Lelong numbers of $T^*$ are positive at most countably many points in $\rztO$.  This is inconsistent with the fact that a positive closed $(1,1)$ current that charges a non-trivial analytic disk if and only if its Lelong number is positive at all (uncountably many) points in the disk. 
\end{proof}

\begin{rem}
It seems likely to us that, since $f$ has small topological degree, Lelong numbers of $T_*$ vanish except at points in $\ind(f^{-\infty})$.  One can at least show without difficulty that these numbers vanish except at points in the forward orbit of $\ind(f)$ (a countable union of curves).  This weaker fact plus Corollary \ref{cor:nidinfinity} is sufficient to verify that $T_*$ doesn't charge analytic disks either.  But we do not use this in the sequel.
\end{rem}

\begin{thm} 
\label{thm:eqnearhom}
Support functions $\sfn_{T^*}$ and $\sfn_{T_*}$ for the currents $T^*$ and $T_*$ in Theorem \ref{thm:invcurrentsexist} are nearly homogeneous.
\end{thm}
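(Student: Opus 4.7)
The plan is to apply Proposition \ref{prop:nearlyhomogenough} with dense set
\begin{equation*}
\mathcal{S} := \{\tau \subset N_\R : \tau \text{ rational and } C_\tau \cap \ind(f^\infty) = \emptyset\},
\end{equation*}
which is dense in the space of rays of $N_\R$ because $\ind(f^\infty)$ is countable and rational rays are dense. The uniform constant controlling $|\sfn_{T^*} - \sfn_{\bar T^*}|$ along rays in $\mathcal{S}$ will come from the uniform bound $|\rpot_{T^*}| \leq C_0$ on $C_\tau$ provided by Corollary \ref{cor:cvgceonpoles}(3). The argument for $\sfn_{T_*}$ is identical after substituting $\ind(f^{-\infty})$ for $\ind(f^\infty)$ throughout.

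To translate the bound on $\rpot_{T^*}$ along poles into a bound on $\sfn_{T^*} - \sfn_{\bar T^*}$ along rays, the idea is to use the $\torus_\R$-averaged potential $\psi(p) := \int_{\torus_\R} \rpot_{T^*}(g \cdot p) \, dg$ (with $dg$ denoting Haar measure), which is a $\torus_\R$-invariant potential for $T^*_{ave} - \bar T^*$ on $\rztO$. By the discussion following Theorem \ref{thm:homogenization}, $(\sfn_{T^*} - \sfn_{\bar T^*}) \circ \Log$ is also a potential for $T^*_{ave} - \bar T^*$ on $\torus$, and because any $\torus_\R$-invariant pluriharmonic function on $\torus$ is constant, there is $c \in \R$ with $\psi|_\torus = (\sfn_{T^*} - \sfn_{\bar T^*}) \circ \Log + c$. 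Moreover, by Theorem \ref{thm:homogenization}, $(\sfn_{T^*} - \sfn_{\bar T^*})(tv_\tau) \leq 0$ is non-increasing in $t$, so $|\sfn_{T^*} - \sfn_{\bar T^*}|(tv_\tau)$ is non-decreasing in $t$ and therefore bounded above by its limit as $t \to \infty$.

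For $\tau \in \mathcal{S}$, pick $\sigma$-coordinates $(x_1, x_2)$ with $C_\tau = \{x_1 = 0\}$ and $v_\tau = (1,0)$. Then $\Log^{-1}(tv_\tau) = \{|x_1| = e^{-t}, |x_2| = 1\}$ is a real 2-torus that degenerates as $t \to \infty$ to the circle $\{x_1 = 0, |x_2| = 1\} \subset C_\tau$. By Corollary \ref{cor:cvgceonpoles}(2), $\rpot_{T^*}$ is continuous on an open neighborhood of $C_\tau$, hence uniformly bounded on some compact neighborhood of this limit circle. Dominated convergence applied to the parametrization $(\theta_1, \theta_2) \mapsto (e^{-t+i\theta_1}, e^{i\theta_2})$ of $\Log^{-1}(tv_\tau)$ then yields
\begin{equation*}
\lim_{t\to\infty} \psi(tv_\tau) = \frac{1}{2\pi} \int_0^{2\pi} \rpot_{T^*}(0, e^{i\theta}) \, d\theta,
\end{equation*}
whose absolute value is at most $C_0$ by Corollary \ref{cor:cvgceonpoles}(3). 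Combined with monotonicity and the identity $\psi|_\torus = (\sfn_{T^*} - \sfn_{\bar T^*}) \circ \Log + c$ from the previous paragraph, $|\sfn_{T^*} - \sfn_{\bar T^*}| \leq C_0 + |c|$ on every $\tau \in \mathcal{S}$, so Proposition \ref{prop:nearlyhomogenough} gives near homogeneity of $\sfn_{T^*}$.

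The main delicate point is the dominated convergence step, which requires a $t$-independent bound on $\rpot_{T^*}$ in some compact neighborhood of the limit circle in $C_\tau$. This follows from the continuity assertion in Corollary \ref{cor:cvgceonpoles}(2), whose proof actually shows that the series \eqref{eqn:series} converges uniformly on an open neighborhood of $C_\tau$ when $\tau \in \mathcal{S}$.
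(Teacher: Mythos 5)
Your proposal follows the same overall architecture as the paper's proof: reduce via Proposition~\ref{prop:nearlyhomogenough} to a dense set of rays, use Corollary~\ref{cor:cvgceonpoles} for a uniform bound of $\rpot_{T^*}$ on the corresponding poles, pass to the $\torus_\R$-average, identify it with $(\sfn_{T^*}-\sfn_{\bar T^*})\circ\Log$ up to a constant, and then combine monotonicity with a ``radial limit equals value at the pole'' identity. The only cosmetic difference is your choice of dense set ($\{\tau: C_\tau\cap\ind(f^\infty)=\emptyset\}$, dense because $\ind(f^\infty)$ is countable) versus the paper's single orbit $\{\tropf^n(\tau)\}_{n\geq 0}$; both work.

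However, there is a genuine gap exactly at the step you flag as the main delicate point. You justify the dominated convergence $\lim_{t\to\infty}\psi(tv_\tau) = \tfrac{1}{2\pi}\int_0^{2\pi}\rpot_{T^*}(0,e^{i\theta})\,d\theta$ by claiming that ``the proof of Corollary~\ref{cor:cvgceonpoles}(2) actually shows that \eqref{eqn:series} converges uniformly on an open neighborhood of $C_\tau$.'' It does not: the key control there is $f^n(U\cap C_\tau)\cap V=\emptyset$, which only gives uniform convergence on $U\cap C_\tau$, i.e.\ \emph{along} the pole, not in an ambient open neighborhood in $\rztO$. (The final ``on $U$'' in that proof reads like a slip for ``on $U\cap C_\tau$''.) Dominated convergence over the degenerating $2$-tori $\Log^{-1}(tv_\tau)$ requires a $t$-independent bound on $\rpot_{T^*}$ on an ambient compact neighborhood of the limit circle, and that is essentially the content of the much harder Theorem~\ref{THM:CONTINUITY} from the next section (citing it would not be circular, but you should say so explicitly). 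Alternatively, and closer to what the paper implicitly relies on, one can avoid dominated convergence entirely: $\sfn_{T^*}$ and $\sfn_{\bar T^*}$ both have linear growth, hence $\Phi := \sfn_{T^*}-\sfn_{\bar T^*}$ is globally Lipschitz on $N_\R$. For $q\to p$ with $p$ on the circle $\{x_1=0,|x_2|=1\}\subset C_\tau$, one has $\Log(q)=(a,b)$ with $a\to\infty$, $b\to 0$, so $|\Phi(a,b)-\Phi(a,0)|\le L|b|\to 0$ and thus $\Phi(\Log(q))\to \lim_{t\to\infty}\Phi(tv_\tau)$; combined with the identification of $\rpot_{T^*,ave}$ as the unique $\bar T^*$-psh representative, this yields the paper's limit identity without invoking ambient continuity of $\rpot_{T^*}$.
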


\begin{proof} 
Again, we prove the result only for $T^*$.  Let $\sfn_{T^*}$ be a support function for $T^*$ and (then) $\sfn_{\bar T^*}$ be the support function for $\bar T^*$ obtained by homogenizing $\sfn_{T^*}$.  Let $T^*_{ave}$ be the $\torus_\R$-average of $T^*$.  Recall that $\rpot_{T^*_{ave}} := (\sfn_{T^*} - \sfn_{\bar T^*})\circ\Log$ extends from $\torus$ to $\rztO$ as a potential for $T^*_{ave} - \bar T^*$.  Recall from Theorem \ref{thm:homogenization} and Proposition \ref{prop:average} that $\sfn_{T^*} - \sfn_{\bar T^*}$ is continuous, non-positive and non-increasing along every ray $\tau\subset N_\R$.  Our goal is to prove that it is uniformly bounded below.  By Proposition~\ref{prop:nearlyhomogenough}, it suffices to prove this only on a dense set of rays $\tau\subset N_\R$.

Let $C_\tau$ be (as in e.g. Corollary \ref{cor:nidinfinity}) a pole such that $C_{\tropf^n(\tau)}\cap\ind(f) = \emptyset$ for all $n\geq 0$.  Since moreover, $\tropf$ has irrational rotation number, the rays $\tropf^n(\tau)$ are dense in $N_\R$.  Corollary \ref{cor:cvgceonpoles} tells us that the potential $\rpot_{T^*}$ for $T^*-\bar T^*$ given by \eqref{eqn:series} is uniformly bounded on $\bigcup_{n\geq 0} C_{\tropf^n(\tau)}$.  Hence the the $\torus_\R$-average $\rpot_{T^*,ave}$ of $\rpot_{T^*}$ is uniformly bounded on the same set.  Since $\rpot_{T^*,ave}$ is another potential for $T^*_{ave}-\bar T^*$ it differs from the function $\rpot_{T^*_{ave}}$ in the previous paragraph by a constant.  On the other hand, we have for any $v\in N$ that
$$
\lim_{t\to\infty} (\sfn_{T^*} - \sfn_{\bar T^*})(tv) = \rpot_{T^*,ave}(p),
$$
where $p = \lim_{z\to 0} \gamma_v(z) \in C_{\tropf^n(\tau)}$ for $\gamma_v:\C^*\to \torus$ the one parameter subgroup associated to $v$.
It follows that $\sfn_{T^*} - \sfn_{\bar T^*}$ is uniformly bounded along $\tropf^n(\tau)$ by a constant that does not depend on $n$. Proposition \ref{prop:nearlyhomogenough} now yields that the support function $\sfn_{T^*}$ is nearly homogeneous.
\end{proof}

\section{A continuity result}
\label{sec:currents_II}
From now until the end of the paper, we take $f:\rztO\tto\rztO$ to be a toric rational map satisfying the hypotheses of Theorem \ref{thm:mainthm}.  That is,
\begin{enumerate}
 \item\label{stability} $f$ is internally stable.
 \item\label{smalltop} $\dtop(f) < \ddeg(f)$.
 \item\label{rotation} $\tropf$ is a homeomorphism with irrational rotation number.
 \item\label{indout} $\ind(f)\cap\torus = \emptyset$.
\end{enumerate}
Assumption \eqref{stability} implies that the forward and backward indeterminacy orbits $\ind(f^\infty)$ and $\ind(f^{-\infty})$ are disjoint from each other.  Assumption \eqref{indout} implies they are also disjoint from $\torus$.  Assumption \ref{rotation} implies that they meet each pole of $\rztO$ in at most finitely many points.  Hence $\ind(f^\infty)$ and $\ind(f^{-\infty})$ are each closed and discrete in $\rztO$.

The results about equilibrium currents in \S\ref{sec:currents} all apply more or less equally to $T^*$ and $T_*$.  The main result of this section, which directly implies Theorem \ref{thm:ctyresult} above, is specific to $T^*$.  We do not know whether  the analogue for $T_*$ is also true.  

\begin{thm}\label{THM:CONTINUITY}
If $f:\rztO\tto \rztO$ is a toric map satisfying the hypotheses of Theorem \ref{thm:mainthm}, then the series \eqref{eqn:series} defining a potential $\rpot_{T^*}$ for $T^*-\bar T^*$ converges uniformly on any compact subset of $\rztO\setminus\ind(f^\infty)$ to a continuous function.
\end{thm}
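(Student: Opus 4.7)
The plan is to prove uniform convergence of the partial sums
$S_N := \sum_{n=0}^N \ddeg^{-n}\,\rpot\circ f^n$ on any compact $K\subset \rztO\setminus\ind(f^\infty)$, using Theorem~\ref{thm:pullback}(2) to fix $\rpot\le 0$. The sequence $(S_N)$ is then monotonically decreasing, each $S_N$ is continuous on $\rztO\setminus\ind(f^{N+1})$ (which contains $K$), and $S_N\to\rpot_{T^*}$ pointwise as noted in the excerpt just before the theorem. Once uniform convergence on $K$ is established, continuity of $\rpot_{T^*}$ on $\rztO\setminus\ind(f^\infty)$ follows automatically, so that is the substantive content of the theorem.

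My first step is to reduce uniform convergence on $K$ to a pointwise bound of the form $|\rpot(f^n(p))|\le C(K)\cdot(1+n)$ for all $p\in K$ and $n\ge 0$. Such a linear-in-$n$ bound makes $\sum_{n>N}\ddeg^{-n}|\rpot\circ f^n|$ into a uniformly summable geometric-type tail on $K$. By Theorem~\ref{thm:pullback}(1), $\rpot$ is bounded outside any prescribed neighborhood of the finite set $\ind(f)$ and has at worst a logarithmic singularity at each point of $\ind(f)$, so the linear-in-$n$ estimate will follow from a uniform lower bound of the shape $d(f^n(p),\ind(f))\ge C'\,e^{-Ln}$ for $p\in K$.

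Next I would establish this distance bound by combining hypothesis~\eqref{indout}---which places $\ind(f)$ on finitely many poles $C_{\tau_1},\dots,C_{\tau_m}$---with the tropical approximation Theorem~\ref{thm:tropapprox}. For $p\in K$ lying on a pole, Corollary~\ref{cor:cvgceonpoles} already supplies uniform control along the orbit of poles $\{C_{\tropf^n(\tau)}\}_{n\ge 0}$, whose members meet $\ind(f)$ only finitely often since $\tropf$ has irrational rotation number. For $p\in K\cap\torus$, proximity of $f^n(p)\in\rztO$ to a specific indeterminacy point $q_0\in C_{\tau_k}\cap\ind(f)$ translates via $\Log$ into a sharp angular condition on $\tropf^n(\Log p)\in N_\R$: its angle to the ray $\tau_k$ must be of order at most $1/\|\tropf^n(\Log p)\|$. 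Because $\tropf$ induces on the circle of rays a homeomorphism with irrational rotation number, the orbit of any direction cannot asymptotically align with any of the $\tau_k$, and one can convert this qualitative minimality into an exponential lower bound on the angle, hence on $d(f^n(p),\ind(f))$.

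The main obstacle is this quantitative angular-avoidance estimate: turning minimality of the induced circle map into a uniform-in-$p\in K$ exponential lower bound on $d(f^n(p),\ind(f))$, without invoking a Diophantine hypothesis on the rotation number. I expect the cleanest route is to exploit the explicit decomposition $\rpot=\rpot_E+v$ from the proof of Theorem~\ref{thm:pullback}, where $\rpot_E$ is a potential for an internal divisor supported on $\exc(f)$ and $v=\sfn_{\bar T^*}\circ\Log\circ f - \sfn_{\bar T^*}\circ\tropf\circ\Log$ is uniformly bounded on $\torus$ minus any neighborhood of $\exc(f)$. One iterates this decomposition under $f^n$, using Theorem~\ref{thm:tropapprox} at each step to control $v\circ f^j$ away from $\exc(f)$ and Corollary~\ref{cor:cvgceonpoles} (together with the fact that the forward orbit of a given pole hits $\exc(f)$ only finitely often) to handle the $\rpot_E\circ f^j$ contributions, tracking the accumulation of tropical errors under the Lipschitz constants of $\tropf^{\pm 1}$.
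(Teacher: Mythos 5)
Your structural plan—reduce to a pointwise growth estimate on $\abs{\rpot\circ f^n}$, use tropical approximation to translate proximity of $f^n(p)$ to $\ind(f)$ into an angular condition on $\tropf^n(\Log p)$, and resolve an angular-avoidance estimate—is the right skeleton, and you have correctly identified the crux. But there is a genuine gap precisely at that crux, and your targeted bound is off. You aim for a \emph{linear} bound $\abs{\rpot(f^n(p))}\le C(K)(1+n)$, equivalently $\dist(f^n(p),\ind(f))\ge C'e^{-Ln}$; the paper never establishes, and does not need, anything this strong. It proves only the sub-exponential estimate: for any $\tdeg$ with $e^{3\delta}\dtop<\tdeg<\ddeg$ and all $n$ large, $\abs{\log\dist(f^n(p),\ind(f))}<\tdeg^n$ uniformly on $K\cap\torus$, which already makes $\sum\ddeg^{-n}\abs{\rpot\circ f^n}$ converge. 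More importantly, the step you defer—``turning minimality of the induced circle map into a uniform exponential lower bound on the angle''—is exactly where your argument breaks down: minimality of the circle homeomorphism gives no quantitative angular-avoidance at all, and indeed the orbit $\tropf^\ell(\tau)$ does come within distance $\approx C/\norm{\tropf^\ell v}\approx C\dtop^{-\ell/2}$ of fixed rational rays, which is exponentially small. The quantitative input the paper uses is not minimality but \emph{integrality}: $\tropf^\ell(\tau)$ is a rational ray generated by the integer vector $\tropf^\ell v$ of norm $\le C(e^\delta\dtop^{1/2})^\ell$, and two distinct rational rays with one primitive generator of norm $\le M$ must be separated by an angle $\gtrsim 1/M$. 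This gives the lower bound $\sphericalangle(\tropf^\ell(\tau),\tilde\tau)\gtrsim(e^\delta\dtop^{1/2})^{-\ell}$, which is then pitted against the upper bound $\approx(e^{2\delta}\dtop^{1/2})^k/\tdeg^n$ coming from the hypothetically huge $\norm{\Log f^n(p)}\gtrsim\tdeg^n$; the contradiction requires $\tdeg>e^{3\delta}\dtop$ and hence the small-topological-degree hypothesis $\dtop<\ddeg$, which never enters your sketch.

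A second omission: the paper passes to an iterate of $f$ so that $\tropf$ is nearly conformal (Lemma \ref{LEM:ALMOST_CONFORMAL}); this is essential for the iterated angle control in Proposition \ref{PROP:ITERATIVE_TROP_APPROX1}, because without near-conformality the angular distortion under $\tropf^\ell$ is not controllable uniformly. The closing step of your sketch—iterating $\rpot=\rpot_E+v$ and tracking errors through Lipschitz constants of $\tropf^{\pm1}$—is really just a restatement of the tropical approximation estimates you already cite and does not supply the missing arithmetic ingredient. To fix the proposal: replace the direct linear-bound goal with the paper's contradiction setup, restore the near-conformality reduction, and replace the appeal to minimality by the integrality lower bound on angular separation between rational rays.
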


For the remainder of this section, we fix an $\R$-linear identification $N_\R\cong \C$.  We let $\norm{\cdot}$ denote the norm and $\sphericalangle(\cdot,\cdot)$ the angle measure induced by the standard Euclidean metric on $\C$.  As the next result makes precise, the  condition on the rotation number of $\tropf$ guarantees that the tropicalization $\trop{f^n} = \tropf^n$ is nearly conformal for large $n$.  On the other hand, all of the hypotheses of Theorem \ref{THM:CONTINUITY} remain true if we replace the map $f$ by an iterate, so we will be able to assume henceforth that $\tropf$ itself is almost conformal. 

\begin{lem}\label{LEM:ALMOST_CONFORMAL}
For any fixed $\delta>0$, we may replace $f$ in Theorem \ref{THM:CONTINUITY} with an iterate $f^n$ in order to arrange that
\begin{enumerate}
\item 
\label{EQUATION:HYPOTHESIS_ALMOST_CONFORMAL}
for any $v\in N_\R$
$$
{\rm e}^{-\delta}\dtop^{1/2}  \leq \frac{\|\tropf \nvec \|}{\|\nvec\|} \leq {\rm e}^{\delta}\dtop^{1/2}; 
$$
\item 
\label{EQN:ANGLE_CHANGE}
and consequently for any $v,v'\in N_\R$.
$$
\sphericalangle (\tropf \nvec,\tropf \nvec') \leq {\rm e}^{\delta} \sphericalangle (\nvec,\nvec').
$$
\end{enumerate}
\end{lem}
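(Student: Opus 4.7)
The plan is to reduce both conclusions to a single statement of uniform smallness for a Birkhoff cocycle over the induced map of $\tropf$ on the circle of directions, and then to control that cocycle by combining Denjoy's theorem with Gottschalk--Hedlund. Write $\tropf$ in polar coordinates as $\tropf(re_\theta) = (r\,h_1(\theta))\,e_{\tilde f(\theta)}$, where $\tilde f: S^1 \to S^1$ is the map induced on directions and $h_1(\theta) = \|\tropf e_\theta\|$. A direct comparison of Cartesian and polar Jacobians on each linear piece of $\tropf$ (using $|\det \tropf|_\sigma| = |\rho(f)| = \dtop$, together with the polar area element $r\,dr\,d\theta$) yields the key identity
\[
h_1(\theta)^2\,|\tilde f'(\theta)| = \dtop,
\]
and its iterate $h_n(\theta)^2\,|\tilde f^{n'}(\theta)| = \dtop^n$ on each linear piece of $\tropf^n$. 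In particular the singular values $a_n \leq b_n$ of the matrix representing $\tropf^n$ on such a piece satisfy $a_n b_n = \dtop^n$. After replacing $f$ by $f^n$, conclusion~\eqref{EQUATION:HYPOTHESIS_ALMOST_CONFORMAL} becomes $|\log h_n(\theta) - \tfrac{n}{2}\log\dtop| \leq \delta$ uniformly in $\theta$, while conclusion~\eqref{EQN:ANGLE_CHANGE} follows by integrating $|\tilde f^{n'}| = \dtop^n/h_n^2$ along the arc from $\arg v$ to $\arg v'$ on $S^1$. So everything reduces to choosing $n$ making $\log h_n - \tfrac n2 \log\dtop$ uniformly small.

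The first step is to show uniform \emph{boundedness} of this family in $n$ and $\theta$. Since $\tilde f$ is a piecewise $C^\infty$ circle homeomorphism with $\log|\tilde f'|$ of bounded variation $V$ (finite jumps at the finitely many rays of $\Sigma_1(f)$, and smooth on each piece), and its rotation number $\alpha$ is irrational, Denjoy's theorem applies: $\tilde f$ is topologically conjugate to $R_\alpha$, is minimal and uniquely ergodic, and $\{\tilde f^n\}$ is uniformly equicontinuous. Applying the Denjoy distortion estimate at the closest-return times $n = q_k$ (denominators of continued-fraction convergents of $\alpha$) bounds the distortion of $(\tilde f^{q_k})'$ on each linear piece of $\tilde f^{q_k}$ by $e^V$; combined with $a_n b_n = \dtop^n$ this yields $|\log h_{q_k}(\theta) - \tfrac{q_k}{2}\log\dtop| \leq V/4$ uniformly along this subsequence. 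The cocycle identity
\[
\log h_{n+m}(\theta) = \log h_n(\theta) + \log h_m(\tilde f^n \theta)
\]
then propagates the boundedness, with a larger constant, to all $n$, giving $\sup_{n,\theta} |\log h_n(\theta) - \tfrac{n}{2}\log\dtop| < \infty$.

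For the second step, minimality of $\tilde f$ on the compact space $S^1$, continuity of $\log h_1 - \tfrac12 \log\dtop$, and uniform boundedness of its Birkhoff sums allow an application of the Gottschalk--Hedlund theorem: there is a continuous $u: S^1 \to \R$ with $\log h_1 - \tfrac12 \log\dtop = u\circ\tilde f - u$. Telescoping, $\log h_n(\theta) - \tfrac{n}{2}\log\dtop = u(\tilde f^n\theta) - u(\theta)$. The Denjoy conjugacy identifies $\tilde f^n$ with $R_{n\alpha}$ up to a fixed homeomorphism of $S^1$, so along any subsequence $n_k \to \infty$ with $n_k\alpha$ tending to $0$ modulo $1$ (which exists by density of $\{n\alpha\}$), $\tilde f^{n_k} \to \id_{S^1}$ uniformly. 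Uniform continuity of $u$ on the compact $S^1$ then forces $\sup_\theta|u(\tilde f^{n_k}\theta) - u(\theta)| \to 0$, so for $n$ far enough along this subsequence the supremum of $|\log h_n - \tfrac{n}{2}\log\dtop|$ falls below any prescribed $\delta$. This gives conclusion~\eqref{EQUATION:HYPOTHESIS_ALMOST_CONFORMAL}; conclusion~\eqref{EQN:ANGLE_CHANGE} follows from $|\tilde f^{n'}| = \dtop^n/h_n^2 \in [e^{-2\delta}, e^{2\delta}]$ by integration along arcs on $S^1$, after a harmless replacement of $\delta$ by $\delta/2$ at the start.

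The main obstacle is the uniform boundedness in the first step. Although the Denjoy distortion inequality is classical for individual intervals whose successive iterates under $\tilde f^0, \ldots, \tilde f^{n-1}$ are pairwise disjoint, applying it uniformly over every linear piece of $\tilde f^{q_k}$ requires the standard combinatorial analysis at closest-return times to verify the relevant disjointness of iterates of each such piece. Once the subsequence bound is in hand, the extension to all $n$ via the cocycle telescoping is elementary, and the subsequent Gottschalk--Hedlund and subsequence arguments proceed without further difficulty.
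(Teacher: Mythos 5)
Your proof takes a genuinely different route from the paper's. The paper gets conclusion~(1) directly by citing \cite[Theorem 5.5]{DiRo24} and derives~(2) by writing $\tropf(z) = az + b\bar z$ on a sector and estimating the argument explicitly. You instead attempt a self-contained dynamical argument for~(1) via Denjoy theory and Gottschalk--Hedlund, and derive~(2) by integrating $|\tilde f^{n\prime}| = \dtop^n/h_n^2$ along arcs of $S^1$. Your reduction is correct and clean: the Jacobian identity $h_1(\theta)^2\,\tilde f'(\theta) = \dtop$, the resulting observation that $\tilde f' = \dtop/h_1^2$ is continuous (so $\tilde f$ is genuinely $C^1$, not merely piecewise), and the integration argument for~(2) given~(1) are all valid — the last arguably more transparent than the paper's computation.

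There is, however, a genuine gap in your argument for~(1), at the step ``The cocycle identity then propagates the boundedness, with a larger constant, to all $n$.'' Setting $\phi = \log\tilde f'$ (so $\int\phi\,d\mu = 0$ by the usual Lyapunov-exponent argument for uniquely ergodic circle maps), Denjoy--Koksma does give $\sup_\theta|S_{q_k}\phi(\theta)| \leq \mathrm{Var}(\phi)$ at the closest-return times $q_k$. But decomposing an arbitrary $n$ in its Ostrowski representation $n = \sum_j b_j q_j$, with $0\leq b_j\leq a_{j+1}$ controlled by the partial quotients of the rotation number $\alpha$, and telescoping the cocycle yields only $|S_n\phi| \leq \bigl(\sum_j b_j\bigr)\,\mathrm{Var}(\phi)$. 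This is not uniformly bounded in $n$ unless the partial quotients $a_j$ are bounded, and nothing in the hypotheses rules out Liouville $\alpha$. Gottschalk--Hedlund requires $\sup_n|S_n\phi(\theta_0)|<\infty$ for \emph{every} $n$ at some $\theta_0$, not merely along the subsequence $q_k$, so it cannot be applied at this stage. Indeed there exist $C^\infty$ circle diffeomorphisms with Liouville rotation number for which $\log f'$ fails to be a continuous coboundary, so the needed rigidity cannot follow from Denjoy theory alone; it must come from structure specific to tropicalizations of toric maps (integrality, piecewise linearity, the invariant support function $\sfn_{\bar T^*}$), which is exactly what the cited \cite[Theorem 5.5]{DiRo24} uses and your argument never touches.
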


\begin{proof}  Conclusion \eqref{EQUATION:HYPOTHESIS_ALMOST_CONFORMAL} follows from \cite[Theorem 5.5]{DiRo24}, so it remains to show how this implies the second conclusion.  Recall from Theorem \ref{thm:tropicalization} that $N_\R$ decomposes into a finite union of closed sectors $\overline{\sigma}$ such that $\tropf$ is linear on each.  Since $\tropf$ is a homeomorphism, the second and last conclusions of Theorem \ref{thm:tropicalization} give us that $\dtop(f)=|\det\tropf|$ on each $\overline{\sigma}$.  

It suffices to prove (\ref{EQN:ANGLE_CHANGE}) when $\sphericalangle(v,v')$ is small.  In particular, we can assume $v,v'$ lie in some sector $\overline{\sigma}$ where $\tropf$ is linear. Using our identification $N_\R\cong \C$, we obtain $a,b\in\C$ such that (without loss of generality) $|a| > |b|$
$$
\tropf(z) = az + b\bar z
$$
on $\overline{\sigma}$.  Conclusion \eqref{EQUATION:HYPOTHESIS_ALMOST_CONFORMAL} then gives
\begin{align*}
1+2 \frac{|b|}{|a|} \leq \frac{|a|+|b|}{|a|-|b|} < e^{2\delta}.
\end{align*}
and hence $1+3 |b|/|a| < e^{3\delta}$.
Let $\theta>\phi$ be the arguments of $v$ and $v'$. Then 
\begin{align*}
\sphericalangle (\tropf \nvec,\tropf \nvec') &= (\theta-\phi)+{\rm arg}\left(\frac{1+e^{-i2\theta} b/a}{1+e^{-i2\phi} b/a}\right) \\
&\leq (\theta-\phi)+{\rm arg}\left(1+(b/a)(e^{-i2\theta}-e^{-i2\phi})+\cdots\right) \\
&\leq (\theta-\phi)+ 3|b/a|(\theta-\phi) = (1+3|b/a|) \sphericalangle(\nvec,\nvec') < e^{3\delta} \sphericalangle(\nvec,\nvec').
\end{align*}
Replacing $\delta$ with $\delta/3$ completes the proof.
\end{proof}

With Lemma \ref{LEM:ALMOST_CONFORMAL} we derive some consequences of Theorem \ref{thm:tropapprox} for `tropical approximation' of forward iterates of $f$.  

\begin{prop} \label{PROP:ITERATIVE_TROP_APPROX1} Suppose that the toric map $f$ in Theorem \ref{THM:CONTINUITY} satisfies the conclusions of Lemma \ref{LEM:ALMOST_CONFORMAL}.  Given open sets $U\supset \ind(f)$ and $V\supset f(\exc(f))$, there exist $C_1, C_2 > 0$ such that the following hold for any $p\in\torus$.  
\begin{itemize}
\item[(i)] If $f^{j}(p) \not \in U$ for all $0 \leq j \leq n-1$, then
\begin{align*}%\label{EQN:CONCLUSION1_ON_P}
\norm{\Log\circ f^n(p))} \geq \left(e^{-\delta} \dtop^{1/2} \right)^n \left(\norm{\Log(p)} - C_1 \right).
\end{align*}
\item[(ii)] If $f^{j+1}(p) \not \in V$ for all $0 \leq j \leq n-1$, then
\begin{align*}%\label{EQN:CONCLUSION2_ON_P}
\norm{\Log\circ f^n(p))} \leq \left(e^{\delta} \dtop^{1/2} \right)^n \left(\norm{\Log(p)} + C_1 \right).
\end{align*} 
\item[(iii)] If $f^{j}(p) \not \in U$, $f^{j+1}(p) \not \in V$ for all $0 \leq j \leq n-1$ and $\norm{\Log(p)} \geq 2C_1$, then for any $\tau \in N_\R$ we have
\begin{align*}
\sphericalangle(\Log\circ f^n(p)), \tropf^n(\tau)) \leq e^{\delta n}\left( \sphericalangle(\Log(p),\tau) + \frac{C_2}{\|\Log(p)\|}\right).
\end{align*}
\end{itemize}
\end{prop}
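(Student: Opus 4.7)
Setting $q_j := f^j(p)$, $v_j := \Log(q_j)$, and $\alpha_j := \sphericalangle(v_j,\tropf^j(\tau))$, the plan is to convert each of the three estimates of Theorem \ref{thm:tropapprox} into a one-step recursion and then iterate, using Lemma \ref{LEM:ALMOST_CONFORMAL} to handle the linear action of $\tropf$.

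For \emph{(i)}, Theorem \ref{thm:tropapprox}(2) with its neighborhood of $\ind(f)$ chosen to be our $U$ gives a constant $C$ such that $\|v_{j+1}\| \geq \|\tropf(v_j)\| - C$ whenever $q_j \notin U$. Combining with the lower inequality of Lemma \ref{LEM:ALMOST_CONFORMAL}(1) yields the recursion $\|v_{j+1}\| \geq e^{-\delta}\dtop^{1/2}\|v_j\| - C$. Because $\tropf$ has irrational rotation number, $f$ is not invertible, so $\dtop \geq 2$ and $e^{-\delta}\dtop^{1/2} > 1$ for small enough $\delta$. Telescoping the recursion folds the additive error into a convergent geometric series and produces (i) with $C_1 = C/(e^{-\delta}\dtop^{1/2}-1)$. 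Part \emph{(ii)} is the dual argument: the escape clause in Theorem \ref{thm:tropapprox}(3) is a disjunction and is therefore activated by the single hypothesis $q_{j+1}\notin V$; applying the upper inequality of Lemma \ref{LEM:ALMOST_CONFORMAL}(1) and iterating as above yields the matching upper bound.

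For \emph{(iii)}, Theorem \ref{thm:tropapprox}(1) requires $q_j$ to avoid a neighborhood $U_0$ of $\exc(f)$, which is not literally among our hypotheses. The fix is to take $U_0 := U \cup f^{-1}(V)$; this is open (since $\ind(f)\subset U$, extending $f^{-1}(V)$ as an open set outside $\ind(f)$ poses no problem) and contains $\exc(f)$ (since $\exc(f)\setminus\ind(f)$ maps into $f(\exc(f))\subset V$). The hypotheses $q_j \notin U$ and $q_{j+1}\notin V$ jointly force $q_j\notin U_0$, so Theorem \ref{thm:tropapprox}(1) applies and gives $\|v_{j+1}-\tropf(v_j)\|\leq C$. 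The triangle inequality for angles combined with Lemma \ref{LEM:ALMOST_CONFORMAL}(2) then produces
\begin{align*}
\alpha_{j+1} \leq \sphericalangle(v_{j+1},\tropf(v_j)) + e^{\delta}\alpha_j \leq \frac{C'}{\|v_{j+1}\|} + e^{\delta}\alpha_j,
\end{align*}
the first inequality being the perpendicular-distance bound $\sin\sphericalangle(v_{j+1},\tropf(v_j)) \leq \|v_{j+1}-\tropf(v_j)\|/\|v_{j+1}\|$. Part (i), whose hypothesis $\|\Log(p)\|\geq 2C_1$ is in force, supplies $\|v_{j+1}\|\geq \tfrac{1}{2}(e^{-\delta}\dtop^{1/2})^{j+1}\|\Log(p)\|$. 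Iterating the recursion, the factor $e^{(n-k)\delta}$ from iteration combines with the factor $e^{k\delta}$ implicit in $1/\|v_k\|$ to yield a clean prefactor $e^{n\delta}$ in front of the geometric series $\sum_{k=1}^{n}\dtop^{-k/2}\leq (\dtop^{1/2}-1)^{-1}$, producing the desired term $C_2/\|\Log(p)\|$.

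The main delicacy is the bookkeeping in (iii): the factors of $e^{\delta}$ and $\dtop^{1/2}$ arising from Lemma \ref{LEM:ALMOST_CONFORMAL} and from the lower bound of (i) must align so that the $\dtop^{k/2}$ terms cancel exactly between numerator and denominator, leaving in front only the allowed factor $e^{n\delta}$ and a decaying geometric sum, rather than a lingering $\dtop^{n/2}$ that would break the statement.
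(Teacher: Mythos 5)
Your proposal is correct and follows essentially the same route as the paper's proof: convert each one-step estimate from Theorem \ref{thm:tropapprox} into a recursion for $\|v_j\|$ or $\alpha_j$, fold in the near-conformality estimates of Lemma \ref{LEM:ALMOST_CONFORMAL}, and telescope, using part (i) with the assumption $\|\Log(p)\|\geq 2C_1$ to control the denominators $\|v_{j+1}\|$ in part (iii). The one place you add something the paper leaves implicit is the justification for applying Theorem \ref{thm:tropapprox}(1) in part (iii): the paper simply states that $p\notin U$, $f(p)\notin V$ and Conclusion (1) give the one-step bound, whereas you explicitly construct the open neighborhood $U_0 := U\cup f^{-1}(V)$ of $\exc(f)$ and verify that the hypotheses exclude $q_j$ from it; this is a clean way to spell out the step. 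The minor discrepancies in the form of $C_1$ (you get $C/(e^{-\delta}\dtop^{1/2}-1)$, the paper writes $C\sum_{j\geq 0}(e^{-\delta}\dtop^{1/2})^{-j}$) are harmless since the paper's is just an upper bound, and your explicit remark that $\dtop\geq 2$ forces $e^{-\delta}\dtop^{1/2}>1$ for small $\delta$ is a valid justification that the paper takes for granted.
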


\begin{proof}
Concerning (i), let $C>0$ be the constant from \eqref{item:ta2} of Theorem \ref{thm:tropapprox} (with $U' := U$).  Iteratively applying that estimate together with conclusion (\ref{EQUATION:HYPOTHESIS_ALMOST_CONFORMAL}) of Lemma \ref{LEM:ALMOST_CONFORMAL} gives (i) with
\begin{align*}
C_1 = C \sum_{j=0}^\infty \left(e^{-\delta} \dtop^{1/2} \right)^{-j}.
\end{align*}
The proof of (ii) is similar.

Concerning (iii), we have from Conclusion \eqref{EQN:ANGLE_CHANGE} of Lemma \ref{LEM:ALMOST_CONFORMAL} that
\begin{eqnarray*}
\sphericalangle(\Log\circ f(p),\tropf(\tau)) & \leq & \sphericalangle(\Log\circ f(p), \tropf\circ\Log(p)) + \sphericalangle(\tropf\circ\Log(p),\tropf(\tau)) \\
& \leq & \sphericalangle(\Log\circ f(p), \tropf\circ\Log(p)) + e^\delta \sphericalangle(\Log(p),\tau).
\end{eqnarray*}
Meanwhile, the conditions $p \not \in U$, $f(p)\notin V$, and Conclusion \eqref{item:ta1} of Theorem \ref{thm:tropapprox} give
$\|\Log\circ f(p) - \tropf \Log(p)\| \leq C$ and therefore 
\begin{align}\label{EQN:ONE_STEP}
\sphericalangle(\Log\circ f(p), \tropf \Log(p)) \leq {\rm arcsin}\left(\frac{C}{\norm{\Log\circ f(p)}}\right) \leq \frac{\pi}{2} \frac{C}{\norm{\Log\circ f(p)}},
\end{align}
where we have used that ${\rm arcsin}(x) \leq \frac{\pi}{2} x$ for all $x \in [0,1]$.  Thus 
\begin{align*}
\sphericalangle(\Log\circ f(p),\tropf(\tau)) \leq e^\delta \sphericalangle(\Log(p),\tau) + \frac{\pi}{2} \frac{C}{\norm{\Log\circ f(p)}}.
\end{align*}
Repeating this estimate with $f(p),\dots f^{n-1}(p)$ in place of $p$, we obtain
\begin{eqnarray*}
\sphericalangle(\Log\circ f^n(p)), \tropf^n \tau) 
& \leq & 
\sphericalangle(\Log(p),\tau) e^{\delta n} + \frac{C\pi}{2} \sum_{j=1}^n \frac{e^{\delta (n-j)}}{\norm{\Log\circ f^j(p))}} \\
& \leq & 
e^{\delta n}\left(\sphericalangle(\Log(p),\tau) 
+ 
\sum_{j=1}^n \frac{C\pi}{2\dtop^{j/2} \left(\norm{\Log(p)} - C_1\right)} \right) \\
& \leq & 
e^{\delta n} \left(\sphericalangle(\Log(p),\tau) + \frac{C_2}{\norm{\Log(p)}} \right).
\end{eqnarray*}  
The second inequality follows from the assumption $\Log(p)\geq 2C_1$ and the lower bound on $\norm{\Log\circ f^j(p))}$ from (i). 
The third inequality follows from $\norm{\Log(p)}-C_1 \geq \frac12\norm{\Log(p)}$ and taking $C_2 := C\pi\sum_{j=0}^\infty \dtop^{-j/2}$. 
\end{proof}

\begin{proof}[Proof of Theorem \ref{THM:CONTINUITY}]
Since $f$ has small topological degree, we may choose $\delta>0$ so that 
\begin{align}\label{EQN:ASSUMPTION_ON_DELTA}
e^{3\delta} \dtop(f) < \tdeg < \ddeg(f).
\end{align}
We have $\dtop(f^n) = \dtop(f)^n$ and, by internal stability, $\ddeg(f^n) = \ddeg(f)^n$, so \eqref{EQN:ASSUMPTION_ON_DELTA} holds with the same $\delta$ if we replace $f$ with a forward iterate $f^n$.  Likewise, $f^n$ satisfies the hypotheses of Theorem \ref{THM:CONTINUITY} whenever $f$ does.  So since the equilibrium currents $T^*,T_*$ are the same for $f^n$ as they are for $f$, we may begin by replacing $f$ with an iterate in order to assume that the conclusions of Lemma \ref{LEM:ALMOST_CONFORMAL} hold for $\delta$ in \eqref{EQN:ASSUMPTION_ON_DELTA}.

Let $K\subset \rztO\setminus \ind(f^\infty)$ be a given compact set. By Corollary~\ref{cor:star}, $K\subset Q \equiv Q(\tau_1,\dots,\tau_J,R)$ for some star $Q\subset\rztO$.  Increasing $R$ and adding to the finite set $\Sigma_1(Q) := \{\tau_1,\dots,\tau_J\}$ of rational rays $\tau_j\subset N_\R$ that direct $Q$, we may assume that $Q$ contains the finite sets $\ind(f)$ and $f(\exc(f))$.   

Let $\rpot:=\rpot_{\ddeg^{-1} f^*\bar T^*}$ be as in \eqref{eqn:series}. Fix a point $q\in\ind(f)$ and a local coordinate $z$ centered at $q$.  Then (see e.g. \cite[Proposition 1.2]{BeDi05b}) $\rpot(z) \geq -C\log\norm{z}$ for some constant $C>0$.  Theorem \ref{thm:pullback} tells us $|\rpot|$ is bounded on the complement of any neighborhood of $\ind(f)$, so it will suffice to fix a Riemannian distance function on $\rztO$ and show that
\begin{align}\label{EQN:DESIRED_SERIES}
\sum_{j=0}^\infty \frac{\log \dist(f^j(p),\ind{f})}{\ddeg^j}
\end{align}
converges uniformly on $K$.  Each term of this series is continuous on $K$, so it will further suffice to prove this only on the dense subset $K\cap\torus$.

Since $\tropf$ has irrational rotation number and $\Sigma_1(Q)$ is finite, we can choose $n_0 \in \mathbb{N}$ sufficiently large
so that $\tropf^n(\tau) \not \in \Sigma_1(Q)$ for any $\tau\in \Sigma_1(Q)$ and $n\geq n_0$.  We then choose an open set $U\subset Q$ such that
\begin{equation}
\label{EQN:DEF_U1}
\ind(f)\subset U \quad 
\text{and}\quad 
U\cap\bigcup_{j=0}^{n_0} f^j(K) = \emptyset.
\end{equation}
Similarly, since $f$ is internally stable, we can choose an open set $V\subset Q$ such that 
\begin{equation}
\label{EQN:DEF_V}
f(\exc(f))\subset V \quad\text{and}\quad U \cap \bigcup_{j=0}^{n_0} f^j(V) = \emptyset.
\end{equation}
These are the `excluded' neighborhoods we will use when applying Proposition~\ref{PROP:ITERATIVE_TROP_APPROX1}.

Fix a number $\tdeg \in (e^{3\delta}\dtop,\ddeg)$ and suppose to get a contradiction that the series (\ref{EQN:DESIRED_SERIES}) does not converge
uniformly on $K \cap \torus$.   Then for arbitrarily large $n \in \mathbb{N}$ there exists $p = p(n) \in K \cap \torus$ such that 
\begin{align}\label{EQN:HYPOTHETICAL_SITUATION}
|\log \dist(f^n(p),\ind(f))| \geq \tdeg^n.
\end{align}
Hence (for $n$ large enough) $f^n(p) \in U$, and there is a constant $C_3>0$ such that 
\begin{align}\label{EQN:HYPOTHETICAL_SITUATION1}
\|\Log\circ f^n(p) \| \geq C_3 \tdeg^n
\end{align}
for all such $n$.  Finally, by Conclusion (1) of Corollary \ref{cor:cvgceonpoles}, $\ind(f^{-\infty})$ neither intersects nor accumulates on $\ind(f)$, so we may assume that $f^n(p)\notin\ind(f^{-\infty})$.  Hence the orbit segment $p,\dots,f^n(p)$ is completely contained in $\torus$.

From here the general idea is that since $\Log\circ f$ is well-approximated by $\tropf\circ\Log$, the fact that $p$ and $f^n(p)$ both lie in $Q$ suggests that there are arbitrarily large $n$ and rays $\tau,\tilde\tau\in \Sigma_1(Q)$ such that $\tropf^n(\tau) = \tilde\tau$.  But this is impossible, because the set $\Sigma_1(Q)$ of rays directing $Q$ is finite, whereas no ray in $N_\R$ is periodic by $\tropf$.  Proposition \ref{PROP:ITERATIVE_TROP_APPROX1} allows us to make this idea precise, but the details become a little involved since the relationship between $f$ and $\tropf$ breaks down for orbits that pass near $\ind(f)$  and/or $f(\exc(f))$.  In any case, to proceed we fix a positive integer $n$ as in \eqref{EQN:HYPOTHETICAL_SITUATION}.  All statements and inequalities that follow will be understood to hold for `sufficiently large' such $n$.  The meaning of `sufficiently large' will become clear gradually, increasing several (but finitely many!) times as the argument progresses.  All constants $C_j$ that appear will be otherwise independent of $n$ and will remain the same from the moment they are introduced.  

We let $q = f^{n-k}(p)$ be the last point in the orbit segment $p,f(p),\dots,f^n(p)$ that lies in the open set $V$ from \eqref{EQN:DEF_V}.  If no such point exists we take $q=p$, i.e. $k=n$.  Our choice of $q$ allows us to apply Proposition \ref{PROP:ITERATIVE_TROP_APPROX1} to obtain
$$
C_3\tdeg^n \leq \norm{\Log\circ f^n(p)} = \norm{\Log\circ f^k(q)} \leq \left(e^{\delta} \dtop^{1/2}\right)^k (\norm{\Log(q)} + C_1).
$$
Since $\tdeg > e^{3\delta} \dtop > e^\delta\dtop^{1/2}$, this implies for sufficiently large $n$ that 
\begin{align}
\label{EQN:LOWER_BOUND_LOG_P}
\|\Log(q)\|
\geq
\frac{C_3}{2} \frac{\tdeg^n}{(e^{\delta}\dtop^{1/2})^k} \geq {\rm max}(R,2C_1).
\end{align}
Since $q \in V \subset Q$, we also have a ray $\tau \in \Sigma_1(Q)$ such that ${\rm dist}(\Log(q),\tau) \leq R$, where $R$ is the width used in defining $Q$.  From this and \eqref{EQN:LOWER_BOUND_LOG_P}, we infer
\begin{align} \label{EQN:UPPER_BOUND_INITIAL_ANGLE}
\sphericalangle(\Log(q), \tau) \leq {\rm arcsin}\left(\frac{2R}{C_3} \cdot \frac{(e^{\delta} \dtop^{1/2})^k}{\tdeg^n}\right) \leq \frac{\pi R }{C_3} \cdot  \frac{(e^{\delta} \dtop^{1/2})^k}{\tdeg^n}.
\end{align}

Now let $\ell$ be the smallest positive integer such that $f^\ell(q) \in U$.  Since $q\in V$ and $f^k(q) = f^n(p)\in U$, we have $n_0 < \ell \leq k$ by \eqref{EQN:DEF_V}.  Then $U\cap\torus\subset Q$ implies $\dist(f^\ell(q),\tilde\tau) < R$ for some $\tilde\tau\in \Sigma_1(Q)$.   So since $\{f^{\ell+1}(q),\dots,f^n(q)\}\cap V = \emptyset$, we can repeat the argument for \eqref{EQN:UPPER_BOUND_INITIAL_ANGLE} to obtain
\begin{align} 
\label{EQN:UPPER_BOUND_INITIAL_ANGLE2}
\sphericalangle(\Log\circ f^\ell(q), \tilde\tau) \leq {\rm arcsin}\left(\frac{2R}{C_3} \cdot \frac{(e^{\delta} \dtop^{1/2})^{k-\ell}}{\tdeg^n}\right) \leq \frac{\pi R }{C_3} \cdot  \frac{(e^{\delta} \dtop^{1/2})^{k-\ell}}{\tdeg^n}.
\end{align}
Moreover, by construction, the orbit segment $\{q,f(q),\dots,f^\ell(q)\}$ meets $V$ only at $q$ and $U$ only at $f^\ell(q)$. 
So \eqref{EQN:LOWER_BOUND_LOG_P} and the last conclusion of Proposition \ref{PROP:ITERATIVE_TROP_APPROX1} give
\begin{equation}
\label{EQN:UPPER_BOUND_FINAL_ANGLE}
\sphericalangle(\Log\circ f^\ell(q), \tropf^\ell(\tau)) 
\leq 
e^{\delta \ell} \left(\sphericalangle(\Log(q),\tau)  +  \frac{C_2}{\|\Log(q)\|}\right)
\leq  
C_4 e^{\delta\ell}\cdot \frac{(e^\delta \dtop^{1/2})^k}{\tdeg^n}
\leq
C_4 \frac{(e^{2\delta} \dtop^{1/2})^k}{\tdeg^n}.
\end{equation}  
Together \eqref{EQN:UPPER_BOUND_INITIAL_ANGLE2} and \eqref{EQN:UPPER_BOUND_FINAL_ANGLE} imply that
\begin{equation}
\label{eqn:contra1}
\sphericalangle(\tropf^\ell(\tau), \tilde\tau) \leq C_5\frac{(e^{2\delta} \dtop^{1/2})^k}{\tdeg^n}.
\end{equation}
But $\tilde\tau\in \Sigma_1(Q)$, whereas $\ell\geq n_0$ implies that $\tropf^\ell(\tau)\notin\Sigma_1(Q)$, so we claim \eqref{eqn:contra1} is impossible for $n$ large.

To see this, let $v\in N\setminus\{0\}$ be the primitive vector on $\tau$.  Note that since $\tau$ is one of finitely many rays in $\Sigma_1(Q)$, we have $\norm{v}\leq C_6$ for some constant $C_6$ depending on $Q$ but not on the particular ray $\tau$ (i.e. not on $n$).  Finiteness of $\Sigma_1(Q)$ also implies that 
$$
\min\{\dist(\tilde v,\tilde\tau): \tilde v\in N\setminus \tilde \tau\} \geq C_7 > 0
$$
for some constant $C_7$ depending on $Q$ but not on the particular ray $\tilde\tau$.  Hence
$$
\sphericalangle(\tropf^\ell(\tau), \tilde\tau) 
= 
\sphericalangle(\tropf^\ell(v), \tilde\tau) 
\geq 
\frac{C_7}{\norm{\tropf^\ell v}} 
\geq 
\frac{C_7}{C_6 (e^\delta \dtop^{1/2})^\ell}.
$$
Putting this together with \eqref{eqn:contra1}, rearranging and using that $\ell\leq k \leq n$, we obtain that
$$
\tdeg^n \leq C_8 (e^{3\delta} \dtop)^n
$$
for all sufficiently large $n$.  This contradicts $\tdeg > e^{3\delta}\dtop$, justifying our claim, and completing the proof of Theorem \ref{THM:CONTINUITY}. 
\end{proof}

For later reference we note the following consequence of Theorem \ref{THM:CONTINUITY}.

\begin{cor}
\label{cor:curvectrl}
Let $f$ be as in Theorem \ref{THM:CONTINUITY} and $\dist$ denote a Riemannian distance on $\rztO$.  Then for any curve $C\subset\rztO$, the intersection $\ind(f^\infty)\cap C$ is finite, and there exist constants $A,B>0$
$$
\rpot_{T^*}(p) \geq  \min\{-B,A\log\dist(p,\ind(f^\infty)\cap C_\tau)\}.
$$
for all $p\in C$.
\end{cor}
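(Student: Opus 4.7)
The proof has two steps. For the finiteness claim: every curve $C \subset \rztO$ is either an internal (compact) curve or a pole $C_\tau^\circ \cong \C^*$. In the pole case, Conclusion (1) of Corollary \ref{cor:cvgceonpoles} gives the finiteness of $\ind(f^\infty)\cap C$. In the internal case, I will use that $\ind(f^\infty)$ is closed and discrete in $\rztO$, as noted in the opening of this section, so $\ind(f^\infty) \cap C$ is a discrete subset of a compact set, hence finite.

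For the quantitative bound, the plan is to analyze the series \eqref{eqn:series} $\rpot_{T^*} = \sum_j \ddeg^{-j}\rpot\circ f^j$ term by term along $C$. Three inputs: by Theorem \ref{thm:pullback}(1), $\rpot$ is continuous and bounded outside any neighborhood of the finite set $\ind(f)$; near each point of $\ind(f)$, $\rpot$ has at most a logarithmic singularity, so $\rpot(z) \geq C\log\|z\| - M$ in local coordinates; and by Theorem \ref{THM:CONTINUITY}, the series converges uniformly on compact subsets of $\rztO \setminus \ind(f^\infty)$. Because $\tropf$ is a homeomorphism with irrational rotation number, only finitely many forward iterates $f^j(C)$ intersect $\ind(f)$: for a pole $C = C_\tau^\circ$, $f^j(C) = C_{\tropf^j(\tau)}$ and irrationality of the rotation number allows this coincidence only finitely often; for internal $C$, the same conclusion follows from Proposition \ref{prop:comesdetached} combined with the hypothesis $\ind(f)\cap\torus = \emptyset$. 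For the remaining ``good'' indices $j$, the argument of Corollary \ref{cor:cvgceonpoles}(3) adapts to yield a uniform bound on $\rpot \circ f^j|_C$, independent of both $j$ and $p \in C$.

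For the finitely many ``bad'' indices $j$, the function $\rpot \circ f^j|_C$ is continuous on $C$ away from the finite set $(f^j|_C)^{-1}(\ind(f)) \subset \ind(f^\infty) \cap C$, with local logarithmic behavior
\[
\rpot(f^j(p)) \geq C' k_{j,q_0}\log\dist(p,q_0) - D'
\]
near each preimage $q_0$, where $k_{j,q_0}$ denotes the local multiplicity of $f^j|_C$ at $q_0$ and we use that $\dist(f^j(p),\ind(f)) \asymp \dist(p,q_0)^{k_{j,q_0}}$. Summing the weighted series then yields finitely many logarithmic singularities supported on $\ind(f^\infty) \cap C$ plus a uniformly bounded remainder, giving $\rpot_{T^*}(p) \geq A\log\dist(p, \ind(f^\infty)\cap C) - B$ as desired.

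The main obstacle I anticipate is establishing uniform control of $\rpot \circ f^j|_C$ in the ``good'' indices $j$ when $C$ is a non-compact pole, where $p$ may approach the $\torus$-invariant ends of $C$. The key insight, borrowed from the proof of Corollary \ref{cor:cvgceonpoles}(3), is that $C_{\tropf^j(\tau)}$ can meet a pole containing an $\ind(f)$-point only at $\torus$-invariant points, which lie outside $\rztO$. Thus one may choose a single neighborhood of $\ind(f)$ meeting only the finitely many poles that carry $\ind(f)$-points, so that for every good $j$ the curve $f^j(C)$ lies in the complementary region where $\rpot$ is bounded, independently of $j$.
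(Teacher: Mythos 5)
Your treatment of the finiteness claim is fine, and it is also correct that the bad indices (those $j$ with $f^j(C)\cap\ind(f)\neq\emptyset$) number only finitely many and contribute controlled logarithmic singularities at $\ind(f^\infty)\cap C$.  The gap is in the good-index step.  You claim a bound $\rpot\circ f^j|_C\geq -M$, with $M$ independent of $j$, for all good $j$.  Your justification works when $C$ is a pole: there $f^j(C)=C_{\tropf^j(\tau)}$ is itself a pole, and distinct poles are disjoint in $\rztO$, so a single neighborhood $V$ of $\ind(f)$ chosen to meet only $\ind(f)$-poles is avoided by $f^j(C)$ for every good $j$.  But when $C$ is internal, $f^j(C)$ is an internal curve whose $\torus$-part can pass arbitrarily close to $\ind(f)$: the condition $f^j(C)\cap\ind(f)=\emptyset$ forbids intersection, not proximity, and the amoeba $\Log(f^j(C)\cap\torus)$ may still extend into the direction of an $\ind(f)$-ray even when $f^j(C)$ misses the indeterminacy points themselves.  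So the claimed $j$-uniform bound on $\rpot\circ f^j|_C$ is not available for internal $C$; it is in fact stronger than what is needed (the series $\sum_j\ddeg^{-j}\rpot\circ f^j$ can converge along $C$ even if the individual terms $\rpot\circ f^j$ decay to $-\infty$ at a sub-$\ddeg^j$ rate).

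The paper sidesteps this entirely.  It applies Proposition~\ref{prop:comesdetached} to choose one $n$ with $f^n(C)\cap\ind(f^\infty)=\emptyset$ and then invokes the functional equation $\ddeg^n\rpot_{T^*}=\rpot_{T^*}\circ f^n+\rpot_{f^{n*}\bar T^*}$.  The first term is bounded on $C$ because $\rpot_{T^*}$ is bounded on the curve $f^n(C)$ (by Corollary~\ref{cor:cvgceonpoles}(3) when $f^n(C)$ is a pole, by Theorem~\ref{THM:CONTINUITY} when $f^n(C)$ is internal, hence compact), and the second term has the required logarithmic lower bound near $\ind(f^n)\subset\ind(f^\infty)$ by Theorem~\ref{thm:pullback}(1).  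Note that the identity $\sum_{j\geq n}\ddeg^{-j}\rpot\circ f^j(p)=\ddeg^{-n}\rpot_{T^*}(f^n(p))$ shows that what you actually need from the good indices is a bound on the tail sum, not on each term, and that is precisely what the functional-equation argument supplies.  You could repair your proof by making this substitution, but at that point it coincides with the paper's.
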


\begin{proof}
Corollary \ref{cor:cvgceonpoles} gives that $\ind(f^\infty)\cap C$ is finite when $C$ is a pole.  Finiteness holds for internal $C$ because under the hypotheses of Theorem \ref{THM:CONTINUITY} $\ind(f^\infty)\cap C \subset C\setminus\torus$, and the latter set is always finite.  If in either case, $C\cap \ind(f^\infty) = \emptyset$, then $\rpot_{T^*}|_C$ is bounded.  When $C$ is a pole, this is the final conclusion of Corollary \ref{cor:cvgceonpoles}.  When $C$ is internal (and therefore compact in $\rztO$), this is an immediate consequence of Theorem \ref{THM:CONTINUITY}.

For a general curve $C$, we apply Proposition \ref{prop:comesdetached} to obtain $n>0$ such that $f^n(C)\cap \ind(f^\infty) = \emptyset$.  By invariance of $T^*$, we have
$$
\ddeg^n\rpot_{T^*} = \rpot_{T^*}\circ f^n + \rpot_{f^{n*}\bar T^*},
$$
where the first term on the right is bounded on $C$ by our choice of $n$, the second term is bounded above everywhere on $\rztO$ by Theorem \ref{thm:pullback}, and
$$
\rpot_{f^{n*}\bar T^*} \geq \max\{- A\,\log\dist(\cdot,\ind(f^n)),-B\}.
$$
for some $A,B>0$ by Conclusion (1) of Theorem \ref{thm:pullback} and e.g. \cite[Proposition 1.3]{BeDi05b}
\end{proof}

\section{Wedge products and energy}
\label{sec:products}
We begin this section by reviewing a general potential theoretic approach for intersecting positive closed $(1,1)$ currents that was initiated by Bedford and Taylor \cite{BeTa76} and further developed by many others.  Our treatment suffices for the purposes of this article, but nearly all of the results below hold in substantially greater generality (see e.g. \cite{DDG11}).
In Subsection \ref{ss:toricwedge} we extend the wedge product to be considered on the toric limit surface $\rztO$.

\subsection{Wedge product on a compact K\"ahler surface.}\label{SUBSEC:WEDGE_COMPACT}
Let $X,\omega$ be a compact K\"ahler surface and $\psh(\omega)$ denote the set of functions $u\in L^1(X)$ such that $\omega +dd^c u\geq 0$.  Note that $\psh(\omega) \subset \psh(C\omega)$ for any $C>1$ and if $\omega'$ is any other K\"ahler form, then there exists $C>1$ such that $\psh(C^{-1}\omega) \subset \psh(\omega') \subset \psh(C\omega')$.  Hence $\qpsh(X) := \bigcup_{C>0} \psh(C\omega)$ is independent of the choice of $\omega$.

\begin{defn} A \emph{regularizing sequence} for $u\in\psh(\omega)$ is a decreasing sequence $(u_j)\subset \psh(\omega)\cap C^\infty(X)$ that converges pointwise to $u$.
\end{defn}

A proof of the following can be found in the book by Guedj and Zeriahi \cite[Proposition 8.16]{GZ_book}

\begin{thm}
\label{thm:regularization}
Any $u\in PSH(\omega)$ admits a regularizing sequence.
\end{thm}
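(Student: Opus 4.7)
The plan is to construct the regularizing sequence by combining local convolution with a global patching procedure. Since the statement is classical (and referenced to Guedj–Zeriahi), I would follow Demailly's regularization technique, which is the standard proof.

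First, I would cover $X$ by finitely many coordinate charts $(V_i, z_i)$ on which $\omega = dd^c \rho_i$ for a smooth local potential $\rho_i$, chosen so that the shrunken charts $U_i \Subset V_i$ still cover $X$. On each $V_i$, the function $u + \rho_i$ is plurisubharmonic, so its standard convolution $(u + \rho_i) * \chi_\epsilon$ with a smooth radial mollifier $\chi_\epsilon$ of radius $\epsilon$ is well-defined on $U_i$ (for $\epsilon$ small), smooth, plurisubharmonic, and decreases to $u + \rho_i$ as $\epsilon \downarrow 0$. Setting $u_{i,\epsilon} := (u+\rho_i)*\chi_\epsilon - \rho_i$ gives a smooth function on $U_i$ satisfying $\omega + dd^c u_{i,\epsilon} \geq 0$ up to an error $O(\epsilon)$ coming from the fact that $\rho_i * \chi_\epsilon \neq \rho_i$; this error is absorbed by replacing $\omega$ with $(1+C\epsilon)\omega$, i.e. the local approximants are $(1+C\epsilon)\omega$-psh on $U_i$.

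The next step, which is the main obstacle, is to glue the local approximants into a single globally defined smooth $\omega$-psh function. For this I would invoke Demailly's gluing procedure based on the regularized maximum $M_\eta(x_1,\ldots,x_N)$, which is a smooth, convex, nondecreasing function that agrees with $\max$ outside an $\eta$-neighborhood of the diagonal and preserves $(q)$-plurisubharmonicity. Using a subordinate partition-of-unity-style argument — specifically choosing constants $c_i$ so that $u_{i,\epsilon} + c_i$ dominates $u_{j,\epsilon} + c_j$ near the boundary of $U_i \cap U_j$ on overlapping patches — one patches together the $u_{i,\epsilon} + c_i$ via $M_\eta$ to obtain a globally defined smooth function $\tilde u_\epsilon$ that is $(1+C'\epsilon)\omega$-psh on all of $X$ and that agrees locally with one of the convolutions.

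Finally, to get a sequence in $\psh(\omega)$ (not merely $\psh((1+C'\epsilon)\omega)$) that is \emph{decreasing} and converges pointwise to $u$, I would pass to a subsequence $\epsilon_j \downarrow 0$ chosen rapidly enough, subtract a small multiple of a fixed strictly $\omega$-psh correction $\theta_j = C'\epsilon_j\, \varphi_0$ where $\varphi_0$ is a smooth function with $dd^c\varphi_0 \geq -\omega/2$, and then take cumulative maxima or simply adjust additive constants so that the resulting smooth functions $u_j \in \psh(\omega) \cap C^\infty(X)$ form a monotone decreasing sequence. Pointwise convergence $u_j(x) \downarrow u(x)$ on $X$ follows because the local convolutions converge pointwise from above to $u$ on each chart, and the $M_\eta$-gluing respects pointwise limits. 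The subtlety worth double-checking is ensuring that the gluing constants $c_i$ can be chosen uniformly in $\epsilon$ so that monotonicity is not spoiled; this is arranged by using the same cover and the same auxiliary smooth function $\varphi_0$ throughout, which is precisely what is done in the cited \cite[Proposition 8.16]{GZ_book}.
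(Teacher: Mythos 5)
The paper does not prove this theorem; it simply cites \cite[Proposition 8.16]{GZ_book}, so your task is effectively to reproduce that cited argument. Your overall plan — local convolution, Richberg-style gluing via the regularized maximum, then a correction to pass from $(1+C'\epsilon)\omega$-psh to genuine $\omega$-psh — is the right skeleton, but the correction step at the end contains a sign error that breaks the proof.

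You propose to subtract $C'\epsilon_j\,\varphi_0$ with $\varphi_0$ a fixed strictly $\omega$-psh function (so $dd^c\varphi_0 \geq -\omega/2$). This goes the wrong direction: if $\tilde u_{\epsilon}$ satisfies $(1+C'\epsilon)\omega + dd^c\tilde u_{\epsilon}\geq 0$, then
$$dd^c\bigl(\tilde u_{\epsilon}-C'\epsilon\,\varphi_0\bigr)=dd^c\tilde u_{\epsilon}-C'\epsilon\,dd^c\varphi_0,$$
and since subtracting a quasi-psh function can only lower the $dd^c$ (and in fact gives no usable lower bound at all without an upper bound on $dd^c\varphi_0$), this produces something strictly \emph{worse} than $(1+C'\epsilon)\omega$-psh, not better. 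To absorb the excess curvature $C'\epsilon\,\omega$ into a $dd^c$ you would need a globally defined smooth function with $dd^c\psi\geq\omega$, which does not exist on a compact K\"ahler surface. The standard fix (this is precisely the Blocki--Kolodziej point, which Guedj--Zeriahi are citing) is multiplicative rather than additive: normalize $u\leq 0$, arrange the glued approximants $\tilde u_{\epsilon_j}\leq 0$ decreasing to $u$, and set $u_j := \lambda_j\,\tilde u_{\epsilon_j}$ with $\lambda_j := (1+C'\epsilon_j)^{-1}$. Then $\omega+dd^c u_j = \omega - \lambda_j\,\omega + \lambda_j\bigl((1+C'\epsilon_j)\omega+dd^c\tilde u_{\epsilon_j}\bigr)\geq 0$, so $u_j\in\psh(\omega)\cap C^\infty(X)$; and since $\lambda_j\nearrow 1$ while $\tilde u_{\epsilon_j}\leq 0$ decreases, the sequence $u_j$ is decreasing and converges pointwise to $u$. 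Your gluing and local-convolution steps are fine; replacing the additive correction by this rescaling closes the gap.
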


For the remainder of this subsection, we fix a positive closed $(1,1)$ current $S$
on $X$ and let $L^1(S)$ denote the set of Borel measureable functions on $X$
that are integrable with respect to the trace measure $\omega\wedge S$.  Then
$L^1(S)$ is independent of $\omega$ and includes e.g. all bounded 
Borel measurable functions, hence all bounded qpsh functions on $X$.
Less obviously $L^1(S)$ includes qpsh functions that are
unbounded on small enough sets.  The following particular case of \cite[III,
Theorem 4.5]{DemBook} will be useful to us in the sequel.

\begin{prop}
\label{prop:boundedish}
If $I\subset X$ is a finite set and $u\in L^1(X)$ is a qpsh function that is locally bounded on $X\setminus I$, then $u\in L^1(S)$ for any positive closed $(1,1)$ current $S$ on $X$.
\end{prop}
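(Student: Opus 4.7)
The plan is to reduce the proposition to a local statement near each singular point and then invoke the Chern-Levine-Nirenberg inequality. First I would cover $X$ by finitely many coordinate balls consisting of small balls $B_p$ around each $p\in I$ together with balls on which $u$ is bounded. Bounded functions are trivially in $L^1(\omega\wedge S)$ since the trace measure is finite on compact $X$, so it remains to show $u\in L^1(B_p,\omega\wedge S)$ for each $p\in I$.

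Fix $p\in I$ and a small coordinate ball $B$ around $p$. Since $u$ is qpsh, there exists $C>0$ with $C\omega+dd^c u\geq 0$, and choosing a smooth local potential $\phi$ for $C\omega$ on $B$ lets me decompose $u=v+h$ on $B$ with $v\in \psh(B)$ (taking $v=u+\phi$) and $h=-\phi$ smooth. The smooth part $h$ contributes a finite integral against $\omega\wedge S$, so the problem reduces to showing $v\in L^1(B',\omega\wedge S)$ for some $B'\Subset B$. The hypothesis that $u$ is locally bounded on $X\setminus I$ guarantees that $v$ is locally bounded on $B\setminus\{p\}$, in particular $v\not\equiv -\infty$ and so $v\in L^1_{loc}(B,\mathrm{Leb})$.

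At this point I would invoke the Chern-Levine-Nirenberg inequality that underpins \cite[III, Theorem 4.5]{DemBook}: for any $v\in\psh(B)\cap L^1_{loc}(B)$ and any positive closed $(1,1)$ current $S$ on $B$,
$$
\int_{B'}|v|\,\omega\wedge S \;\leq\; C_{B',B}\,\|v\|_{L^1(B)}\,M(S,B),
$$
where $M(S,B)$ denotes the mass of the trace $\omega\wedge S$ on $B$. Both factors on the right are finite, hence $v\in L^1(B',\omega\wedge S)$. Summing the contributions from the finitely many points of $I$ together with the trivially finite contributions from the covering balls off $I$ yields $u\in L^1(\omega\wedge S)=L^1(S)$.

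The main technical content here is the Chern-Levine-Nirenberg estimate itself, whose proof relies on a cutoff function and Stokes' theorem to exchange the roles of $v$ and a local psh potential for $S$. This is a classical tool, treated carefully in \cite{DemBook}, so the principal obstacle is really just the bookkeeping reduction to local balls rather than any new analytic difficulty. One subtle point to verify is that the $dd^c$-decomposition $u=v+h$ is genuinely available on small coordinate balls, but this follows directly from the definition of $\qpsh(X)$ and the local existence of smooth potentials for $\omega$.
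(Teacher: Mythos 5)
The paper gives no proof for this proposition: it is stated as a special case of \cite[III, Theorem 4.5]{DemBook}, which you are effectively trying to re-derive. Your reduction from a qpsh function on $X$ to a psh function $v$ with isolated singularity on coordinate balls around each point of $I$ is a correct first step.

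The gap is the displayed inequality, which is false as stated. Take $B\subset\C^2$ the unit ball, $v(z_1,z_2)=\log|z_1|$, and $S=[\{z_1=0\}]$; then $v\in\psh(B)\cap L^1(B)$ and $M(S,B)<\infty$, yet $\int_{B'}|v|\,\omega\wedge S=+\infty$ because $v\equiv-\infty$ on $\supp S$. Your estimate is quantified over all psh $v\in L^1(B)$ and therefore does not see the hypothesis that the unbounded locus of $v$ is a single point; but that hypothesis is precisely what excludes the counterexample and is what Demailly's Theorem 4.5 encodes via a Hausdorff-measure condition on the intersection of the unbounded locus with $\supp S$. This also means the proof sketch you give (``cutoff and Stokes to exchange $v$ with a local potential of $S$'') cannot be right on its own, since it would establish the false general inequality. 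The correct move is to cite \cite[III, Theorem 4.5]{DemBook} directly, as the paper does, or to reproduce Demailly's actual argument: truncate $v$ to $v_k=\max(v,-k)$, use boundedness of $v$ on an annulus around $p$ to ensure $v_k=v$ there for $k$ large, and then bound the masses $\int\chi\,(-v_k)\,\omega\wedge S$ uniformly in $k$ --- a step that exploits the isolated singularity, not merely $\|v\|_{L^1}$.
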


The main idea of the Bedford-Taylor construction (in our context) is that for
any $u\in \psh(\omega)\cap L^1(S)$, the product $uS$ is itself a $(1,1)$
current and we can formally set
\begin{equation}
\label{eqn:wedge}
T \wedge S := \omega\wedge S + dd^c(uS),
\end{equation}
where $T := \omega + dd^c u$.  We say in this case that `the wedge product of
$T$ and $S$ is well-defined'.  If, moreover, $(u_j)$ regularizes $u$, then the
monotone convergence theorem gives us that $u_jS \to uS$ as currents.  Hence
$(\omega+dd^c u_j)\wedge S \to T\wedge S$, and $T\wedge S$ is therefore a
positive Borel measure with total mass equal to that of $\omega\wedge S$.

\begin{lem}\label{LEM:CURRENT_PROJECTION_FORMULA}
Suppose that $X$ is a Kahler surface and $\pi: Y \rightarrow X$ is a birational morphism.   Suppose $T$ is a closed positive $(1,1)$ current on $Y$,
that $u \in QPSH(X)$ and that $u \circ \pi \in L^1(T)$.   Then
\begin{align*}
\pi_* \left( dd^c(u \circ \pi) \wedge_Y T \right) = dd^c u \wedge_X \pi_* T.
\end{align*}
Here, the $\pi_*$ on the left hand side of the equation denotes the pushforward of Borel measures.
\end{lem}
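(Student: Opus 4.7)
The plan is to reduce the lemma to the identity
\[
\pi_*\bigl((u\circ\pi)\,T\bigr) = u\cdot\pi_*T
\]
as order-zero $(1,1)$ currents on $X$. Once this is in hand, applying $dd^c$ to both sides and using that pushforward of currents commutes with $dd^c$ yields the desired projection formula. Following the definition in \eqref{eqn:wedge}, for $u\in\qpsh(X)$ with $u\circ\pi\in L^1(T)$ both wedge products in the statement are to be interpreted as signed Borel measures, namely $dd^c(u\circ\pi)\wedge_Y T = dd^c((u\circ\pi)T)$ and $dd^c u\wedge_X \pi_*T = dd^c(u\,\pi_*T)$ (each being the difference of a positive Bedford--Taylor wedge product and a smooth trace measure).

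I would first verify the target identity when $u$ is smooth. For any compactly supported test $(1,1)$-form $\phi$ on $X$,
\begin{align*}
\langle\pi_*((u\circ\pi)T),\phi\rangle = \langle T,(u\circ\pi)\pi^*\phi\rangle = \langle T,\pi^*(u\phi)\rangle = \langle\pi_*T,u\phi\rangle = \langle u\,\pi_*T,\phi\rangle,
\end{align*}
which settles the smooth case.

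To pass to general qpsh $u$, select a K\"ahler form $\omega_X$ on $X$ with $u\in\psh(\omega_X)$, and a K\"ahler form $\omega_Y$ on $Y$ with $\omega_Y\geq\pi^*\omega_X$. Theorem \ref{thm:regularization} provides a regularizing sequence $u_j\in\psh(\omega_X)\cap C^\infty(X)$ with $u_j\downarrow u$; the pullback $u_j\circ\pi$ then lies in $\psh(\omega_Y)\cap C^\infty(Y)$ and decreases pointwise to $u\circ\pi$. The smooth case gives $\pi_*((u_j\circ\pi)T)=u_j\,\pi_*T$ for every $j$. Choosing $C>0$ with $\pi^*\omega_X\leq C\omega_Y$, the estimate
\begin{align*}
\int|u|\,\omega_X\wedge\pi_*T = \int|u\circ\pi|\,\pi^*\omega_X\wedge T \leq C\int|u\circ\pi|\,\omega_Y\wedge T<\infty
\end{align*}
shows $u\in L^1(\pi_*T)$. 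Monotone convergence then gives $(u_j\circ\pi)T\to(u\circ\pi)T$ and $u_j\,\pi_*T\to u\,\pi_*T$ as order-zero currents on $Y$ and $X$ respectively, and continuity of $\pi_*$ for the weak-$*$ topology (since $\pi^*$ sends continuous compactly supported forms to continuous ones) propagates the identity to the limit.

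The main obstacle is the signed-measure bookkeeping in this limiting step, but it dissolves once we observe that $dd^c$ is continuous in the distributional topology and commutes with $\pi_*$. Applying $dd^c$ to the limiting identity $\pi_*((u\circ\pi)T)=u\,\pi_*T$ therefore gives $\pi_*(dd^c((u\circ\pi)T))=dd^c(u\,\pi_*T)$, which is the desired projection formula.
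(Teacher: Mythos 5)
Your proof is correct and built from the same core ingredients as the paper's — regularize $u$ by a decreasing sequence $u_j$, verify the identity for smooth $u$ directly from the definition of $\pi_*$, and pass to the limit by monotone convergence — but you organize the argument differently. The paper reduces the claim to testing against a smooth function $g$, integrates by parts to move $dd^c$ off $u$, and decomposes $dd^c g$ into a difference of K\"ahler forms, landing on the intermediate identity $\int(u\circ\pi)\,\pi^*\omega\wedge T = \int u\,\omega\wedge\pi_*T$. You instead establish the stronger order-zero current identity $\pi_*\bigl((u\circ\pi)T\bigr)=u\,\pi_*T$ and then apply $dd^c$, invoking commutativity of $\pi_*$ with $dd^c$. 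These routes are essentially equivalent (the paper's intermediate identity is your current identity tested against K\"ahler forms), but yours sidesteps the integration-by-parts step at the cost of appealing to weak-$*$ continuity of $\pi_*$ on order-zero currents, which is valid here because $\pi$ is proper. Your explicit verification that $u\in L^1(\pi_*T)$ is a worthwhile addition; the paper leaves this implicit even though it is needed for the right-hand side to be a well-defined signed measure.
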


\begin{proof}
Since $u \in  QPSH(X)$ we can suppose that $u \leq 0$.
It suffices to prove for any smooth test function $g$ that 
\begin{align*}
\int_Y (g \circ \pi) \ dd^c (u \circ \pi) \wedge T = \int_X g \  dd^c u \wedge \pi_* T.
\end{align*}
Integration by parts gives that this is equivalent to 
\begin{align*}
\int_Y (u \circ \pi) dd^c (g \circ \pi)  \wedge T = \int_X u \ dd^c g \wedge \pi_* T.
\end{align*}
We can write $dd^c g = \omega_1 - \omega_2$ where $\omega_1$ and $\omega_2$ are Kahler forms on $X$.  Therefore it suffices
to show for any Kahler form $\omega$ on $X$ that
\begin{align}\label{EQN:CURRENT_PROJECTION_FORMULA}
\int (u \circ \pi) \pi^* \omega \wedge T = \int u \  \omega \wedge \pi_* T.
\end{align}
If $u$ is smooth this follows from the definition of the proper pushforward
$\pi_* T$.   Otherwise, one can choose a regularizing sequence $(u_j)$ for $u$.
Equation (\ref{EQN:CURRENT_PROJECTION_FORMULA}) holds with
$u$ replaced by $u_j$ for each $j \in \mathbb{N}$ and the result then follows for $u$ by applying the
monotone convergence theorem to both sides.
\end{proof}

The next result guarantees (among many other things) that when both currents involved represent K\"ahler classes, the Bedford-Taylor definition \eqref{eqn:wedge} of wedge product is symmetric.

\begin{thm}
\label{thm:decapprox1}
Given K\"ahler forms $\omega,\omega'$ on $X$, $u\in\psh(\omega)$ and $v\in \psh(\omega')$, let $T = \omega + dd^c u$ and $T' =\omega'+dd^c v$.  Then $u\in L^1(T')$ if and only if $v\in L^1(T)$.  If either (and therefore both) of the integrability conditions hold and $(u_j),(v_j)$ are regularizing sequences for $u$ and $v$, then we further have
$$
T\wedge T' = \lim_{j\to\infty} T_j\wedge T'_j
$$
weakly as Radon measures.
\end{thm}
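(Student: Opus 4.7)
The plan is to combine a reduction to non-positive potentials with integration by parts and a Bedford--Taylor monotone convergence argument. First, I would normalize so that $u, v \le 0$ by subtracting constants, which affects neither $\psh(\omega)$-membership nor the currents $T, T'$. For smooth regularizing sequences $u_j \searrow u$ and $v_k \searrow v$, two integrations by parts on the compact surface $X$ give the symmetry
\begin{equation*}
\int_X u_j\, \omega \wedge dd^c v_k \;=\; \int_X v_k\, \omega \wedge dd^c u_j.
\end{equation*}
Rewriting $\omega \wedge dd^c v_k = \omega \wedge T'_k - \omega \wedge \omega'$ (and analogously on the right), then invoking monotone convergence on the non-positive functions $u_j, v_k$ against the positive measures $\omega \wedge T'_k$, $\omega \wedge T_j$ whose total masses are controlled by the fixed cohomology classes of $T$ and $T'$, yields the equivalence $u \in L^1(T') \iff v \in L^1(T)$.

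Second, assuming either integrability condition, I would establish the weak convergence. Fix a smooth test function $\varphi$. Since $T'_k$ is closed,
\begin{equation*}
\int_X \varphi\, T_j \wedge T'_k \;=\; \int_X \varphi\, \omega \wedge T'_k + \int_X u_j\, dd^c\varphi \wedge T'_k.
\end{equation*}
For fixed $k$, the form $T'_k$ is smooth, so monotone convergence as $j \to \infty$ gives $\int u_j\, dd^c\varphi \wedge T'_k \to \int u\, dd^c\varphi \wedge T'_k$, and one naturally identifies the resulting limit with $\int \varphi\, T \wedge T'_k$ in the Bedford--Taylor sense. Letting $k \to \infty$ next, the essential step is $u\, T'_k \to u\, T'$ as currents, which is the classical Bedford--Taylor monotone convergence theorem for wedge products with a fixed qpsh potential lying in $L^1(T')$. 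A diagonal extraction then yields $T_j \wedge T'_j \to T \wedge T'$ weakly. Running the parallel identity $T_j \wedge T'_k = \omega' \wedge T_j + dd^c(v_k T_j)$ through the same double limit shows that the answer also equals the wedge product defined via the potential $v$, establishing the symmetry of the construction.

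The main obstacle is the Bedford--Taylor step $u\, T'_k \to u\, T'$ when $u$ is not locally bounded. The standard workaround truncates $u^N := \max(u, -N) \in \psh(\omega) \cap L^\infty(X)$: for each fixed $N$ the classical continuity theorem for decreasing sequences of bounded $\omega$-psh potentials applies directly, and the error $\int |u - u^N|\, |dd^c\varphi| \wedge T'_k$ is dominated uniformly in $k$ by a multiple of $\int (u^N - u)\, \omega \wedge T'_k$, which tends to zero as $N \to \infty$ by the hypothesis $u \in L^1(T')$ together with the uniform total-mass bound on $\omega \wedge T'_k$ inherited from the fixed cohomology class of $T'$. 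Once this limit is justified, the non-diagonal convergence $T_j \wedge T'_k \to T \wedge T'$ as $\min(j,k) \to \infty$ is immediate, and the stated diagonal convergence follows at once.
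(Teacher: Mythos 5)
Your overall strategy — normalize to non-positive potentials, integrate by parts, exploit the uniform mass bound coming from the fixed cohomology class, and handle unboundedness by truncation — is the right circle of ideas, and it reproduces some moves the paper itself makes. But there is a genuine gap, and it sits precisely at the technical heart of the theorem.

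The gap is the uniformity claim at the end. You want $\int (u^N - u)\,\omega\wedge T'_k$ to tend to $0$ as $N\to\infty$, \emph{uniformly in $k$}, and you assert this follows from $u\in L^1(T')$ plus the uniform mass bound on $\omega\wedge T'_k$. That is not correct: $u\in L^1(T')$ gives $\int(u^N-u)\,\omega\wedge T' \to 0$ by monotone convergence \emph{for the limiting measure}, and a uniform bound on the total masses $\int\omega\wedge T'_k$ does not transfer smallness from $\omega\wedge T'$ to the approximating measures $\omega\wedge T'_k$ — nothing prevents the mass of $\omega\wedge T'_k$ from piling up exactly where $u-u^N$ is large. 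Writing it out via integration by parts, $\int (u^N - u)\,\omega\wedge T'_k = \int (u^N-u)\,\omega\wedge\omega' + \int v_k\,\omega\wedge(T^N - T)$, and controlling $\int v_k\,\omega\wedge(T^N-T)$ uniformly in $k$ is exactly the kind of statement you cannot get from monotone convergence alone. The same difficulty is already present in your first step: the iterated limits give you $\lim_k \int u\,\omega\wedge T'_k = \int v\,\omega\wedge T$, but identifying the left side with $\int u\,\omega\wedge T'$ — which is the symmetry $\int -u\,dd^c v\wedge\omega = \int -v\,dd^c u\wedge\omega$ — is not a consequence of monotone convergence, since the measures $\omega\wedge T'_k$ are not monotone in $k$.

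The paper's proof supplies exactly the missing ingredient. It first proves a monotonicity inequality for the energy pairing $\nrg{u,v}_S$ along decreasing sequences (Proposition \ref{prop:nrgbnds}), and uses this in Corollary \ref{cor:decapprox2} to run a squeeze: the diagonal sequence $\nrg{u_j,v_j}_S$ is shown to be asymptotically increasing, and is then sandwiched between the two iterated limits, both of which can be evaluated because one variable is held at a smooth stage. This establishes $\lim_j \nrg{u_j,v_j}_S = \int -u\,dd^c v\wedge S = \int -v\,dd^c u\wedge S$, which is precisely the symmetry you need. The theorem then follows by taking $S=\omega$ for the iff, and $S$ equal to the positive and negative K\"ahler parts of $dd^c\psi$ for the weak convergence. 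If you want to repair your proof, the cleanest route is to prove the monotonicity estimate $\nrg{u,v}_S \le \nrg{\ti u,\ti v}_S + \norm[L^1(S\wedge\omega')]{u-\ti u} + \norm[L^1(S\wedge\omega)]{v-\ti v}$ for smooth decreasing approximations and then run the sandwich; truncation by $\max(u,-N)$ alone will not close the argument without an input of this kind.
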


We will obtain Theorem \ref{thm:decapprox1} as a special case of a more general result.  In order to state and prove the latter, we associate to the above current $S$ a non-negative and symmetric bilinear form on $C^\infty(X)$:
\begin{align}\label{EQN:DEF_SMOOTH_PAIRING}
\nrg{u,v}_S := \int du\wedge d^c v \wedge S = \int -u \,dd^c v\wedge S = \int -v\,dd^c u\wedge S.
\end{align}
We let $\norm[S]{u} := \nrg{u,u}_S^{1/2}$ denote the associated \emph{$S$-energy} seminorm.  When restricted to $\omega$-psh functions the pairing $\nrg{u,v}_S$ has a useful monotonicity property.

\begin{prop}
\label{prop:nrgbnds}
For any smooth functions $u,\ti u\in \psh(\omega)$ and $v,\ti v\in\psh(\omega')$ such that $u\geq \ti u$ and $v\geq \ti v$, we have
$$
\nrg{u,v}_S \leq \nrg{\ti u,\ti v}_S + \norm[L^1(S\wedge\omega')]{u-\ti u} + \norm[L^1(S\wedge\omega)]{v-\ti v}.
$$
In particular
\begin{align}\label{EQN:PROP_NRGBNDS_INEQ2}
\norm[S]{u}^2 - \norm[L^1(S\wedge\omega)]{u-\tilde u} \leq \nrg{u,\tilde u}_S \leq \norm[S]{\tilde u}^2 + \norm[L^1(S\wedge\omega)]{u-\tilde u}.
\end{align}
\end{prop}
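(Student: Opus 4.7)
The plan is to reduce the first inequality to two one-sided energy estimates for cross-terms, obtained by rewriting $dd^c v$ and $dd^c\tilde u$ in terms of their positive representatives $T' = \omega' + dd^c v$ and $T_{\tilde u} = \omega + dd^c\tilde u$, and then to deduce the second display as a direct specialization of the first.

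First I would record that $\nrg{\,\cdot\,,\,\cdot\,}_S$ is symmetric and bilinear on $C^\infty(X)$, which follows by integration by parts and the assumption that $S$ is closed (smoothness of $u,\tilde u,v,\tilde v$ and compactness of $X$ make this routine). Bilinearity gives the splitting
\begin{equation*}
\nrg{u,v}_S - \nrg{\tilde u,\tilde v}_S = \nrg{u-\tilde u,\,v}_S + \nrg{\tilde u,\,v-\tilde v}_S.
\end{equation*}

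Next I would estimate each cross-term using the sign of the differences. For the first, write $dd^c v = T' - \omega'$ with $T'\geq 0$; then
\begin{equation*}
\nrg{u-\tilde u,\,v}_S = -\int (u-\tilde u)\,dd^c v\wedge S = \int(u-\tilde u)\,\omega'\wedge S - \int(u-\tilde u)\,T'\wedge S.
\end{equation*}
Since $u\geq\tilde u$, the second integrand is non-negative and may be dropped, giving $\nrg{u-\tilde u,v}_S \leq \norm[L^1(S\wedge\omega')]{u-\tilde u}$. Symmetry of $\nrg{\,\cdot\,,\,\cdot\,}_S$ lets me treat the second cross-term the same way, rewriting $\nrg{\tilde u,v-\tilde v}_S = -\int(v-\tilde v)\,dd^c\tilde u\wedge S$ and using $dd^c\tilde u = T_{\tilde u}-\omega$ with $T_{\tilde u}\geq 0$ and $v\geq\tilde v$ to get $\nrg{\tilde u,v-\tilde v}_S \leq \norm[L^1(S\wedge\omega)]{v-\tilde v}$. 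Summing gives the first inequality of the proposition.

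Finally, the two-sided bound \eqref{EQN:PROP_NRGBNDS_INEQ2} falls out by applying the first inequality with $\omega'=\omega$ and suitably chosen data. For the upper bound $\nrg{u,\tilde u}_S \leq \norm[S]{\tilde u}^2 + \norm[L^1(S\wedge\omega)]{u-\tilde u}$, take $v=\tilde v = \tilde u$ so that $v-\tilde v = 0$. For the lower bound, rearranged as $\norm[S]{u}^2 \leq \nrg{u,\tilde u}_S + \norm[L^1(S\wedge\omega)]{u-\tilde u}$, apply the first inequality instead with the first pair replaced by $(u,u)$ and the second pair by $(u,\tilde u)$, making the first error term vanish. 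I do not anticipate a serious obstacle: the only care required is that the integrations by parts defining and symmetrizing $\nrg{\,\cdot\,,\,\cdot\,}_S$ are legitimate, which is guaranteed by the smoothness hypothesis and the closedness of $S$.
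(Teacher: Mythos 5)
Your proof is correct and follows essentially the same route as the paper: your bilinearity splitting $\nrg{u,v}_S - \nrg{\tilde u,\tilde v}_S = \nrg{u-\tilde u,v}_S + \nrg{\tilde u,v-\tilde v}_S$ recovers exactly the paper's identity \eqref{eqn:basicidentity}, and the subsequent estimate using $dd^c v \geq -\omega'$, $dd^c\tilde u \geq -\omega$ together with the sign hypotheses is the same. The specialization to get \eqref{EQN:PROP_NRGBNDS_INEQ2} also matches the paper's double application of the first inequality.
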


\begin{proof}
Integration by parts gives the following identity.
\begin{equation}
\label{eqn:basicidentity}
\nrg{u,v}_S - \nrg{\ti u,\ti v}_S = \int (\ti u-u) \,dd^c v\wedge S + \int (\ti v - v)\,dd^c \tilde u\wedge S.
\end{equation}
Since $dd^c v \geq -\omega'$, $dd^c \tilde u \geq - \omega$ and $u\geq \ti u$, $v \geq \ti v$, we obtain
$$
\nrg{u,v}_S - \nrg{\ti u,\ti v}_S \leq \int (u - \ti u) \,\omega'\wedge S + \int (v-\ti v)\,\omega\wedge S.
$$
This gives the first inequality.  The second inequality follows from applying the first twice with $\omega = \omega'$:
$$
\nrg{u,u}_S \leq \nrg{u,\ti u}_S + \int (u-\ti u)\,\omega\wedge S \leq \nrg{\ti u,\ti u}_S + 2\int(u-\ti u)\,\omega\wedge S.
$$
\end{proof}

\begin{cor}
\label{cor:decapprox2}
Given $u,v\in \qpsh(X)\cap L^1(S)$ with regularizing sequences $(u_j)$ and $(v_j)$, we have
$$
\lim_{j\to\infty} \nrg{u_j,v_j}_S = \int -u\,dd^c v\wedge S = \int -v\,dd^c u\wedge S \in \R\cup\{+\infty\}. 
$$
In particular, $u\in L^1(dd^c v\wedge S)$ if and only if $v\in L^1(dd^c u\wedge S)$.
\end{cor}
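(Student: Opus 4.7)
The plan is to decompose the pairing into a positive-measure piece (against $T'_j \wedge S$) and a benign piece (against $\omega' \wedge S$), then handle each by combining the Bedford--Taylor weak continuity with monotone convergence. After subtracting constants from $u$ and $v$ (which alters neither $dd^c u$, $dd^c v$, nor $\nrg{\cdot,\cdot}_S$) I may assume $u_j, v_j, u, v \leq 0$, and after rescaling $\omega, \omega'$ I may take $u \in \psh(\omega)$ and $v \in \psh(\omega')$. Set $T := \omega + dd^c u$, $T_j := \omega + dd^c u_j$, $T' := \omega' + dd^c v$, $T'_j := \omega' + dd^c v_j$; the weak continuity recalled after \eqref{eqn:wedge} yields $T_j \wedge S \to T \wedge S$ and $T'_j \wedge S \to T' \wedge S$ as Radon measures. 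Integration by parts for the smooth approximants gives
\begin{align*}
\nrg{u_j, v_j}_S = \int -u_j \, dd^c v_j \wedge S = \int -u_j \, T'_j \wedge S + \int u_j \, \omega' \wedge S,
\end{align*}
and the second term converges to $\int u\, \omega' \wedge S$ (finite) by monotone convergence applied to $u - u_j \nearrow 0$ against the positive measure $\omega' \wedge S$, using $u, u_0 \in L^1(S) = L^1(\omega' \wedge S)$.

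It remains to show $\lim_j \int -u_j\, T'_j \wedge S = \int -u\, T' \wedge S$ in $[0,+\infty]$. The lower bound is direct: for fixed $m$ and $j \geq m$, $-u_j \geq -u_m$ and $T'_j \wedge S \geq 0$ give $\int -u_j\, T'_j \wedge S \geq \int -u_m\, T'_j \wedge S$; weak convergence plus continuity of $u_m$, followed by monotone convergence as $m \to \infty$ with $-u_m \nearrow -u \geq 0$ against $T' \wedge S$, yields $\liminf_j \int -u_j\, T'_j \wedge S \geq \int -u\, T' \wedge S$. The main obstacle is the matching upper bound, since weak convergence of $T'_j \wedge S$ alone does not suffice for the unbounded function $-u$. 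The key trick is to split
\begin{align*}
\int -u_j \, T'_j \wedge S = \int -u_j \, T' \wedge S + \int -u_j \, dd^c(v_j - v) \wedge S
\end{align*}
and then, in the error term, use integration by parts (valid because $u_j$ is smooth and $v, v_j \in L^1(S)$) to rewrite it as
\begin{align*}
-\int (v_j - v) \, dd^c u_j \wedge S = -\int (v_j - v)\, T_j \wedge S + \int (v_j - v)\, \omega \wedge S.
\end{align*}
The first summand on the right is $\leq 0$ since $v_j - v \geq 0$ and $T_j \wedge S \geq 0$, while the second tends to $0$ by monotone convergence ($v_j - v \searrow 0$, $v_0 - v \in L^1(S)$). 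Combined with $\lim_j \int -u_j\, T' \wedge S = \int -u\, T' \wedge S$ (monotone convergence of $-u_j \nearrow -u$ against the positive measure $T' \wedge S$), this gives $\limsup_j \int -u_j\, T'_j \wedge S \leq \int -u\, T' \wedge S$, matching the lower bound.

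Putting the pieces together, $\lim_j \nrg{u_j, v_j}_S = \int -u\, T' \wedge S + \int u\, \omega' \wedge S = \int -u \, dd^c v \wedge S$ in $\R \cup \{+\infty\}$. Symmetry of the pairing on smooth functions ($\nrg{u_j, v_j}_S = \nrg{v_j, u_j}_S$ by classical integration by parts) means the entire argument applies verbatim after swapping $(u, \omega) \leftrightarrow (v, \omega')$, so the limit also equals $\int -v \, dd^c u \wedge S$; the two Bedford--Taylor integrals therefore coincide. For the ``in particular'' clause, observe that with $u \leq 0$ the total variation $|dd^c v \wedge S|$ is dominated by $T' \wedge S + \omega' \wedge S$, so $u \in L^1(dd^c v \wedge S)$ is equivalent to $\int -u\, T' \wedge S < \infty$, i.e.\ to $\int -u \, dd^c v \wedge S < \infty$; the equality of the two BT integrals then gives the claimed biconditional.
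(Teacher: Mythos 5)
Your proof is correct, and its structure differs from the paper's. The paper first invokes Proposition~\ref{prop:nrgbnds} (the bilinear monotonicity bound for smooth $\omega$-psh pairs) to conclude that $\nrg{u_j,v_j}_S$ is asymptotically increasing and therefore convergent, then sandwiches the limit via the iterated limit $\lim_j\lim_k\nrg{u_j,v_k}_S$, which it evaluates using weak convergence of $v_kS\to vS$ followed by monotone convergence in $j$. You avoid Proposition~\ref{prop:nrgbnds} and the double limit: after normalizing so that $u,v\leq 0$, you separate the pairing into contributions against the trace measures $T'_j\wedge S$ and $\omega'\wedge S$, and prove matching $\liminf$ and $\limsup$ bounds directly. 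Your $\limsup$ step---writing $\int -u_j\,dd^c(v_j-v)\wedge S$ by parts as $-\int(v_j-v)\,T_j\wedge S + \int(v_j-v)\,\omega\wedge S$ and discarding the nonpositive first term---is in effect an in-line re-derivation of the one-sided content of Proposition~\ref{prop:nrgbnds}. The underlying ingredients (integration by parts against the smooth $u_j$, positivity of $T_j\wedge S$ together with $v_j-v\geq 0$, and monotone plus weak convergence of Bedford--Taylor products) coincide; the paper's route is more economical in context because Proposition~\ref{prop:nrgbnds} is reused in several later results (e.g.\ Corollaries~\ref{cor:nrgbnds} and \ref{cor:decrlims2}), whereas yours is more self-contained and makes the role of the sign conditions more visible.
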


The wedge products in the conclusion should be interpreted in the Bedford-Taylor sense: i.e. technically,
$$
dd^c u\wedge S = dd^c(uS),
$$
which is a signed measure with finite total mass on $X$.

\begin{proof} Since any two K\"ahler forms on $X$ are comparable, we may suppose that $u,v,u_j,v_j$ are all $\omega$-psh for the same K\"ahler form $\omega$.  Given indices $j<k$, we apply the estimate in Proposition \ref{prop:nrgbnds} to obtain
$$
\nrg{u_j,v_j}_S - \norm[L^1(\omega \wedge S)]{v_j-v_k} \leq \nrg{u_j,v_k}_S \leq \nrg{u_k,v_k}_S +
\norm[L^1(\omega \wedge S)]{u_j-u_k}.
$$
Since $u_j\to u$ and $v_j\to v$ in $L^1(S)$, the $L^1$ norms on both sides tend to $0$ as $j\to\infty$.  In particular $\nrg{u_j,v_j}_S$ is asymptotically increasing with $j$ and must converge to its supremum (which might be $\infty$).  Moreover,
$$
\lim_{j\to\infty} \nrg{u_j,v_j}_S \leq \lim_{j\to\infty}\lim_{k\to\infty} \nrg{u_j,v_k}_S \leq \lim_{k\to\infty} \nrg{u_k,v_k}_S.
$$
The middle term may be rewritten as
$$
\lim_{j\to\infty} \lim_{k\to\infty} \int -u_j \,dd^c v_k \wedge S  = \lim_{j\to\infty} \int -u_j \,dd^c v\wedge S = \int -u \,dd^c v\wedge S,
$$
where the first equality is from weak convergence of $v_k \wedge S$ to $v \wedge S$ and the second is by the monotone convergence theorem.
Hence $\nrg{u_j,v_j}_S \to \int -u\,dd^c v\wedge S$.  Switching the roles of $u$ and $v$ gives the other inequality in the corollary.
\end{proof}

\begin{proof}[Proof of Theorem \ref{thm:decapprox1}]
That $u\in L^1(T')$ if and only if $v \in L^1(T)$ follows from taking $S=\omega$ in the final assertion in Corollary \ref{cor:decapprox2}.
If $\psi\in C^\infty(X)$ is a test function, then $dd^c\psi = \omega_1 -\omega_2$ can be written as a difference of K\"ahler forms.   We then have  
$$
\lim_{j\to\infty} \int \psi\,dd^c u_j\wedge dd^c v_j = \lim_{j\to \infty} \int  u_j \,dd^c v_j \wedge dd^c\psi = \int u \,dd^c v\wedge dd^c\psi = \int \psi\,dd^c u\wedge dd^c v,
$$
with the middle equality justified by Corollary \ref{cor:decapprox2} (with $S$ equal to each $\omega_k$ for $k=1,2$) and the other equalities being integration by parts.
Hence
\begin{eqnarray*}
\int \psi \, T_j\wedge T'_j & = & \int \psi\, T_j \wedge \omega' + \int \psi\, \omega \wedge dd^c v_j + \int \psi \, dd^c u_j\wedge dd^c v_j \\
& \to & \int \psi\, T\wedge\omega' + \int \psi\, \omega \wedge dd^c v + \int \psi\,dd^c u\wedge dd^c v = \int\psi T \wedge T',
\end{eqnarray*}
which concludes the argument.
\end{proof}

Theorem \ref{thm:regularization} and Corollary \ref{cor:decapprox2} allow us to extend $\nrg{\cdot,\cdot}_S$ to all of $\qpsh(X)\cap L^1(S)$ via
\begin{equation}
\label{eqn:epairing}
\nrg{u,v}_S := \int -u\,dd^c v\wedge S
\end{equation}
Since $u \in \qpsh(X)$ is bounded above by some constant and $dd^c v$ is bounded below by a negative multiple of the trace measure $\omega\wedge S$ of $S$, the integral is well-defined, though possibly equal to $+\infty$. Approximating $u$ and $v$ with regularizing sequences shows that $\nrg{\cdot,\cdot}_S$ remains non-negative and symmetric.  Hence the Cauchy-Schwarz inequality implies it is finite on the real cone 
$$
\eclass(S) := \{u \in L^1(S) \cap \qpsh(X) \, : \, \norm[S]{u} <\infty\},
$$
which includes all bounded qpsh functions on $X$.  Theorem \ref{thm:regularization} and Corollary \ref{cor:decapprox2} immediately imply the following.

\begin{cor}
\label{cor:nrgbnds}
The identity \eqref{eqn:basicidentity} holds for all $u,v,u',v'\in \eclass(S)$.  Hence so do all conclusions of Proposition \ref{prop:nrgbnds}.  
\end{cor}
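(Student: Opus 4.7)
The plan is to extend the identity from the smooth setting to $\eclass(S)$ by approximating each of $u, \ti u, v, \ti v$ with a decreasing regularizing sequence and invoking Corollary \ref{cor:decapprox2} to pass to the limit in every mixed energy pairing that appears. By rescaling if necessary, I may choose K\"ahler forms $\omega, \omega'$ for which $u, \ti u \in \psh(\omega)$ and $v, \ti v \in \psh(\omega')$, and Theorem \ref{thm:regularization} then supplies decreasing smooth sequences $(u_j), (\ti u_j) \subset \psh(\omega)\cap C^\infty(X)$ and $(v_j), (\ti v_j) \subset \psh(\omega')\cap C^\infty(X)$ converging pointwise to the respective limits. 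For the smooth approximants, identity \eqref{eqn:basicidentity} is the integration-by-parts computation already used in the proof of Proposition \ref{prop:nrgbnds}.

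To promote the identity to the limit, I rewrite each of the four integrals appearing in \eqref{eqn:basicidentity} as a difference of energy pairings: for example,
$$
\int (\ti u_j - u_j)\, dd^c v_j \wedge S = \nrg{u_j, v_j}_S - \nrg{\ti u_j, v_j}_S,
$$
and similarly for the second right-hand integral. By Corollary \ref{cor:decapprox2}, each mixed pairing $\nrg{u_j, v_j}_S$, $\nrg{\ti u_j, v_j}_S$, $\nrg{u_j, \ti v_j}_S$, $\nrg{\ti u_j, \ti v_j}_S$ converges to the corresponding pairing with subscripts dropped; since all four functions lie in $\eclass(S)$, the Cauchy--Schwarz inequality $|\nrg{a,b}_S| \leq \norm[S]{a}\norm[S]{b}$ forces every limit to be finite. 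Recollecting terms and invoking the symmetry $\nrg{a,b}_S = \nrg{b,a}_S$ then produces \eqref{eqn:basicidentity} on $\eclass(S)$.

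The inequalities of Proposition \ref{prop:nrgbnds} now follow algebraically from the identity. Under the hypothesis $u \geq \ti u$, the decomposition $dd^c v = (dd^c v + \omega') - \omega'$ gives
$$
\int (\ti u - u)\, dd^c v \wedge S = \int (\ti u - u)(dd^c v + \omega')\wedge S + \int (u - \ti u)\, \omega' \wedge S \leq \norm[L^1(\omega'\wedge S)]{u - \ti u},
$$
since the first integrand is nonpositive against a positive measure. The analogous bound for $\int(\ti v - v)\,dd^c \ti u\wedge S$ combined with \eqref{eqn:basicidentity} gives the first inequality of Proposition \ref{prop:nrgbnds}; inequalities \eqref{EQN:PROP_NRGBNDS_INEQ2} then follow by applying it twice with $\omega = \omega'$ and $v = u$, $\ti v = \ti u$. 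The only point requiring care is the absolute convergence of each mixed integral so that the bilinear expansion into pairings is valid in the limit; this is delivered by Cauchy--Schwarz within $\eclass(S)$ together with comparability of K\"ahler forms, which places all four functions in $L^1(\omega'\wedge S) = L^1(\omega \wedge S)$.
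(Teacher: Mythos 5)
Your proof is correct and follows the same route the paper intends when it states that the corollary follows ``immediately'' from Theorem \ref{thm:regularization} and Corollary \ref{cor:decapprox2}: regularize all four functions, observe that \eqref{eqn:basicidentity} is an exact integration-by-parts identity for smooth approximants, and pass to the limit term by term via Corollary \ref{cor:decapprox2}, with Cauchy--Schwarz on $\eclass(S)$ supplying finiteness of every pairing. One small imprecision at the very end: obtaining \eqref{EQN:PROP_NRGBNDS_INEQ2} requires applying the first inequality with two \emph{different} substitutions --- $v=\tilde v=u$ to get the left-hand bound $\norm[S]{u}^2 \leq \nrg{u,\tilde u}_S + \norm[L^1(S\wedge\omega)]{u-\tilde u}$, and $v=\tilde v=\tilde u$ to get the right-hand bound --- rather than the single choice $v=u,\ \tilde v=\tilde u$ that you indicate.
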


With a bit more effort, we can also broaden the scope of Corollary \ref{cor:decapprox2}.

\begin{cor}
\label{cor:decrlims2}
Let $(u_j)\subset \eclass(S)\cap\psh(\omega)$ be a decreasing sequence with limit $u\in L^1(S)$.  Then
$$
\lim_{j\to\infty} \norm[S]{u_j} = \norm[S]{u}. 
$$
Hence $\nrg{\cdot,\cdot}_S$ is similarly continuous with respect to decreasing limits on $\eclass(S)\cap\psh(\omega)$.
\end{cor}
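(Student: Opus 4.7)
The plan is to establish the matching inequalities
$$\limsup_j \norm[S]{u_j}^2 \leq \norm[S]{u}^2 \leq \liminf_j \norm[S]{u_j}^2,$$
and then promote the resulting norm convergence to continuity of the pairing by polarization.

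The upper bound is immediate if $\norm[S]{u}=+\infty$, so I may assume $u\in\eclass(S)$. Applying Corollary~\ref{cor:nrgbnds} with $u_j\geq u$ in both slots gives
$$\norm[S]{u_j}^2 \leq \norm[S]{u}^2 + 2\norm[L^1(S\wedge\omega)]{u_j-u},$$
and the $L^1$-norm tends to $0$ by dominated convergence because $0\leq u_j-u\leq u_1-u\in L^1(S\wedge\omega)$.

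The main obstacle is the lower bound, which relies on the Bedford-Taylor monotone convergence theorem: since $u_j\downarrow u$ with $u\in L^1(S)$, the positive measures $T_j\wedge S:=(\omega+dd^c u_j)\wedge S$ converge weakly to $T\wedge S$ while retaining constant total mass $\int\omega\wedge S$. Fix a constant $c$ with $u_1\leq c$ on $X$ and use Stokes ($\int dd^c u_j\wedge S=0$ on compact $X$) to write
$$\norm[S]{u_j}^2 = \int(c-u_j)\,T_j\wedge S \;-\; \int(c-u_j)\,\omega\wedge S.$$
The second integral converges to $\int(c-u)\,\omega\wedge S$ by monotone convergence on the finite positive measure $\omega\wedge S$. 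For the first, for each fixed $k$ and each $j\geq k$ one has $c-u_j\geq c-u_k\geq 0$ with $c-u_k$ lower semicontinuous, so weak convergence of positive measures against lsc nonnegative functions yields
$$\liminf_j\int(c-u_j)\,T_j\wedge S \;\geq\; \liminf_j\int(c-u_k)\,T_j\wedge S \;\geq\; \int(c-u_k)\,T\wedge S.$$
Sending $k\to\infty$ and applying monotone convergence on the finite positive measure $T\wedge S$ gives $\liminf_j\int(c-u_j)\,T_j\wedge S\geq\int(c-u)\,T\wedge S$. Subtracting and invoking Stokes again yields $\liminf_j\norm[S]{u_j}^2\geq\int(c-u)\,dd^c u\wedge S=\nrg{u,u}_S$, regardless of whether this value is finite.

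For continuity of the full pairing on $\eclass(S)\cap\psh(\omega)$, the just-established norm convergence combined with the sandwich \eqref{EQN:PROP_NRGBNDS_INEQ2} forces $\nrg{u_j,u}_S\to\norm[S]{u}^2$. Extending $\nrg{\cdot,\cdot}_S$ bilinearly to the real span of $\eclass(S)$ (where it remains positive semidefinite and hence satisfies Cauchy-Schwarz), expanding $\norm[S]{u_j-u}^2=\norm[S]{u_j}^2-2\nrg{u_j,u}_S+\norm[S]{u}^2$ shows $\norm[S]{u_j-u}\to 0$. Applying Cauchy-Schwarz to the decomposition $\nrg{u_j,v_j}_S-\nrg{u,v}_S=\nrg{u_j-u,v_j}_S+\nrg{u,v_j-v}_S$ then gives $\nrg{u_j,v_j}_S\to\nrg{u,v}_S$ for any other decreasing sequence $v_j\downarrow v$ in $\eclass(S)\cap\psh(\omega)$.
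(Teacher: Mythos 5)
Your proof is correct, and the lower bound is obtained by a genuinely different route than the paper's. Both proofs handle the upper bound $\limsup_j\norm[S]{u_j}^2\leq\norm[S]{u}^2$ the same way, via the monotonicity estimate \eqref{EQN:PROP_NRGBNDS_INEQ2} (the paper invokes monotone convergence where you invoke dominated convergence for the $L^1$ tail; this is cosmetic). For the lower bound, the paper introduces a \emph{second} decreasing sequence of \emph{smooth} functions $v_j\downarrow u$, interleaves so that $u_j\leq v_j$, and then uses the same monotonicity estimate together with the already-established Corollary \ref{cor:decapprox2} (which gives $\norm[S]{v_j}\to\norm[S]{u}$ because the $v_j$ are smooth). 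You instead work directly from the Bedford--Taylor picture: you write $\norm[S]{u_j}^2$ as a difference of integrals of the nonnegative function $c-u_j$ against $T_j\wedge S$ and $\omega\wedge S$, pass to the weak limit using that $T_j\wedge S\to T\wedge S$ with constant total mass, and exploit lower semicontinuity of $c-u_k$ (i.e.\ upper semicontinuity of psh functions) to get the $\liminf$ inequality, then let $k\to\infty$ by monotone convergence. Your argument is more self-contained, at the cost of having to make the weak convergence and lsc step explicit; the paper's is shorter because it delegates the heavy lifting back to Corollary \ref{cor:decapprox2}. The final polarization/Cauchy--Schwarz step in your proof expands what the paper leaves as a one-line remark, and is correct. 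One tiny point to make explicit if you keep your version: the weak convergence $T_j\wedge S\to T\wedge S$ for a general (non-smooth) decreasing sequence in $L^1(S)$ follows from the same monotone-convergence argument the paper gives for regularizing sequences (namely $u_jS\to uS$ as currents, then apply $dd^c$), so it is available but was only stated for regularizing sequences.
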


\begin{proof}
Since $u\not\equiv-\infty$, it follows from Hartog's Compactness Lemma (e.g.
\cite[Theorem 1.46]{GZ_book}) that $u\in \psh(\omega)$ and by Monotone
convergence that $u_j\to u$ in $L^1(S\wedge \omega)$.  Hence, in particular,
$(u_j)$ is Cauchy in $L^1(S\wedge\omega)$.  Corollary
\ref{cor:nrgbnds} and (\ref{EQN:PROP_NRGBNDS_INEQ2})  therefore imply that
$\norm[S]{u_j}$ is essentially increasing and in particular that
$\lim_{j\to\infty} \norm[S]{u_j} = \sup\norm[S]{u_j}$ exists.

We first show that $\lim\norm[S]{u_j} \leq \norm[S]{u}$.  This is immediate if $\norm[S]{u} = \infty$.  Otherwise, $u \in \eclass(S)$.  Since each $u_j \in \eclass(S)$ as well, Corollary \ref{cor:nrgbnds} and Equation (\ref{EQN:PROP_NRGBNDS_INEQ2}) give
\begin{align*}
\norm[S]{u_j}^2 \leq \norm[S]{u}^2 + 2 \norm[L^1(S \wedge \omega)]{u_j - u}. 
\end{align*}
The inequality therefore follows from $u_j\to u$ in $L^1(S\wedge\omega)$.

For the reverse inequality $\lim\norm[S]{u_j} \geq \norm[S]{u}$, let $(v_j)\subset C^\infty(X)\cap \psh(\omega)$ be another sequence decreasing to $u$.  By Hartog's Lemma we can replace $v_j$ with $v_j + 2^{-j}$ and pass to subsequences to arrange that $u_j \leq v_j$ for all $j$.  Hence by Corollary \ref{cor:nrgbnds} again
$$
\norm[S]{v_j}^2 \leq \norm[S]{u_j}^2 + 2\norm[L^1(\omega \wedge S)]{v_j-u_j},
$$
for each $j \in \mathbb{N}$.  Letting $j\to\infty$, applying Corollary \ref{cor:decapprox2} on the right side, and using $u_j,v_j\to u$ in $L^1(S\wedge\omega)$ on the left, we obtain $\norm[S]{u} \leq \lim\norm[S]{u_j}$, as desired.

Continuity of $\nrg{\cdot,\cdot}_S$ under decreasing limits in $\eclass(S)\cap \psh(\omega)$ follows from the Cauchy-Schwarz inequality.
\end{proof}

The following is a (less general) version of \cite[Proposition 1.8]{DDG11}, and we refer the reader to the proof given there. 

\begin{prop}
\label{prop:strong} Given $u\in \eclass(S)\cap \psh(\omega)$ and $u_j = \max\{u,-j\}$, let $T = \omega + dd^c u$ and $T_j := \omega + dd^c u_j$ be the associated positive closed $(1,1)$ currents.  Then we have for all $k>j$ that
\begin{equation}
\label{eqn:restriction}
\oone_{\{u>-j\}} \,T_j\wedge S = \oone_{\{u>-j\}}\,T_k\wedge S.
\end{equation}
Consequently, the measures $\oone_{\{u>-j\}} T_j\wedge S$ are increasing with $j$ and converge strongly as Borel measures to $T\wedge S$.
\end{prop}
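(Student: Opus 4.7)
The plan is to deduce all three claims from the local nature of the Bedford--Taylor wedge product together with the finite-energy hypothesis $u\in\eclass(S)$. The basic locality principle we will exploit is that if $v,w\in\psh(\omega)\cap L^1(S)$ agree on an open set $\Omega\subset X$, then $(v-w)S$ vanishes as a current on $\Omega$, and hence so does $dd^c((v-w)S)=T_v\wedge S-T_w\wedge S$. Applied to $u_j$ and $u_k$ for $k\geq j$, which coincide on the open set $\{u_j>-j\}=\{u>-j\}$ (open by upper semi-continuity of $u_j$), this gives \eqref{eqn:restriction} directly. Applying the same locality to $u_j$ and $u$ themselves yields the stronger identity
\begin{equation*}
\mu_j \;:=\; \oone_{\{u>-j\}}\,T_j\wedge S \;=\; \oone_{\{u>-j\}}\,T\wedge S,
\end{equation*}
from which monotonicity of $(\mu_j)$ and the pointwise-on-Borel-sets limit $\mu_j\uparrow \oone_{\{u>-\infty\}}\,T\wedge S$ follow from the nesting $\{u>-j\}\subset\{u>-k\}$ and the monotone convergence theorem for measures.

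It then remains to show $(T\wedge S)(\{u=-\infty\})=0$, for then the pointwise increase $\mu_j\uparrow T\wedge S$ of finite measures automatically upgrades to strong convergence. The plan here is to use the finite-energy assumption via integration by parts. Formally, substituting $dd^c u=T-\omega$ into the definition \eqref{eqn:epairing} of $\nrg{u,u}_S$ gives
\begin{equation*}
\nrg{u,u}_S \;=\; \int(-u)\,T\wedge S \;-\; \int(-u)\,\omega\wedge S.
\end{equation*}
Since $\nrg{u,u}_S<\infty$ by the $\eclass(S)$ hypothesis while $\int(-u)\,\omega\wedge S<\infty$ by $u\in L^1(S)$, we will be able to conclude that $\int(-u)\,T\wedge S<\infty$, which forces $\{u=-\infty\}$ to be $T\wedge S$-null.

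The main obstacle is to justify this integration-by-parts identity for the unbounded function $u\in\eclass(S)\cap\psh(\omega)$. The natural approach is to establish it first on the bounded regularizations $u_j=\max\{u,-j\}$, where the identity is immediate from \eqref{EQN:DEF_SMOOTH_PAIRING}--\eqref{eqn:epairing} and the smooth approximation underlying Corollary \ref{cor:decapprox2}, and then pass to the limit $j\to\infty$. On the left, Corollary \ref{cor:decrlims2} provides $\nrg{u_j,u_j}_S\to\nrg{u,u}_S$. On the right, monotone convergence handles $\int(-u_j)\,\omega\wedge S\to\int(-u)\,\omega\wedge S$, while the term $\int(-u_j)\,T_j\wedge S$ splits according to $\{u>-j\}$ and $\{u\leq -j\}$: the identity $\mu_j=\oone_{\{u>-j\}}T\wedge S$ already proved handles the first piece directly, whereas the second requires controlling the mass that $T_j\wedge S$ places on the deep sublevel set $\{u\leq -j\}$. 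This last control is the technical heart of the argument and is precisely where the finite $S$-energy of $u$ is essential; a complete treatment along these lines appears in \cite[Proposition 1.8]{DDG11}.
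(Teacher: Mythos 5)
The first step of your outline is flawed. You assert that $\{u_j>-j\}=\{u>-j\}$ is open ``by upper semi-continuity of $u_j$,'' but upper semicontinuity makes \emph{sublevel} sets $\{u_j<c\}$ open; the \emph{superlevel} set $\{u>-j\}$ of a discontinuous psh function need not be open. For instance, if $u(z,w)=\sum_{n\geq 1}2^{-n}\log|z-2^{-n}|$ then $u(0,w)$ is finite while $u(2^{-n},w)=-\infty$ for every $n$, so $\{u>c\}$ contains $(0,w)$ but no neighborhood of it whenever $-\infty<c<u(0,w)$. Consequently your locality principle (which is itself correct) only establishes \eqref{eqn:restriction} on the \emph{interior} of $\{u>-j\}$, and the residual set $\{u>-j\}\setminus\mathrm{int}\,\{u>-j\}$ could a priori carry positive $T_j\wedge S$ or $T_k\wedge S$ mass. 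In fact \eqref{eqn:restriction} is precisely the Bedford--Taylor comparison (``maximum'') principle for wedge products, a genuinely non-trivial theorem whose standard proof invokes quasi-continuity and capacity estimates rather than mere locality of $dd^c$; your ``stronger identity'' $\oone_{\{u>-j\}}T_j\wedge S=\oone_{\{u>-j\}}T\wedge S$ has the same gap.

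The remainder of your outline is sound in structure: granted the comparison principle, monotonicity and weak convergence of $\mu_j$ to $\oone_{\{u>-\infty\}}T\wedge S$ follow as you say, and the integration-by-parts argument that $(T\wedge S)(\{u=-\infty\})=0$ is exactly where the finite-energy hypothesis $u\in\eclass(S)$ is used. Note, however, that the paper supplies no proof of this proposition at all---it is prefaced by ``we refer the reader to the proof given there,'' i.e.\ to \cite[Proposition~1.8]{DDG11}---so the content you would need to add is precisely the comparison argument that your sketch takes for granted.
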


\begin{cor}
\label{cor:massonpps}
Suppose that $T = \omega + dd^c u$ for some $u\in \eclass(S)\cap\psh(\omega)$ and that $P\subset X$ is a complete pluripolar set not charged by $S$.  Then $T\wedge S$ does not charge $P$ either.
\end{cor}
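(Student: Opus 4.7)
The plan is to reduce to the case of bounded potentials via Proposition~\ref{prop:strong} and then control $(T_j\wedge S)(P)$ using a weight function that blows up on $P$.  Since $P$ is complete pluripolar, I first choose, after rescaling $\omega$ if necessary, a function $v\in\psh(\omega)$ with $v\leq 0$ and $\{v=-\infty\}=P$, and form the truncations $v_k:=\max\{v,-k\}$ and $u_j:=\max\{u,-j\}$, setting $T_j:=\omega+dd^c u_j$.  By Proposition~\ref{prop:strong} the measures $\oone_{\{u>-j\}}T_j\wedge S$ increase strongly to $T\wedge S$, so
$$
(T\wedge S)(P) \;\leq\; \sup_j\, (T_j\wedge S)(P),
$$
and it suffices to show $(T_j\wedge S)(P) = 0$ for each fixed $j$.

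To this end I use $\oone_P \leq -v_k/k$ (valid since $-v_k\geq k$ on $P$), giving the crude bound
$$
(T_j\wedge S)(P) \;\leq\; \frac{1}{k}\int -v_k\,T_j\wedge S,
$$
and aim to bound the right-hand integral uniformly in $k$.  Since $u_j$ and $v_k$ are both bounded $\omega$-psh, the Bedford--Taylor wedge products are well defined, and integration by parts (justified by regularization using Theorem~\ref{thm:regularization} and the symmetry in Corollary~\ref{cor:decapprox2}) gives
$$
\int -v_k\,T_j\wedge S \;=\; \int -v_k\,\omega\wedge S + \int -u_j\,dd^c v_k\wedge S.
$$
Writing $dd^c v_k = (\omega+dd^c v_k)-\omega$ and using that $\omega+dd^c v_k$ represents the class $[\omega]$, the second integral is controlled uniformly in $k$ by a constant multiple of $j\,[\omega]\cdot[S]$, since $|u_j|\leq j$.

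The first integral $\int -v_k\,\omega\wedge S$ increases, by monotone convergence, to $\int -v\,\omega\wedge S$, and the hypothesis that $S$ does not charge $P$ enters at precisely this point: it implies, via a classical integrability theorem of Demailly, that a qpsh function $v$ with $\{v=-\infty\}=P$ satisfies $v\in L^1(\omega\wedge S)$.  So $\int -v_k\,\omega\wedge S$ remains uniformly bounded in $k$ as well.  Combining the two bounds and sending $k\to\infty$ yields $(T_j\wedge S)(P)=0$, and the result follows.  The principal obstacle is this last integrability input, which is the sole place where the no-charge hypothesis on $S$ is used; every other estimate is a formal integration-by-parts computation for bounded $\omega$-psh functions, with $j$ held fixed so that the cohomological bound $|u_j|\leq j$ applies.
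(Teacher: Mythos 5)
Your overall strategy matches the paper's: truncate $u$ to $u_j$, integrate by parts against a qpsh function $v$ vanishing to $-\infty$ exactly on $P$, bound using $|u_j|\leq j$, and conclude via Proposition~\ref{prop:strong}. However, there is a genuine gap at the key step where the hypothesis that $S$ does not charge $P$ is used.

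You assert that ``$S$ does not charge $P$'' implies, ``via a classical integrability theorem of Demailly,'' that \emph{any} qpsh $v$ with $\{v=-\infty\}=P$ lies in $L^1(\omega\wedge S)$. This implication is false for a general complete pluripolar set $P$. For example, on $\bP^2$ take $P=\{z_1=0\}$, $v$ a qpsh function whose singularity is $\log|z_1|$, and $S=\sum_{n\geq 1}a_n[L_n]$ a convergent sum of distinct lines $L_n$ through the origin making angle $\theta_n$ with $P$, with $\theta_n\to 0$. Then $(\omega\wedge S)(P)=0$ since each $L_n\cap P$ is a single point, but a direct computation gives $\int -v\,\omega\wedge S\gtrsim\sum a_n\log\theta_n^{-1}$, which diverges for suitable $a_n,\theta_n$ (e.g. $a_n=2^{-n}$, $\theta_n=2^{-2^n}$). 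The classical Demailly-type integrability result (the paper's Proposition~\ref{prop:boundedish}, citing Demailly's Chapter III, Theorem 4.5) applies only when $P$ is \emph{finite}, not for general pluripolar $P$. So the step that produces a $k$-uniform bound on $\int -v_k\,\omega\wedge S$ does not go through as written.

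The paper's proof avoids this by \emph{choosing} $v$ well: since $(\omega\wedge S)(P)=0$ means $v$ is finite $\omega\wedge S$-a.e., one may replace $v$ by $\chi\circ v$, where $\chi:[-\infty,0]\to[-\infty,0]$ is a sufficiently slowly increasing convex bijection. This preserves $\{\chi\circ v=-\infty\}=P$, keeps $\chi\circ v$ qpsh (for a rescaled $\omega$), and forces $\chi\circ v\in L^1(\omega\wedge S)$. With that replacement in hand, your computation closes: the first integral is uniformly bounded, the second is $O(j)$, and sending $k\to\infty$ gives $(T_j\wedge S)(P)=0$. So your proof is repairable, but the missing ingredient is exactly the freedom to replace $v$ by $\chi\circ v$; the no-charge hypothesis does not by itself make the given $v$ integrable against $\omega\wedge S$.
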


\begin{proof}
We suppose as usual that $u\leq 0$.  By hypothesis $P=\{v=-\infty\}$ for some negative function $v\in\psh(\omega)$.  Since $\omega\wedge S(P) = 0$, we may replace $v$ with $\chi\circ v$, where $\chi:[-\infty,0]\to[-\infty,0]$ is a (very slowly) increasing convex bijection, in order to suppose that $v \in L^1(S)$.  

Now given a positive integer $j$, let $u_j = \max\{u,-j\}$.  Then $T_j := \omega + dd^c u_j \geq 0$ and
\begin{eqnarray*}
\int -v\, T_j\wedge S & = & \int -v\,\omega\wedge S + \int -v\, dd^c(u_j+j) \wedge S = \int -v\,\omega\wedge S + \int (u_j+j)\,(- dd^c v)\wedge S \\
& \leq & \int -v\,\omega\wedge S + \int (u_j+j)\,\omega\wedge S
< \infty.
\end{eqnarray*}
Corollary \ref{cor:decapprox2} justifies the integration by parts in the second equality and $v \in \psh(\omega)$ justifies the final inequality.  In any case, since $v \equiv -\infty$ on $P$  we infer from finiteness of this integral  that $T_j\wedge S(P) = 0$ for every $j$.  Hence $T\wedge S(P) = \lim_{j\to\infty} T_j\wedge S(P) = 0$ by Proposition \ref{prop:strong}.
\end{proof}

\begin{cor}
\label{cor:lessisgood}
If $u,v\in \psh(\omega)$ satisfy $u\geq v$, then $v\in \eclass(S)$ implies that $u\in \eclass(S)$, too.
\end{cor}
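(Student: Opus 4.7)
The plan is to verify the two conditions defining $\eclass(S)$ for $u$, namely $u \in L^1(S)$ and $\norm[S]{u} < \infty$.

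The first condition is immediate: since $u$ is qpsh it is bounded above by some constant $C$, and combined with $v \leq u$ and $v \in L^1(S)$ this gives $u \in L^1(S)$.

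The substantive step is the energy bound. My plan is to use the canonical truncations $u_j := \max\{u,-j\}$ and $v_j := \max\{v,-j\}$, which are bounded $\omega$-psh functions (hence lie in $\eclass(S)$) and satisfy $u_j \geq v_j$ for every $j$. Applying the identity \eqref{eqn:basicidentity}, available on $\eclass(S) \cap \psh(\omega)$ via Corollary \ref{cor:nrgbnds}, with $u = v = u_j$ and $\tilde u = \tilde v = v_j$ gives
\begin{align*}
\nrg{u_j,u_j}_S - \nrg{v_j,v_j}_S = \int (v_j - u_j)\, dd^c u_j \wedge S + \int (v_j - u_j)\, dd^c v_j \wedge S.
\end{align*}
Since $v_j - u_j \leq 0$ and $dd^c u_j, dd^c v_j \geq -\omega$, the right-hand side is dominated to yield
\begin{align*}
\norm[S]{u_j}^2 \leq \norm[S]{v_j}^2 + 2 \int (u_j - v_j)\, \omega \wedge S.
\end{align*}
Using $u_j \leq C$ and $v_j \geq v$, the integrand is bounded pointwise by $C - v \in L^1(\omega \wedge S)$, so the last integral is uniformly bounded in $j$; meanwhile $\norm[S]{v_j} \to \norm[S]{v} < \infty$ by Corollary \ref{cor:decrlims2} applied to $v_j \downarrow v$. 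Hence $\norm[S]{u_j}$ is uniformly bounded.

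To conclude, the decreasing family $u_j \in \eclass(S) \cap \psh(\omega)$ converges to $u$, which is already known to be in $L^1(S)$, so a second application of Corollary \ref{cor:decrlims2} gives $\norm[S]{u_j} \to \norm[S]{u}$. Thus $\norm[S]{u} < \infty$ and $u \in \eclass(S)$. The only delicate point is choosing the right instance of \eqref{eqn:basicidentity}: pairing each of $u_j$, $v_j$ with itself (rather than a mixed pairing) is what lets the nonpositivity of $v_j - u_j$ combine with the lower bounds $dd^c u_j, dd^c v_j \geq -\omega$ to produce an $L^1(\omega \wedge S)$ error term that the integrability of $v$ already controls.
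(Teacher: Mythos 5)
Your proof is correct and follows essentially the same strategy as the paper: approximate $u,v$ from above by elements of $\eclass(S)\cap\psh(\omega)$, invoke the monotonicity inequality from Proposition~\ref{prop:nrgbnds}/Corollary~\ref{cor:nrgbnds}, and pass to the limit. The only difference is in the approximants: the paper uses smooth decreasing regularizers (Theorem~\ref{thm:regularization}) together with Corollary~\ref{cor:decapprox2}, whereas you use the truncations $\max\{u,-j\}$, $\max\{v,-j\}$ together with two applications of Corollary~\ref{cor:decrlims2}. Your choice has the minor advantage that the ordering $u_j\geq v_j$ is automatic, a point the paper's proof tacitly "may assume" for its smooth regularizers.
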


\begin{proof}
Let $u_j,v_j\in C^\infty(X)\cap\psh(\omega)$ be sequences that decrease to $u$ and $v$, respectively.  Since $u\geq v$, we may assume $u_j \geq v_j$ for all $j$.  
Using the hypothesis that $u, v\in \psh(\omega)$ and Proposition  \ref{prop:nrgbnds} we obtain
$$
\norm[S]{u}^2 = \lim_{j\to\infty} \norm[S]{u_j}^2 \leq \lim_{j\to\infty}\left( \norm[S]{v_j}^2 + 2 \norm[L^1(\omega \wedge S)]{u_j-v_j}\right) = \norm[S]{v}^2 + 2 \norm[L^1(\omega \wedge S)]{u-v} < \infty.
$$
Note that $\norm[L^1(S)]{u-v} < \infty$ since $v \in \eclass(S)$ implies $v \in L^1(S)$ and therefore $u-v \in L^1(S)$.
\end{proof}

The following is a particularly useful application of Corollary \ref{cor:lessisgood}

\begin{cor}
\label{cor:logsings}
Suppose that $u\in \psh(\omega)$ is continuous off a finite set $I\subset X$, that local potentials for $S$ are finite at each point in $I$, and that $u(x)\geq -A+B\log\dist(x,I)$ for some constants $A,B\geq 0$  Then $u\in \eclass(S)$.
\end{cor}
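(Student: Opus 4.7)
The plan is to invoke Corollary \ref{cor:lessisgood}: it suffices to exhibit some $v \in \psh(\omega) \cap \eclass(S)$ with $v \leq u$ globally on $X$. Since replacing $\omega$ by a larger K\"ahler form does not change $\qpsh(X)$ or $\eclass(S)$, I can freely enlarge $\omega$ during the construction of $v$.

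For the construction, I will choose mutually disjoint holomorphic charts $(U_p, z_p)$ centered at each $p \in I$, together with cutoffs $\chi_p \in C^\infty_c(U_p)$ equal to $1$ on a smaller neighborhood $U_p' \Subset U_p$, and set
\[
v := -A' + \sum_{p \in I} \chi_p \cdot B\log\|z_p\|.
\]
For a sufficiently large constant $C > 0$, each summand is $C\omega$-plurisubharmonic, so $v \in \psh(C\omega)$, and I absorb $C$ into $\omega$. Choosing $A'\geq A$ and using continuity of $u$ off $I$, the hypothesis $u \geq -A + B\log\dist(\cdot, I)$ yields $u \geq v$ on each $U_p'$, while on the complement of $\bigcup_p U_p'$ both $u$ and $v$ are continuous (and $v$ is bounded), so $A'$ can be enlarged to force $u\geq v$ everywhere.

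The substance of the proof is then to verify that $v \in \eclass(S)$. I will form the bounded qpsh approximants $v_j := \max(v, -j)$, which decrease to $v$; each $v_j$ lies in $\eclass(S)$ because it is bounded. The logarithmic singularities of $v$ at the finite set $I$ are locally integrable against the trace measure $\omega \wedge S$, which has locally finite density near each $p \in I$ by finiteness of the local potential of $S$ there, so $v \in L^1(S)$. Corollary \ref{cor:decrlims2} then reduces the task to establishing the uniform estimate $\sup_j \nrg{v_j, v_j}_S < \infty$. Outside a fixed neighborhood of $I$, the sequence $v_j$ stabilizes to the continuous function $v$ once $j$ is large, so the contribution from this region is uniformly bounded. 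Near each $p \in I$, I write $S = dd^c w_p$ locally with $w_p$ psh and $w_p(p) > -\infty$, and integrate by parts twice to obtain an estimate of the form
\[
\int_{U_p} -v_j\, dd^c v_j \wedge dd^c w_p \;\leq\; \int_{U_p} -w_p\, (dd^c v_j)^2 + (\text{boundary terms}),
\]
where the boundary terms are uniformly bounded because $v_j$ stabilizes to $v$ near $\partial U_p$. By Bedford--Taylor continuity, the Monge--Amp\`ere measures $(dd^c v_j)^2$ converge weakly to $(dd^c v)^2$, which near $p$ is a constant multiple of $\delta_p$ plus a smooth remainder; in particular $\int_{U_p}(dd^c v_j)^2$ is uniformly bounded in $j$. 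Finiteness of $w_p$ at $p$ then gives the required uniform bound on $\int w_p (dd^c v_j)^2$.

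The main obstacle will be executing the last integration-by-parts estimate when $w_p$ is merely \emph{finite} at $p$ rather than bounded in a neighborhood. This is a classical Chern--Levine--Nirenberg situation and can be treated by truncating $w_p$ at large negative values and passing to the limit via monotone convergence, using that the Monge--Amp\`ere approximants of $v_j$ carry only logarithmic singularities and hence that $w_p \in L^1\!\left((dd^c v_j)^2\right)$ uniformly in $j$. Modulo this technical step, the argument is complete.
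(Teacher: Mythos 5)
Your structure matches the paper's: exhibit a minorant $v \in \psh(\omega)$ of $u$, smooth off $I$ with logarithmic singularities at the points of $I$, then reduce via Corollary~\ref{cor:lessisgood} to showing $v \in \eclass(S)$. Where the paper concludes by citing \cite[Theorem 3.6]{BeDi05b}, you attempt a self-contained energy estimate, and the gap sits exactly where you flag it.

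The uniform-in-$j$ bound on $\int_{U_p'} -w_p\,(dd^c v_j)^2$ does not follow from Chern--Levine--Nirenberg or from weak convergence of $(dd^c v_j)^2$: CLN-type bounds require $w_p$ to be \emph{locally bounded}, whereas the hypothesis gives only $w_p(p) > -\infty$ at the single point $p$, and weak convergence of measures supplies no control on integrals against an unbounded function. The structural fact that rescues the estimate --- and is precisely the content of the cited \cite[Theorem 3.6]{BeDi05b} --- is that on $U_p'$ one has $v_j = \mathrm{const} + B\max(\log\|z_p\|,\log r_j)$, so $(dd^c v_j)^2|_{U_p'}$ is a fixed multiple of normalized uniform measure on the sphere $\{\|z_p\|=r_j\}$ with $r_j\downarrow 0$; since $w_p$ is plurisubharmonic, its spherical averages over these shrinking spheres decrease to $w_p(p)>-\infty$, giving $\int_{U_p'} -w_p\,(dd^c v_j)^2 \leq C\,(-w_p(p))$ uniformly in $j$. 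Your truncation/monotone-convergence suggestion does not by itself produce this uniformity (the crude bound for $\max(w_p,-k)$ grows linearly in $k$), so without the spherical-mean observation the input $\sup_j\nrg{v_j,v_j}_S<\infty$ needed to apply Corollary~\ref{cor:decrlims2} is not actually established.
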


\begin{proof}
The bound on $u$ means we can bound $u$ from below by a an $\omega$-psh function $\tilde u$ that is smooth off $I$ and, near each $p\in I$, equal to $\tilde u(z) + B\log\norm{z}$ for some smooth function $\tilde u$ and local coordinate $z$ centered at $p$.  It suffices to show therefore that $\tilde u$ has finite $S$-energy.  This is accomplished in \cite[Theorem 3.6]{BeDi05b}. 
\end{proof}

By linearity, the pairing $\nrg{\cdot,\cdot}_S$ extends to \emph{differences} $u_1-u_2$ of functions $u_1,u_2\in\eclass(S)$.    However, there is no reason to expect that $\eclass(S)$ is any sense complete with respect to $\norm[S]{\cdot}$.  Hence it is necessary for us to allow a vector space larger than the one spanned by $\eclass(S)$.  

\begin{defn}
\label{defn:DPSH_and_ADMIT_WEDGE}
Let $\dpsh(X)$ denote the set of all differences $u = u_1-u_2$, where $u_1,u_2\in\qpsh(X)$.  We will say that $dd^c u$ {\em admits a wedge product with $S$} if the decomposition $u=u_1-u_2$ can be chosen so that $u_1, u_2\in L^1(S)$ separately.  
\end{defn}

This guarantees that $dd^c u\wedge S$ is a signed Borel measure with finite total mass. 
 We introduce the following rather ad hoc terminology.

\begin{defn}
\label{defn:WEAKLY_FINITE_ENERGY}
We say that $u\in\dpsh(X)$ has \emph{weakly finite $S$-energy}, writing $u\in \weclass(S)$, if $dd^c u$ admits a wedge product with $S$, and there is a constant $C>0$ such that 
$$
-\int v\,dd^c u \wedge S \leq C\norm[S]{v}
$$
for any $v\in C^\infty(X)$.  
\end{defn}

\noindent If in the decomposition $u=u_1-u_2$, both $u_1$ and $u_2$ have finite $S$-energy, then $u$ has weakly finite $S$-energy (with e.g. $C = \norm[S]{u_1} + \norm[S]{u_2}$).  However, the point to Definition \ref{defn:WEAKLY_FINITE_ENERGY} is that $u$ might have weakly finite $S$-energy even when both $\norm[S]{u_1}$ and $\norm[S]{u_2}$ are infinite. In any case, if $u$ has weakly finite $S$-energy, then we let $\dnorm[S]{u}$ denote the smallest possible value of $C$ in this definition.  

In several places below, we will convert information about the Lelong numbers of a positive closed current to information about how much mass its wedge product with another current assigns to points.  We rely on the bounds in the following result to do this.

\begin{prop}
\label{prop:lelongtoptmass}
There is an absolute constant $C>0$ such that the following hold for any $u\in\dpsh(X)\cap L^1(S)$ and $p\in X$.
\begin{enumerate}
 \item If $|u|\leq M$ in a neighborhood of $p$, then $(dd^c u\wedge S)(p) \leq CM\nu(S,p)$.
 \item If $u\in\weclass(S)$, then $(dd^c u\wedge S)(p) \leq C\dnorm[S]{u}\nu(S,p)$.
\end{enumerate}
\end{prop}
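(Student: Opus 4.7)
The plan is to localize near $p$ and test $dd^c u \wedge S$ against a smooth radial cutoff. Choose local coordinates with $p=0$ and set $\chi_r(z)=\chi(|z|^2/r^2)$ for a fixed smooth $\chi\colon[0,\infty)\to[0,1]$ with $\chi(0)=1$ and $\chi\equiv 0$ on $[1,\infty)$. Since $\chi_r\le\mathbf{1}_{B_r(p)}$ and $\chi_r\to\mathbf{1}_{\{p\}}$ pointwise, dominated convergence applied separately to the positive and negative parts of the signed Radon measure $dd^c u\wedge S$ gives
\[
(dd^c u\wedge S)(\{p\}) = \lim_{r\to 0}\int\chi_r\,dd^c u\wedge S.
\]
Decomposing $u=u_1-u_2$ with $u_1,u_2\in\qpsh(X)\cap L^1(S)$ as permitted by the hypothesis that $dd^c u$ admits a wedge product with $S$, the currents $u_jS$ are well-defined locally and integration by parts yields
\[
\int\chi_r\,dd^c u\wedge S = \int u\,dd^c\chi_r\wedge S.
\]
The two quantitative inputs I would lean on are the pointwise bound $|dd^c\chi_r|\le Cr^{-2}\omega_{\mathrm{std}}$ on $B_r$ and the Lelong-number asymptotic $\sigma_S(B_r)/(\pi r^2)\to\nu(S,p)$, where $\sigma_S=S\wedge\omega_{\mathrm{std}}$ is the trace measure.

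For part (1), the bound $|u|\le M$ on a neighborhood of $p$ gives at once $\left|\int u\,dd^c\chi_r\wedge S\right|\le CMr^{-2}\sigma_S(B_r)$; letting $r\to 0$ and absorbing constants yields the claim. For part (2) I lack a pointwise bound on $u$, so instead I substitute the defining inequality for $\dnorm[S]{u}$ at the smooth test function $\chi_r$:
\[
\left|\int\chi_r\,dd^c u\wedge S\right|\le\dnorm[S]{u}\,\|\chi_r\|_S,
\]
and estimate $\|\chi_r\|_S^2=\int d\chi_r\wedge d^c\chi_r\wedge S$ via a second integration by parts together with the same Lelong-number asymptotic.

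The main technical challenge, in my view, is sharpening step (2) enough to reach the linear factor $\nu(S,p)$ claimed in the proposition. A direct cutoff estimate yields only $\|\chi_r\|_S^2=O(\nu(S,p))$, so Cauchy--Schwarz delivers no better than $\dnorm[S]{u}\sqrt{\nu(S,p)}$. To upgrade this, one likely needs either a cutoff adapted to a local psh potential for $S$ at $p$, chosen so that the leading Hessian contribution cancels against the subleading behaviour of $\sigma_S(B_r)-\pi\nu(S,p)r^2$, or a decomposition $u=u_{\mathrm{bdd}}+u_{\mathrm{res}}$ into a bounded piece handled by (1) and a residual whose weakly finite $S$-energy forces its mass at $p$ to vanish. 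Either route must be executed with constants uniform in $S$, $u$, and $p$, and I expect this is where the bulk of the proof's effort should lie.
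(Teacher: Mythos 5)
Your approach for both parts tracks the paper's proof precisely: localize with a radial cutoff $\chi_r$ at $p$, identify $(dd^c u\wedge S)(\{p\})=\lim_{r\to 0}\int\chi_r\,dd^c u\wedge S$, integrate by parts, and use the Lelong-number asymptotic $r^{-2}\sigma_S(B_r)\to\pi\nu(S,p)$. Part (1) matches the paper's argument essentially verbatim.

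For part (2) the obstruction you flag is not a gap peculiar to your write-up; it is present in the paper's own displayed argument. Testing Definition~\ref{defn:WEAKLY_FINITE_ENERGY} against $v=\pm\chi_r$ yields
$$
\left|\int\chi_r\,dd^c u\wedge S\right|\leq\dnorm[S]{u}\,\norm[S]{\chi_r}
=\dnorm[S]{u}\left|\int\chi_r\,dd^c\chi_r\wedge S\right|^{1/2},
$$
and the right side tends to $C\dnorm[S]{u}\sqrt{\nu(S,p)}$, exactly the weaker bound you computed. The paper's proof of (2) writes the first inequality as $\left|\int\chi_r\,dd^c u\wedge S\right|\leq\dnorm[S]{u}\left|\int\chi_r\,dd^c\chi_r\wedge S\right|$ --- without the square root --- and that omission is what makes the linear factor $\nu(S,p)$ appear in the limit; no additional idea (such as the potential-adapted cutoff or the $u=u_{\mathrm{bdd}}+u_{\mathrm{res}}$ decomposition you sketch) is supplied to upgrade $\sqrt{\nu}$ to $\nu$. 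A scaling check reinforces your skepticism: replacing $S$ by $\lambda S$ multiplies $(dd^c u\wedge S)(\{p\})$ and $\nu(S,p)$ each by $\lambda$ but multiplies $\dnorm[S]{u}$ only by $\lambda^{1/2}$, so the linear bound with a genuinely absolute $C$ would, on letting $\lambda\to 0$, force $(dd^c u\wedge S)(\{p\})=0$ for every $u\in\weclass(S)$; the square-root bound is the one that is scale-invariant and that the cutoff computation actually delivers. Since the linear dependence is what is quoted downstream (e.g.\ in the proof of Theorem~\ref{thm:fullmass}, where $\dnorm[S_X]{\rpot_T}\nu(S_X,p)$ is summed over $\torus$-invariant points), this step warrants a closer look before the estimate is used.
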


\begin{proof}
Fix a local coordinate $x:U\to B_1(0)$ on $X$ centered at $p$.  Let $\omega$ be the Euclidean K\"ahler form on $B_1(0)$ and $\chi:B_1(0)\to [0,1]$ be a smooth, compactly supported function equal to $1$ near $0$.  For any $r<1$, let $\chi_r(x) = \chi(x/r)$.  Then
$$
(dd^c u\wedge S)(p) = \lim_{r\to 0} \int \chi_r\,dd^c u\wedge S.
$$
If $|u|\leq M$ near $p$, integration by parts gives
$$
\left|\int \chi_r\,dd^c u\wedge S\right| = \left|\int u\,dd^c\chi_r\wedge S\right|
\leq \frac{M\norm[\mathcal{C}^2]{\chi}}{r^2}\int \omega\wedge S \to CM\nu(S,p)
$$
for some constant $C$.  Biholomorphic invariance of Lelong numbers implies that $C$ does not depend on the choice of local coordinate.  This proves (1).

For (2) we have by definition of $\dnorm[S]{u}$ and $\norm[S]{\chi_r}$ that
$$
\left|\int \chi_r\,dd^c u\wedge S\right| \leq \dnorm[S]{u} \left|\int \chi_r\,dd^c\chi_r\wedge S\right| 
\leq
\frac{C\dnorm[S]{u}}{r^2}\left|\int \omega\wedge S\right| \to C\dnorm[S]{u}\nu(S,p)
$$
as before.
\end{proof}

\subsection{Wedge products of toric currents}
\label{ss:toricwedge}

Let us next extend the above discussion of wedge products to the non-compact
surface $\rztO$.  Rather than aim for maximum generality, we seek only to
establish a framework adequate for the purposes of this paper.  

\begin{defn}\label{DEFN:WEDGE_PRODUCT_TORIC_CURRENTS}
We will say
that toric currents $T,S\in\pcc^+(\rzt)$ \emph{admit a wedge product} if
$\ch{T},\ch{S}$ are K\"ahler classes, and in each toric surface $X$, we have
$T\wedge_X S := T_X\wedge S_X$ is defined (in the Bedford-Taylor sense): i.e.
given $X$, there are K\"ahler forms $\omega_{T,X},\omega_{S,X}$ on $X$ such
that $T_X = \omega_{T,X} + dd^c u_X$, $S_X = \omega_{S,X} + dd^c v_X$, where
$u_X\in L^1(S_X)$ (equivalently $v_X\in L^1(T_X)$).  
\end{defn}

\begin{prop}
\label{prop:shrinks}
If $T,S\in \pcc^+(\rzt)$ admit a wedge product and $Y\succ X$ are toric surfaces, then
\begin{equation}
\label{eqn:wedgeshrinks}
T\wedge_X S = \pi_{YX*}(T\wedge_Y S) + \nu.
\end{equation}
where $\nu$ is a positive measure with $\supp\nu = \pi_{YX}(\exc(\pi_{YX}))$.
\end{prop}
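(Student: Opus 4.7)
The plan is to extract $\nu$ as the discrepancy in a projection-formula identity, combining Proposition \ref{prop:changes} with Lemma \ref{LEM:CURRENT_PROJECTION_FORMULA}. Using the K\"ahler hypothesis, I would write $T_X = \omega_T + dd^c u$ with $u \in L^1(S_X)$ and verify that $u\circ\pi_{YX} \in L^1(S_Y)$ (this holds because the admissibility hypothesis supplies an $L^1(S_Y)$ potential for $T_Y$, local potentials for $D_T := \pi_{YX}^*T_X - T_Y$ are logarithmic, and $S_Y$ charges no curves). Lemma \ref{LEM:CURRENT_PROJECTION_FORMULA}, together with the smooth identity $\pi_{YX*}(\pi_{YX}^*\omega_T \wedge_Y S_Y) = \omega_T \wedge_X S_X$, would then yield the key projection formula
\begin{equation*}
\pi_{YX*}(\pi_{YX}^* T_X \wedge_Y S_Y) = T_X \wedge_X S_X.
\end{equation*}
Since $\ch T$ is K\"ahler, Proposition \ref{prop:changes} gives $\pi_{YX}^* T_X = T_Y + D_T$ with $D_T$ an effective external divisor whose support equals $\exc(\pi_{YX})$.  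Substituting identifies $\nu := \pi_{YX*}(D_T \wedge_Y S_Y)$ as a positive measure with $\supp\nu \subset \pi_{YX}(\exc(\pi_{YX}))$.

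For the reverse containment, I would factor $\pi_{YX}$ as a sequence of single toric blowups and induct.  The base case is a single blowup $\pi: Y \to X$ at a $\torus$-invariant point $p_\sigma$ with exceptional pole $C_\tau$; then $D_T = \nu(T_X, p_\sigma)\,[C_\tau]$, and $[C_\tau]\wedge_Y S_Y$ is the trace of $S_Y$ on $C_\tau$, a positive measure of total mass
\begin{equation*}
\isect{[C_\tau]}{[S_Y]}_Y = \isect{[C_\tau]}{\pi^*[S_X] - \nu(S_X, p_\sigma)[C_\tau]}_Y = \nu(S_X, p_\sigma),
\end{equation*}
via the projection formula for classes and $[C_\tau]^2 = -1$.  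Pushforward yields a Dirac at $p_\sigma$ of mass $\nu(T_X, p_\sigma)\nu(S_X, p_\sigma) > 0$ by Proposition \ref{prop:smalllelong} applied to both K\"ahler classes.  The inductive step then expresses $\nu$ as a sum of pushforwards of such positive Diracs---one for each single blowup in the factorization---so that strictly positive mass sits above every $\torus$-invariant point of $\pi_{YX}(\exc(\pi_{YX}))$.

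The hard part will be the bookkeeping for the inductive step, in particular ensuring that intermediate currents remain admissible and that both $\ch{T}$ and $\ch{S}$ restrict to K\"ahler classes on each intermediate toric surface, so that Proposition \ref{prop:smalllelong} applies at each stage.  Both conditions are immediate from the definitions in $\hoo(\rzt)$ and from Definition \ref{DEFN:WEDGE_PRODUCT_TORIC_CURRENTS}.  A smaller technical point is verifying that $[C_\tau]\wedge_Y S_Y$ is well-defined in the Bedford--Taylor sense, which follows from writing $D_T = \pi_{YX}^* T_X - T_Y$ as a difference of currents both admitting wedge products with $S_Y$.
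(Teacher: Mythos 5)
Your approach is genuinely different from the paper's, and it has a gap in the integrability step.

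The paper's proof is substantially more economical. It reduces immediately to the case of a single blowup $\pi_{YX}:Y\to X$ of a $\torus$-invariant point $p_\sigma$. Since $\pi_{YX}$ restricts to an isomorphism $Y\setminus E\to X\setminus\{p_\sigma\}$, the Bedford--Taylor products $T\wedge_Y S$ and $T\wedge_X S$ agree there by locality, so $\nu$ is automatically a point mass at $p_\sigma$. Its value is then computed purely cohomologically: the total masses of the two products are the intersection numbers $\isect{T_Y}{S_Y}$ and $\isect{T_X}{S_X}$, and writing $T_Y = \pi_{YX}^*T_X - c E$, $S_Y = \pi_{YX}^*S_X - c' E$ gives $\isect{T_X}{S_X}-\isect{T_Y}{S_Y}=cc'$, with $c,c'>0$ supplied directly by Proposition \ref{prop:changes}. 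No projection-formula lemma, no decomposition of potentials, no Lelong-number argument are needed.

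Your route instead extracts an explicit formula $\nu = \pi_{YX*}(D_T\wedge_Y S_Y)$ via Lemma \ref{LEM:CURRENT_PROJECTION_FORMULA}. The load-bearing step there is the claim $u_X\circ\pi_{YX}\in L^1(S_Y)$, which you justify by saying that local potentials for $D_T = \pi_{YX}^*T_X - T_Y$ are logarithmic and ``$S_Y$ charges no curves.'' That last assertion is not a hypothesis of the proposition: $S\in\pcc^+(\rzt)$ need not be internal, so $S_Y$ can have a nontrivial divisorial component along poles of $Y$, including along $\exc(\pi_{YX})$. If $S_Y$ charges the exceptional pole $E$, then the logarithmic potential for $D_T$ is identically $-\infty$ on a set of positive $S_Y$-trace mass, and the integrability you need is not clear. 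The paper's mass-comparison argument is deliberately agnostic to this, which is why it does not pass through the projection-formula lemma at all.

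A smaller but related point: in your base case you invoke Proposition \ref{prop:smalllelong} to get $\nu(T_X,p_\sigma),\nu(S_X,p_\sigma)>0$, but that proposition is stated for \emph{internal} currents, which $T$ and $S$ here are not assumed to be. The strict positivity you want is already available from Proposition \ref{prop:changes} (which has only the nef/K\"ahler hypothesis and is exactly what the paper uses), so you should cite that instead. Finally, note that you obtain the coefficient $\nu(T_X,p_\sigma)\nu(S_X,p_\sigma)$ where the paper obtains $cc'$; these agree only when $T_Y$ and $S_Y$ do not themselves charge $E$, another place where the implicit ``internal'' assumption is creeping in.
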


\begin{proof}
It suffices to consider the case where $\pi_{YX}$ is the blowup of $p_\sigma\in X$.  If $E = \pi_{YX}^{-1}(p_\sigma)$, then $\pi_{YX}:Y-E \to X-{p_\sigma}$ is an isomorphism.  Hence
$\pi_{YX*}(T\wedge_Y S) = T\wedge_X S$ on $X-\{p_\sigma\}$.  On the other hand, the total mass of
$T\wedge_Y S$ is given by
$$
\isect{T_Y}{S_Y} = \isect{\pi_{YX}^* T_X + c E)}{(\pi_{YX}^* S_X + c' E} = \isect{T_X}{S_X} - cc'
< \isect{T_X}{S_X}
$$
by Proposition \ref{prop:changes}. Hence $T\wedge_X S$ gives more mass to $p_\sigma$ than $\pi_{YX*} (T\wedge_Y S)$ does.
\end{proof}

Equation \ref{eqn:wedgeshrinks} tells us that when $T$ and $S$ admit a wedge product the mass of the measure $T\wedge_X S$ decreases as the surface $X$ increases, converging to $\isect{T}{S}$ as $X\to\rzt$.  On the other hand, for surfaces $Y\succ X$, we have via the inclusions $X^\circ\subset Y^\circ \subset \rzt^\circ$ that $(T\wedge_Y S)|_{X^\circ} = (T\wedge_X S)|_{X^\circ}$ as measures on $\rzt^\circ$.  Hence 
$$
(T\wedge_X S)|_{X^\circ} \leq (T\wedge_Y S)|_{Y^\circ} \leq T\wedge_Y S.
$$ 
That is, the restricted measures increase with the surface, and their masses
are bounded \emph{above} by $\isect{T}{S}$.  The restrictions therefore
converge to a positive Borel measure on $\rzt^\circ$ that we denote by $T\wedge
S$.  Since $(T\wedge S)(\rzt^\circ)\leq \isect{T}{S}$, we say that $T\wedge S$
\emph{has full mass} (on $\rzt^\circ$) when equality holds.  This is equivalent
to saying that the mass of $T\wedge_X S$ on $\torus$-invariant points of $X$
decreases to $0$ as $X$ increases to $\rzt^\circ$.  In any case, if $T\wedge S$
does not charge curves in $\rzt^\circ$, then
$$
T\wedge S = (T\wedge S)|_{\torus} = (T\wedge_X S)|_\torus = (T\wedge_X S)|_{X^\circ}
$$
for every toric surface $X$.

\begin{thm} 
\label{thm:wedgewhomogeneous}
Let $\bar T, S\in\pcc^+(\rzt)$ be positive internal toric currents such that $\bar T$ is homogeneous and $\ch{\bar T},\ch{S}\in\hoo(\rzt)$ are K\"ahler.  
\begin{enumerate}
 \item $\bar T$ and $S$ admit a wedge product.
 \item If $S$ does not charge a given curve $C\subset\rztO$, then neither does $\bar T\wedge S$.  Hence $\bar T\wedge S = (\bar T\wedge S)|_{\torus}$.
 \item Let $\bar S$ be the homogenization of $S$.  Then $\bar T\wedge \bar S$ has full mass on $\rztO$ and is equal to $\isect{\bar T}{\bar S}$ times Haar measure on $\torus_\R$.
 \item If $S$ has a nearly homogeneous support function, then $\bar T\wedge S$ has full mass on $\rztO$.
\end{enumerate}
\end{thm}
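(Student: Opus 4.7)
The plan is to address the four conclusions in sequence, exploiting $\torus_\R$-invariance of $\bar T$ throughout. For (1), Proposition \ref{prop:bartonpole} gives that any local potential $u_X$ for $\bar T_X$ is continuous on $X^\circ$ with at worst logarithmic singularities at the finite set of $\torus$-invariant points $p_\sigma\in X$ (inherited from $\sfn_{\bar T}\circ\Log$). Proposition \ref{prop:boundedish} then yields $u_X \in L^1(S_X)$, so $\bar T_X\wedge S_X$ is Bedford--Taylor defined on each toric surface $X$. For (2), given a curve $C\subset \rztO$, I enlarge $X$ so that $C$ is either contained in $X^\circ$ (internal case) or equals the interior of a pole of $X$ (external case). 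On a neighborhood of $C$ in $\rzt^\circ$, $\bar T$ has a continuous local potential by Proposition \ref{prop:bartonpole}, and the standard Bedford--Taylor fact that wedging a continuous-potential positive current with $S$ produces no additional mass on locally pluripolar sets already missed by $S$ then applies. (Alternatively, one invokes Corollary \ref{cor:massonpps} after using Corollary \ref{cor:logsings} to place $u_X$ in $\eclass(S_X)$.) Since curves are locally complete pluripolar, this gives~(2).

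For (3), I combine $\torus_\R$-invariance with a direct support-function computation. Both $\bar T$ and $\bar S$ are $\torus_\R$-invariant, so bilinearity of the wedge makes $\bar T\wedge\bar S$ $\torus_\R$-invariant; by (2) it places no mass on poles, so it is supported on $\torus$. Working in $\sigma$-coordinates on $\torus$ and using that $\Log\colon \torus\to N_\R$ is a principal $\torus_\R$-bundle, one computes
\[
(\bar T\wedge\bar S)|_\torus = \Log^*\bigl(MA_\R(\sfn_{\bar T},\sfn_{\bar S})\bigr)\otimes \mathrm{Haar}_{\torus_\R},
\]
where $MA_\R$ denotes the real mixed Monge--Amp\`ere. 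Since $\sfn_{\bar T}$ and $\sfn_{\bar S}$ are positively homogeneous convex, their subgradients are constant along rays in $N_\R$ and lie in the compact polytopes $K_{\bar T}:=\partial\sfn_{\bar T}(0)$ and $K_{\bar S}:=\partial\sfn_{\bar S}(0)$, so $MA_\R(\sfn_{\bar T},\sfn_{\bar S})$ is a Dirac mass at $0\in N_\R$ with weight equal to the mixed area of $K_{\bar T}$ and $K_{\bar S}$. The classical toric-geometric dictionary identifies this mixed area with $\isect{\bar T}{\bar S}$, so $\bar T\wedge\bar S = \isect{\bar T}{\bar S}\cdot\mathrm{Haar}_{\torus_\R}$, with full mass on $\rztO$.

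For (4), I reduce to (3) via $\torus_\R$-averaging. Since $\bar T$ is $\torus_\R$-invariant and each $p_\sigma\in X$ is $\torus_\R$-fixed, averaging commutes with wedging by $\bar T$ and preserves point masses at $p_\sigma$, yielding $(\bar T\wedge_X S)(p_\sigma) = (\bar T\wedge_X S_{\mathrm{ave}})(p_\sigma)$. Near-homogeneity of $\sfn_S$ together with Theorem \ref{thm:homogenization} makes the potential $\rpot_{S_{\mathrm{ave}}} = (\sfn_S - \sfn_{\bar S})\circ\Log$ for $S_{\mathrm{ave}}-\bar S$ uniformly bounded on $\rztO$, say by $M$. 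Writing
\[
\bar T\wedge_X S_{\mathrm{ave}} - \bar T\wedge_X \bar S = dd^c\rpot_{S_{\mathrm{ave}}}\wedge \bar T_X,
\]
Proposition \ref{prop:lelongtoptmass}(1) gives
\[
\sum_\sigma \bigl|(dd^c\rpot_{S_{\mathrm{ave}}}\wedge \bar T_X)(p_\sigma)\bigr| \leq CM\sum_\sigma \nu(\bar T_X,p_\sigma),
\]
which tends to $0$ by Proposition \ref{prop:smalllelong}. Combined with (3), which implies $\sum_\sigma (\bar T\wedge_X\bar S)(p_\sigma)\to 0$ as $X\to\rzt$, this gives $\sum_\sigma (\bar T\wedge_X S)(p_\sigma)\to 0$, establishing full mass and hence (4).

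The main obstacle I anticipate is the computation underlying (3): while the Monge--Amp\`ere of a positively homogeneous convex function being a Dirac mass at the origin is classical, carrying it out carefully on $\torus$ and tying the resulting mixed area cleanly to the inverse-limit intersection number $\isect{\bar T}{\bar S}$ needs attention. A secondary technical point for (4) is verifying that $\rpot_{S_{\mathrm{ave}}}$ extends to a globally bounded function across the poles; this follows because $\sfn_S - \sfn_{\bar S}$ is bounded, non-positive, and non-increasing along each ray (Theorem \ref{thm:homogenization}), so it has finite limits along each ray in $N_\R$ and pulls back via $\Log$ to a function bounded on all of $\rztO$.
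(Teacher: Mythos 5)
Your treatment of Conclusions (1), (2), and (4) follows the paper essentially verbatim: (1) is exactly Proposition \ref{prop:bartonpole} plus Proposition \ref{prop:boundedish}; (2) is the Bedford--Taylor ``locally bounded potentials don't add mass on pluripolar sets'' argument (the paper implements it via $u_j=\max\{u,-j\}$ and Corollary \ref{cor:massonpps}, which is one of the two alternatives you offer); and (4) is precisely the paper's $\torus_\R$-averaging reduction followed by Proposition \ref{prop:lelongtoptmass}(1) and Proposition \ref{prop:smalllelong}.

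For Conclusion (3) you take a genuinely different route.  You invoke the classical dictionary $(\bar T\wedge \bar S)|_\torus = \Log^*\bigl(MA_\R(\sfn_{\bar T},\sfn_{\bar S})\bigr)\otimes \mathrm{Haar}_{\torus_\R}$, identify the real mixed Monge--Amp\`ere of two positively homogeneous convex functions as a Dirac mass at $0$ weighted by the mixed area of their subdifferentials, and then appeal to the toric correspondence between mixed area and intersection number.  This is conceptually cleaner, but — as you yourself flag — the last step is the nontrivial one here: $\isect{\bar T}{\bar S}$ is defined in the paper as $\inf_X \isect{\bar T}{\bar S}_X$ over the inverse system of toric surfaces, and one must verify that this infimum coincides with the mixed area of the (generally non-polyhedral) bodies $\partial\sfn_{\bar T}(0)$ and $\partial\sfn_{\bar S}(0)$.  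That requires an approximation-and-continuity argument for mixed volumes that you leave unstated, and the Monge--Amp\`ere formula itself needs care for non-smooth convex $\sfn$.  The paper sidesteps all of this: it first shows $(\bar T\wedge\bar S)|_\torus$ is a multiple of $\mathrm{Haar}_{\torus_\R}$ by approximating the two support functions by ones that are piecewise-linear with disjointly chosen creases, so that near each point of $\torus\setminus\torus_\R$ at least one factor is pluriharmonic; then it nails the constant by wedging with the piecewise-linear homogeneous approximant $\bar T(X)$, whose potential is pluriharmonic near each $p_\sigma$, so that $\bar T(X)\wedge_X \bar S$ places no mass at torus-invariant points and therefore has total mass $\isect{\bar T}{\bar S}_X$ on $\torus_\R$; letting $X\to\rzt$ then gives the constant directly without ever invoking mixed volumes.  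Your approach buys conceptual transparency (everything is a concrete convex-geometric computation) at the cost of importing and re-verifying the toric dictionary in the limit setting; the paper's approach stays internal to the apparatus already built around $\hoo(\rzt)$ and intersection numbers, at the cost of a slightly more indirect two-step limiting argument.
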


\begin{proof}
Fix a particular toric surface $X$ and write $\bar T_X = \omega + dd^c u$, where $\omega$ is a K\"ahler form and $u\in\psh(\omega)$. By Proposition \ref{prop:bartonpole}, $u$ is continuous on $X^\circ$.  Hence $u\in L^1(S_X)$ by Proposition \ref{prop:boundedish}. This proves that $\bar T\wedge_X S$ is defined in the Bedford-Taylor sense.  Taking $u_j = \max\{u,-j\}$, we have that $u_j \in \eclass(S_X)$ agrees with $u$ on any given compact subset of $X^\circ$ when $j$ is large enough.  So given a curve $C\subset X$ not charged by $S_X$, Corollary \ref{cor:massonpps} gives that $(\omega+dd^c u_j)\wedge S_X$ does not charge $C$ for any $j\geq 0$.  
The interiors of the compact sets $\{u_j=u\}$ exhaust $X^\circ$ as $j\to\infty$, so $\bar T\wedge_X S$ does not charge $C\cap X^\circ$.  In particular, since $S_X$ is internal, $\bar T\wedge S_X$ does not charge $X^\circ\cap C$ for any pole $C\subset X$.  Letting $X$ increase to $\rzt$ gives the first two conclusions.  For Conclusion (3) we first prove

\begin{lem}
$(\bar T \wedge \bar S)|_\torus = (\bar T \wedge \bar S)|_{\torus_\R}$ is a multiple of Haar measure
on $\torus_\R$.
\end{lem}

\begin{proof}
Let $\sfn_{\bar T},\sfn_{\bar S}$ be support functions for $\bar T$ and $\bar S$.  
Let $(\tau_j)_{j\geq 0}$ be a sequence of rational rays with $\bigcup \tau_j$ dense in $N_\R$.  For $n>0$ large, let $\sfn_{\bar T_n}:N_\R\to \R$ be the support function defined by setting $\sfn_{\bar T_n} = \sfn_{\bar T}$ on $\tau_1\cup\dots\cup\tau_n$ and then extending linearly to the $n$ disjoint sectors in the complement.  Then $\sfn_{\bar T_n}$ decreases to $\sfn_{\bar T}$ uniformly on compact subsets of $N_\R$, and the associated positive internal currents $\bar T_n$ converge weakly to $\bar T$.  We similarly approximate $\sfn_{\bar S}$ by support functions $\sfn_{\bar S_n}$ associated to another dense sequence of rays entirely disjoint from $(\tau_j)_{j\geq 0}$. 

Then for any $n$ and each point $p \in \torus\setminus\torus_\R$, we have that either $\sfn_{\bar T_n}\circ\Log$ or $\sfn_{\bar S_n}\circ\Log$ is pluriharmonic on a neighborhood of $p$.  It follows that 
$\bar T_n\wedge \bar S_n|_{\torus}$ is supported on $\torus_\R$.  By $\torus_\R$-invariance, $\bar T_n\wedge \bar S_n|_{\torus}$ must be a non-negative multiple of Haar measure on $\torus_\R$.
On the other hand, continuity of the Monge-Amp\`ere operator with respect to decreasing sequences
of locally bounded psh functions (see e.g. \cite[Theorem 3.18]{GZ_book}) gives us that
\begin{align*}
(\bar T_n \wedge \bar S_n)|_\torus \rightarrow (\bar T \wedge \bar S)_\torus.
\end{align*}
\end{proof}

It remains to show that $(\bar T\wedge \bar S)|_{\torus_\R}$ has (full) mass equal to $\isect{\bar T}{\bar S}$.  To do this we employ the homogeneous approximation $\bar T(X) \in \pcc^+(X)$ (see \cite[Theorem 6.7]{DiRo24}) of $\bar T$ on some toric surface $X$.  That is, $\sfn_{\bar T(X)}$ is linear on every sector in $\Sigma(X)$ and agrees with $\sfn_{\bar T}$ on each ray in $\Sigma_1(X)$.  Hence $\sfn_{\bar T(X)}\circ\Log$ is pluriharmonic near the torus invariant points of $X$, so  $\supp\bar T(X)\wedge_X \bar S$ avoids $\torus$-invariant points.  By Conclusion (2), $\bar T(X)\wedge \bar S$ does not charge curves.  So it follows from the lemma that $\bar T(X)\wedge_X \bar S = (\bar T(X)\wedge \bar S)|_\torus$ is exactly equal to $\isect{\bar T(X)}{\bar S}_X$ times Haar measure on $\torus_\R$.  Since $\bar T_X$ is cohomologous in $X$ to $\bar T(X)$, we further have that
$
\isect{\bar T(X)}{\bar S}_X = \isect{\bar T}{\bar S}_X 
$
decreases to 
$
\isect{\bar T}{\bar S}
$
as $X$ increases to $\rzt$, hence $\bar T(X)\wedge_X \bar S$ decreases to $\isect{\bar T}{\bar S}$ times Haar measure on $\torus_\R$.  

On the other hand, as $X$ increases to $\rzt$, the rays in $\Sigma_1(X)$ become dense in $N_\R$.  Hence as in the proof of the lemma, we have $\bar T(X)\wedge_X \bar S \to \bar T\wedge \bar S$ on any relatively compact open subset of $\torus$.  The third conclusion follows immediately.

For the final conclusion of Theorem \ref{thm:wedgewhomogeneous} we observe that
$$
\sum_{\sigma\in\Sigma_2(X)} \bar T\wedge S(p_\sigma) =
\sum_{\sigma\in\Sigma_2(X)} \bar T\wedge \bar S(p_\sigma) + \sum_{\sigma\in\Sigma_2(X)} \bar T\wedge dd^c\rpot_S(p_\sigma),
$$
where $\rpot_S$ is the $\bar{S}$ potential for $S$.
Since $\bar T$ is $\torus_\R$-invariant, as is each $p_\sigma\in X\setminus X^\circ$, we may suppose in the second sum on the right that $S = S_{ave}$ is also $\torus_\R$-invariant.  So when the support function $\sfn_S$ for $S$ is nearly homogeneous, $\rpot_S = (\sfn_S - \sfn_{\bar S})\circ\Log$ is bounded.  From the first conclusion of Proposition \ref{prop:lelongtoptmass} we infer that
$$
\sum_{\sigma\in\Sigma_2(X)} \bar T\wedge S(p_\sigma) \leq
\sum_{\sigma\in\Sigma_2(X)} \bar T\wedge \bar S(p_\sigma) + C\sum_{\sigma\in\Sigma_2(X)}\nu(\bar T, p_\sigma).
$$
But as $X$ increases to $\rzt$, the first term on the right tends to $0$
because $\bar T\wedge \bar S$ has full mass on $\torus_\R \subset \rztO$, and the second term
tends to $0$ by Proposition \ref{prop:smalllelong}.  Hence the left side tends
to $0$, too, which is equivalent to the statement that $\bar T\wedge S$ has
full mass on $\rztO$.  \end{proof}

Let $C^\infty(\rzt)$ denote the union of $C^\infty(X)$ over all toric surfaces $X$ modulo the identification $u\circ \pi_{XY} \sim u$ for any $u\in C^\infty(Y)$ and any surface $X\succ Y$.  Then we have
$$
C_0^\infty(\rztO) \subset C^\infty(\rzt) \subset C^\infty(\rztO).
$$
That is on the one hand, if $u\in C_0^\infty(\rztO)$, then $\supp u\subset X^\circ$ for some surface $X$.  Hence $u\in C^\infty(X)\subset C^\infty(\rzt)$.  And on the other hand, $v\in C^\infty(\rzt)$ implies that $v|_{X^\circ}$ is well-defined on all sufficiently dominant $X$.  Since the open sets $X^\circ$ exhaust $\rztO$ and the restrictions agree with each other on overlaps, it follows that $v$ is well-defined and smooth on all of $\rztO$.

Any toric current $S\in\pcc^+(\rzt)$ defines a pairing on $C^\infty(\rzt)$ modeled on the one in \eqref{EQN:DEF_SMOOTH_PAIRING}.  Given $u,v\in C^\infty(\rzt)$, we choose a toric surface $X$ in which both $u$ and $v$ are well-defined, and set 
$$
\nrg{u,v}_S := \nrg{u,v}_{S_X} = \int_X du\wedge d^c v\wedge S_X.
$$
Since $\pi_{XY*} S_X = S_Y$ when $X\succ Y$, this definition is independent of the choice of $X$.  If at least one of the two functions belongs to $\in C^\infty_0(\rztO)$, then $du\wedge d^cv$ is a test form on $\rztO$, giving us the equivalent formulation
\begin{equation}
\label{eqn:toricenergy}
\nrg{u,v}_S = \int_{\rztO} du\wedge d^c v\wedge S.
\end{equation}
In any case, we let $\norm[S]{\cdot}$ denote the associated seminorm.

\begin{defn}
\label{DEF:WEAKLY_FINITE_ENERGY_ON_RZTO}
Let $T_1,T_2,S\in\pcc^+(\rztO)$ be currents such that $T_1$ is cohomologous to $T_2$ and both admit wedge products with $S$ (in particular, $\ch{T_j}$ and $\ch{S}$ are K\"ahler classes).  We say that a potential $\rpot\in L^1_{loc}(\rztO)$ for $T_1-T_2$ has weakly finite $S$-energy if there is a constant $C$ such that
for all toric surfaces $X$ and all $u\in C^\infty(X)$, we have
$$
\int_X u\,dd^c\rpot\wedge_X S \leq C\norm[S]{u}.
$$
\end{defn}

\noindent 
We let $\dnorm[S]{\rpot}$ denote the minimum possible value of $C$ in this
definition.  Recall that if $X$ is a toric surface, then $\rpot$ (or strictly
speaking, the restriction $\rpot|_{X^\circ}$) is a also potential for
$T_{1,X}-T_{2,X}$.

\begin{prop}
\label{prop:dnormincreases}
Let $\rpot$ be the potential in Definition \ref{DEF:WEAKLY_FINITE_ENERGY_ON_RZTO} and $X\succ Y$ be toric surfaces.  Then $\dnorm[S_X]{\rpot} \geq \dnorm[S_Y]{\rpot}$.
\end{prop}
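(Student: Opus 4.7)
The plan is to prove the inequality by pulling back test functions from $Y$ to $X$ via the morphism $\pi_{XY}: X \to Y$, and then invoking the projection formula for wedge products (Lemma \ref{LEM:CURRENT_PROJECTION_FORMULA}) to relate the two sides of the defining inequality for $\dnorm[S_Y]{\rpot}$ to the corresponding quantities on $X$.

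Concretely, I would start by fixing an arbitrary $v \in C^\infty(Y)$ and setting $u := v \circ \pi_{XY} \in C^\infty(X)$. Since $u$ and $v$ represent the same equivalence class in $C^\infty(\rzt)$, the construction of the pairing $\nrg{\cdot,\cdot}_S$ after \eqref{eqn:toricenergy} (being independent of the toric surface chosen to evaluate it) yields
\begin{equation*}
\norm[S_X]{u} \;=\; \norm[S]{v} \;=\; \norm[S_Y]{v}.
\end{equation*}
Next, because $T_1, T_2 \in \pcc^+(\rzt)$ admit wedge products with $S$, I may decompose $\rpot = u_1 - u_2$ as a difference of two qpsh functions whose restrictions to $X^\circ$ and $Y^\circ$ lie in $L^1(S_X)$ and $L^1(S_Y)$ respectively. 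Applying Lemma \ref{LEM:CURRENT_PROJECTION_FORMULA} to each $u_j$, viewed on $Y$ with pullback $u_j \circ \pi_{XY}$ on $X$, and using $\pi_{XY*}S_X = S_Y$, I obtain
\begin{equation*}
\pi_{XY*}\bigl(dd^c u_j \wedge_X S_X\bigr) \;=\; dd^c u_j \wedge_Y S_Y,
\end{equation*}
and then by linearity $\pi_{XY*}(dd^c \rpot \wedge_X S_X) = dd^c \rpot \wedge_Y S_Y$.

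With this in hand, the standard formula for integration against a pushforward measure gives
\begin{equation*}
\int_Y v\, dd^c \rpot \wedge_Y S_Y \;=\; \int_X (v \circ \pi_{XY})\, dd^c \rpot \wedge_X S_X \;=\; \int_X u\, dd^c \rpot \wedge_X S_X.
\end{equation*}
Since $u \in C^\infty(X)$, the definition of $\dnorm[S_X]{\rpot}$ bounds the right-hand side by $\dnorm[S_X]{\rpot}\,\norm[S_X]{u} = \dnorm[S_X]{\rpot}\,\norm[S_Y]{v}$. As $v$ was arbitrary, the definition of $\dnorm[S_Y]{\rpot}$ then yields $\dnorm[S_Y]{\rpot} \leq \dnorm[S_X]{\rpot}$.

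The only point requiring some care is the application of the projection formula in Lemma \ref{LEM:CURRENT_PROJECTION_FORMULA} to the non-smooth potential $\rpot$: the lemma is stated for a single qpsh function, so one has to split $\rpot$ into its two qpsh pieces and verify that the integrability hypothesis $u_j \circ \pi_{XY} \in L^1(S_X)$ holds for each. This, however, is precisely what the hypothesis that $T_1, T_2$ admit wedge products with $S$ on $X$ provides (Definition \ref{DEFN:WEDGE_PRODUCT_TORIC_CURRENTS}), so no real obstacle arises.
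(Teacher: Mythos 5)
Your strategy is essentially the paper's — pull the smooth test function back from $Y$ to $X$, integrate against $dd^c\rpot\wedge S$ on $X$, and transport the result back to $Y$ using a projection formula. But there is a genuine gap in your invocation of Lemma~\ref{LEM:CURRENT_PROJECTION_FORMULA}. That lemma requires a qpsh function $u_j$ on the \emph{base} $Y$ with $u_j\circ\pi_{XY}\in L^1(S_X)$, and you assert that this integrability is ``precisely what the hypothesis that $T_1,T_2$ admit wedge products with $S$ provides.'' It does not. Admissibility on $X$ gives a potential $u_{j,X}\in\psh(\omega_X)$ with $u_{j,X}\in L^1(S_X)$, while admissibility on $Y$ gives a potential $u_{j,Y}\in\psh(\omega_Y)$ with $u_{j,Y}\in L^1(S_Y)$; since $\omega_X\neq\pi_{XY}^*\omega_Y$ (the latter is degenerate on the exceptional locus), these potentials are not related by pullback. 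Indeed, $u_{j,Y}\circ\pi_{XY}$ differs from $u_{j,X}$ by a function with logarithmic poles along the curves contracted by $\pi_{XY}$ (coming from the effective divisor $D$ in $\pi_{XY}^*T_{j,Y}=T_{j,X}+D$), and the membership of such a function in $L^1(\omega_X\wedge S_X)$ is not automatic — it depends on how $S_X$ behaves near those curves. So there is no ``single decomposition $\rpot=u_1-u_2$'' that is simultaneously usable on $X$ and $Y$ in the way your argument needs.

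The paper's proof sidesteps this by never decomposing $\rpot$: since $v\circ\pi_{XY}$ is smooth, it integrates by parts to arrive at $\int_X dd^c(v\circ\pi_{XY})\wedge(\rpot S_X)$, a pairing of a smooth form against the order-zero current $\rpot S_X$. The only integrability needed is $\rpot\in L^1(\omega_X\wedge S_X)$ and $\rpot\in L^1(\omega_Y\wedge S_Y)$, which \emph{does} follow directly from admissibility (as the difference $u_{1,X}-u_{2,X}$ on $X$, and similarly on $Y$). The identification $\pi_{XY*}(\rpot S_X)=\rpot S_Y$ then comes from the general definition of pushforward of currents. If you want to retain your route through the lemma, you would first have to establish $u_{j,Y}\circ\pi_{XY}\in L^1(S_X)$, which in general requires an additional argument (for instance via Siu's theorem, showing $S_X$ does not charge the exceptional curves, combined with an integration-by-parts estimate for $-\log|z_E|$ against $\omega_X\wedge S_X$); the paper's formulation avoids having to do this.
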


\begin{proof}
Suppose that $u\in C^\infty(Y)\subset C^\infty(\rzt)$ and that $X\succ Y$.  Let $\pi = \pi_{XY}$ denote the transition.  Then 
\begin{eqnarray*}
\int_X (u\circ \pi)\,dd^c \rpot \wedge S_X
& = &
\int_X dd^c(u\circ \pi)\wedge (\rpot S_X) 
=
\int_Y dd^c u \wedge \pi_*(\rpot S_X) \\
& = &
\int dd^c u \wedge (\rpot S_Y)
=
\int_Y u\, dd^c\rpot \wedge S_Y.
\end{eqnarray*}
The first and last equalities hold by definition since $u$ is smooth.  Note that $dd^c(u\circ \pi) \wedge S_X$ and $dd^c u\wedge S_Y$ are signed measures dominated above and below by constant multiples of the trace measures of $S_Y$ and $S_X$.  Hence the second inequality follows from the definition of pushforward and integrability of $\rpot$ with respect to the trace measures of $S_X$ and $S_Y$.  Thus
$$
\left| \int_Y u\, dd^c\rpot \wedge S_Y\right| 
= 
\left| \int_X (u\circ\pi)\, dd^c\rpot \wedge S_X\right| 
\leq 
\dnorm[S_X]{\rpot}\norm[S_X]{u\circ\pi} = \dnorm[S_X]{\rpot}\norm[S_Y]{u}.
$$
So $\dnorm[S_X]{\rpot} \geq \dnorm[S_Y]{\rpot}$.
\end{proof}

\begin{cor} The potential $\rpot$ in Definition \ref{DEF:WEAKLY_FINITE_ENERGY_ON_RZTO} has weakly finite $S$-energy if and only if $\dnorm[S_X]{\rpot}$ is finite and uniformly bounded as $X$ ranges over all toric surfaces.  In this case,
$$
\dnorm[S]{\rpot} = \sup_X \dnorm[S_X]{\rpot}.
$$
\end{cor}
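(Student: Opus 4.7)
The plan is to deduce the corollary directly by comparing Definition~\ref{defn:WEAKLY_FINITE_ENERGY} (on toric surfaces $X$) with Definition~\ref{DEF:WEAKLY_FINITE_ENERGY_ON_RZTO} (on $\rztO$), using that smooth test functions on $\rzt$ are by construction smooth on some toric surface $X$ and that the pairing $\nrg{\cdot,\cdot}_S$ is intrinsic, agreeing with $\nrg{\cdot,\cdot}_{S_X}$ for any $X$ where the arguments are defined. Once this identification is made, the two inequalities $\dnorm[S]{\rpot}\leq \sup_X\dnorm[S_X]{\rpot}$ and $\sup_X\dnorm[S_X]{\rpot}\leq \dnorm[S]{\rpot}$ follow without further work; Proposition~\ref{prop:dnormincreases} is invoked only to justify writing the supremum as a monotone limit.

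For the easy direction, I would observe that if $C = \dnorm[S]{\rpot}$ is finite then the defining inequality of Definition~\ref{DEF:WEAKLY_FINITE_ENERGY_ON_RZTO} holds for \emph{each} toric surface $X$ and all $u \in C^\infty(X)$. Since $\norm[S]{u} = \norm[S_X]{u}$ for such $u$, this verifies Definition~\ref{defn:WEAKLY_FINITE_ENERGY} on $X$ with the same constant $C$, so $\dnorm[S_X]{\rpot}\leq C$ for every $X$. Hence $\sup_X \dnorm[S_X]{\rpot}\leq \dnorm[S]{\rpot}$, and in particular each $\dnorm[S_X]{\rpot}$ is finite.

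For the reverse direction, suppose $C^* := \sup_X \dnorm[S_X]{\rpot}<\infty$. Given any toric surface $X$ and any $u\in C^\infty(X)$, the definition of $\dnorm[S_X]{\rpot}$ yields
\begin{equation*}
\left|\int_X u\,dd^c\rpot\wedge_X S\right| \leq \dnorm[S_X]{\rpot}\norm[S_X]{u} \leq C^*\norm[S]{u},
\end{equation*}
where the sign convention discrepancy between the two definitions is absorbed by replacing $u$ with $-u$ if needed, noting $\norm[S]{-u}=\norm[S]{u}$. This verifies Definition~\ref{DEF:WEAKLY_FINITE_ENERGY_ON_RZTO} with constant $C^*$, hence $\dnorm[S]{\rpot}\leq C^*$. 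Combining the two inequalities gives $\dnorm[S]{\rpot}=\sup_X\dnorm[S_X]{\rpot}$, and the ``iff'' follows since weak finiteness is by definition equivalent to $\dnorm[S]{\rpot}<\infty$.

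There is no real obstacle here; the entire content is bookkeeping across the two definitions. The only subtle point worth flagging in writing is the trivial sign-flip between the two definitions and the fact that $\norm[S]{u}=\norm[S_X]{u}$ whenever $u \in C^\infty(X)$, which follows from the coherence of $\nrg{\cdot,\cdot}_S$ under the projections $\pi_{XY*}S_X = S_Y$ noted just before~\eqref{eqn:toricenergy}.
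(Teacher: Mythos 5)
The paper states this corollary without proof, treating it as immediate from the definitions; your argument supplies exactly what is missing and is correct. The two key observations you make — that $C^\infty(\rzt)$ is by definition the union of $C^\infty(X)$ over all $X$, and that $\norm[S]{u} = \norm[S_X]{u}$ whenever $u \in C^\infty(X)$ (true by the very definition of $\nrg{\cdot,\cdot}_S$ given just before \eqref{eqn:toricenergy}) — are precisely the content that makes Definition~\ref{DEF:WEAKLY_FINITE_ENERGY_ON_RZTO} a ``uniform over $X$'' version of Definition~\ref{defn:WEAKLY_FINITE_ENERGY}, and both directions of the inequality follow at once. Your observation about the sign convention (absorbing it via $u \mapsto -u$) and your remark that Proposition~\ref{prop:dnormincreases} is logically inessential for the supremum identity (only making the supremum a monotone limit) are both accurate.
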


\begin{thm}
\label{thm:fullmass}
Suppose that $S,T\in\pcc^+(\rzt)$ are internal and admit a wedge product. If the support function $\sfn_S$ for $S$ is nearly homogeneous and the potential $\rpot_T$ for $T-\bar T$ has weakly finite $S$-energy, then $T\wedge S$ has full mass on $\rztO$.
\end{thm}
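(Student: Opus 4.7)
The plan is to reduce everything to Theorem \ref{thm:wedgewhomogeneous}(4) by decomposing $T$ into its homogenization plus a correction term, then controlling the correction via the weak energy hypothesis.

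First I would write $T = \bar T + dd^c\rpot_T$ and fix a toric surface $X$. By Theorem \ref{thm:wedgewhomogeneous}(1), $\bar T$ admits a wedge product with $S$, so on $X$ one has local potentials $u_X$ for $T_X$ and $\bar u_X$ for $\bar T_X$, both in $L^1(S_X)$. Hence $\rpot_T$ is locally a difference of $L^1(S_X)$ functions and admits a wedge product with $S_X$ in the sense of Definition \ref{defn:DPSH_and_ADMIT_WEDGE}, giving the signed-measure decomposition
\begin{equation*}
T\wedge_X S \;=\; \bar T\wedge_X S \;+\; dd^c\rpot_T\wedge_X S.
\end{equation*}
Recalling that $T\wedge S$ has full mass on $\rztO$ if and only if $\sum_{\sigma\in\Sigma_2(X)}(T\wedge_X S)(p_\sigma) \to 0$ as $X$ increases to $\rzt$, it suffices to prove the analogous vanishing for each of the two summands above.

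For the first summand, since $\sfn_S$ is nearly homogeneous, Theorem \ref{thm:wedgewhomogeneous}(4) applies directly: $\bar T\wedge S$ has full mass on $\rztO$, so $\sum_{\sigma\in\Sigma_2(X)}(\bar T\wedge_X S)(p_\sigma)\to 0$. For the second summand, I would apply Proposition \ref{prop:lelongtoptmass}(2) at each invariant point $p_\sigma\in X$. The hypothesis that $\rpot_T$ has weakly finite $S$-energy, together with the corollary following Proposition \ref{prop:dnormincreases}, yields $\dnorm[S_X]{\rpot_T}\leq \dnorm[S]{\rpot_T}=:M<\infty$ uniformly in $X$. Since $\weclass(S_X)$ is closed under negation with $\dnorm[S_X]{-\rpot_T}=\dnorm[S_X]{\rpot_T}$, Proposition \ref{prop:lelongtoptmass}(2) applied to both $\pm\rpot_T$ gives an absolute value bound
\begin{equation*}
\bigl|(dd^c\rpot_T\wedge_X S)(p_\sigma)\bigr| \;\leq\; CM\,\nu(S_X,p_\sigma)
\end{equation*}
for a universal constant $C$. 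Summing over $\sigma\in\Sigma_2(X)$ and invoking Proposition \ref{prop:smalllelong} applied to the positive internal current $S$ yields
\begin{equation*}
\sum_{\sigma\in\Sigma_2(X)}\bigl|(dd^c\rpot_T\wedge_X S)(p_\sigma)\bigr|\;\leq\; CM\sum_{\sigma\in\Sigma_2(X)}\nu(S_X,p_\sigma)\;\longrightarrow\; 0
\end{equation*}
as $X$ increases to $\rzt$. Combining the two estimates gives the desired vanishing of the total point mass at $\torus$-invariant points.

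The main technical obstacle is purely bookkeeping: one must verify that $\rpot_T$ makes sense as an element of $\dpsh(X)\cap L^1(S_X)$ so that $dd^c\rpot_T\wedge_X S$ is a well-defined signed measure, and that the absolute value bound from Proposition \ref{prop:lelongtoptmass}(2) applies on both sides. No delicate dynamical input is needed beyond the already-established tools: Proposition \ref{prop:smalllelong} supplies the decay of Lelong numbers, Theorem \ref{thm:wedgewhomogeneous}(4) handles the homogeneous part, and the uniform control $\dnorm[S_X]{\rpot_T}\leq \dnorm[S]{\rpot_T}$ from Proposition \ref{prop:dnormincreases} lets the two estimates be combined cleanly.
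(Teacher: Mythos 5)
Your proposal is correct and follows essentially the same line of reasoning as the paper's own proof: decompose $T\wedge_X S = \bar T\wedge_X S + dd^c\rpot_T\wedge_X S$, control the homogeneous part by Theorem \ref{thm:wedgewhomogeneous}(4), control the correction by Propositions \ref{prop:lelongtoptmass}(2) and \ref{prop:dnormincreases}, and finish with Proposition \ref{prop:smalllelong}. The only minor stylistic difference is your explicit invocation of the negation of $\rpot_T$ to get an absolute-value bound; the paper gets this for free by replacing the test function with its negative in the definition of $\dnorm[S]{\cdot}$, but both routes are valid and the substance is identical.
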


\begin{proof}
By hypothesis and Theorem \ref{thm:wedgewhomogeneous}, $\bar T\wedge S$ has full mass on $\rztO$.  Moreover, for each toric surface $X$, we have 
from Propositions \ref{prop:lelongtoptmass} and \ref{prop:dnormincreases} that
$$
\sum_{p\in X\setminus X^\circ} |(dd^c \rpot_T\wedge S)(p)| \leq C\dnorm[S_X]{\rpot_T}\sum_{p\in X\setminus X^\circ} \nu(S_X,p) 
\leq C\dnorm[S]{\rpot_T} \sum_{p\in X\setminus X^\circ}\nu(S_X,p).
$$
By Proposition \ref{prop:smalllelong}, the last sum decreases to $0$ as the surface $X$ increases, so we conclude that $T\wedge S = \bar T\wedge S + dd^c \rpot_T\wedge S$ has full mass on $\rztO$.
\end{proof}

\section{Equilibrium measure: construction}
\label{sec:measure}
Recall our standing assumption that $f:\rztO\tto\rztO$ is a toric map satisfying the hypotheses of Theorem \ref{thm:mainthm}.  Our goal in this section is to apply the machinery from \S \ref{sec:products} to define and investigate the Bedford-Taylor wedge product of the equilibrium currents $T^*$ and $T_*$ associated to $f$ in \S\ref{subsec:eq}.  More precisely, we prove the following result, which amounts to all of Theorem \ref{thm:mainthm} except Conclusions (2) and (3) which are proven in \S\ref{sec:mixing}.

\begin{thm}
\label{thm:eqmeasureexists}
The internal currents $T^*$ and $T_*$ admit a wedge product in the sense of Definition \ref{DEFN:WEDGE_PRODUCT_TORIC_CURRENTS}.  The resulting Borel measure $T^*\wedge T_*$ on $\rztO$ has (full) mass equal to $1$ and does not charge curves.  
\end{thm}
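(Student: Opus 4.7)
Three statements need to be verified: the Bedford-Taylor wedge product exists on each toric surface $X$, the resulting measure on $\rztO$ has full mass equal to $1$, and it does not charge curves.

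\emph{Existence of the wedge product.} Fix a toric surface $X$ and write $T^*_X = \omega + dd^c u$ with $\omega$ K\"ahler and $u\in\psh(\omega)$. By Theorem \ref{THM:CONTINUITY} the potential $\rpot_{T^*}$ is continuous on $\rztO\setminus\ind(f^\infty)$, and by Proposition \ref{prop:bartonpole} the homogenization $\bar T^*$ has continuous local potentials at every point of $\rztO$, so $u$ itself is continuous on $X^\circ\setminus\ind(f^\infty)$. Since $\ind(f^\infty)\cap C_\tau$ is finite for each pole $C_\tau$ by Corollary \ref{cor:cvgceonpoles}(1) and $X$ has finitely many poles, the set $I := (X\setminus X^\circ)\cup(\ind(f^\infty)\cap X)$ is finite, so $u$ is locally bounded off $I$. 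Proposition \ref{prop:boundedish} then yields $u\in L^1(T_{*,X})$, and the Bedford-Taylor wedge product $T^*_X\wedge T_{*,X}$ is defined in the sense of Definition \ref{DEFN:WEDGE_PRODUCT_TORIC_CURRENTS}.

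\emph{Full mass equal to $1$.} This is a direct application of Theorem \ref{thm:fullmass} with $T = T^*$ and $S = T_*$: $\sfn_{T_*}$ is nearly homogeneous by Theorem \ref{thm:eqnearhom}, and $\rpot_{T^*}$ has weakly finite $T_*$-energy by Theorem \ref{thm:weakenergy1}. Hence $T^*\wedge T_*$ has full mass on $\rztO$, equal to $\isect{T^*}{T_*} = 1$ by the normalization in Theorem \ref{thm:invcurrentsexist}.

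\emph{Does not charge curves, and the main obstacle.} Let $C\subset\rztO$ be a curve and decompose
\begin{equation*}
T^*\wedge T_* \;=\; \bar T^*\wedge T_* \;+\; dd^c(\rpot_{T^*}\, T_*).
\end{equation*}
The first summand does not charge $C$ by Conclusion (2) of Theorem \ref{thm:wedgewhomogeneous} together with Corollary \ref{cor:nidinfinity}. For the second, the plan is to work on each toric surface $X$, approximating $u$ from above by the truncations $u_k = \max(u,-k)$: each $\omega + dd^c u_k$ has bounded potential, so its Bedford-Taylor wedge with $T_{*,X}$ does not charge any curve that $T_{*,X}$ does not charge. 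Transferring this to $T^*_X\wedge T_{*,X}$ on the continuity locus $X\setminus I$ requires a variant of Proposition \ref{prop:strong} in which the weakly finite $T_*$-energy of $\rpot_{T^*}$ (Theorem \ref{thm:weakenergy1}) replaces the $\eclass$ hypothesis; this is the main technical obstacle, since we do not \emph{a priori} know that the $\omega$-psh representative $u$ itself has finite $T_*$-energy. For the remaining finite set $C\cap\ind(f^\infty)$, Proposition \ref{prop:lelongtoptmass}(2) bounds the point mass at each such $p$ by $\dnorm[T_*]{\rpot_{T^*}}\,\nu(T_*,p)$, and the easy direction of the remark following Corollary \ref{cor:nodisk} gives $\nu(T_*,p) = 0$ for $p\in\ind(f^\infty)$, since by internal stability $\ind(f^\infty)$ is disjoint from the forward orbit of $\ind(f)$.
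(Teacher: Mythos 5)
Parts one and two of your proposal match the paper's argument closely: continuity of $\rpot_{T^*}$ off the discrete set $\ind(f^\infty)$ plus continuity of potentials for $\bar T^*$ reduce to Proposition \ref{prop:boundedish}, and full mass follows by combining Theorems \ref{thm:eqnearhom}, \ref{thm:weakenergy1} and \ref{thm:fullmass}. The decomposition $\mu = \bar T^*\wedge T_* + dd^c(\rpot_{T^*} T_*)$ for the third part is also the paper's starting point.

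However, the third part has two genuine problems. First, the obstacle you flag concerning Proposition \ref{prop:strong} is not actually there. You do not need any analog of that proposition with a ``weak energy'' hypothesis, and you do not need $u$ itself to have finite $T_*$-energy: the Bedford--Taylor wedge product is a local operator, and the truncation $u_k = \max(u,-k)$ agrees with $u$ on the open set $\{u>-k\}$, whose interior exhausts the continuity locus. Hence $(\omega + dd^c u_k)\wedge_X T_{*,X}$ coincides with $T^*_X\wedge_X T_{*,X}$ on $\{u>-k\}$, and Corollary \ref{cor:massonpps} applied to the (bounded, hence $\eclass(T_{*,X})$) truncations gives that the former does not charge $C\cap\{u>-k\}$. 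Letting $k\to\infty$ handles all of $C$ outside the finite set $\ind(f^\infty)\cap C$. This is exactly the mechanism already used for $\bar T^*\wedge T_*$ in the proof of Theorem \ref{thm:wedgewhomogeneous}(2), which you cite.

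Second, and more seriously, your treatment of the finite set $C\cap\ind(f^\infty)$ rests on an unsubstantiated claim. You assert that ``by internal stability $\ind(f^\infty)$ is disjoint from the forward orbit of $\ind(f)$,'' and then invoke the remark after Corollary \ref{cor:nodisk} to conclude $\nu(T_*,p)=0$ there. But internal stability says $f^n(\exc(f))\cap\ind(f)=\emptyset$, which gives $\ind(f^\infty)\cap\ind(f^{-\infty})=\emptyset$ and $\ind(f)\cap\ind(f^{-\infty})=\emptyset$; it does not say anything directly about the forward orbit of $\ind(f)$, which is a countable union of \emph{curves} $\bigcup_m f^m(\ind(f))$. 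There is no reason a priori why the discrete set $\ind(f^\infty)$ should miss all of these curves (and at the level $m=0$ the two sets actually both contain $\ind(f)$). The paper instead uses Corollary \ref{cor:cvgceonpoles}(2): $\rpot_{T_*}|_{C_\tau}$ is continuous off $\ind(f^{-\infty})$, and since $\ind(f^\infty)$ is disjoint from $\ind(f^{-\infty})$, local potentials for $T_*$ are finite at each $p\in\ind(f^\infty)$, hence $\nu(T_*,p)=0$. Combining this with Proposition \ref{prop:lelongtoptmass}(2) and Theorem \ref{thm:weakenergy1} closes the argument without appealing to the remark at all.
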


\begin{defn} We call $\mu := T^*\wedge T_*$ the \emph{equilibrium measure} of $f$.
\end{defn}

\noindent Since $\mu$ does not charge curves in $\rztO$, one can alternatively define $\mu$ to be the Borel probability measure obtained by fixing some/any toric surface $X$ and restricting the product $\mu = T^*\wedge_X T_*$ to $\torus$.

The main ingredient in the proof of Theorem \ref{thm:eqmeasureexists} is Theorem \ref{thm:weakenergy1}, which we take for granted momentarily.  

\begin{proof}[Proof of Theorem \ref{thm:eqmeasureexists}] 
Let $X$ be a toric surface.  By Proposition \ref{prop:iskahler}, there is a K\"ahler form $\omega_X$ and $u\in \psh(\omega_X)$ such that $T^*_X = \omega_X + dd^c u$.  Likewise, $\bar T^*_X = \omega_X + dd^c v$ for some $v\in \psh(\omega_X)$.  Hence $\rpot_{T^*} = u-v$ is a potential for $T^*-\bar T^*$.  Proposition \ref{prop:bartonpole} tells us that $v$ is continuous on $X^\circ$, and Theorem \ref{THM:CONTINUITY} tells us that $\rpot_{T^*}$ is continuous on $X^\circ\setminus \ind(f^\infty)$.  Hence $u = \rpot - v$ is continuous off the finite subset consisting of points in $X$ that are $\torus$-invariant and/or indeterminate for some iterate of $f$.  Proposition \ref{prop:boundedish} therefore implies $u\in L^1(T_{*,X})$.  Hence the Bedford-Taylor wedge product $T^* \wedge_X T_*$ is well-defined in $X$.  Since $X$ is arbitrary, we conclude that $T^*$ and $T_*$ admit a wedge product on $\rztO$.

By Theorem \ref{thm:eqnearhom} any support function $\sfn_{T_*}$ for $T_*$ is nearly homogeneous.  By Theorem \ref{thm:weakenergy1}, the potential $\rpot_{T^*}$ has weakly finite $T_*$-energy.  So Theorem \ref{thm:fullmass} tells us that $T^*\wedge T_*$ has full mass on $\rztO$.  Corollary \ref{cor:nidinfinity} tells us that $T_*$ does not charge curves, so Theorem \ref{thm:wedgewhomogeneous} tells us that $\bar T^*\wedge T_*$ does not charge curves either.   It will suffice, therefore, to show that $dd^c\rpot_{T^*}\wedge T_* = dd^c(\rpot_{T^*} T_*)$ does not charge any curve $C\subset \rztO$.

Let $n$ be the largest integer such that $f^{-n}(\ind(f))\cap C$ is non-empty and $X$ be a toric surface sufficiently dominant that $C\subset X^\circ$.  Then we may apply Corollary \ref{cor:massonpps} as we did in the proof of Conclusion (2) in Theorem \ref{thm:wedgewhomogeneous} to see that $dd^c\rpot_{T^*}\wedge_X T_*$ does not charge $C\setminus \ind(f^n)$.  On the other hand, Corollary \ref{cor:cvgceonpoles} gives that local potentials for $T_*$ are finite at points $p \in \ind(f^n)\cap X^\circ$, so Conclusion (2) of Proposition \ref{prop:lelongtoptmass} shows that $dd^c\rpot_{T^*}\wedge T_*$ assigns no mass to $\ind(f^n)$ either.
\end{proof}

The rest of this section is devoted to proving Theorem \ref{thm:weakenergy1}.  We caution from the outset that if $X$ is a given toric surface, both currents $T^*_X,T_{*,X}\in\pcc^+(X)$ have positive Lelong numbers at each $\torus$-invariant point of $X$.  Hence $\varphi_{T^*}$ is \emph{not} a difference of qpsh functions each with finite $T_{*,X}$-energy.  This forces us to accept the weaker conclusion that $\varphi_{T^*}$ has only \emph{weakly} finite $T_*$-energy and leads to arguments that are longer and more delicate than one might hope.

For any integer $n\geq 0$, we set $T_n := \ddeg^{-n} f^{n*}\bar T^* \in
\pcc^+(\rzt)$ which is internal by Proposition~\ref{prop:intext} and
cohomologous to $T_0 = \bar T^*$.  We let $\sfn_n := \sfn_{T_n}$ denote a
support function for $T_n$ and $\rpot_n = \rpot_{T_n}$ denote a potential for
$T_n - T_0$.  Fixing a toric surface $X$ and a K\"ahler form $\omega_X$
cohomologous on $X$ to $T^*_X$, we also have $T_{n,X} = \omega_X + dd^c
u_{n,X}$ for some $u_{n,X}\in\psh(\omega_X)$.  We normalize so that $\rpot_n =
u_{n,X}-u_{0,X}$.  While $u_{n,X}$ depends on the surface, $\rpot_n$ does not.
In any case, we will suppress the subscript $X$ when the surface is understood.

Theorem \ref{thm:pullback} Part~(1) tells us that $\rpot_n$ is not only continuous on $X^\circ\setminus\ind(f^n)$ but also bounded in a neighborhood of each point in $X\setminus X^\circ$.  Since $u_0$ is continuous on $X^\circ$ by Proposition~\ref{prop:bartonpole} we therefore also have that $u_n=\rpot_n + u_0$ is continuous on $X^\circ\setminus\ind(f^n)$.  Hence the Bedford-Taylor product $T_n\wedge_X T_*$ is well-defined for all $n\geq 0$.  As we noted in \S\ref{ss:toricwedge}, the measures $T_n\wedge_X T_*$, and even their masses, vary with $X$.  Our next result shows, however, that for different $n,m\geq 0$ and sufficiently dominant toric surfaces $X$, the measures $T_n\wedge_X T_*$ and $T_m\wedge_X T_*$ vary with $X$ in exactly the same way.

\begin{thm}
\label{thm:cancelation} For any toric surface $X$ such that $\ind(f^n)\subset X^\circ$, the signed measure
$
\mu_n := (T_n - T_0) \wedge_X T_*
$
has no mass on any curve $C\subset X$.  In particular, $\mu_n$ has full mass on $\torus\subset\rztO$ and is independent of (sufficiently dominant) $X$.
\end{thm}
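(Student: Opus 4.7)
The plan is to show $\mu_n=dd^c\rpot_n\wedge T_{*,X}$ assigns zero mass to any curve $C\subset X$ by combining local Bedford-Taylor analysis on the complement of a finite obstruction set with a truncation argument to handle the isolated problem points. First I would establish regularity: by iterating Theorem~\ref{thm:pullback}(1) via $T_{k+1}=\ddeg^{-1}f^*T_k$, the potential $\rpot_n:=u_n-u_0$ (where $u_n,u_0\in\psh(\omega_X)$ are potentials for the cohomologous currents $T_n,T_0$) is continuous and bounded on $X$ outside any open neighborhood of the finite set $\ind(f^n)\subset X^\circ$.

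Next I would handle curves away from the finite obstruction set $F:=\ind(f^n)\cup(X\setminus X^\circ)$. Cover $C\cap(X^\circ\setminus\ind(f^n))$ by compact sets $K$ on which both $u_n$ and $u_0$ are bounded continuous. Then the local Bedford-Taylor products $T_n\wedge_X T_*$ and $T_0\wedge_X T_*$ are Radon measures on $K$, and since $T_*$ does not charge curves by Corollary~\ref{cor:nidinfinity}, Corollary~\ref{cor:massonpps} (applied locally to bounded potentials) shows neither charges $C\cap K$. Summing over a countable exhaustion gives $\mu_n(C\cap(X^\circ\setminus\ind(f^n)))=0$, reducing the problem to the finite set $C\cap F$.

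To control $C\cap F$, I would introduce the truncation $u_n^{(M)}:=\max(u_n,u_0-M)\in\psh(\omega_X)$, the auxiliary current $T_n^{(M)}:=\omega_X+dd^c u_n^{(M)}$ and the globally bounded function $\rpot_n^{(M)}:=u_n^{(M)}-u_0=\max(\rpot_n,-M)$. On $\{\rpot_n>-M\}$ one has $u_n^{(M)}=u_n$ so the auxiliary signed measure $\mu_n^{(M)}:=(T_n^{(M)}-T_0)\wedge_X T_*$ agrees with $\mu_n$ there by the cutoff argument behind Proposition~\ref{prop:strong}. At each $p\in\ind(f^n)$, since $u_n=-\infty$ but $u_0$ is continuous, one has $u_n^{(M)}=u_0-M$ on a (shrinking) neighborhood of $p$, so $T_n^{(M)}=T_0$ there and $\mu_n^{(M)}$ vanishes near $p$. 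At each torus-invariant $p_\sigma\in X\setminus X^\circ$, since $\rpot_n$ is bounded, we get $u_n^{(M)}=u_n$ near $p_\sigma$ for $M$ large, so $\mu_n=\mu_n^{(M)}$ there. Applying Step~2 to $\mu_n^{(M)}$ (whose potential is globally bounded and continuous on $X^\circ$) then gives that $\mu_n^{(M)}$ charges no curve, and passing to the limit $M\to\infty$ completes the argument.

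The main obstacle is the final step: showing $\mu_n^{(M)}(\{p_\sigma\})=0$ at each torus-invariant point lying on $C$. The key observation is that near $p_\sigma$ the potentials $u_n^{(M)}$ and $u_0$ differ by the bounded function $\rpot_n$, so $T_n^{(M)}$ and $T_0$ have the same logarithmic singularity type and identical Lelong numbers at $p_\sigma$. The cancellation then follows from the Bedford-Taylor principle that point masses of $T\wedge S$ at $p_\sigma$ depend only on the singularity type of the potential of $T$ at $p_\sigma$, so $(T_n^{(M)}\wedge_X T_*)(\{p_\sigma\})=(T_0\wedge_X T_*)(\{p_\sigma\})$. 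The ``in particular'' assertions are then immediate: no mass on any pole $C_\tau\subset X$ forces $\supp\mu_n\subset\torus$, and since $\torus$ is common to all toric surfaces dominating $X$, $\mu_n$ does not depend on the choice of sufficiently dominant~$X$.
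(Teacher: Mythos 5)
Your overall strategy (truncation plus locality, then reduce to finitely many obstruction points) tracks the paper's proof closely for the part concerning $C\cap X^\circ$, but you diverge from the paper at the decisive final step, and that is where the argument has a real gap.

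The problem is the claimed ``Bedford--Taylor principle that point masses of $T\wedge S$ at $p_\sigma$ depend only on the singularity type of the potential of $T$ at $p_\sigma$.'' There is no such principle in the Bedford--Taylor theory. What you are actually invoking is a version of Demailly's comparison theorem for Lelong numbers of wedge products: if $u_1,u_2$ are psh with $u_1-u_2=O(1)$ near $p$, then $(dd^c u_1\wedge S)(\{p\})=(dd^c u_2\wedge S)(\{p\})$. That statement is plausible and may well be true in the present setting, but it is a genuinely non-trivial theorem, not a consequence of the elementary integration-by-parts bound. Indeed, the integration-by-parts estimate (Proposition \ref{prop:lelongtoptmass}(1)) gives only $|(dd^c\rpot_n\wedge_X T_*)(p_\sigma)|\leq CM\nu(T_{*,X},p_\sigma)$, and since $\ch{T_*}$ is K\"ahler the Lelong number $\nu(T_{*,X},p_\sigma)$ is \emph{strictly positive} in every fixed $X$, so this bound alone does not yield $0$. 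Invoking the comparison theorem without proof or citation — and without checking its hypotheses are satisfied by the toric currents $T_*$, which have unbounded potentials along the poles through $p_\sigma$ as well as log singularities at $p_\sigma$ itself — is a gap.

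The paper does something quite different precisely to avoid this. It proves that $\mu_n$ has no mass on curves in $Y^\circ$ for \emph{arbitrary} toric surfaces $Y\succ X$, then uses the projection formula (Lemma \ref{LEM:CURRENT_PROJECTION_FORMULA}) to push $dd^c\rpot_n\wedge_Y T_*$ down to $X$ and identify $|\mu_n(p_\sigma)|$ with the sum of point masses over $\torus$-invariant points of $Y$ lying above $p_\sigma$. It then applies exactly the elementary bound $CM\nu(T_{*,Y},\cdot)$ of Proposition \ref{prop:lelongtoptmass}(1), together with Proposition \ref{prop:smalllelong}, which says $\sum_\sigma\nu(T_{*,Y},p_\sigma)\to 0$ as $Y\to\rzt$. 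This bootstrap makes the crude integration-by-parts estimate sufficient. That is what you would need to add (or else carefully state and verify a comparison theorem adequate to the toric currents at hand) to complete the argument. A secondary, fillable issue: when you pass $M\to\infty$, you only conclude $\mu_n$ has no mass on $C\setminus\ind(f^n)$ (since $\mu_n^{(M)}$ and $\mu_n$ disagree near $\ind(f^n)$, where $\mu_n^{(M)}$ was made to vanish); the set $C\cap\ind(f^n)$ still needs a separate one-line argument using the fact that $T_*$ has bounded local potentials at those points.
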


\begin{proof}
Let $C\subset X$ be a curve.  Fix a neighborhood $U$ of the $\torus$-invariant points $X\setminus X^\circ$, chosen so that $\overline{U}\cap \ind(f^n) = \emptyset$.  \
Proposition \ref{prop:smalllelong} tells us that since $\ch{T^*_n} = \ch{T^*}$ is K\"ahler, the potentials $u_n$ have non-trivial logarithmic singularities near each $\torus$-invariant point of $X$.  For $j\geq 0$ large, we let $u_{n,j}\in\psh(\omega)$ be equal to $u_n$ off $U$ and to $\max\{u,-j\}$ on $U$.  Then $u_{n,j}$ is continuous except for logarithmic singularities at points of $\ind(f^n)$.  Corollary \ref{cor:logsings} then tells us that $\norm[T_*]{u_{n,j}}$ is finite.  Since $T_*$ does not charge curves, Corollary \ref{cor:massonpps} further implies that $(\omega+dd^c u_{n,j})\wedge_X T_*$ does not charge curves either.  The open set $\{u_{n,j} \neq u_n\}$ decreases to $X\setminus X^\circ$ as $j\to\infty$, so we conclude that $\mu_n(C\cap X^\circ) = 0$.

It remains only to show that $\mu_n(p_\sigma) = 0$ for each $\torus$-invariant $p_\sigma \in X$.  Since $\rpot_n$ is bounded away from $\ind(f_n)$, our choice of surface $X$ guarantees that $|\rpot_n|\leq M$ on a neighborhood of $X\setminus X^\circ$.  The constant $M$ does not change if we replace $X$ by $Y\succ X$.
So given $\epsilon>0$, choose $Y\succ X$ such that $\sum_{\sigma\in\Sigma_2(Y)} \nu(T_{*,Y},p_\sigma) < \epsilon$.  Observe that
\begin{eqnarray*}
\sum_{\sigma\in\Sigma_2(X)} |\mu_n(p_\sigma)| 
& = & \sum_{\sigma\in\Sigma_2(X)} |(dd^c\rpot_n\wedge_X T_*)(p_\sigma)| 
= \sum_{\sigma\in\Sigma_2(X)} |(dd^c\rpot_n\wedge_Y T_*)(\pi_{YX}^{-1}(p_\sigma))| \\
& = & \sum_{\sigma\in\Sigma_2(Y)} |(dd^c\rpot_n\wedge_Y T_*)(p_\sigma)| \leq CM\sum_{\sigma\in\Sigma_2(Y)} \nu(T_*,p_\sigma) \leq CM\epsilon.
\end{eqnarray*}
The second equality comes from Lemma \ref{LEM:CURRENT_PROJECTION_FORMULA} and the identity $\pi_{YX*} (dd^c\rpot_n\wedge_Y T_*) = dd^c\rpot_n\wedge_X T_*$.  The third equality follows from applying the first paragraph of this proof to curves in $Y$ instead of curves in $X$.  The final inequality is obtained from item (1) in Proposition~\ref{prop:lelongtoptmass}.  The constant $C$ is independent of $Y$, so by letting $\epsilon$ decrease to $0$, we find that $\sum_{\sigma\in\Sigma_2(X)} |\mu_n(p_\sigma)| = 0$ as hoped.
\end{proof}

Now fix two non-negative integers $m,n\geq 0$ and let $N = \max\{m,n\}$.  Continuity of $\rpot_m$ on $\rztO \setminus \ind(f^N)$ implies that $\rpot_m$ is measurable (in any toric surface $X$) with respect to $\mu_n$.  Since $\mu_n$ does not charge curves the product $\rpot_m\,\mu_n$ has no mass outside $\torus$ and is therefore independent of $X$.  That is, we can regard $\rpot_n \mu_m$ as a measure on $\torus$, on $\rztO,$ or on a given toric surface $X$ without distinction.  The value $\int \rpot_n \mu_m$ is the same in any case.

\begin{prop}
\label{prop:integrable}
$\rpot_m$ is integrable with respect to $\mu_n$.
\end{prop}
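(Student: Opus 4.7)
The plan is to fix a toric surface $X$ sufficiently dominant that $\ind(f^N) \subset X^\circ$, where $N = \max\{m, n\}$.  By Theorem~\ref{thm:cancelation}, $\mu_n$ viewed as a signed measure on $X$ is supported on $\torus$ and decomposes as the difference of positive finite Borel measures $\nu_n := T_{n,X} \wedge_X T_{*,X}$ and $\nu_0 := T_{0,X} \wedge_X T_{*,X}$.  It therefore suffices to prove $\rpot_m \in L^1(\nu_n) \cap L^1(\nu_0)$.  Decompose $\rpot_m = u_{m,X} - u_{0,X}$ as a difference of two $\omega_X$-psh functions.

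First, I would localize.  Theorem~\ref{thm:pullback}~(1), iterated to $f^m$, gives that $\rpot_m$ is bounded outside any open neighborhood of $\ind(f^m) \subset X^\circ$.  Choose $V = \bigsqcup_{p \in \ind(f^m)} V_p$ to be a disjoint union of small coordinate balls, each $V_p \subset X^\circ$ avoiding the finite set of torus-invariant points of $X$, on which $|\rpot_m|$ is uniformly bounded off $V$.  Integrability on $X \setminus V$ is then immediate from finiteness of $\nu_n$ and $\nu_0$.  For each $p \in \ind(f^m)$, Proposition~\ref{prop:bartonpole} gives that $u_{0,X}$ is continuous on $V_p$ (as $p$ is not torus-invariant), so $|\rpot_m| \leq |u_{m,X}| + O(1)$ on $V_p$, and the task reduces to showing that the $\omega_X$-psh function $u_{m,X}$, which satisfies $u_{m,X}(z) \geq A\log|z - p| - B$ locally, is $\nu_n$- and $\nu_0$-integrable on $V_p$.

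Second, the required local integrability of $u_{m,X}$ against the Bedford-Taylor Monge-Amp\`ere measures $\nu_n,\nu_0$ follows from standard Chern-Levine-Nirenberg-type local estimates once the local potentials of $T_{n,X}$, $T_{0,X}$, $T_{*,X}$ are shown to have at most logarithmic singularity at $p$: for $T_{0,X} = \bar T^*$ they are continuous by Proposition~\ref{prop:bartonpole}; for $T_{n,X}$ they are $\omega_X$-psh, hence $L^1_{loc}$ with at most log-singular behavior.  The main obstacle is controlling the local potential for $T_{*,X}$ at $p$, since so far the paper has only established continuity of $\rpot_{T_*}$ along the pole $C_\tau \ni p$ (Corollary~\ref{cor:cvgceonpoles}, using that $p \in \ind(f^\infty)$ is disjoint from $\ind(f^{-\infty})$ by internal stability and hypothesis~\eqref{indout}), rather than full two-dimensional control.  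This is upgraded using Corollary~\ref{cor:nidinfinity} (which says $T_*$ does not charge the curve $C_\tau$) together with continuity of local potentials for the homogenization $\bar T_*$ at $p$ (Proposition~\ref{prop:bartonpole}); these combine to give the Lelong-number-type control of $T_{*,X}$ at $p$ needed to close the CLN-type estimate.
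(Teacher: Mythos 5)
Your plan matches the skeleton of the paper's proof: reduce to the toric surface $X$, split off the bounded part of $\rpot_m$ away from $\ind(f^m)$, and handle the residual logarithmic singularities at $\ind(f^N)$ by exploiting the behavior of $T_*$ there. But there is a genuine gap at the step you defer to ``standard Chern--Levine--Nirenberg-type local estimates.''

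The CLN inequalities (and their standard extensions) control integration against $dd^c v\wedge dd^c w$ when the potentials $v,w$ are \emph{locally bounded}. Here you need $u_{m,X}\in L^1(T_{n,X}\wedge_X T_{*,X})$ near $p\in\ind(f^N)$, where $T_{n,X}$ has a genuine (nonzero Lelong number) logarithmic pole at $p$ and the local potential $w$ for $T_{*,X}$ is only known to be \emph{finite at} $p$ --- not bounded near $p$; the paper explicitly does not claim $\rpot_{T_*}$ is continuous off $\ind(f^{-\infty})$ in two complex variables. With two of the three objects unbounded, no off-the-shelf CLN bound applies; knowing all potentials have ``at most log singularity'' is not sufficient in general. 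What the paper does at this point is specific work: use $u_m,u_n\geq A\log\dist(\cdot,\ind(f^N))+B$ to replace each in turn by $\log\|z\|$, integrate by parts twice against a cutoff, and reduce the problem to the finiteness of $\int \chi\log\|z\|\,dd^c\log\|z\|\wedge T_*$, which is exactly \cite[Theorem 3.6]{BeDi05b} (the result packaged as Corollary~\ref{cor:logsings}) and \emph{requires} the local potential of $T_*$ to be finite at $z=0$, not merely to have vanishing Lelong number. Your phrase ``Lelong-number-type control'' is therefore too weak to close the argument.

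Two smaller points. First, ``they are $\omega_X$-psh, hence $L^1_{loc}$ with at most log-singular behavior'' is not true of general $\omega_X$-psh functions; the logarithmic lower bound $u_{n,X}\geq A\log\dist(\cdot,\ind(f^N))+B$ comes from the pullback structure of $T_n=\ddeg^{-n}f^{n*}\bar T^*$ via \cite[Proposition 1.3]{BeDi05b}, not from pshness alone. Second, in your ``upgrade'' for $T_{*,X}$ at $p$ you invoke Corollary~\ref{cor:nidinfinity} ($T_*$ does not charge $C_\tau$), which only gives $\rpot_{T_*}|_{C_\tau}\not\equiv-\infty$ and is weaker than what is needed. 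The fact the paper actually uses is that $\rpot_{T_*}|_{C_\tau}$ is continuous (hence finite) at $p$, from Corollary~\ref{cor:cvgceonpoles}~(2) together with $\ind(f^\infty)\cap\ind(f^{-\infty})=\emptyset$; combined with continuity of local potentials for $\bar T_*$ (Proposition~\ref{prop:bartonpole}) this yields finiteness of the local potential of $T_{*,X}$ at $p$, which is the hypothesis of \cite[Theorem 3.6]{BeDi05b}.
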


\begin{proof} Suppose that $X$ is a toric surface such that $\ind(f^N)\subset X^\circ$.  Let $U\subset X^\circ$ be a union of coordinate balls, centered at the points of $\ind(f^N)$. Let $\chi:X\to[0,1]$ be a smooth cutoff function supported in $U$ and equal to $1$ on a neighborhood of $\ind(f^N)$.  By construction and Theorem \ref{thm:pullback}, $(1-\chi)\rpot_m$ is bounded, so it is $\mu_n$-integrable simply because $\mu_n$ has finite total mass.

To show that $\chi\rpot_m$ is also $\mu_n$-integrable, recall that $\rpot_m = u_m-u_0$, where as above $u_m = u_{m,X}$ denotes the potential for $T_m-\omega_X$ on $X$.  Hence it will suffice to show for any $j,k\leq N$ that $\chi u_j$ is integrable with respect to $T_k\wedge_X T_*$.  Since $C \geq u_j \geq A\log\dist(\cdot,\ind(f^N)) + B$, and since $\chi$ is supported on coordinate balls about points in $\ind(f^N)$, it suffices to show that
$$
\int_{\norm{z}<r} \chi \log\norm{z} \,T_k\wedge_X T_* >-\infty,
$$
where $z$ is one of the local coordinates and $r>0$ is small.  To do this, we choose a regularizing sequence $(\log_\ell)\subset \psh(\omega)$ for $\log\norm{z}$ such that $\log_\ell(z) = \log\norm{z}$ on $\supp d\chi$. Then by monotone convergence
$$
\int \chi\log\norm{z}\,T_k\wedge_X T_* = \lim_{\ell\to\infty} \int \chi\log_\ell(z)\,T_k\wedge_X T_* = \lim_{\ell\to\infty} \int u_k \,dd^c(\chi\log_\ell\norm{z})\wedge_X T_*
+ O(1)
$$
since $\chi\log\norm{z}$ is integrable with respect to $\omega\wedge_X T_*$ by Proposition \ref{prop:boundedish}.  Since $\log_\ell(z) = \log\norm{z}$ is independent of $\ell$ on $\supp d\chi$, we have that
$$
dd^c(\chi\log_\ell\norm{z}) - \chi\,dd^c\log_\ell\norm{z}
$$ 
is smooth and independent of $\ell$, bounded above and below by fixed multiples of $\omega$.  Hence there exists $C > 0$ such that up to additive constants
\begin{eqnarray*}
\int \chi\log\norm{z}\,T_k\wedge_X T_* & = & \lim_{\ell\to\infty}\int \chi u_k \,dd^c\log_\ell\wedge_X T_*
   \geq C\lim_{\ell\to\infty}\int\chi\log\norm{z}\,dd^c\log_\ell\wedge_X T_* \\
   & = & C\lim_{\ell\to\infty}\int\chi\log_\ell\,dd^c\log\norm{z}\wedge_X T_*
   = C\int \chi\log\norm{z}\,dd^c\log\norm{z}\wedge_X T_*. 
\end{eqnarray*}
By Corollary \ref{cor:cvgceonpoles} and the assumption \eqref{indout} at the beginning of \S\ref{sec:currents_II}, local potentials for $T_*$ are finite at $z=0$, so we conclude that the last integral is finite by \cite[Theorem 3.6]{BeDi05b}.
\end{proof}

\begin{cor}
\label{cor:integrability}
For any $m\geq n\geq 0$ we have that $\rpot_m - \rpot_n$ is integrable with respect to $\mu_m-\mu_n = dd^c(\rpot_m-\rpot_n)\wedge T_*$, and
$$
\int (\rpot_m-\rpot_n)\,dd^c(\rpot_m-\rpot_n)\wedge T_* = \ddeg^{-n} \int \rpot_{m-n}\,dd^c\rpot_{m-n}\wedge T_*.
$$
\end{cor}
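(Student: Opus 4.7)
For integrability, Proposition \ref{prop:integrable} ensures each of $\rpot_m, \rpot_n$ is integrable against each of the signed measures $\mu_m, \mu_n$, so by linearity $\rpot_m - \rpot_n$ is integrable against $\mu_m - \mu_n = dd^c(\rpot_m - \rpot_n) \wedge T_*$.

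For the identity, the plan is to exploit the dynamical relation
$$
T_m - T_n = \ddeg^{-n}\, f^{n*}(T_{m-n} - \bar T^*) = \ddeg^{-n}\, dd^c\bigl(\rpot_{m-n} \circ f^n\bigr),
$$
where the first equality follows from $T_m = \ddeg^{-n} f^{n*} T_{m-n}$ together with linearity of $f^{n*}$ (Proposition \ref{prop:intext}), and the second holds off $\ind(f^n)$ and then extends trivially across that finite set. Consequently $dd^c(\rpot_m - \rpot_n - \ddeg^{-n}\rpot_{m-n}\circ f^n) = 0$ on $\rztO$, and a Hartogs-type extension across $\ind(f^n)$ to any toric surface $X$ forces the function to be globally pluriharmonic and hence constant. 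After adjusting the additive normalization of $\rpot_m$ we may therefore assume
$$
\rpot_m - \rpot_n = \ddeg^{-n}\, \rpot_{m-n} \circ f^n,
$$
so that with $g := f^n$ and $\psi := \rpot_{m-n}$, verifying the corollary reduces to the identity
$$
\int (\psi \circ g)\, dd^c(\psi \circ g) \wedge T_* = \ddeg^n \int \psi\, dd^c \psi \wedge T_*.
$$

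To close the calculation I would invoke the projection formula for the proper holomorphic map $g$ (Corollary \ref{COR:PROPER_AND_COVER}), generalizing Lemma \ref{LEM:CURRENT_PROJECTION_FORMULA} beyond its birational setting, in the form
$$
g_*\bigl( dd^c(\psi \circ g) \wedge T_* \bigr) = dd^c \psi \wedge g_* T_* = \ddeg^n\, dd^c \psi \wedge T_*,
$$
where the last equality uses the dynamical invariance $f^n_* T_* = \ddeg^n T_*$ of the forward equilibrium current. Combined with the defining identity $\int (\psi \circ g)\, d\nu = \int \psi\, d(g_* \nu)$ for the signed measure $\nu := dd^c(\psi \circ g) \wedge T_*$, this yields the reduced identity.

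The main technical obstacle is justifying this projection formula in the present context: $\psi = \rpot_{m-n}$ is only a difference of qpsh functions rather than a single qpsh function as in Lemma \ref{LEM:CURRENT_PROJECTION_FORMULA}, and $g = f^n$ carries both non-empty indeterminacy and ramification. My plan for handling this is to work on a toric surface $X$ whose interior contains $\ind(f^n)$, resolve $f^n_X : X \dashrightarrow X$ by a birational modification $\pi : Z \to X$, and apply the standard projection formulas to the holomorphic lift $\hat g : Z \to X$ and to $\pi$ separately; one then uses Theorem \ref{thm:cancelation} (to see that the target identity is independent of $X$ and no mass escapes to $\torus$-invariant points) together with Propositions \ref{prop:integrable} and \ref{prop:boundedish} (to ensure all integrals involved are absolutely convergent) to assemble the final formula on $\torus \subset \rztO$.
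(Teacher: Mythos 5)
Your first two steps match the paper's proof exactly: integrability via Proposition \ref{prop:integrable}, then the dynamical identity $dd^c(\rpot_m-\rpot_n) = \ddeg^{-n} dd^c(\rpot_{m-n}\circ f^n)$ to replace $\rpot_m-\rpot_n$ by $\ddeg^{-n}\rpot_{m-n}\circ f^n$ up to an additive constant (which drops out because the signed measure has net mass zero). Where you diverge is the final step of showing $\int(\psi\circ f^n)\,dd^c(\psi\circ f^n)\wedge T_* = \ddeg^n\int\psi\,dd^c\psi\wedge T_*$ with $\psi = \rpot_{m-n}$. You propose to establish and invoke a projection formula $f^n_*(dd^c(\psi\circ f^n)\wedge T_*) = dd^c\psi\wedge f^n_*T_*$, but as you yourself flag, this requires extending Lemma \ref{LEM:CURRENT_PROJECTION_FORMULA} in two nontrivial directions at once (from birational morphisms to proper finite maps with indeterminacy, and from single qpsh functions to differences of qpsh functions with singularities), and that extension is not carried out. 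Your sketch of resolving indeterminacy and applying projection formulas separately to $\hat g$ and $\pi$ is plausible, but it is precisely this missing lemma that constitutes the proof; as written, the key step is not closed. The paper avoids this entirely by exploiting the fact that $\rpot_{m-n}$ is \emph{continuous} on all of $\torus$ (a consequence of Theorem \ref{thm:pullback}, since $\ind(f^{m-n})\cap\torus=\emptyset$). Since $\mu_m-\mu_n$ has full mass on $\torus$ by Theorem \ref{thm:cancelation}, one can fix $\epsilon>0$, cut off to a relatively compact $U\subset\torus\setminus f^n(\ind(f^n))$, approximate $\rpot_{m-n}$ uniformly on $U$ by a \emph{smooth} function $\tilde\rpot$, and then only ever apply the change of variables and the invariance $f^n_*T_* = \ddeg^n T_*$ to smooth compactly supported data, where the projection formula is elementary. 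A chain of $\approx_\epsilon$ comparisons (each controlled by the finite total mass of the signed measure) then yields the identity upon letting $\epsilon\to 0$. Your route would likely succeed if the generalized projection formula were proved, but the paper's approximation argument is more economical: it sidesteps the need for that lemma altogether by trading regularity of $\psi$ for continuity plus a cutoff.
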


\begin{proof}
The first assertion is immediate from Proposition \ref{prop:integrable}.  For the second assertion, observe that
$$
dd^c(\rpot_m-\rpot_n) = T_m-T_n = \ddeg^{-n}f^{n*} (T_{m-n} - T_0) = \ddeg^{-n}dd^c (\rpot_{m-n}\circ f^n).
$$
Hence $\rpot_m-\rpot_n$ and $\ddeg^{-n} \rpot_{m-n}\circ f^n$ differ by a constant.
Since $dd^c(\rpot_m-\rpot_n)\wedge T_*$ is a signed measure
with net mass $0$ we have
$$
\int_{\rztO} (\rpot_m-\rpot_n)\,dd^c(\rpot_m-\rpot_n)\wedge T_* = \ddeg^{-2n} \int_{\rztO} (\rpot_{m-n}\circ f^n) \,dd^c (\rpot_{m-n}\circ f^n) \wedge T_*.
$$
The proof will therefore be complete once we show that
$$
\int_{\rztO} (\rpot_{m-n}\circ f^n) \,dd^c (\rpot_{m-n}\circ f^n) \wedge T_*  = \ddeg^n\int_{\rztO} \rpot_{m-n}\,dd^c\rpot_{m-n}\wedge T_*.
$$
Theorem \ref{thm:cancelation} tells us that the integrands in this equation have no mass on any curve in $\rztO$.  So given $\epsilon >0$ we can choose a relatively compact open set $U\subset \torus \setminus f^n(\ind(f^n))$ such that $\int_{\rztO\setminus U} \rpot_{m-n}\,dd^c\rpot_{m-n}\wedge T_* < \epsilon$.  By Corollary \ref{COR:PROPER_AND_COVER} $f^n:\torus \setminus f^{-n}(f^n(\ind(f^n)) \to \torus \setminus f^n(\ind(f^n))$ is a finite holomorphic covering map, so we have that $f^{-n}(U)$ is a relatively compact open subset of $\torus \setminus f^{-n}(f^n(\ind(f))$, and by increasing $U$ if necessary that
$$
\int_{\torus \setminus f^{-n}(U)} (\rpot_{m-n}\circ f^n) \,dd^c (\rpot_{m-n}\circ f^n) \wedge T_* < \epsilon.
$$
Let $\chi:\torus \to[0,1]$ be a smooth compactly supported function such that
$\chi\equiv 1$ on $U$.  Since $\rpot_{m-n}$ is continuous on $\torus$,
we can choose a smooth function $\tilde\rpot:U\to \R$ such
that $|\tilde \rpot - \rpot_{m-n}|<\epsilon$ on $U$.  Thus
\begin{eqnarray*}
\int_{\torus} (\rpot_{m-n}\circ f^n) \,dd^c (\rpot_{m-n}\circ f^n) \wedge T_*
& \approx_{3\epsilon} &
\int_{\torus} ((\chi\tilde\rpot)\circ f^n) \,dd^c (\rpot_{m-n}\circ f^n) \wedge T_* \\
& = &
\int_{\torus} (\rpot_{m-n}\circ f^n) \,dd^c((\chi\tilde\rpot)\circ f^n) \wedge T_* \\
& \approx_{2\epsilon} &
\int_{\torus} (\tilde\rpot \circ f^n) \,dd^c((\chi\tilde\rpot) \circ f^n) \wedge T_* 
=
\ddeg^n\int_{\torus} \tilde\rpot\,dd^c(\chi \tilde\rpot)\wedge T_* \\
& \approx_{2\epsilon} &
\ddeg^n \int_{\torus} \rpot_{m-n}\,dd^c(\chi \tilde\rpot)\wedge T_* 
=
\ddeg^n \int \chi\tilde\rpot\,dd^c\rpot_{m-n}\wedge T_* \\
& \approx_{3\epsilon} &
\ddeg^n \int_{\torus} \rpot_{m-n}\,dd^c\rpot_{m-n}\wedge T_*,
\end{eqnarray*}
where $\approx_\delta$ means that the two sides differ by at most $\pm\delta$.  We have used that the signed measure $(T_m-T_n)\wedge T_*$ has total mass $2$ in each occurrence of `$\approx$'.  We have also used the invariance $f^n_*T_* = \ddeg^n T_*$ on the right side of the second `$=$'. the proof concludes on letting $\epsilon\to 0$.
\end{proof}

Even though Corollary \ref{cor:integrability} guarantees that $\int -(\rpot_m-\rpot_n)\,dd^c(\rpot_m-\rpot_n)\wedge T_*$ is well-defined and finite, it does not mean that $\rpot_m-\rpot_n$ has finite $T_*$-energy (which would then be given by the same integral).  That is, because $f^{m*}\bar T^*$, $f^{n*}\bar T^*$ and $\bar T^*$ all represent K\"ahler classes in $\hoo(\rztO)$, their restrictions all have positive Lelong numbers at the $\torus$-invariant points of any given toric surface $X$.  Hence it is not clear whether we can express $\rpot_m-\rpot_n$ in $X$ as a difference of qpsh functions in $\eclass(T_{*,X})$.  Here we prove something a little more modest.

\begin{thm}
\label{thm:finiteenergy1}  For any $m,n\geq 0$, the function $\rpot_m-\rpot_n$ has weakly finite $T_*$-energy
\begin{eqnarray}
\label{eqn:wknrgbd}
\dnorm[T_*]{\rpot_m-\rpot_n} = \left(\int_\torus - (\rpot_m-\rpot_n)\,dd^c(\rpot_m-\rpot_n)\wedge T_*\right)^{1/2}.
\end{eqnarray}
\end{thm}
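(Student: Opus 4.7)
The plan is to apply Cauchy-Schwarz to the bilinear energy pairing $\langle\cdot,\cdot\rangle_{T_*}$, after first rewriting $\rpot_m-\rpot_n$ as a difference of two functions with genuinely finite $T_*$-energy on a fixed toric surface $X$. I will fix a toric surface $X$ with $\ind(f^N)\subset X^\circ$ (where $N=\max\{m,n\}$) and write $T_{m,X}=\omega+dd^cu_m$, $T_{n,X}=\omega+dd^cu_n$ with $u_m,u_n\in\psh(\omega)$ so that $\rpot_m-\rpot_n=u_m-u_n$.  The core observation is that Theorem \ref{thm:pullback} guarantees $\rpot_m-\rpot_n$ is bounded in a neighborhood of each $\torus$-invariant point $p_\sigma\in X\setminus X^\circ$; this forces the Lelong numbers $\nu(T_m,p_\sigma)$ and $\nu(T_n,p_\sigma)$ to agree, so the log singularities of $u_m$ and $u_n$ at these points cancel in the difference.

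I would exploit this by constructing, using local coordinates and smooth cutoffs near each $p_\sigma$, a qpsh function $\psi$ on $X$ whose local behavior at each $p_\sigma$ captures the common Lelong-type singularity $\nu_\sigma\log\norm{\cdot-p_\sigma}$.  Setting $\phi_m:=u_m-\psi$ and $\phi_n:=u_n-\psi$ then gives qpsh functions that are continuous off the finite set $\ind(f^N)\cap X^\circ$ and have at worst logarithmic singularities there.  Since Corollary \ref{cor:cvgceonpoles} ensures that local potentials for $T_*$ are finite at each such singularity, Corollary \ref{cor:logsings} yields $\phi_m,\phi_n\in\eclass(T_{*,X})$; in particular $\rpot_m-\rpot_n=\phi_m-\phi_n$ lives in $\eclass(T_{*,X})-\eclass(T_{*,X})$.

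The energy pairing $\langle\cdot,\cdot\rangle_{T_{*,X}}$ on $\eclass(T_{*,X})$ extends bilinearly to differences, and the associated quadratic form remains nonnegative on such differences because for smooth real $u$ one has $du\wedge d^cu\wedge T_{*,X}\geq0$ as a form, and this passes to general differences via the regularization machinery of \S\ref{SUBSEC:WEDGE_COMPACT} (Theorem \ref{thm:regularization} and Corollary \ref{cor:decrlims2}).  Cauchy-Schwarz then gives, for any smooth $u$ on $X$,
\begin{equation*}
\left|\int_X u\,dd^c(\rpot_m-\rpot_n)\wedge_X T_*\right|
=|\langle u,\phi_m-\phi_n\rangle_{T_{*,X}}|
\leq \|u\|_{T_{*,X}}\cdot\|\phi_m-\phi_n\|_{T_{*,X}}.
\end{equation*}
Because $\phi_m-\phi_n=\rpot_m-\rpot_n$ with matching $dd^c$, the last factor equals the right-hand side of \eqref{eqn:wknrgbd}, and Theorem \ref{thm:cancelation} together with Proposition \ref{prop:dnormincreases} makes this bound both independent of $X$ and uniform in $X$, so $\rpot_m-\rpot_n\in\weclass(T_*)$ with $\dnorm[T_*]{\rpot_m-\rpot_n}\leq\|\rpot_m-\rpot_n\|_{T_*}$.

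For the reverse inequality I would test against a carefully chosen smooth function.  Using Theorem \ref{THM:CONTINUITY} and the boundedness of $\rpot_m-\rpot_n$ off $\ind(f^\infty)$, the function $\rpot_m-\rpot_n$ is continuous on $\rztO\setminus\ind(f^N)$, so it can be approximated by smooth $u_k\in C^\infty(X_k)$ on sufficiently dominant toric surfaces $X_k$ in such a way that $\int u_k\,dd^c(\rpot_m-\rpot_n)\wedge T_*\to\|\rpot_m-\rpot_n\|_{T_*}^2$ and $\|u_k\|_{T_*}\to\|\rpot_m-\rpot_n\|_{T_*}$; invoking Corollary \ref{cor:integrability} to control the integrals and Proposition \ref{prop:integrable} to handle integrability against the signed measure $\mu_m-\mu_n$, this produces the matching lower bound and the stated equality.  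The main technical obstacle will be the construction of $\psi$ and the verification that $\phi_m,\phi_n\in\eclass(T_*)$—the delicate point being that $\psi$ must accurately cancel the common Lelong singularities at every $\torus$-invariant point of $X$ without introducing new ones at $\ind(f^N)$; a secondary obstacle is the approximation argument in the reverse inequality, where $\rpot_m-\rpot_n$ is not $\omega$-psh and so one cannot simply regularize by decreasing limits.
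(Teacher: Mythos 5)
Your plan hinges on finding a qpsh $\psi$ for which $\phi_m=u_m-\psi$ and $\phi_n=u_n-\psi$ both lie in $\eclass(T_{*,X})$, so that $\rpot_m-\rpot_n$ is a difference of genuinely finite-energy qpsh functions and Cauchy--Schwarz can be applied directly.  This is exactly what the paper warns, in the paragraph immediately preceding this theorem, cannot be done: ``\emph{both currents $T^*_X,T_{*,X}$ have positive Lelong numbers at each $\torus$-invariant point of $X$.  Hence $\varphi_{T^*}$ is not a difference of qpsh functions each with finite $T_{*,X}$-energy.}''  The same obstruction applies already at finite level $m,n$, and it defeats the proposed $\psi$ for two independent reasons.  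First, $\phi_m=u_m-\psi$ is a difference of two qpsh functions, and this is not qpsh in general: $dd^c\phi_m=T_m-\omega-dd^c\psi$, and if $\psi$ truly carries a logarithmic singularity then $dd^c\psi$ has positive mass at $p_\sigma$, so $dd^c\phi_m$ is not bounded below by a multiple of $\omega$ near $p_\sigma$.  You cannot have $\psi$ qpsh, $\psi$ singular enough to absorb the singularity of $u_m$, \emph{and} $u_m-\psi$ qpsh all at once.  Second, and independently, the singularity of $u_m$ at a torus-invariant point $p_\sigma$ is not a pure Lelong-type $\nu_\sigma\log\norm{\cdot}$ singularity: since $u_m=u_0+\rpot_m$ with $\rpot_m$ bounded near $p_\sigma$, the singularity is that of the potential $u_0$ of $\bar T^*$, which near $p_\sigma$ looks like $\sfn_{\bar T^*}\circ\Log$, i.e.\ is governed by the (generically non-piecewise-linear) convex support function $\sfn_{\bar T^*}$ restricted to the cone $\sigma$.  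Subtracting a single log term leaves an unbounded residue, so even ignoring qpsh-ness, $\phi_m$ is not continuous off $\ind(f^N)$ and Corollary \ref{cor:logsings} does not apply.

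The paper avoids exactly this trap by never attempting such a decomposition.  In Lemma \ref{lem:dnormbndonx} it truncates $u_m$ and $u_n$ near the torus-invariant points to get $u_{j,\ell}:=\max\{u_j,-\ell\}$ there (leaving them unchanged elsewhere); these truncations are continuous off $\ind(f^N)$ with at worst log singularities, hence lie in $\eclass(T_{*,X})$, and one runs the Cauchy--Schwarz argument with $\Delta_\ell=u_{m,\ell}-u_{n,\ell}$.  The crucial point is the quantitative control of the error incurred by truncation: using $|\Delta|\le M$ off a neighborhood of $\ind(f^N)$ and Stokes, the difference between the truncated energy and $\int-\Delta\,dd^c\Delta\wedge T_*$ is bounded by $M\sum_p(T_m+T_n)\wedge_X T_*(p)$, and Theorem \ref{thm:cancelation} identifies this as $2M\sum_p\bar T^*\wedge_X T_*(p)$, which Theorem \ref{thm:wedgewhomogeneous}(4) makes tend to $0$ as $X\to\rzt$.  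This is the mechanism that replaces your missing decomposition: instead of producing finite-energy representatives, one produces finite-energy approximants and shows the error vanishes in the inverse limit.  Your concluding remark in the ``reverse inequality'' step — that $\rpot_m-\rpot_n$ is not $\omega$-psh so decreasing regularization is not available — is a real issue, but it is secondary; it is handled in the paper by regularizing $u_{m,\ell}$ and $u_{n,\ell}$ separately and applying Monotone Convergence and Corollary \ref{cor:decapprox2} to each.
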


\proof
We write $\Delta = \rpot_m-\rpot_n$ throughout the proof.  Then
$$
dd^c\Delta\wedge T_* = (T_m-T_n)\wedge T_* =\mu_m-\mu_n,
$$
and Proposition \ref{prop:integrable} guarantees that $\Delta$ is integrable with respect to $dd^c\Delta\wedge T_*$.  Fix a relatively compact neighborhood $U$ of $\ind(f^m)$ in $\rztO$.  Assume $\overline{U}$ meets only those poles that contains points of $\ind(f^m)$.  Let $M := \sup_{\rztO\setminus U}|\Delta|$ which is finite by Theorem \ref{thm:pullback} Part~(1).

\begin{lem}
\label{lem:dnormbndonx}
Let $X$ be a toric surface such that $\ind(f^m)\subset X^\circ$.  Then $\Delta$ has weakly finite $T_{*,X}$-energy, and
$$
\left|(\dnorm[T_{*,X}]{\Delta})^2 - \int -\Delta\,dd^c\Delta\wedge T_*\right| \leq 2M\sum_{p\in X\setminus X^\circ}  (\bar T^*\wedge_X T_*)(p).
$$
\end{lem}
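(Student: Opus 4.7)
The plan is to approximate $\Delta$ by truncations $\Delta_j := u_{m,j} - u_{n,j}$, where $u_{m,j} := \max(u_m, -j)$ and $u_{n,j} := \max(u_n, -j)$ are bounded continuous $\omega_X$-psh functions lying in $\eclass(T_{*,X})$.  Consequently $\Delta_j$ is a difference of elements of $\eclass(T_{*,X})$, satisfying $\dnorm[T_{*,X}]{\Delta_j} = \norm[T_{*,X}]{\Delta_j}$ and $\norm[T_{*,X}]{\Delta_j}^2 = \int -\Delta_j\,dd^c\Delta_j\wedge T_{*,X}$.  A direct case analysis on the regions $\{u_m > -j\}\cap\{u_n > -j\}$ versus its complement yields the pointwise bound $|\Delta_j|\leq|\Delta|$.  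Combined with the integrability furnished by Proposition \ref{prop:integrable}, dominated convergence then gives $\int-\Delta_j\,d(\mu_m-\mu_n) \to \int-\Delta\,dd^c\Delta\wedge T_{*,X}$ as $j\to\infty$.

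Proposition \ref{prop:strong} supplies the key decomposition $dd^c\Delta_j\wedge T_{*,X} = dd^c\Delta\wedge T_{*,X} + \alpha_j^{(m)} - \alpha_j^{(n)}$, where $\alpha_j^{(m)} := T^{u_{m,j}}\wedge T_{*,X} - T_{m,X}\wedge T_{*,X}$ (and analogously $\alpha_j^{(n)}$) is a signed measure of total mass zero supported on $V_j^{(m)}:=\{u_m\leq -j\}$.  As $j\to\infty$, $V_j^{(m)}$ contracts onto $\ind(f^m)\cup(X\setminus X^\circ)$. Since $\ind(f^m)$ lies on poles and hence in $\rztO\setminus\torus$, the signed measure $\mu_m$ (which by Theorem \ref{thm:cancelation} lives on $\torus$) does not charge it; this identifies the point masses of $T_{m,X}\wedge T_{*,X}$ as $(\bar T^*\wedge_X T_*)(p_\sigma)$ at torus-invariant $p_\sigma\in X\setminus X^\circ$ and $0$ at points of $\ind(f^m)$.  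Consequently $\limsup_j|\alpha_j^{(m)}|(X) \leq 2\sum_{p_\sigma}(\bar T^*\wedge_X T_*)(p_\sigma)$, and similarly for $\alpha_j^{(n)}$.

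The crucial observation controlling the error is that once $j$ is large enough, $u_{m,j}(p_\sigma)=u_{n,j}(p_\sigma)=-j$ at each torus-invariant point, forcing $\Delta_j(p_\sigma)=0$.  The atomic contributions $\alpha_j^{(m)}(\{p_\sigma\}) = \alpha_j^{(n)}(\{p_\sigma\}) = -(\bar T^*\wedge_X T_*)(p_\sigma)$ to the error integral $\int-\Delta_j\,d(\alpha_j^{(m)}-\alpha_j^{(n)})$ therefore vanish, leaving only the diffuse ``spread'' mass on punctured neighborhoods of each $p_\sigma$, which is bounded by $2(\bar T^*\wedge_X T_*)(p_\sigma)+o(1)$.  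In small balls $B_\sigma\ni p_\sigma$ chosen disjoint from $U$ we have $|\Delta_j|\leq|\Delta|\leq M$, while the total variation of $\alpha_j^{(m)}$ and $\alpha_j^{(n)}$ near $\ind(f^m)$ tends to zero.  Summing over $p_\sigma$ yields $\limsup_j\bigl|\int-\Delta_j\,d(\alpha_j^{(m)}-\alpha_j^{(n)})\bigr|\leq 2M\sum_{p_\sigma}(\bar T^*\wedge_X T_*)(p_\sigma)$, which combined with the first two paragraphs gives $\bigl|\norm[T_{*,X}]{\Delta_j}^2 - \int-\Delta\,dd^c\Delta\wedge T_{*,X}\bigr| \leq 2M\sum_{p_\sigma}(\bar T^*\wedge_X T_*)(p_\sigma) + o(1)$.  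Cauchy--Schwarz applied to $\Delta_j$ against smooth test functions transfers this bound to $\dnorm[T_{*,X}]{\Delta}^2$: the upper bound via weak convergence $dd^c\Delta_j\wedge T_{*,X}\to dd^c\Delta\wedge T_{*,X}$ tested against smooth $v$, the lower bound by testing against smooth approximations of $\Delta_j$ whose $T_{*,X}$-norms approach $\norm[T_{*,X}]{\Delta_j}$.  The main obstacle lies in the bookkeeping of this last paragraph: isolating the precise atomic contributions via Theorem \ref{thm:cancelation}, and leveraging $\Delta_j(p_\sigma)=0$ to cancel them cleanly so that the error depends only on the diffuse spread mass.
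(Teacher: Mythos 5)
Your proposal follows the same broad outline as the paper (truncate, compare the truncated energy integral with $\int -\Delta\,dd^c\Delta\wedge T_*$, then transfer to $\dnorm[T_{*,X}]{\cdot}$ via Cauchy--Schwarz), but you truncate \emph{globally}: $u_{m,j} = \max(u_m,-j)$ on all of $X$. The paper instead truncates only on fixed coordinate neighborhoods $U_p$ of the $\torus$-invariant points, leaving $u_m, u_n$ untouched near $\ind(f^m)$. This is not a cosmetic choice, and it is where your argument has a genuine gap.

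The issue is the contribution to the error integral from a neighborhood $U'$ of $\ind(f^m)$ (disjoint from the $\torus$-invariant points). There $|\Delta_j|$ is \emph{not} controlled by $M$: at a point $q\in\ind(f^m)\setminus\ind(f^n)$, $u_n$ is bounded while $u_m\to-\infty$, so $|\Delta_j|$ grows like $j$ as $j\to\infty$. You assert only that ``the total variation of $\alpha_j^{(m)}$ and $\alpha_j^{(n)}$ near $\ind(f^m)$ tends to zero,'' but that alone does not give $\int_{U'}|\Delta_j|\,d|\alpha_j^{(m)}|\to 0$: you need the mass to decay faster than $1/j$. This can in fact be salvaged by a Chebyshev-type estimate, because $u_m$ has \emph{locally} finite $T_{*,X}$-energy near $\ind(f^m)$ (Corollary \ref{cor:logsings}, since local potentials for $T_*$ are finite there), so $j\cdot T_m\wedge_X T_*\bigl(\{u_m\le -j\}\cap U'\bigr)\to 0$ as the tail of a convergent energy integral, and Stokes on $U'$ transfers this to the $T_{m,j}$ piece. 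But none of this appears in your sketch, and without it the near-$\ind(f^m)$ term is simply unestimated. In the paper's version the problem never arises, because $\Delta_\ell\equiv\Delta$ near $\ind(f^m)$ and the error is confined to the balls $B_p(r)$ where $|\Delta_\ell|\le|\Delta|\le M$.

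Two smaller points. First, you invoke Proposition \ref{prop:strong} for $u_m$, but $u_m\notin\eclass(T_{*,X})$ (it has log singularities at the $\torus$-invariant points, where $T_*$ also has positive Lelong numbers); you would need to localize that proposition or use a variant of it. Second, your identification $\alpha_j^{(m)}(\{p_\sigma\})=-(\bar T^*\wedge_X T_*)(p_\sigma)$ rests on $T_{m,j}\wedge_X T_*(\{p_\sigma\})=0$; this is true (your $u_{m,j}$ is bounded, hence in $\eclass(T_{*,X})$, and Corollary \ref{cor:massonpps} applies since $T_*$ does not charge $\{p_\sigma\}$), but the step deserves a citation since it is precisely the kind of pluripolar-mass statement that fails without the energy hypothesis. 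The atom-cancellation you highlight ($\Delta_j(p_\sigma)=0$) is a nice observation, though it does not improve on the paper's constant $2M$, which the paper obtains by directly bounding $\int_{B_p(r)}(T_m+T_n)\wedge_X T_*$ via Stokes without separating atomic and diffuse parts.
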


Taking this for granted momentarily, we note that since $T_*$ has a nearly homogeneous support function, Theorem \ref{thm:wedgewhomogeneous}, Part~(4) tells us that the sum on the right decreases to $0$ as $X$ increases to $\rztO$.  Theorem \ref{thm:finiteenergy1} follows.
\qed

\begin{cor}
\label{cor:approxmu}
For any $n\in\N$, the currents $T_n$ and $T_*$ admit a wedge product in the sense of Definition \ref{DEFN:WEDGE_PRODUCT_TORIC_CURRENTS}.  The resulting positive Borel measure $T_n\wedge T_*$ on $\rztO$ has (full) mass equal to $1$ and does not charge curves.
\end{cor}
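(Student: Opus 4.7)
The plan is to verify the three assertions in order, essentially replaying the first part of the proof of Theorem~\ref{thm:eqmeasureexists} with the considerable simplification that on any toric surface $X$, a potential for $T_n$ is continuous off a \emph{finite} subset of $X$.

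To show $T_n$ and $T_*$ admit a wedge product on a given toric surface $X$, I would fix a K\"ahler form $\omega_X$ cohomologous to $T^*_X$ and write $T_{n,X} = \omega_X + dd^c u_{n,X}$ with $u_{n,X} = \rpot_n + u_{0,X}$, where $u_{0,X}\in\psh(\omega_X)$ is a potential for $\bar T^*_X - \omega_X$. Proposition~\ref{prop:bartonpole} gives that $u_{0,X}$ is continuous on $X^\circ$, while Theorem~\ref{thm:pullback}(1) gives that $\rpot_n$ is continuous off $\ind(f^n)$ and bounded near the $\torus$-invariant points of $X$. Hence $u_{n,X}$ is continuous on $X$ away from the finite set $(X\setminus X^\circ)\cup(\ind(f^n)\cap X^\circ)$, so Proposition~\ref{prop:boundedish} guarantees $u_{n,X}\in L^1(T_{*,X})$ and the Bedford--Taylor product $T_n\wedge_X T_*$ is defined.

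For the claim that $T_n\wedge T_*$ has full mass equal to~$1$ on $\rztO$, my plan is to invoke Theorem~\ref{thm:fullmass} directly with $T = T_n$ and $S = T_*$. The support function $\sfn_{T_*}$ is nearly homogeneous by Theorem~\ref{thm:eqnearhom}, and since $T_n$ is cohomologous to $\bar T^*$ and each class has a unique homogeneous representative, $\bar T_n = \bar T^*$; hence the potential $\rpot_{T_n}$ appearing in Theorem~\ref{thm:fullmass} is our $\rpot_n$. Because $\rpot_0 \equiv 0$, applying Theorem~\ref{thm:finiteenergy1} to the pair of indices $(n,0)$ shows that $\rpot_n = \rpot_n-\rpot_0$ has weakly finite $T_*$-energy. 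Theorem~\ref{thm:fullmass} then yields that $T_n\wedge T_*$ has full mass on $\rztO$, and that mass is $\isect{T_n}{T_*} = \isect{T^*}{T_*} = 1$.

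Finally, for the non-charging of curves, the plan is to use the decomposition $T_n\wedge_X T_* = \bar T^*\wedge_X T_* + dd^c\rpot_n\wedge_X T_*$ on each toric surface $X$ (valid because the two outer products are defined in the Bedford--Taylor sense and local potentials differ by $\rpot_n$). Restricting to $X^\circ$ and letting $X$ increase to $\rzt$ yields $T_n\wedge T_* = \bar T^*\wedge T_* + \mu_n$, with $\mu_n$ as in Theorem~\ref{thm:cancelation}. Theorem~\ref{thm:wedgewhomogeneous}(2) shows that $\bar T^*\wedge T_*$ assigns no mass to any curve (since $T_*$ doesn't, by Corollary~\ref{cor:nidinfinity}), and Theorem~\ref{thm:cancelation} shows $\mu_n$ assigns no mass to curves; thus neither does the sum. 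The only non-routine ingredient is the weakly-finite-energy input harvested from Theorem~\ref{thm:finiteenergy1}; the remainder amounts to assembling pieces already in place, so I expect no substantive obstacles beyond careful bookkeeping.
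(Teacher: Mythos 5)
Your proposal is correct and takes essentially the same approach as the paper, whose proof is a one-line remark that the argument is identical to that of Theorem~\ref{thm:eqmeasureexists} with Theorem~\ref{thm:finiteenergy1} substituted for Theorem~\ref{thm:weakenergy1}. You have filled in the details faithfully, and along the way correctly identified the natural simplifications available for $T_n$ in place of $T^*$ (Theorem~\ref{thm:pullback}(1) applied to $f^n$ instead of the deeper Theorem~\ref{THM:CONTINUITY}, and a direct appeal to Theorem~\ref{thm:cancelation} for the non-charging of curves), so no issues.
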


\begin{proof}
The argument is identical to the above proof of Theorem~\ref{thm:eqmeasureexists}, except that Theorem~\ref{thm:finiteenergy1} takes the place of Theorem \ref{thm:weakenergy1}.
\end{proof}

\begin{proof}[Proof of Lemma \ref{lem:dnormbndonx}]
The hypothesis on $X$ implies that $U$ is a relatively compact open subset of $X^\circ$.  Hence we may choose coordinate neighborhoods $U_p$ of the $\torus$-invariant points $p\in X\setminus X^\circ$ whose closures are disjoint from each other and from $\overline{U}$.  For $r>0$ small, we let $B_p(r)\subset U_p$ denote the coordinate ball of radius $r$ about $p$ and set $B(r) := \bigcup_{p\in X\setminus X^\circ} B_p(r)$.

As above, we write $T_j = \omega_X + dd^c u_j$, on $X$.  For each (large) positive integer $\ell$, we let $u_{j,\ell}\geq u_j$ be equal to $\max\{u_j,-\ell\}$ on each $U_p$ and equal to $u_j$ elsewhere.  Again, since $T_j$ represents a K\"ahler class in $\hoo(\rztO)$, Proposition \ref{prop:smalllelong} tells us that $u_j(p)$ has a non-trivial logarithmic singularity at $p$, so that $u_{j,\ell}\in \psh(\omega_X)$ is continuous off $\ind(f^m)$.  Corollary \ref{cor:logsings} and the fact that local potentials for $T_*$ are finite on $\ind(f^n)$ further imply that $u_{j,\ell}$ has finite $T_{*,X}$-energy
$$
(\norm[T_{*,X}]{u_{j,\ell}})^2 = \int - u_{j,\ell}\,dd^c u_{j,\ell}\wedge_X T_*.
$$
Set $\Delta_\ell := u_{m,\ell}-u_{n,\ell}$.  Then for any fixed $r>0$, we have that $\Delta = \Delta_\ell$ outside $B(r)$ when $\ell$ is large enough.  Hence
$$
\left|\int \Delta_\ell\,dd^c\Delta_\ell\wedge_X T_* - \int\Delta\,dd^c\Delta\wedge_X T_*\right|
\leq
\left|\int_{B(r)} \Delta\,dd^c\Delta \wedge_X T_*\right|
    + \sum_{p\in X\setminus X^\circ} \left|\int_{B_p(r)} \Delta_\ell\,dd^c\Delta_\ell\wedge_X T_*\right|.
$$
To estimate the integrals in the sum, we set $T_{j,\ell} := \omega_X + dd^c u_{j,\ell} \in \pcc^+(X)$ and proceed.
\begin{eqnarray*}
\left|\int_{B_p(r)} \Delta_\ell\, dd^c\Delta_\ell\wedge_X T_* \right|
& = &
\left|\int_{B_p(r)} \Delta_\ell\, (T_{m,\ell}-T_{n,\ell})\wedge_X T_* \right|
\leq \int_{B_p(r)} |\Delta_\ell| \,(T_{m,\ell}+T_{n,\ell})\wedge_X T_*  \\
& \leq & M \int_{B_p(r)} (T_{m,\ell}+T_{n,\ell})\wedge_X T_*  
= M \int_{B_p(r)} (T_m+T_n)\wedge_X T_* .
\end{eqnarray*}
The final equality follows from Stokes' Theorem and the fact that $T_{j,\ell} = T_j$ outside a compact subset of $B_p(r)$ for $j=m,n$.  So for $\ell=\ell(r)$ large 
enough,
$$
\left|\int \Delta_\ell\,dd^c\Delta_\ell\wedge_X T_* - \int\Delta\,dd^c\Delta\wedge_X T_*\right|
\leq
\left|\int_{B(r)} \Delta \,dd^c\Delta \wedge_X T_*\right|
+
M\sum_{p\in X\setminus X^\circ}\left|\int_{B_p(r)} (T_m+T_n)\wedge_X T_* \right|.
$$
As $\ell\to \infty$, we may let $r\to 0$ and apply Theorem \ref{thm:cancelation} and Proposition \ref{prop:integrable} to show that the first term goes to zero.  Thus
\begin{eqnarray*}
\limsup_{\ell\to \infty}
\left|\int \Delta_\ell\,dd^c\Delta_\ell\wedge_X T_* - \int\Delta\,dd^c\Delta\wedge_X T_*\right|
& \leq &
M\sum_{p\in X\setminus X^\circ}
(T_m+T_n)\wedge_X T_*(p) \\
& = & 2M\sum_{p\in X\setminus X^\circ}
\bar T^*\wedge_X T_*(p).
\end{eqnarray*}
The final equality comes from Theorem \ref{thm:cancelation}; i.e. for each $\torus$-invariant $p\in X$ and $j\geq 0$, we have $T_j\wedge_X T_*(p) = T_0\wedge_X T_*(p) = \bar T^* \wedge_X T_*(p)$.

Now if $\kappa\in C^\infty(X)$, we integrate by parts and apply Monotone convergence to $u_{m,\ell}$ and $u_{n,\ell}$ separately to obtain
$$
\left|\int \kappa\,dd^c\Delta\wedge_X T_*\right|^2
=
\lim_{\ell\to\infty} \left|\int \Delta_\ell \,dd^c\kappa\wedge_X T_*\right|^2
\leq
\norm[T_*]{\kappa}^2 \liminf_{\ell\to\infty} \norm[T^*]{\Delta_\ell}^2.
$$
Hence $\Delta$ has weakly finite $T_{*,X}$ energy, and
$$
\dnorm[T_{*,X}]{\Delta}^2  \leq \liminf_{\ell\to\infty} \norm[T^*]{\Delta_\ell}^2 \leq \int -\Delta\, dd^c\Delta \wedge_X T_* + 2M\sum_{p\in X\setminus X^\circ}  (\bar T^*\wedge_X T_*)(p).
$$
For the complementary bound, we observe that Theorem \ref{thm:regularization} and Corollary \ref{cor:decapprox2} allow us to trade our approximations $u_{j,\ell}$ for \emph{smooth} approximations $\tilde u_{j,\ell}\in\psh(\omega_X)$ such that 
$
\lim_{\ell\to\infty} \norm[T_{X,*}]{u_{j,\ell} - \tilde u_{j,\ell}} = 0. 
$
By Hartog's Lemma (see also Corollary \ref{cor:decrlims2}) we can further assume that $\tilde u_{j,\ell}$ decreases pointwise to $u_j$.
Hence if $\tilde\Delta_\ell = \tilde u_{m,\ell}-\tilde u_{n,\ell}$, we have that
\begin{align}\label{EQN:TILDEDELTA_L_APPROX_DELTA_L}
\lim_{\ell\to\infty} \norm[T_{X,*}]{\Delta_\ell - \tilde\Delta_\ell} = 0.
\end{align}
So by Monotone convergence again,
$$
\int -\Delta\,dd^c\Delta\wedge_X T_* = \lim_{\ell\to\infty} \int -\tilde\Delta_\ell\,dd^c\Delta\wedge_X T_* \leq \dnorm[T_{*,X}]{\Delta} \liminf_{\ell\to\infty} \norm[T_{*,X}]{\tilde\Delta_\ell}
\leq
\dnorm[T_{*,X}]{\Delta} \liminf_{\ell\to\infty} \norm[T_{*,X}]{\Delta_\ell}.
$$
Squaring both sides and substituting our above upper bound for the limit gives
$$
\left(\int -\Delta\,dd^c\Delta\wedge_X T_*\right)^2 \leq (\dnorm[T_{*,X}]{\Delta})^2\left(\int -\Delta\,dd^c\Delta\wedge_X T_* + 2M\sum_{p\in X\setminus X^\circ}  (\bar T^*\wedge_X T_*)(p)\right).
$$
This implies 
\begin{align}\label{EQN:DELTA_WEAK_ENERGY_LOWER_BND}
(\dnorm[T_{*,X}]{\Delta})^2 \geq \int -\Delta\,dd^c\Delta\wedge_X T_* - 2M\sum_{p\in X\setminus X^\circ}  (\bar T^*\wedge_X T_*)(p).
\end{align}
Since $\int -\Delta\,dd^c\Delta \wedge_X T_* = \int_\torus -\Delta \,dd^c\Delta\wedge T_*$ does not depend on $X$, the proof is complete.
\end{proof}

Before continuing we record a related technical result needed to prove Corollary \ref{COR:WEAK_ENERGY_BND_TAIL} and then in turn Theorem \ref{thm:geometricIntersection} below.  Here $\Delta$ is as in the proof of Theorem \ref{thm:finiteenergy1}.

\begin{prop}
\label{prop:technical}
Let $S\leq T_*$ be a positive closed $(1,1)$ current on an open subset $U\subset \torus$ and $\chi:U\to \R$ be a smooth compactly supported  function.  Then
$$
\int \chi\,dd^c\Delta \wedge (T_*-S) 
\leq 
\dnorm[T_*]{\Delta} \left( \int d\chi\wedge d^c\chi\wedge (T_*-S)\right)^{1/2}.
$$
\end{prop}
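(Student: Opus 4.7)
The plan is to establish the stronger bound on the absolute value $|\int \chi\,dd^c\Delta \wedge (T_*-S)|$ via the classical Cauchy--Schwarz inequality for the positive closed $(1,1)$-current $R := T_*-S$ on $U$, first for smooth approximants of $\Delta$ and then passing to the limit. The regularization will be exactly the one used in the proof of Lemma \ref{lem:dnormbndonx}.

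First, I would fix a toric surface $X$ with $\supp\chi \cup \ind(f^N) \subset X^\circ$, where $N := \max\{m,n\}$, and choose smooth $\omega_X$-psh functions $\tilde u_{m,\ell}, \tilde u_{n,\ell}$ on $X$ decreasing pointwise to $u_m, u_n$. Set $\tilde\Delta_\ell := \tilde u_{m,\ell} - \tilde u_{n,\ell} \in C^\infty(X)$. Since $\chi$ is compactly supported in $U$, integration by parts gives $\int \chi\,dd^c\tilde\Delta_\ell \wedge R = -\int d\chi \wedge d^c\tilde\Delta_\ell \wedge R$. Applying the Cauchy--Schwarz inequality for the positive closed $(1,1)$-current $R$ on any pre-compact open set $V\subset U$ containing $\supp d\chi$, where $\tilde\Delta_\ell$ is smooth, yields
\begin{equation*}
\left| \int \chi \, dd^c\tilde\Delta_\ell \wedge R \right|^2 \leq \int d\chi \wedge d^c\chi \wedge R \cdot \int_V d\tilde\Delta_\ell \wedge d^c\tilde\Delta_\ell \wedge R.
\end{equation*}
Using $R \leq T_*$ together with the fact that $T_*$ has finite mass on the toric surface $X$, the rightmost factor is bounded above by $\int_X d\tilde\Delta_\ell \wedge d^c\tilde\Delta_\ell \wedge T_* = \norm[T_{*,X}]{\tilde\Delta_\ell}^2$.

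Next I would pass to the limit $\ell \to \infty$. Under Assumption (4), $u_j$ is continuous, hence locally bounded, on $U \subset \torus$, so the local Bedford--Taylor continuity theorem (cf.\ Theorem \ref{thm:decapprox1}) gives weak convergence $dd^c\tilde u_{j,\ell} \wedge R \to dd^c u_j \wedge R$; pairing with $\chi$ yields $\int \chi \, dd^c\tilde\Delta_\ell \wedge R \to \int \chi \, dd^c\Delta \wedge R$. For the right-hand factor, the arguments in the proof of Lemma \ref{lem:dnormbndonx}, in particular equations \eqref{EQN:TILDEDELTA_L_APPROX_DELTA_L} and \eqref{EQN:DELTA_WEAK_ENERGY_LOWER_BND}, give
\begin{equation*}
\limsup_{\ell\to\infty}\norm[T_{*,X}]{\tilde\Delta_\ell}^2 \leq \dnorm[T_{*,X}]{\Delta}^2 + 2M\sum_{p\in X\setminus X^\circ}(\bar T^*\wedge_X T_*)(p).
\end{equation*}
Since $\dnorm[T_{*,X}]{\Delta}\leq \dnorm[T_*]{\Delta}$ by Proposition \ref{prop:dnormincreases} and the point-mass sum vanishes as $X$ increases to $\rzt$ by Theorem \ref{thm:wedgewhomogeneous}(4), letting $X$ increase to $\rzt$ yields the inequality
\begin{equation*}
\left|\int \chi\,dd^c\Delta\wedge R\right|^2 \leq \int d\chi\wedge d^c\chi \wedge R \cdot \dnorm[T_*]{\Delta}^2,
\end{equation*}
from which Proposition \ref{prop:technical} follows upon taking square roots.

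The main obstacle is coordinating the double limit---first $\ell \to \infty$, then $X \to \rzt$---so that the right-hand side collapses to the intended constant $\dnorm[T_*]{\Delta}^2$ without extraneous error terms. This is handled by the precise energy bounds already developed in the proof of Lemma \ref{lem:dnormbndonx}, which guarantee that both $\limsup_\ell \norm[T_{*,X}]{\tilde\Delta_\ell}^2$ and $\dnorm[T_{*,X}]{\Delta}^2$ differ from $\int -\Delta\,dd^c\Delta \wedge_X T_*$ by an error controlled by the vanishing sum $\sum_{p\in X\setminus X^\circ}(\bar T^*\wedge_X T_*)(p)$.
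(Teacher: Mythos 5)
Your proposal is correct and follows essentially the same route as the paper's proof: the same smooth regularization $\tilde\Delta_\ell$ from Lemma \ref{lem:dnormbndonx}, the same integration by parts followed by the Cauchy--Schwarz inequality for the positive closed current $T_*-S$, the same appeal to equations \eqref{EQN:TILDEDELTA_L_APPROX_DELTA_L} and \eqref{EQN:DELTA_WEAK_ENERGY_LOWER_BND} to control the energy factor, and the same passage to the limit $X\to\rzt$ to kill the point-mass error term. (Your stated error bound should carry a factor $4M$ rather than $2M$---you need both \eqref{EQN:DELTA_WEAK_ENERGY_LOWER_BND} and the earlier $\limsup$ estimate from the proof of Lemma \ref{lem:dnormbndonx} to compare $\limsup_\ell\norm[T_{*,X}]{\Delta_\ell}^2$ with $\dnorm[T_{*,X}]{\Delta}^2$---but that is immaterial since the sum vanishes as $X\to\rzt$.)
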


\begin{proof} 
Given $\epsilon>0$, Corollary \ref{cor:approxmu} yields a toric surface $X$ with $2M\sum_{p\in X\setminus X^\circ}  (\bar T^*\wedge_X T_*)(p) < \epsilon$.  Let $\tilde\Delta_\ell$ denote the smooth approximation of $\Delta$ on $X$ used to prove Lemma \ref{lem:dnormbndonx}.  Then integration by parts, monotone convergence and Cauchy-Schwarz
give
\begin{eqnarray*}
\left|\int \chi\,dd^c\Delta \wedge (T_*-S)\right|
& = &  
\lim_{\ell\to\infty} \left|\int \tilde \Delta_\ell \, dd^c\chi\wedge(T_*-S)\right|
=
\lim_{\ell\to\infty} \left|\int -d\tilde\Delta_\ell \wedge d^c\chi\wedge_X (T_*-S)\right| \\
& \leq &
\liminf_{\ell\to\infty}  \left(\int d\tilde\Delta_\ell\wedge d^c\tilde\Delta_\ell \wedge_X (T_*-S)\right)^{1/2}\left(\int d\chi\wedge d^c\chi \wedge_X (T_*-S)\right)^{1/2} \\
& \leq & 
\liminf_{\ell\to\infty} \norm[T_{*,X}]{\tilde\Delta_\ell} \left(\int d\chi\wedge d^c\chi \wedge_X (T_*-S)\right)^{1/2}.
\end{eqnarray*}
Using Equations (\ref{EQN:TILDEDELTA_L_APPROX_DELTA_L}) and (\ref{EQN:DELTA_WEAK_ENERGY_LOWER_BND}), we have 
$
\dnorm[T_{*,X}]{\Delta}^2\geq \liminf \norm[T_{*,X}]{\tilde\Delta_\ell}^2 - \epsilon.
$ 
Hence
\begin{eqnarray*}
\left|\int \chi\,dd^c\Delta \wedge (T_*-S) \right|^2 
& \leq & 
\left(\int d\chi\wedge d^c\chi \wedge (T_*-S)\right) \left( \dnorm[T_{*,X}]{\Delta}^2 + \epsilon\right) \\
& \leq & 
\left(\int d\chi\wedge d^c\chi \wedge (T_*-S)\right) \left( \dnorm[T_*]{\Delta}^2 + \epsilon\right). 
\end{eqnarray*}
Since $\epsilon>0$ is arbitrary, the proof is complete.
\end{proof}

We finally show that the series \eqref{eqn:series} defining $\rpot_{T^*}$ converges in the seminorm $\dnorm[T_*]{\cdot}$.

\begin{proof}[Proof of Theorem \ref{thm:weakenergy1}]
Fix a toric surface $X$ and a test function $\kappa\in C^\infty(X)$.  Then $dd^c\kappa\wedge T_*$ is a signed Borel measure with finite total mass.  Hence
\begin{eqnarray*}
\left|\int \kappa\,dd^c\rpot_{T^*}\wedge T_*\right| & = & \left|\int \rpot_{T^*} \, dd^c \kappa\wedge T_*\right| \leq
\sum_{j\geq 0} \left|\int (\rpot_{j+1}-\rpot_j)\,dd^c\kappa \wedge T_*\right|\\
& \leq & \norm[T_*]{\kappa}\sum_{j\geq 0} \dnorm[T_*]{\rpot_{j+1}-\rpot_j} \\
& = & \norm[T_*]{\kappa}\sum_{j\geq 0} \left(\int -(\rpot_{j+1}-\rpot_j)\,dd^c(\rpot_{j+1}-\rpot_j)\wedge T_*\right)^{1/2} \\
& = & \norm[T_*]{\kappa}\sum_{j\geq 0} \ddeg^{-j/2}\left(\int -\rpot_1\,dd^c\rpot_1\wedge T_*\right)^{1/2} \\
& = & \norm[T_*]{\kappa}\frac{\left(\int -\rpot_1\,dd^c\rpot_1\wedge T_*\right)^{1/2}}{1-\ddeg^{-1/2}}.
\end{eqnarray*}
The equality in the third line follows from Theorem \ref{thm:finiteenergy1} and the next 
equality follows from Corollary~\ref{cor:integrability}.  In any case, we conclude that 
\begin{equation}
\label{eqn:weaknrgbd}
\dnorm[T_*]{\rpot_{T^*}} \leq E_{T^*} := \frac{\dnorm[T_*]{\rpot_{\ddeg^{-1} f^*\bar T^*}}}{1-\ddeg^{-1/2}} < \infty.
\end{equation}
\end{proof}

Adapting the proof of Theorem \ref{thm:weakenergy1} gives further helpful bounds involving the same constant.

\begin{cor}\label{COR:WEAK_ENERGY_BND_TAIL} The following hold for any $n\geq 0$. 
\begin{enumerate}
 \item $\norm[T_*]{\rpot_{T^*} - \rpot_{\ddeg^{-n} f^{n*} \bar T^*} }' \leq E_{T^*} \ddeg^{-n/2}$;
 \item if $U$, $S$, and $\chi$ are as in Proposition \ref{prop:technical}, then
$$
\left|\int \chi\,(T^*-\ddeg^{-n} f^{n*}\bar T^*)\wedge (T_*-S)\right| 
\leq 
E_{T^*}\ddeg^{-n/2}\left(\int d\chi\wedge d^c\chi\wedge (T_*-S)\right)^{1/2}.
$$
\end{enumerate}
\end{cor}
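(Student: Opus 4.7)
The plan is to obtain both estimates by adapting the telescoping-sum argument from the proof of Theorem~\ref{thm:weakenergy1}, but starting the sum at $j=n$ rather than $j=0$. Writing $T_j := \ddeg^{-j}f^{j*}\bar T^*$ as in \S\ref{sec:measure}, I will use the identity
\begin{equation*}
T^* - \ddeg^{-n}f^{n*}\bar T^* = T^* - T_n = \sum_{j=n}^\infty dd^c(\rpot_{j+1}-\rpot_j),
\end{equation*}
together with the per-term estimate
\begin{equation*}
\dnorm[T_*]{\rpot_{j+1}-\rpot_j} = \ddeg^{-j/2}\Bigl(\int -\rpot_1\,dd^c\rpot_1\wedge T_*\Bigr)^{1/2}
\end{equation*}
supplied by Theorem~\ref{thm:finiteenergy1} combined with Corollary~\ref{cor:integrability} (applied with $m=j+1$). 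Summing over $j\geq n$ yields exactly $E_{T^*}\ddeg^{-n/2}$ in view of \eqref{eqn:weaknrgbd}.

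For conclusion (1), I will fix a toric surface $X$ and $\kappa\in C^\infty(X)$ and integrate by parts to write $\int\kappa\,dd^c(\rpot_{T^*}-\rpot_n)\wedge T_* = \int(\rpot_{T^*}-\rpot_n)\,dd^c\kappa\wedge T_*$. Theorem~\ref{thm:pullback}(2) lets me choose $\rpot_1 \leq 0$, so each increment $\ddeg^{-j}\rpot\circ f^j$ is nonpositive and the partial sums $\rpot_m-\rpot_n$ decrease pointwise to $\rpot_{T^*}-\rpot_n$; writing $dd^c\kappa=\omega_1-\omega_2$ as a difference of K\"ahler forms and applying monotone convergence to each piece reduces the task to the uniform bound
\begin{equation*}
\dnorm[T_*]{\rpot_m-\rpot_n}\;\leq\; \sum_{j=n}^{m-1}\dnorm[T_*]{\rpot_{j+1}-\rpot_j}\;\leq\; E_{T^*}\ddeg^{-n/2},
\end{equation*}
which follows from the triangle inequality for $\dnorm[T_*]{\cdot}$ together with the per-term estimate above.

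For conclusion (2), the standing hypothesis $\ind(f)\cap\torus=\emptyset$ forces $\supp\chi\subset U\subset\torus$ to be disjoint from $\ind(f^\infty)$, so Theorem~\ref{THM:CONTINUITY} provides uniform convergence $\rpot_m\to\rpot_{T^*}$ on $\supp\chi$. Proposition~\ref{prop:technical} applied to $\Delta_m := \rpot_m-\rpot_n$ gives
\begin{equation*}
\Bigl|\int\chi\,dd^c\Delta_m\wedge(T_*-S)\Bigr|\;\leq\; \dnorm[T_*]{\Delta_m}\Bigl(\int d\chi\wedge d^c\chi\wedge(T_*-S)\Bigr)^{1/2};
\end{equation*}
integrating by parts on the left rewrites it as $\int\Delta_m\,dd^c\chi\wedge(T_*-S)$, and passing to the limit $m\to\infty$ via the uniform convergence on $\supp d\chi$, together with the uniform bound $\dnorm[T_*]{\Delta_m}\leq E_{T^*}\ddeg^{-n/2}$ from (1), completes the proof. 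The arguments are essentially bookkeeping; the only real care needed is in the interchange of limits, justified by monotone convergence in (1) and by the uniform convergence on the compact set $\supp\chi$ furnished by Theorem~\ref{THM:CONTINUITY} in (2).
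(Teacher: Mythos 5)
Your proof is correct, and for part (1) it follows the paper's approach essentially verbatim: telescope the potential as $\rpot_{T^*}-\rpot_n=\sum_{j\geq n}(\rpot_{j+1}-\rpot_j)$, bound each increment using Theorem~\ref{thm:finiteenergy1} together with Corollary~\ref{cor:integrability} (which gives $\dnorm[T_*]{\rpot_{j+1}-\rpot_j}=\ddeg^{-j/2}\dnorm[T_*]{\rpot_1}$), and sum the geometric series. Your more explicit justification of the limit interchange via monotone convergence (using that the increments $\ddeg^{-j}\rpot\circ f^j$ are nonpositive by Theorem~\ref{thm:pullback}(2)) is implicit in the paper and fine. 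For part (2), the paper also telescopes and applies Proposition~\ref{prop:technical} to each individual increment $\rpot_{j+1}-\rpot_j$, bounding $\sum_{j\geq n}\bigl|\int(\rpot_{j+1}-\rpot_j)\,dd^c\chi\wedge(T_*-S)\bigr|$ directly, whereas you apply Proposition~\ref{prop:technical} once to the partial sum $\Delta_m$, invoke the bound from part (1), and pass to the limit $m\to\infty$ using the uniform convergence on $\supp\chi$ furnished by Theorem~\ref{THM:CONTINUITY}. Both work; your variant is a minor reorganization that reuses (1) at the cost of an extra dependency on Theorem~\ref{THM:CONTINUITY} that the paper's term-by-term argument avoids.
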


\begin{proof}
The first assertion follows from writing $\rpot_{T^*} - \rpot_{\ddeg^{-n} \bar T^*} = \sum_{j=n}^\infty \rpot_{j+1}-\rpot_j$ and repeating the argument used to bound $\dnorm[T_*]{\rpot_{T^*}}$.  The second assertion follows from similar estimation and Proposition \ref{prop:technical}: 
\begin{eqnarray*}
\left|\int \chi\,(T^* - \ddeg^{-n} f^{n*}\bar T^*)\wedge (T_*-S)\right| 
& = & 
\left|\int (\rpot_{T^*} - \rpot_{\ddeg^{-n} f^{n*}\bar T^*})\,dd^c\chi\wedge (T_*-S)\right| \\
& \leq & 
\sum_{j=n}^\infty \left|\int(\rpot_{j+1}-\rpot_j)\,dd^c\chi\wedge (T_*-S)\right| \\
& \leq & 
\left(\int d\chi\wedge d^c\chi\wedge (T_*-S)\right)^{1/2} \sum_{j=n}^\infty \dnorm[T_*]{\rpot_{j+1}-\rpot_j} \\
& = &
E_{T^*} \ddeg^{-n/2} \left(\int d\chi\wedge d^c\chi\wedge (T_*-S)\right)^{1/2}.
\end{eqnarray*}
as in the proof of Theorem \ref{thm:weakenergy1}
\end{proof}

\subsection{Symmetry between the equilibrium currents}

We can reverse the roles of $T^*$ and $T_*$ in nearly all of the above.  The only ingredient peculiar to $T^*$ is the continuity result Theorem \ref{THM:CONTINUITY}, but we used this only to establish admissibility of the product $T^*\wedge T_*$.  Since all the other results we used are symmetric in $T^*$ and $T_*$, we obtain among other things analogs of Theorems \ref{thm:weakenergy1} and \ref{thm:finiteenergy1} as well as Corollaries \ref{cor:approxmu} and \ref{COR:WEAK_ENERGY_BND_TAIL}.

\begin{thm} 
\label{thm:weakenergy2}
The function $\rpot_{T_*}$ has weakly finite $T^*$-energy 
$$
\dnorm[T^*]{\rpot_{T_*}} \leq E_{T_*} := \frac{\dnorm[T^*]{\rpot_{\ddeg^{-1} f_* \bar T_*}}}{1-\ddeg^{-1/2}}.
$$
\end{thm}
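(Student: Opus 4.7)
The plan is to mirror the proof of Theorem \ref{thm:weakenergy1}, swapping the roles of $T^*$ and $T_*$ and replacing pullback by pushforward throughout. Define $T_n := \ddeg^{-n} f^n_* \bar T_*$ for $n \geq 0$, which is internal by Proposition \ref{prop:intext} and cohomologous to $\bar T_*$ by the invariance $f_* T_* = \ddeg T_*$ at the cohomology level.  Let $\rpot_n$ denote a potential for $T_n - \bar T_*$.  Theorem \ref{thm:pushforward} ensures $\rpot_n$ is continuous and may be chosen negative outside any neighborhood of $f^n(\exc(f^n)) \subset \ind(f^{-\infty})$.  Combined with continuity of $\rpot_{T^*}$ off $\ind(f^\infty)$ (Theorem \ref{THM:CONTINUITY}) and the fact that the hypotheses of Theorem \ref{thm:mainthm} force $\ind(f^\infty)$ and $\ind(f^{-\infty})$ to be disjoint closed discrete subsets of $\rztO$, this shows that $T_n \wedge T^*$ is a well-defined Bedford-Taylor product on every toric surface $X$, giving the analog of Corollary \ref{cor:approxmu}.

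Next I would establish the pushforward analogs of Theorem \ref{thm:cancelation}, Proposition \ref{prop:integrable}, and Corollary \ref{cor:integrability}, with $T^*$ now playing the role of the fixed ``test'' current.  The first two follow the original proofs essentially verbatim, using Corollary \ref{cor:nidinfinity} (that $T^*$ does not charge curves) together with Propositions \ref{prop:lelongtoptmass} and \ref{prop:smalllelong} to absorb point masses at $\torus$-invariant points.  The pushforward analog of Corollary \ref{cor:integrability} states that
\begin{equation*}
\int -(\rpot_m - \rpot_n)\,dd^c(\rpot_m - \rpot_n)\wedge T^* = \ddeg^{-n}\int -\rpot_{m-n}\,dd^c\rpot_{m-n}\wedge T^*.
\end{equation*}
From $T_m - T_n = \ddeg^{-n}f^n_*(T_{m-n} - \bar T_*)$ one gets $\rpot_m - \rpot_n = \ddeg^{-n} f^n_* \rpot_{m-n}$ up to an additive constant.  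The proof then parallels the original: approximate $\rpot_{m-n}$ by a smooth compactly supported $\tilde\rpot$ away from the branch locus and indeterminacy, and use the duality $\int \kappa \cdot f^n_*(\alpha)\wedge T^* = \ddeg^n \int (\kappa\circ f^n)\cdot\alpha\wedge T^*$ coming from the invariance $f^{n*} T^* = \ddeg^n T^*$, playing the role that $f^n_* T_* = \ddeg^n T_*$ played in the original.  A corresponding analog of Theorem \ref{thm:finiteenergy1} then gives $\dnorm[T^*]{\rpot_m - \rpot_n}^2 = \ddeg^{-n}\,\dnorm[T^*]{\rpot_{m-n}}^2$.

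With these ingredients the concluding argument is identical to that in the proof of Theorem \ref{thm:weakenergy1}.  Writing $\rpot_{T_*} = \sum_{j\geq 0}(\rpot_{j+1} - \rpot_j)$, integrating by parts against a smooth test function $\kappa$, and applying Cauchy-Schwarz term by term yields
\begin{equation*}
\left|\int \kappa\,dd^c\rpot_{T_*}\wedge T^*\right| \;\leq\; \norm[T^*]{\kappa}\sum_{j\geq 0}\ddeg^{-j/2}\,\dnorm[T^*]{\rpot^{(1)}} \;=\; \norm[T^*]{\kappa}\cdot E_{T_*},
\end{equation*}
where $\rpot^{(1)} := \rpot_{\ddeg^{-1} f_* \bar T_*}$.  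The main obstacle is justifying the pushforward change-of-variables identity: unlike pullback, the operator $f^n_*$ does not commute with products of smooth functions, so the smoothing-and-cutoff step requires care, working on the locus where $f^n$ acts as a finite holomorphic covering and using $f^{n*} T^* = \ddeg^n T^*$ to absorb the resulting factor of $\ddeg^n$.  Once this is handled, the remaining ingredients, being symmetric in $T^*$ and $T_*$, transfer with no further modification.
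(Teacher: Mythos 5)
Your plan---mirror the proof of Theorem \ref{thm:weakenergy1}, swapping $T^*$ and $T_*$, pullback and pushforward, with Theorem \ref{thm:pushforward} standing in for Theorem \ref{thm:pullback}---is the right one and matches the paper's approach, as do your remarks on the finite-$n$ analogs of Theorem \ref{thm:cancelation}, Proposition \ref{prop:integrable}, and the concluding telescoping/Cauchy--Schwarz estimate.

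However, there is one genuine gap, and it is precisely the point the paper singles out as the one place the two proofs differ. You never justify that $dd^c\rpot_{T_*}$ admits a wedge product with $T^*_X$ on each toric surface $X$, in the sense of Definition \ref{defn:DPSH_and_ADMIT_WEDGE}. This is part of what ``weakly finite $T^*$-energy'' means (Definitions \ref{defn:WEAKLY_FINITE_ENERGY} and \ref{DEF:WEAKLY_FINITE_ENERGY_ON_RZTO}) and is also what licenses the opening integration by parts $\int\kappa\,dd^c\rpot_{T_*}\wedge T^* = \int\rpot_{T_*}\,dd^c\kappa\wedge T^*$. You do establish that each $T_n\wedge_X T^*$ is admissible, using Theorem \ref{thm:pushforward}, but admissibility does not automatically pass to the decreasing limit $T_*\wedge_X T^*$: a decreasing sequence of functions in $L^1(T^*_X)$ need not converge in $L^1(T^*_X)$. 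In the $T^*$-case this came for free from Theorem \ref{THM:CONTINUITY} plus Proposition \ref{prop:boundedish}; that route is \emph{not} symmetric, as the paper explicitly notes that no analogous continuity result for $\rpot_{T_*}$ is known. The intended fix is Theorem \ref{thm:decapprox1}: writing $T^*_X=\omega+dd^c u$ and $T_{*,X}=\omega'+dd^c v$, the proof of Theorem \ref{thm:eqmeasureexists} already gave $u\in L^1(T_{*,X})$, and Theorem \ref{thm:decapprox1} then yields $v\in L^1(T^*_X)$; combined with continuity of potentials for $\bar T_{*,X}$ (Proposition \ref{prop:bartonpole}) this gives admissibility of $dd^c\rpot_{T_*}\wedge_X T^*$. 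As a secondary caution, your ``pushforward analog of Corollary \ref{cor:integrability}'' is more delicate than your sketch suggests: the pullback proof uses $f^{n*}(\rpot\,dd^c\rpot)=(\rpot\circ f^n)\,f^{n*}dd^c\rpot$, whereas pushforward does not respect products, so $(f^n_*\rpot)\,(f^n_*dd^c\rpot)\neq f^n_*(\rpot\,dd^c\rpot)$ and the change of variables produces cross-terms between distinct preimage branches that your argument must still handle.
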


The proof is the same as the one given for Theorem \ref{thm:weakenergy1} except that one appeals to Theorem \ref{thm:decapprox1} (in addition to Theorem \ref{THM:CONTINUITY}) to argue that $dd^c\rpot_{T_*,X}$ admits a wedge product with $T^*_X$ (see Definitions \ref{defn:DPSH_and_ADMIT_WEDGE} and \ref{defn:WEAKLY_FINITE_ENERGY}).

\begin{thm}
\label{thm:finiteenergy2}
Let $\rpot_n$ denote the potential for $\ddeg^{-n} f^n_* \bar T_* - \bar T_*$.  Then for any $n,m\geq 0$, the function $\rpot_n-\rpot_m$ has weakly finite $T^*$-energy satisfying
$$
\dnorm[T^*]{\rpot_n-\rpot_m} = \left(\int -(\rpot_n-\rpot_m)\,dd^c(\rpot_n-\rpot_m)\wedge T^*\right)^{1/2},
$$
where the integral on the right is well-defined and finite.
\end{thm}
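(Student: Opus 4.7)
The plan is to mirror the proof of Theorem \ref{thm:finiteenergy1} with the roles of the two equilibrium currents exchanged throughout. First I would set $S_n := \ddeg^{-n} f^n_* \bar T_* \in \pcc^+(\rzt)$, a positive internal current cohomologous to $\bar T_* = S_0$ by Proposition~\ref{prop:intext}. Let $\rpot_n$ be a potential for $S_n - \bar T_*$; Conclusion (1) of Theorem~\ref{thm:pushforward} tells us $\rpot_n$ is continuous and bounded off any neighborhood of the finite set $f^n(\exc(f^n))$, while Proposition~\ref{prop:bartonpole} gives continuity of local potentials for $\bar T_*$ away from $\torus$-invariant points. Combined with the observation that local potentials for $T^*$ are finite on $f^n(\exc(f^n))$---which follows from Corollary~\ref{cor:cvgceonpoles} together with internal stability, forcing $f^n(\exc(f^n)) \subset \ind(f^{-\infty})$ to be disjoint from $\ind(f^\infty)$---Proposition~\ref{prop:boundedish} establishes admissibility of the Bedford-Taylor product $S_n \wedge_X T^*$ on any toric surface $X$ sufficiently dominant that $f^n(\exc(f^n)) \subset X^\circ$.

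With these ingredients in place, I would execute the pushforward analogs of Theorem~\ref{thm:cancelation}, Proposition~\ref{prop:integrable}, and Corollary~\ref{cor:integrability}. The cancellation statement is that $\mu_n := (S_n - \bar T_*) \wedge_X T^*$ places no mass on any curve and is independent of $X$ once $X$ is dominant enough; the argument carries over verbatim using our control of $\rpot_n$ outside $f^n(\exc(f^n))$, Corollary~\ref{cor:logsings} applied to regularizations of potentials for $S_n$, and Conclusion (1) of Proposition~\ref{prop:lelongtoptmass} together with Proposition~\ref{prop:smalllelong} to show that residual point masses at $\torus$-invariant points cancel. Integrability of $\rpot_m$ with respect to $\mu_n$ then follows by an identical reproduction of the argument for Proposition~\ref{prop:integrable}. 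The key energy identity
\begin{equation*}
\int (\rpot_m - \rpot_n)\, dd^c(\rpot_m - \rpot_n)\wedge T^* = \ddeg^{-n}\int \rpot_{m-n}\, dd^c\rpot_{m-n}\wedge T^*
\end{equation*}
then follows from $dd^c(\rpot_m - \rpot_n) = \ddeg^{-n} f^n_*(dd^c\rpot_{m-n})$ combined with the invariance $f^{n*} T^* = \ddeg^n T^*$, which simultaneously shows that the integral on the right is well-defined and finite.

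The core of the argument is the weak energy bound on a toric surface, i.e.\ the analog of Lemma~\ref{lem:dnormbndonx}. Choose $X$ dominant enough that $f^N(\exc(f^N))\subset X^\circ$ for $N=\max\{m,n\}$, and write $S_{j,X} = \omega_X + dd^c u_j$. The regularizations $u_{j,\ell} := \max\{u_j,-\ell\}$ (cut off near the $\torus$-invariant points of $X$) belong to $\eclass(T^*_X)$ by Corollary~\ref{cor:logsings}, since local potentials for $T^*$ are finite on $f^N(\exc(f^N))$. Running the same regularization-plus-Cauchy--Schwarz argument with $\Delta_\ell := u_{m,\ell}-u_{n,\ell}$ yields both the upper and lower estimates on $\dnorm[T^*_X]{\rpot_m - \rpot_n}^2$ with an error bounded by $2M\sum_{p\in X\setminus X^\circ}(\bar T_* \wedge_X T^*)(p)$, where $M$ bounds $|\rpot_m-\rpot_n|$ away from $f^N(\exc(f^N))$. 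Letting $X$ increase, this error vanishes: by Theorem~\ref{thm:eqnearhom} the support function $\sfn_{T^*}$ is nearly homogeneous, so Conclusion (4) of Theorem~\ref{thm:wedgewhomogeneous}, applied with $\bar T_*$ in the role of the homogeneous current and $T^*$ in the role of the nearly homogeneous one, guarantees that $\bar T_* \wedge T^*$ has full mass on $\rztO$.

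The hard part will be the change-of-variables step in the analog of Corollary~\ref{cor:integrability}. The original proof manipulated $\rpot_{m-n}\circ f^n$, a genuine single-valued function, in tandem with $f^n_* T_* = \ddeg^n T_*$. In the dual setting, $f^n_* \rpot_{m-n}$ is a branched sum over preimages, which resists direct pointwise substitution under integration. I would handle this by restricting to a relatively compact $V \subset \torus$ disjoint from $\ind(f^n) \cup f^{-n}(f^n(\exc(f^n)))$, on which Corollary~\ref{COR:PROPER_AND_COVER} yields that $f^n$ is an unramified $\ddeg^n$-sheeted covering. On $V$, a smooth approximation $\tilde\rpot$ of $\rpot_{m-n}$ together with the projection formula $\int \phi \wedge f^n_* T = \int f^{n*}\phi \wedge T$ allows one to transport the computation to $f^{-n}(V)$, where the invariance $f^{n*} T^* = \ddeg^n T^*$ and the $\ddeg^n$-fold sheet count produce the desired identity, up to errors controlled by tails of the integrals as $V$ exhausts its complement.
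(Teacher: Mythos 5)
Your first three paragraphs follow the paper's route exactly: the paper says the proofs of Theorems \ref{thm:finiteenergy2} and \ref{cor:approxmu2} are ``essentially identical'' to those of Theorem \ref{thm:finiteenergy1} and Corollary \ref{cor:approxmu}, i.e.\ one mirrors Theorem \ref{thm:cancelation}, Proposition \ref{prop:integrable}, and Lemma \ref{lem:dnormbndonx} with $T^*$ and $T_*$ exchanged, with Theorem \ref{thm:pushforward} taking the place of Theorem \ref{thm:pullback}, with $f^n(\exc(f^n))$ in place of $\ind(f^n)$, and with finiteness of local potentials for $T^*$ off $\ind(f^\infty)$ supplying the needed admissibility. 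Your identification of Theorems \ref{thm:eqnearhom} and \ref{thm:wedgewhomogeneous}(4), with $T^*$ now in the ``nearly homogeneous'' role, as what kills the error terms when letting $X$ increase is also correct. Note, though, that the well-definedness and finiteness of $\int -(\rpot_n-\rpot_m)\,dd^c(\rpot_n-\rpot_m)\wedge T^*$ comes directly from the pushforward analog of Proposition \ref{prop:integrable}, not from any energy-rescaling identity as your second paragraph suggests.

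The real issue is your last paragraph. First, it is unnecessary for the theorem at hand: the paper's proof of Theorem \ref{thm:finiteenergy1} does not invoke Corollary \ref{cor:integrability} at all---that corollary is used only in the proof of Theorem \ref{thm:weakenergy1}, so its pushforward analog belongs to the proof of Theorem \ref{thm:weakenergy2}, not Theorem \ref{thm:finiteenergy2}. Second, the sketch you give would not close the gap even for that corollary. The pullback-side calculation in Corollary \ref{cor:integrability} rests on the multiplicativity of $f^{n*}$ on functions and forms, $(g\circ f^n)\,f^{n*}\mu = f^{n*}(g\,\mu)$, which lets one combine $(\tilde\rpot\circ f^n)\,dd^c((\chi\tilde\rpot)\circ f^n)$ into a single $f^{n*}(\cdot)$ before applying the projection formula and $f^n_*T_* = \ddeg^n T_*$. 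Pushforward is not multiplicative: $(f^n_*\rpot)\,f^n_*(dd^c\rpot)\neq f^n_*(\rpot\,dd^c\rpot)$, and $(f^n_*\rpot)\circ f^n$ is a sum over all sheets, not $\rpot$. Your transport to $f^{-n}(V)$ via the projection formula therefore does not yield the identity---the cross-sheet terms do not obviously cancel---and invoking ``the $\ddeg^n$-fold sheet count'' also confuses the dynamical degree with the topological degree $\dtop^n$ of the covering. For Theorem \ref{thm:finiteenergy2} itself, simply drop this step and proceed as in the paper.
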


\begin{cor}
\label{cor:approxmu2}
For any $n\in\N$ the toric currents $f^n_* \bar T_*$ and $T^*$ admit a wedge product.  The positive Borel measure $f^n_* \bar T_*\wedge T^*$ has (full) mass equal to $1$ on $\rztO$ and does not charge curves.  
\end{cor}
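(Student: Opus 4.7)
The plan is to mirror the proof of Theorem \ref{thm:eqmeasureexists} with the roles of $T^*$ and $T_*$ interchanged, substituting Theorems \ref{thm:weakenergy2} and \ref{thm:finiteenergy2} for the corresponding results about $T^*$. Throughout, let $S_n := \ddeg^{-n} f^n_* \bar T_*$, which is cohomologous to $\bar T_*$ by $f_*$-invariance of $\ch{\bar T_*} = \ch{T_*}$; the wedge product statement for $f^n_* \bar T_*$ reduces by scalar multiplication to the analogous statement for $S_n$.

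First I would verify admissibility on each toric surface $X$. Since $\ch{T_*}$ is K\"ahler (Proposition \ref{prop:iskahler}), choose a K\"ahler form $\omega$ on $X$ representing $\ch{T_*}$ and write $S_{n,X} = \omega + dd^c u$, $\bar T_{*,X} = \omega + dd^c v$ with $u,v \in \psh(\omega)$, so that $\rpot_n := u - v$ is a potential for $S_n - \bar T_*$. Proposition \ref{prop:bartonpole} gives continuity of $v$ on $X^\circ$, while Theorem \ref{thm:pushforward} applied to the iterate $f^n$ yields continuity of $\rpot_n$ outside any neighborhood of the finite set $f^n(\exc(f^n))$. Hence $u = v + \rpot_n$ is continuous on $X^\circ$ off a finite set. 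Proposition \ref{prop:boundedish} then gives $u \in L^1(T^*_X)$, so the Bedford--Taylor product $S_n \wedge_X T^*$ is well-defined. Since $X$ is arbitrary, $S_n$ and $T^*$ admit a wedge product in the sense of Definition \ref{DEFN:WEDGE_PRODUCT_TORIC_CURRENTS}.

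Second I would establish full mass. Theorem \ref{thm:eqnearhom} guarantees that a support function for $T^*$ is nearly homogeneous. Taking $m=0$ in Theorem \ref{thm:finiteenergy2} (so $\rpot_0 \equiv 0$) shows that $\rpot_n$ has weakly finite $T^*$-energy. Therefore Theorem \ref{thm:fullmass} applies, with the roles of $T$ and $S$ played by $S_n$ and $T^*$, and gives that $S_n \wedge T^*$ has full mass on $\rztO$ equal to $\isect{\ch{\bar T_*}}{\ch{T^*}} = 1$.

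Third I would show $S_n \wedge T^*$ charges no curve $C \subset \rztO$, following the scheme at the end of the proof of Theorem \ref{thm:eqmeasureexists}. Decompose
\[
S_n \wedge T^* \;=\; \bar T_* \wedge T^* \;+\; dd^c \rpot_n \wedge T^*.
\]
The first term charges no curve by Conclusion~(2) of Theorem \ref{thm:wedgewhomogeneous}, since $T^*$ itself charges none (Corollary \ref{cor:nidinfinity}). For the second, choose a toric surface $X$ with $C \subset X^\circ$; then $C \cap f^n(\exc(f^n))$ is finite, $\rpot_n$ is continuous and bounded on $C$ away from this finite set, and the truncation argument of Corollary \ref{cor:massonpps} used in Conclusion~(2) of Theorem \ref{thm:wedgewhomogeneous} shows that $dd^c \rpot_n \wedge_X T^*$ gives no mass to $C \setminus f^n(\exc(f^n))$. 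Any point $p \in C \cap f^n(\exc(f^n))$ lies in $\ind(f^{-\infty})$, which by internal stability is disjoint from $\ind(f^\infty)$; hence Corollary \ref{cor:nodisk} gives $\nu(T^*, p) = 0$. Combined with the weakly finite $T^*$-energy of $\rpot_n$, Proposition \ref{prop:lelongtoptmass}(2) yields $dd^c \rpot_n \wedge T^* (\{p\}) = 0$, completing the argument.

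The main obstacle is the last step at points of $f^n(\exc(f^n))$: local potentials for $T^*$ are not finite there, so Corollary \ref{cor:massonpps} alone is insufficient, and one must instead exploit the vanishing of Lelong numbers of $T^*$ on $\ind(f^{-\infty})$. This asymmetric input provided by Corollary \ref{cor:nodisk} is precisely what makes the product $f^n_*\bar T_* \wedge T^*$ behave well, even though the analogous Lelong vanishing for $T_*$ on $\ind(f^\infty)$ is not known; fortunately, only the $T^*$ side is needed here.
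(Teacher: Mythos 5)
Your argument is correct and matches the paper's intended approach, which is simply to mirror the proof of Theorem~\ref{thm:eqmeasureexists} with the roles of $T^*$ and $T_*$ exchanged, substituting Theorem~\ref{thm:finiteenergy2} for Theorem~\ref{thm:weakenergy1} (the paper routes this through Corollary~\ref{cor:approxmu}). Your use of Corollary~\ref{cor:nodisk} to get $\nu(T^*,p)=0$ at the bad points is a valid alternative to the route via Corollary~\ref{cor:cvgceonpoles} that the paper uses on the $T_*$ side.

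A small correction to your closing commentary, though: the obstacle at a point $p\in f^n(\exc(f^n))$ is that local potentials for $\ddeg^{-n}f^n_*\bar T_*$ (the psh function you are truncating), not for $T^*$, are unbounded there --- that is what keeps Corollary~\ref{cor:massonpps} from covering those finitely many points. Local potentials for $T^*$ are in fact \emph{finite} at those points, which is precisely why $\nu(T^*,p)=0$ and Proposition~\ref{prop:lelongtoptmass}(2) finishes the job; as written, your sentence contradicts the very Lelong vanishing you then invoke. Relatedly, the vanishing $\nu(T_*,p)=0$ for $p\in\ind(f^\infty)$ is not an open question: it follows from Corollary~\ref{cor:cvgceonpoles} (restriction of $\rpot_{T_*}$ to the pole through $p$ is finite off $\ind(f^{-\infty})$), and this is exactly what the proof of Theorem~\ref{thm:eqmeasureexists} uses. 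What the remark after Corollary~\ref{cor:nodisk} leaves open is vanishing of $\nu(T_*,\cdot)$ at \emph{all} points of $\rztO\setminus\ind(f^{-\infty})$, including points of $\torus$; that stronger statement is not needed in either direction here.
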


The proofs of Theorem \ref{thm:finiteenergy2} and Corollary \ref{cor:approxmu2} are essentially identical to those of Theorem \ref{thm:finiteenergy1} and Corollary \ref{cor:approxmu}.

\section{Equilibrium measure: dynamics and geometry}
\label{sec:mixing}
In this section we investigate the dynamical and geometric properties of the measure $\mu = T^*\wedge T_*$ from Theorem \ref{thm:eqmeasureexists}, proving Conclusions (2) and (3) in Theorem \ref{thm:mainthm} among other things.  As before, $f$ denotes a toric map satisfying the hypotheses of Theorem \ref{thm:mainthm}.

\begin{prop}
$\mu$ is $f$-invariant.
\end{prop}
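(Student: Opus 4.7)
The plan is to verify $f_*\mu = \mu$ via a projection-formula argument of the type used in \cite{BeSm91,DDG11}. Specifically, I aim to prove the identity
\begin{equation}\label{eqn:projectionid}
f_*\bigl(f^*T^* \wedge T_*\bigr) \;=\; T^* \wedge f_*T_*,
\end{equation}
whence the eigenequations $f^*T^* = \ddeg\, T^*$ and $f_*T_* = \ddeg\, T_*$ (immediate consequences of Theorem \ref{thm:invcurrentsexist}) give $\ddeg\, f_*\mu = \ddeg\, \mu$ and hence $f_*\mu = \mu$. That $f_*\mu$ is meaningful as a Borel probability measure follows from $\mu$-negligibility of $\ind(f)$: by Theorem \ref{thm:eqmeasureexists}, $\mu$ charges no curve in $\rztO$, and $\ind(f)$ is a finite subset of the finite union of internal curves comprising $\exc(f)$ (Theorem \ref{thm:tmapbasics}), so $\mu(\ind(f))=0$; properness of $f$ (Corollary \ref{COR:PROPER_AND_COVER}) then ensures $f_*\mu$ has the same total mass as $\mu$.

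To establish \eqref{eqn:projectionid}, I would work locally on small open sets $V \subset \rztO$ disjoint from the closed, discrete set $\ind(f^\infty) \cup f(\exc(f))$. On such $V$, Theorem \ref{THM:CONTINUITY} together with Proposition \ref{prop:bartonpole} provides a continuous local potential $u$ for $T^*$, and shrinking $V$ further arranges that $f^{-1}(V)$ is a disjoint union of finitely many relatively compact open sets, on each of which $f$ restricts to a finite branched cover whose branch locus lies in poles. Since $T_*$ does not charge poles (Corollary \ref{cor:nidinfinity}) and $u \circ f$ is a continuous local potential for $f^*T^*$ on $f^{-1}(V)$, the Bedford-Taylor construction yields
\begin{equation*}
f^*T^* \wedge T_* = dd^c\!\bigl((u\circ f)\,T_*\bigr) \text{ on } f^{-1}(V), \qquad T^* \wedge f_*T_* = dd^c\!\bigl(u\cdot f_*T_*\bigr) \text{ on } V.
\end{equation*}
A direct check against test forms, using that $T_*$ ignores the branch locus, gives the multiplicative projection identity $f_*\bigl((u\circ f)\,T_*\bigr) = u\cdot f_*T_*$, and commutation of $f_*$ with $dd^c$ then produces \eqref{eqn:projectionid} on $V$.

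The main obstacle is propagating the local identity \eqref{eqn:projectionid} across the excluded set $\ind(f^\infty) \cup f(\exc(f))$, which under our standing hypotheses is at most countable and discrete in $\rztO$. Both sides of \eqref{eqn:projectionid} are Borel measures, and I would argue that neither charges the excluded set: the right-hand side assigns no mass to any curve by Theorem \ref{thm:eqmeasureexists} and hence none to any countable subset of points contained in curves; for the left-hand side, which equals $\ddeg\, f_*\mu$, one uses that the $f$-preimage of a curve in $\rztO$ is again contained in a union of curves, so $f_*\mu$ inherits from $\mu$ the property of charging no curve in the complement of $\ind(f)$. With both sides of \eqref{eqn:projectionid} thus determined by their restrictions to the complement of a jointly null set, the local equality extends to a global identity of Borel measures on $\rztO$, completing the proof of invariance.
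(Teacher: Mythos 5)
Your route is sound and rests on the same core ingredients as the paper's proof: continuity of local potentials for $T^*$ (Theorem \ref{THM:CONTINUITY} plus Proposition \ref{prop:bartonpole}), the adjoint relation between $f_*$ on order-zero currents and pullback on continuous forms for the unbranched covering $f|_\torus$, the fact that $\mu$ charges no curves, and the eigenequations $f^*T^*=\ddeg T^*$, $f_*T_*=\ddeg T_*$. The organizational difference is real, though. You prove a projection formula $f_*(f^*T^*\wedge T_*)=T^*\wedge f_*T_*$ locally on small open sets $V$ disjoint from $\ind(f^\infty)\cup f(\exc(f))$, using a local Bedford-Taylor potential $u$ for $T^*$ on $V$ and the multiplicative identity $f_*((u\circ f)T_*)=u\,f_*T_*$, then patch across the excluded discrete null set. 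The paper shortcuts all of this by using a single \emph{global} continuous potential $g=\sfn_{\bar T^*}\circ\Log+\rpot_{T^*}$ for $T^*|_\torus$; it then tests $\mu$ against a smooth compactly supported $\kappa$, restricts integration to $\torus\setminus f^{-1}(f(\ind(f)))$ (a full-measure set since $\mu$ charges no curves), and runs one chain of integral equalities: substitute $\mu=\ddeg^{-1}f^*T^*\wedge T_*$, integrate by parts using $g\circ f$, apply the covering adjoint to pass from $T_*$ to $\ddeg^{-1}f_*T_*=T_*$, then integrate by parts again. The global-potential route avoids both the local patching and the null-set extension, making the argument shorter; your version is more elaborate but buys nothing extra here. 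One minor nit: the remark that one uses ``$T_*$ ignores the branch locus'' to justify $f_*((u\circ f)T_*)=u\,f_*T_*$ is superfluous---that identity holds for any proper holomorphic map, any order-zero current, and any continuous $u$, with no mention of branching needed---though it does no harm.
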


\begin{proof}
It will suffice to show that $\int (\kappa \circ f) \,d\mu = \int \kappa\,d\mu$ for
any smooth, compactly supported function $\kappa:\rzt^\circ\to \R$.  Since
$\mu$ does not charge curves, we can further assume that $\supp\kappa \subset
\torus \setminus f(\ind(f))$.  Hence $\kappa\circ f$ is also smooth and
compactly supported in $\torus\setminus f^{-1}(f(\ind(f))$.  Let $K\subset
\torus$ be a compact set containing both $\supp\kappa$ and $\supp\kappa\circ f$
in its interior.  Recall that $T^*|_{\torus} = dd^c g$ on $\torus$ where $g =
\sfn_{\bar T^*}\circ \Log + \rpot_{T^*}$ is continuous by Theorem \ref{THM:CONTINUITY}.  Hence
\begin{eqnarray*}
\int_{\rztO} (\kappa\circ f)\,d\mu 
& = & 
\int_{\torus\setminus f^{-1}(f(\ind(f)))} (\kappa \circ f)\, \frac{f^*T^*}{\ddeg} \wedge T_* 
= 
\frac{1}{\ddeg} \int_{\torus\setminus f^{-1}(f(\ind(f)))} (\kappa\circ f)\, dd^c (g\circ f)\wedge T_* \\
& = & 
\frac{1}{\ddeg} \int_{\torus\setminus f^{-1}(f(\ind(f)))} (g\circ f)\,dd^c(\kappa\circ f)\wedge T_*
=
\int_{\torus\setminus f(\ind(f))} g\,dd^c\kappa\wedge \frac{f_*T_*}{\ddeg} \\
& = & \int_{\torus\setminus f(\ind(f))} \kappa \, dd^c g \wedge T_* = \int_{\rztO} \kappa\,d\mu.
\end{eqnarray*}
The fourth equality holds because $T_*$ is a current of order zero; hence pushing forward $T_*$ by the holomorphic covering $f:\torus\setminus f^{-1}(f(\ind(f)))\to \torus\setminus f(\ind(f))$ is adjoint to pulling back continuous forms with compact support.
\end{proof}

\subsection{Mixing}
Next we set about proving that $\mu$ is mixing, closely following a strategy originating in \cite{BeSm91} and refined in \cite{Sib99} (see particularly the proof and application of Corollary 2.2.13) and elsewhere. The particular context here leads to some technical modifications of that argument, so we nevertheless give the full proof.

\begin{thm}
\label{thm:ergodiccurrent}
Let $\alpha$ be a smooth function with compact support on $\rzt^\circ$.  Then we have weak convergence
$$
\lim_{n\to\infty} (\alpha\circ f^n)\,T^* = T^*\cdot\int\alpha\,\mu.
$$
\end{thm}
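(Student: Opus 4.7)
I plan to prove the weak convergence of currents by pairing both sides against a smooth compactly supported $(1,1)$ test form $\Omega$ on $\rztO$, aiming to show
\begin{equation*}
I_n(\Omega) := \int (\alpha\circ f^n)\, T^*\wedge \Omega \;\longrightarrow\; \left(\int\alpha\,d\mu\right) \cdot \int T^*\wedge\Omega.
\end{equation*}
The central manipulation combines the invariance $\ddeg^{-n} f^{n*} T^* = T^*$ (a consequence of internal stability) with adjointness of pullback and pushforward:
\begin{equation*}
I_n(\Omega) \;=\; \ddeg^{-n}\int f^{n*}(\alpha T^*)\wedge \Omega \;=\; \ddeg^{-n}\int \alpha\, T^* \wedge f^n_* \Omega.
\end{equation*}
Hence it suffices to prove $\ddeg^{-n}\, T^*\wedge f^n_*\Omega \to \bigl(\int T^*\wedge\Omega\bigr)\,\mu$ weakly as measures.

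To handle this, I would fix a toric surface $X$ with $\supp\alpha\cup\supp\Omega\subset X^\circ$ and decompose $\Omega = \omega + dd^c\beta$ on $X$, where $\omega$ is a smooth closed $(1,1)$-form representing the class of $\Omega$ on $X$ and $\beta\in C^\infty(X)$. Writing $\omega = \omega_+ - \omega_-$ as a difference of K\"ahler-class representatives, each $\omega_\pm$ extends by zero to an internal positive closed current on $\rzt$, and Theorem~\ref{thm:invcurrentsexist} gives $\ddeg^{-n}\,f^n_*\omega_\pm \to \isect{\ch{\omega_\pm}}{\ch{T^*}}\,T_*$ in the sense of internal currents. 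Because $T^*$ has continuous local potentials off the discrete set $\ind(f^\infty)$ (Theorem~\ref{thm:ctyresult}), wedging with $T^*$ is continuous for this convergence in the Bedford--Taylor framework of \S\ref{sec:products}, giving $\ddeg^{-n}\,T^*\wedge f^n_*\omega \to \isect{\ch{\omega}}{\ch{T^*}}\,\mu$. Cohomological invariance then identifies $\isect{\ch{\omega}}{\ch{T^*}} = \int T^*\wedge\omega = \int T^*\wedge\Omega$, since $T^*$ is closed and $\beta$ is a function on the compact surface $X$. For the exact component $dd^c\beta$, the commutation $f^n_*\circ dd^c = dd^c\circ f^n_*$, integration by parts (justified by compact support of $\alpha$), and the pointwise bound $|f^n_*\beta|\leq \dtop^n\|\beta\|_\infty$ yield
\begin{equation*}
\left|\ddeg^{-n}\int \alpha\, T^*\wedge f^n_*(dd^c\beta)\right| = \left|\ddeg^{-n}\int (f^n_*\beta)\, dd^c\alpha\wedge T^*\right| \leq C(\dtop/\ddeg)^n \to 0,
\end{equation*}
using the small-topological-degree hypothesis $\dtop<\ddeg$.

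The main obstacle is the cohomological decomposition of a general, non-$d$-closed smooth $(1,1)$-form $\Omega$ on the compact K\"ahler surface $X$: the $\partial\bar\partial$-Lemma provides a decomposition $\Omega = \omega + dd^c\beta$ with $\omega$ closed only when $\Omega$ is already closed. My intended workaround is to exploit the uniform bound $|I_n(\Omega)|\leq \|\alpha\|_\infty \int T^*\wedge|\Omega|$ (via $|\Omega|\leq C\omega_X$ for a fixed K\"ahler form $\omega_X$ on $X$) to reduce, by continuity in $\Omega$, to the dense subclass of test forms that do admit such a decomposition (namely sums of smooth positive closed compactly supported forms and $dd^c$-exact forms). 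A subsidiary point is to verify that the convergence $\ddeg^{-n}\,T^*\wedge f^n_*\omega_\pm \to \isect{\ch{\omega_\pm}}{\ch{T^*}}\,\mu$ holds as measures on the non-compact surface $\rztO$; this follows from Corollary~\ref{COR:PROPER_AND_COVER} and Theorem~\ref{thm:ctyresult} after restricting to a toric surface $X$ and checking that point masses at $\torus$-invariant points shrink to zero as $X$ grows, in parallel with the mass analysis of \S\ref{sec:measure}.
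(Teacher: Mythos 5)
The pairing computation
$$
I_n(\Omega) \;=\; \ddeg^{-n}\int \alpha\, T^*\wedge f^n_*\Omega
$$
is a reasonable place to start, but the subsequent decomposition $\Omega = \omega + dd^c\beta$ leaves a genuine gap.  Both $\omega$ and $dd^c\beta$ are $d$-closed, so this decomposition is only available when $\Omega$ itself is a closed $(1,1)$-form.  You notice this and propose to fall back on a density argument, but that fallback cannot work: the set of closed test forms is \emph{not} dense in all compactly supported smooth $(1,1)$-forms in any topology for which your uniform bound $|I_n(\Omega)|\le\|\alpha\|_\infty\int T^*\wedge|\Omega|$ would let you pass to the limit.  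Indeed the space of closed forms is itself closed in $C^0$ (if $\Omega_k\to\Omega$ in $C^0$ with $d\Omega_k = 0$, then $d\Omega = 0$ in the sense of currents, hence pointwise since $\Omega$ is smooth), and it is a proper subspace.  Moreover, even if you could handle all closed $\Omega$, that would only show that every weak limit point of $(\alpha\circ f^n)T^*$ pairs the same as $cT^*$ against closed forms; this determines the cohomology class but not the current itself (two cohomologous positive closed currents pair identically against all closed forms yet can differ).  So a direct pairing argument must also handle the non-closed part of the test form, and your proposal does not.

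The paper's proof is organized around precisely this difficulty, and its resolution is substantially different from what you propose.  Rather than decompose the test form, it sets $S_n = (\alpha\circ f^n)T^*$ and argues subsequentially: Lemma~\ref{lem:limitclosed} shows $dS_n\to 0$ (using the energy identity $\int d(\alpha\circ f^n)\wedge d^c(\alpha\circ f^n)\wedge T^* = (\dtop/\ddeg)^n\norm[T^*]{\alpha}$, which exploits the \emph{pullback} invariance $f^{n*}T^*=\ddeg^n T^*$ and the covering degree $\dtop^n$), so every weak limit point $S$ of $(S_n)$ is a closed positive current; Lemma~\ref{lem:limitinvariance} shows the set $\mathcal{S}$ of limit points is invariant under $\ddeg^{-1}f^*$; Lemma~\ref{lem:limitmass} shows $\isect{S}{T_*}=\int\alpha\,\mu$ for all $S\in\mathcal{S}$; and finally, the uniqueness theorem \cite[Theorem 10.1]{DiRo24} is invoked in an amplified form (uniform on compact subsets of $\pcc^+(\rzt)$) together with the invariance of $\mathcal{S}$ to conclude $\mathcal{S}=\{cT^*\}$.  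Your proposal has no analogue of this final uniqueness mechanism, and it is exactly what is needed to go from ``cohomologically equal to $cT^*$'' to ``equal to $cT^*$.''

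Two secondary points.  First, the claim that wedging with $T^*$ is continuous under weak convergence of $\ddeg^{-n}f^n_*\omega_\pm$ needs care: $T^*$ has continuous local potentials only off the discrete set $\ind(f^\infty)$, so continuity of the wedge product is available only away from that set, and one must separately rule out mass escaping to it (the paper's Lemma~\ref{lem:limitmass} avoids this by working with $\bar T_*$ and the energy estimates of Corollary~\ref{COR:WEAK_ENERGY_BND_TAIL}).  Second, if you were to try a full Hodge decomposition of $\Omega$ (closed part plus co-exact part $d^*\psi$), the pushforward of a $1$- or $3$-form by $f^n$ does not obey the $\dtop^n$ bound you use for the function $\beta$; the Jacobian factors make such forms harder to control, which is one reason the paper works with energies rather than raw pushforwards of forms.
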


Before beginning the proof let us observe that since $T^*$ does not charge points, $S_n:=(\alpha\circ f^n) T^*$ is a well-defined (non-closed) $(1,1)$ current of order $0$, acting on continuous $(1,1)$ forms $\beta$ via
$$
\pair{\beta}{S_n} := \int_{\rzt^\circ} (\alpha\circ f^n) \,\beta\wedge T^*.
$$  
By Corollary \ref{COR:PROPER_AND_COVER}, $f:\rzt^\circ\tto\rzt^\circ$ is proper, so $S_n$ is compactly supported.  We may assume that $0\leq \alpha \leq 1$ so that $0\leq S_n \leq T^*$ is positive.  Since $T^*$ has finite total mass in any toric surface, the sequence $(S_n)_{n\in\N}$ has uniformly bounded mass on any compact subset of $\rzt^\circ$.  Hence $(S_n)$ is pre-compact in the weak topology.  We let $\mathcal{S}$ denote the set of all its limit points.

\begin{lem}
\label{lem:limitclosed}
Every element of $\mathcal{S}$ is closed.
\end{lem}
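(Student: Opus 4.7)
My plan is to show that $dS_n \to 0$ as currents on $\rzt^\circ$; since exterior differentiation is continuous on currents, every weak limit $S\in\mathcal{S}$ will then automatically satisfy $dS=0$. To begin, since $T^*$ is closed and charges neither any curve (Corollary \ref{cor:nidinfinity}) nor any discrete subset of $\rzt^\circ$, the product $(\alpha\circ f^n)\,T^*$ is well defined as a $(1,1)$ current of order zero, and in the distributional sense
$$dS_n = d\big((\alpha\circ f^n)\,T^*\big) = d(\alpha\circ f^n)\wedge T^*.$$

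Testing against an arbitrary smooth compactly supported $1$-form $\beta$ on $\rzt^\circ$ and applying Cauchy--Schwarz (for $T^*$ as a positive Hermitian form on $1$-forms) to the two surviving bidegree pieces of $\beta\wedge d(\alpha\circ f^n)$ should yield
$$\Big|\int \beta\wedge d(\alpha\circ f^n)\wedge T^*\Big|^2 \;\leq\; C_\beta\,\nrg{\alpha\circ f^n,\alpha\circ f^n}_{T^*},$$
for some finite constant $C_\beta$ depending only on $\beta$ and the trace measure of $T^*$ on $\supp\beta$.

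The heart of the matter will be to bound the energy factor on the right. Using the backward invariance $f^{n*}T^*=\ddeg^n T^*$ together with the fact (Corollary \ref{COR:PROPER_AND_COVER}) that $f^n$ restricts to a $\dtop^n$-to-$1$ holomorphic covering of $\torus$ minus an analytic set of $T^*$-measure zero, I would compute
$$\nrg{\alpha\circ f^n,\alpha\circ f^n}_{T^*} = \int f^{n*}(d\alpha\wedge d^c\alpha)\wedge T^* = \ddeg^{-n}\int f^{n*}\big(d\alpha\wedge d^c\alpha\wedge T^*\big) = \Big(\tfrac{\dtop}{\ddeg}\Big)^n \nrg{\alpha,\alpha}_{T^*},$$
which tends to $0$ since $\dtop(f)<\ddeg(f)$. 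Together with the previous Cauchy--Schwarz bound this gives $|\langle dS_n,\beta\rangle|\to 0$ for every test $1$-form $\beta$, so every $S\in\mathcal{S}$ is closed.

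The main obstacle will be carefully justifying this energy computation on $\rzt^\circ$: the identity $f^{n*}T^*=\ddeg^n T^*$, the rearrangement $f^{n*}\omega\wedge f^{n*}T^* = f^{n*}(\omega\wedge T^*)$, and the change of variables $\int f^{n*}\omega = \dtop^n\int\omega$ all require delicate manipulation of $T^*$ near the indeterminacy, exceptional, and ramification loci of $f^n$. The argument will rest on $T^*$ charging neither curves nor points, so that these loci are $T^*$-negligible and the computation reduces to the open set where $f^n$ is a genuine unramified finite covering, where the standard pullback/pushforward identities apply.
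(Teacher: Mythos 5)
Your proposal follows essentially the same route as the paper: test $dS_n$ against a compactly supported $1$-form, use Cauchy--Schwarz with the positive current $T^*$, and show the energy $\nrg{\alpha\circ f^n,\alpha\circ f^n}_{T^*}$ decays like $(\dtop/\ddeg)^n$ via the invariance $f^{n*}T^*=\ddeg^n T^*$ and the covering degree $\dtop^n$. The paper handles the technical concerns you flag at the end by first reducing to $\torus$: since any $S\in\mathcal{S}$ is positive and dominated by $T^*$ (which has locally finite mass and does not charge curves), the Skoda--El~Mir theorem says closedness of $S$ in $\rzt^\circ$ follows from closedness in $\torus$; on $\torus$, where $\ind(f^n)\cap\torus=\emptyset$, the function $\alpha\circ f^n$ is genuinely smooth and the integration by parts is unproblematic, so the only remaining issue is that $T^*$ not charge $f^n(\ind(f^n))$ when passing to the covering. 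Making that reduction explicit is worth doing, but your computation is the right one.
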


\begin{proof}
Any current $S\in \mathcal{S}$ is positive and bounded above by $T^*$.  In particular $S$ does not charge curves in $\rzt^\circ$.  So in order to show $S$ is closed, it suffices by the Skoda-El Mir Theorem and the fact that $T^*$ has locally finite mass on $\rzt^\circ$ to show only that $S$ is closed in $\torus$.

Fix a smooth real and compactly supported $1$-form $\gamma$ on $\torus$.  Since $\alpha\circ f^n$ is smooth on $\torus$, we can estimate using Schwarz's inequality:
\begin{eqnarray*}
\left|\int \gamma\,d((\alpha\circ f^n) T^*)\right|^2 
\leq  
\left(\int \gamma\wedge J\gamma\wedge T^*\right)
\left(\int d(\alpha\circ f^n)\wedge d^c(\alpha\circ f^n) \wedge T^*\right),
\end{eqnarray*}
where $J$ denotes the complex structure operator on the real cotangent bundle of $\rzt^\circ$.  Additionally, since $T^*$ does not charge $f^n(\ind(f^n))$ and 
$f^n:\torus\setminus f^{-n}(f^n(\ind(f^n))) \to \torus\setminus f^n(\ind(f^n))$ 
is a finite degree holomorphic covering (Corollary \ref{COR:PROPER_AND_COVER}), we have
\begin{eqnarray*}
\int d(\alpha\circ f^n)\wedge d^c(\alpha\circ f^n) \wedge T^* 
& = &
\frac{1}{\ddeg^n}\int_{\torus\setminus f^{-n}(f^n(\ind(f^n)))} f^{n*} (d\alpha\wedge d^c\alpha\wedge T^*) \\
& = &
\frac{\dtop^n}{\ddeg^n} \int_{\torus\setminus f^n(\ind(f^n))} d\alpha\wedge d^c\alpha\wedge T^*
= 
\frac{\dtop^n}{\ddeg^n} \norm[T^*]{\alpha},
\end{eqnarray*}
where $\norm[T^*]{\alpha}$ is defined by \eqref{eqn:toricenergy}.  All told, we see that 
$$
\int \gamma\wedge d((\alpha\circ f^n)\wedge T^*) \leq C \left(\frac{\dtop}{\ddeg}\right)^{n/2} 
\underset{n\to\infty}\longrightarrow 0.
$$
It follows that any current $S\in\mathcal{S}$ is closed in $\torus$, hence as explained above, also closed in $\rzt^\circ$.
\end{proof}

\begin{lem}
\label{lem:limitinvariance}
$\ddeg^{-1}f^*\mathcal{S} = \mathcal{S}$.
\end{lem}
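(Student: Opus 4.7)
The plan is to reduce the lemma to the identity $S_{n+1}=\ddeg^{-1}f^{*}S_{n}$ and then pass to limits. First I would verify this identity. On the dense open set $U=\torus\setminus f^{-1}(f(\ind(f)))$ where $f$ restricts to a finite holomorphic covering (Corollary \ref{COR:PROPER_AND_COVER}) and $\alpha\circ f^{n}$ is smooth on $f(U)$, functoriality of pullback for holomorphic maps gives
\[
f^{*}\bigl((\alpha\circ f^{n})T^{*}\bigr)=(\alpha\circ f^{n+1})\,f^{*}T^{*}=\ddeg(\alpha\circ f^{n+1})T^{*}=\ddeg\,S_{n+1},
\]
using the invariance $f^{*}T^{*}=\ddeg T^{*}$. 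Since $S_{n},S_{n+1}\leq T^{*}$ and $T^{*}$ does not charge curves (Corollary \ref{cor:nidinfinity}), while $\rztO\setminus U$ is a union of curves together with the poles, the identity extends by trivial extension to all of $\rztO$.

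Next I would observe that shifting the index leaves the weak cluster set of a precompact sequence unchanged, so $\mathcal{S}$ is equally the cluster set of $(S_{n+1})_{n\geq 0}=(\ddeg^{-1}f^{*}S_{n})_{n\geq 0}$. Both inclusions $\ddeg^{-1}f^{*}\mathcal{S}\subset\mathcal{S}$ and $\mathcal{S}\subset\ddeg^{-1}f^{*}\mathcal{S}$ then reduce, by pre-compactness and subsequence extraction, to a single continuity claim: whenever $S_{n_{k}}\to S$ weakly, one has $\ddeg^{-1}f^{*}S_{n_{k}}\to\ddeg^{-1}f^{*}S$. For the forward inclusion I would take $S\in\mathcal{S}$ with $S_{n_{k}}\to S$, pass to a further subsequence so that $S_{n_{k}+1}\to S''$, and apply the continuity claim to conclude $S''=\ddeg^{-1}f^{*}S\in\mathcal{S}$. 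For the reverse inclusion, starting from $S_{n_{k}}\to S'$ with $n_{k}\geq 1$, I would pass to a subsequence so that $S_{n_{k}-1}\to S\in\mathcal{S}$ and obtain $S'=\ddeg^{-1}f^{*}S$ the same way.

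Testing against a smooth compactly supported $(1,1)$ form $\beta$ on $\rztO$, the continuity claim amounts to $\int f_{*}\beta\wedge S_{n_{k}}\to\int f_{*}\beta\wedge S$. The pushforward $f_{*}\beta$ is smooth off the branch locus of $f$, which is contained in the union of poles $\rztO\setminus\torus$; since each $S_{n_{k}}$ and $S$ is dominated by $T^{*}$ and $T^{*}$ does not charge curves, singularities of $f_{*}\beta$ along the branch locus do not affect the pairing, and the required convergence follows from weak convergence $S_{n_{k}}\to S$ tested against truly smooth forms.

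The main obstacle I expect is precisely this continuity step. The subtlety is that the $S_{n}$ are not closed, so arguments based on $L^{1}_{\text{loc}}$-convergence of potentials do not apply directly, and $f_{*}\beta$ is a genuinely singular form along the ramification locus of $f$. The leverage comes from the uniform domination $S_{n}\leq T^{*}$ together with the regularity of $T^{*}$ furnished by Theorem \ref{THM:CONTINUITY} and Corollary \ref{cor:nidinfinity}; these let us excise a neighborhood of the branch locus without losing mass in the limit, reducing the verification to pairing against forms that are actually smooth on $U$.
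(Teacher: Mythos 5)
Your proposal is correct and is essentially the same argument as the paper's, merely organized a bit differently: the identity $\ddeg^{-1}f^*S_n=S_{n+1}$ plus the observation that testing against $\beta$ is the same as testing $S_{n}$ against $f_*\beta$ is exactly the chain of equalities the paper uses, with the same two ingredients (Corollary \ref{COR:PROPER_AND_COVER} to get a smooth compactly supported $f_*\beta$, and domination by $T^*$ to discard mass on curves). Your factoring through an explicit ``continuity claim'' $S_{n_k+1}\to\ddeg^{-1}f^*S$ is a harmless repackaging, since $\ddeg^{-1}f^*S_{n_k}$ \emph{is} $S_{n_k+1}$, so the claim is just the convergence the paper establishes directly.

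One small imprecision worth flagging: in your final paragraph you locate the singularities of $f_*\beta$ along the branch locus of $f$, which you correctly place in $\rztO\setminus\torus$. But for a general test form $\beta$ the relevant singular set of $f_*\beta$ is $f(\ind(f))$, which by Theorem \ref{thm:tmapbasics}(1) is a finite union of \emph{internal} curves and so does meet $\torus$. The fix, though, is the same one you gesture at and the paper makes explicit: since every current in sight is $\leq T^*$ (resp.\ $\leq\ddeg T^*$) and $T^*$ charges no curves, you may assume $\beta$ is compactly supported in $\torus\setminus f^{-1}(f(\ind(f)))$; then Corollary \ref{COR:PROPER_AND_COVER}(2) and properness make $f_*\beta$ genuinely smooth and compactly supported in $\torus\setminus f(\ind(f))$, and the weak convergence $S_{n_k}\to S$ applies directly with no excision needed.
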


\begin{proof} 
Let $(S_{n_j})\subset (S_n)$ be a subsequence such that $S_{n_j}\to S$ and $S_{n_j+1}\to S'$.  It suffices to show that $f^* S = \ddeg S'$, i.e. that $\int \beta \wedge f^* S = \ddeg \int \beta\wedge S'$ for any smooth compactly supported $(1,1)$ form $\beta$ on $\rzt^\circ$.  Since $S$, $S'$ and $f^*S$ are all dominated by $\ddeg T^*$, none of them charges curves.  So we can further assume that $\beta$ is compactly supported in $\torus\setminus f^{-1}(f(\ind(f))$.  Corollary \ref{COR:PROPER_AND_COVER} then implies that $f_*\beta$ is smooth and compactly supported in $\torus\setminus f(\ind(f))$.  Hence
\begin{eqnarray*}
\int_{\rzt^\circ} \beta \wedge f^*S 
& = &
\int_{\torus \setminus f^{-1}(f(\ind(f)))} \beta \wedge f^*S
=
\int_{\torus \setminus f(\ind(f))} f_*\beta \wedge S \\
& = &
\lim_{j\to\infty} \int_{\torus \setminus f(\ind(f))} f_*\beta \wedge S_{n_j}
=
\ddeg \lim_{j\to\infty}\int_{\torus \setminus f^{-1}(f(\ind(f)))} \beta \wedge S_{n_j+1} \\
& = &
\ddeg \int_{\torus \setminus f^{-1}(f(\ind(f)))} \beta \wedge S' = \ddeg \int_{\rztO} \beta\wedge S'.
\end{eqnarray*}
\end{proof}

\begin{lem}
\label{lem:limitmass}
Every $S\in\mathcal{S}$ satisfies $\isect{S}{T_*} = \int \alpha\,\mu$.  
\end{lem}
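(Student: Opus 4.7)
The identity $\isect{S}{T_*} = \int \alpha\,d\mu$ expresses the Bedford-Taylor mass of the wedge $S \wedge T_*$ for $S \in \mathcal{S}$.  Since $0 \le S \le T^*$, this product is well-defined by Theorem \ref{thm:eqmeasureexists}.  The strategy is to compute the mass along the defining sequence $S_n = (\alpha \circ f^n)T^*$ using the just-established $f$-invariance of $\mu$, and then to pass to the weak limit.

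Because $\alpha \circ f^n$ is bounded and continuous off the $\mu$-negligible set $f^{-n}(\ind(f))$, the Bedford-Taylor wedge satisfies $S_n \wedge T_* = (\alpha \circ f^n)\,\mu$ at the level of signed Radon measures.  The $f$-invariance of $\mu$ from Proposition 9.1 then yields
\[
\int S_n \wedge T_* = \int (\alpha \circ f^n)\,d\mu = \int \alpha\,d\mu
\qquad\text{for every } n \ge 0.
\]
It remains to pass this identity to a weak limit $S_{n_k} \to S$ in $\mathcal{S}$.

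The difficulty is that $T_*$ may fail to have continuous local potentials at points of $\ind(f^{-\infty})$, so wedging with $T_*$ is not continuous under weak convergence of currents.  To sidestep this, I would approximate $T_*$ by $T_{*,m} := \ddeg^{-m} f_*^m \bar T_*$: by Theorem \ref{thm:pushforward} these have potentials that are continuous and bounded off $f(\exc(f))$, and by Corollary \ref{cor:approxmu2} each $T^* \wedge T_{*,m}$ is a probability measure on $\rztO$.  For fixed $m$, the continuity of the Bedford-Taylor wedge when one factor has continuous potentials gives $\int S_{n_k} \wedge T_{*,m} \to \int S \wedge T_{*,m}$ as $k \to \infty$.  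To interchange this limit with $m \to \infty$ I would establish a uniform tail bound of the form
\[
\bigl|\int S_n \wedge (T_* - T_{*,m})\bigr| \le C(\alpha)\cdot E_{T_*}\cdot \ddeg^{-m/2},
\]
obtained by combining the symmetric analog of Corollary \ref{COR:WEAK_ENERGY_BND_TAIL}(1) for $T_*$ (whose weak-energy bound $\dnorm[T^*]{\rpot_{T_*} - \rpot_{T_{*,m}}} \le E_{T_*}\,\ddeg^{-m/2}$ follows from Theorem \ref{thm:weakenergy2}) with an integration by parts.  Taking first $k \to \infty$ and then $m \to \infty$ closes the argument.

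The hard part will be the uniform-in-$n$ tail estimate, since $S_n$ is not closed and Corollary \ref{COR:WEAK_ENERGY_BND_TAIL} does not apply to it directly.  The remedy is to exploit the decay of the closed-ness defect of $S_n$ already used in the proof of Lemma \ref{lem:limitclosed}: the Cauchy-Schwarz bound $\int d(\alpha \circ f^n)\wedge d^c(\alpha \circ f^n)\wedge T^* = (\dtop/\ddeg)^n \|\alpha\|_{T^*}^2$ provides control of $dd^c S_n$ against smooth test functions, and coupling this with the weak-energy bound on $\rpot_{T_*} - \rpot_{T_{*,m}}$ produces the required uniform tail estimate.  This technical step is the heart of the proof; the rest is bookkeeping.
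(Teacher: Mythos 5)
Your route is genuinely different from the paper's, and the core idea is sound.  The paper never wedges with $T_*$ directly; instead it computes $\int S_n\wedge \bar T_*$, establishing by a cutoff-and-integration-by-parts argument the identity $\int S_n\wedge\bar T_* = \int\alpha\,T^*\wedge \ddeg^{-n}f^n_*\bar T_*$ (a ``shifted invariance'' statement valid with the homogeneous $\bar T_*$, which is not $f_*$-invariant), and then applies the swapped Corollary~\ref{COR:WEAK_ENERGY_BND_TAIL} to show this tends to $\int\alpha\,\mu$.  You instead use the already-established $f$-invariance of $\mu$ to get $\int S_n\wedge T_* = \int\alpha\,\mu$ immediately, and then trade the bad potentials of $T_*$ for the manageable ones of $T_{*,m}=\ddeg^{-m}f^m_*\bar T_*$.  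Your step 1 is simpler than the paper's, but you pay for it with an iterated limit in step 2.  Both routes ultimately lean on the same weak-energy decay $\dnorm[T^*]{\rpot_{T_*}-\rpot_{T_{*,m}}}\leq E_{T_*}\ddeg^{-m/2}$ and the $(\dtop/\ddeg)^{n/2}$ decay of $\norm[T^*]{\alpha\circ f^n}$.

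That said, there are two gaps you should be aware of.  First, and most substantively: after sending $k\to\infty$ and then $m\to\infty$ you obtain $\int\alpha\,\mu = \lim_m\int S\wedge T_{*,m}$, but the lemma asserts $\int\alpha\,\mu = \isect{S}{T_*}$, and nothing in your proposal connects the two.  The missing step is that $\int S\wedge T_{*,m} = \isect{S}{T_{*,m}} = \isect{S}{T_*}$ for each $m$: since $T^*\wedge T_{*,m}$ has full mass (Corollary~\ref{cor:approxmu2}), decomposing $T^* = S + (T^*-S)$ into two positive closed summands forces each of $S\wedge T_{*,m}$ and $(T^*-S)\wedge T_{*,m}$ to have full mass as well, whence the equality; this also makes the second limit $m\to\infty$ superfluous.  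Second, your sentence ``the continuity of the Bedford--Taylor wedge when one factor has continuous potentials gives $\int S_{n_k}\wedge T_{*,m}\to\int S\wedge T_{*,m}$'' quotes a result that holds for weakly convergent sequences of \emph{closed} positive currents, but the $S_{n_k}$ are not closed.  You do flag the non-closedness issue, but only for the tail estimate; the fixed-$m$ convergence needs it too.  Making it precise requires expanding $\int g\,S_n\wedge T_{*,m}$ (with $g$ a test function and $v$ a continuous local potential for $T_{*,m}$) as $\int v\,dd^c(g(\alpha\circ f^n))\wedge T^*$ and killing the three terms involving $d(\alpha\circ f^n)$ or $dd^c(\alpha\circ f^n)$ using the Cauchy--Schwarz bound from Lemma~\ref{lem:limitclosed}, plus a tightness argument (domination by the probability measure $T^*\wedge T_{*,m}$) to get convergence of total masses.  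Finally, since $\alpha\circ f^n$ does not extend smoothly across $\ind(f^n)$, a cutoff is needed before the weak-energy pairing $\dnorm[T^*]{\cdot}$ can even be applied --- this is exactly what the $\chi$ in the paper's proof is doing, and your argument would need the same device.
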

 
\begin{proof}  Fix $\epsilon>0$.  By Corollary \ref{cor:approxmu2} we can choose a smooth function $\chi: \rztO \to [0,1]$ that is supported in a compact subset of $\torus \setminus f^{-n}(f^n(\ind(f^n)))$ such that
\begin{align*}
\left| \int_{\rzt^\circ} (1-\chi\circ f^n) T^* \wedge \bar{T_*} \right| < \epsilon  \quad \mbox{and} \quad  \left|\int_{\rzt^\circ}  (1-\chi) T^* \wedge \left(\frac{f_*^n \bar{T_*}}{\ddeg^n}\right)\right| < \epsilon.
\end{align*}
Recall that $\bar T^* = dd^c u$, where $u = \sfn_{\bar T^*}\circ\Log$ is continuous on $\torus$ (and therefore bounded on $\supp\chi$).  So since $S_n \leq T^*$,
\begin{eqnarray*}
\int_{\rzt^\circ} S_n\wedge \bar T_*
& \approx_\epsilon &
\int_{\torus} (\chi\circ f^n) S_n\wedge \bar T_* 
= 
\int_{\torus} u \, \frac{f^{n*} T^*}{\ddeg^n}\wedge f^{n*} dd^c(\chi\alpha) 
= 
\int_{\torus} \frac{f^n_* u}{\ddeg^n}\, dd^c(\chi\alpha)\wedge T^* \\
& = & 
\int_{\torus} \chi\alpha\,T^*\wedge \frac{f^n_*\bar T_*}{\ddeg^n}
\approx_{\epsilon}
\int_{\rzt^\circ} \alpha\,T^*\wedge \frac{f^n_*\bar T_*}{\ddeg^n}.
\end{eqnarray*}
As in the proof of Corollary \ref{cor:integrability}, the symbol $\approx_\epsilon$ means that the two sides differ by at most $\pm\epsilon$.
Letting $\epsilon\to 0$, we obtain that
$$
\int_{\rzt^\circ} S_n\wedge \bar T_*
=
\int_{\rzt^\circ} \alpha\,T^*\wedge \frac{f^n_*\bar T_*}{\ddeg^n}.
$$
Letting $\rpot = \rpot_{\ddeg^{-1} f_* \bar T_*}$, we apply this formula and the analog of Corollary \ref{COR:WEAK_ENERGY_BND_TAIL} (1) with the roles of $T^*$ and  $T_*$ reversed to obtain
\begin{eqnarray*}
\left| \int_{\rzt^\circ} \alpha\,\mu - \int_{\rzt^\circ} S_n\wedge \bar T_* \right|
& = & 
\left|\int_{\rzt^\circ} \alpha\,\left(T_* - \frac{f^n_*\bar T_*}{\ddeg^n}\right)\wedge T^* \right| 
\leq  
\norm[T^*]{\rpot_{T_*} - \rpot_{\ddeg^{-n} f^{n}_* \bar T_*} }'  \norm[T^*]{\alpha}  \\  %\norm[T^*]{\alpha}\dnorm[T^*]{\sum_{j=n}^\infty \frac{f^j_*\rpot}{\ddeg^j}} \\
& \leq & E_{T_*}\ddeg^{-n/2}\norm[T^*]{\alpha}
\underset{n\to\infty} \longrightarrow 0.
\end{eqnarray*}
The lemma follows immediately.  
\end{proof}

\begin{proof}[Proof of Theorem \ref{thm:ergodiccurrent}]
It suffices to show that $cT^*$ is the only limit point of $S_n$.  The central convergence result \cite[Theorem 10.1]{DiRo24} in our previous paper, together with Lemmas \ref{lem:limitclosed} and \ref{lem:limitmass}, tells us that for every $T\in\mathcal{S}$ we have
$$
\lim_{n\to\infty} \ddeg^{-n} f^{n*} T =  \isect{T}{T_*} T^* = cT^*.
$$
Though we did not note it there, the proof we gave for \cite[Theorem 10.1]{DiRo24} is actually uniform on compact subsets of $\pcc^+(\rzt)$.  That is, if $(T_n)\subset\pcc^+(\rzt)$ is a relatively compact sequence with $\tilde c = \isect{T_n}{T_*}$ independent of $n$, then we have the more general convergence
$$
\lim_{n\to\infty} \ddeg^{-n} f^{n*} T_n = \tilde c T^*.
$$
This amplification of \cite[Theorem 10.1]{DiRo24} holds because, first of all, when $(T_n)$ is relatively compact, so is the associated sequence $(\ddeg^{-n} f^{n*} T_n)$; and secondly, the constants in the key volume estimate \cite[Theorem 6.11]{DiRo24} used to prove \cite[Theorem 10.1]{DiRo24} are uniform on compact sets.  Indeed, even the proof we gave there depended on this uniformity. 

So given $S\in \mathcal{S}$, we apply Lemma \ref{lem:limitinvariance} to obtain a sequence $(T_n)\subset\mathcal{S}$ such that $S = \lim \ddeg^{-n}f^{n*} T_n$.  Since $T_n\leq T^*$ for all $n$, this sequence is relatively compact, and we obtain that
$$
S = \lim \ddeg^{-n} f^{n*} T_n = cT^*
$$
That is, $cT^*$ is the only element of $\mathcal{S}$.
\end{proof}

\begin{thm}
\label{thm:mixing}
The measure $\mu$ is mixing for $f$.
\end{thm}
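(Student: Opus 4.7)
The plan is to prove that for all smooth compactly supported test functions $\alpha,\beta$ on $\rzt^\circ$,
$$
\int (\alpha\circ f^n)\,\beta\,d\mu \longrightarrow \left(\int\alpha\,d\mu\right)\left(\int\beta\,d\mu\right),
$$
following the Bedford--Smillie--Sibony template and leveraging Theorem~\ref{thm:ergodiccurrent}. Set $\alpha_n := \alpha\circ f^n$ and $c := \int\alpha\,d\mu$. Since $\mu$ charges neither curves nor the closed discrete set $\ind(f^\infty)\cup\ind(f^{-\infty})\subset\rztO$, an $L^1(\mu)$ approximation argument reduces us to $\beta$ with compact support in a coordinate ball $U\subset\torus\setminus(\ind(f^\infty)\cup\ind(f^{-\infty}))$ on which both $T^*$ and $T_*$ admit bounded continuous local potentials $g^*$ and $g_*$; such potentials exist by Proposition~\ref{prop:bartonpole}, Theorem~\ref{THM:CONTINUITY}, and Corollary~\ref{cor:cvgceonpoles}.

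On $U$ I would write $T_*=dd^c g_*$ and combine closedness of $T^*$ with two integrations by parts to obtain
\begin{align*}
\int \beta\,\alpha_n\,T^*\wedge T_*
&= \int g_*\,dd^c(\beta\alpha_n)\wedge T^*\\
&= \int g_*\alpha_n\,dd^c\beta\wedge T^* + \int g_*\beta\,dd^c\alpha_n\wedge T^*\\
&\quad + \int g_*\,(d\alpha_n\wedge d^c\beta + d\beta\wedge d^c\alpha_n)\wedge T^*.
\end{align*}
The first term on the right pairs the continuous compactly supported $(1,1)$-form $g_*\,dd^c\beta$ against $\alpha_n T^*$; the weak convergence $\alpha_n T^*\to cT^*$ from Theorem~\ref{thm:ergodiccurrent}, extended to continuous test forms via mollification and the uniform domination $|\alpha_n|T^*\leq\|\alpha\|_\infty T^*$, yields convergence to $c\int g_*\,dd^c\beta\wedge T^* = c\int\beta\,T^*\wedge T_* = c\int\beta\,d\mu$, which is exactly the desired limit.

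The three remaining terms will be shown to vanish as $n\to\infty$ using small topological degree. For the $\beta\,dd^c\alpha_n$ term, I substitute $dd^c\alpha_n = f^{n*}dd^c\alpha$ and $T^* = \ddeg^{-n} f^{n*} T^*$ to get
$$
\int g_*\beta\,dd^c\alpha_n\wedge T^* = \ddeg^{-n}\int f^n_*(g_*\beta)\,dd^c\alpha\wedge T^*,
$$
and Corollary~\ref{COR:PROPER_AND_COVER} guarantees $|f^n_*(g_*\beta)|\leq\dtop^n\|g_*\beta\|_\infty$ a.e., producing a bound of order $(\dtop/\ddeg)^n\to 0$. Each mixed-derivative term is controlled by Cauchy--Schwarz relative to the positive current $T^*$:
$$
\left|\int g_*\,d\alpha_n\wedge d^c\beta\wedge T^*\right|^2 \leq C\|g_*\|_\infty^2\int d\alpha_n\wedge d^c\alpha_n\wedge T^*\,\int d\beta\wedge d^c\beta\wedge T^*,
$$
and the same pullback-pushforward computation as in the proof of Lemma~\ref{lem:limitclosed} gives $\int d\alpha_n\wedge d^c\alpha_n\wedge T^* = (\dtop/\ddeg)^n\int d\alpha\wedge d^c\alpha\wedge T^*\to 0$.

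The principal obstacle I expect is rigorously justifying the integrations by parts and Bedford--Taylor manipulations when the potentials $g^*,g_*$ are only continuous rather than smooth. This requires regularization by decreasing sequences of smooth $\omega$-psh functions together with monotone convergence for Bedford--Taylor wedge products, all permissible in view of Proposition~\ref{prop:bartonpole} and Theorem~\ref{THM:CONTINUITY}. A secondary technical point is reducing to $\beta$ supported in the good set $U$, which should be handled by a partition-of-unity argument exploiting that $\mu(\ind(f^{\pm\infty}))=0$ since this set is closed and discrete in $\rztO$.
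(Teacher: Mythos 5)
Your overall template is the right one and matches the paper's in broad outline (reduce to compactly supported test functions, write $\mu = T^*\wedge T_*$, isolate a main term controlled by Theorem~\ref{thm:ergodiccurrent} and kill the remaining terms using $\dtop<\ddeg$). However, there is a genuine gap in your reduction step. You claim that on a small ball $U\subset\torus\setminus(\ind(f^\infty)\cup\ind(f^{-\infty}))$ one can write $T_*=dd^c g_*$ with $g_*$ bounded and continuous, citing Proposition~\ref{prop:bartonpole}, Theorem~\ref{THM:CONTINUITY}, and Corollary~\ref{cor:cvgceonpoles}. None of these provides that. Proposition~\ref{prop:bartonpole} concerns homogeneous currents; Theorem~\ref{THM:CONTINUITY} establishes continuity of $\rpot_{T^*}$ only — the paper explicitly remarks at the start of \S\ref{sec:currents_II} that it does \emph{not} know whether the analogous continuity result holds for $T_*$; and Corollary~\ref{cor:cvgceonpoles} gives continuity of $\rpot_{T_*}$ only after restriction to a pole $C_\tau$, not on a full neighborhood in $\rztO$. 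Because your argument leans on $\|g_*\|_\infty<\infty$ and the continuity of $g_*\,dd^c\beta$ in several places (to pair it against the weakly convergent sequence $\alpha_n T^*$, and to bound $|f^n_*(g_*\beta)|\leq \dtop^n\|g_*\beta\|_\infty$), the missing regularity of $T_*$-potentials is not a technical footnote but a structural obstruction.

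The paper circumvents this precisely via Lemma~\ref{lem:tminusapprox}: using the finite-$n$ approximation $T_{*,n}=\ddeg^{-n}f^n_*\bar T_*$, whose $\torus$-potential \emph{is} continuous (being the pushforward of the continuous support function of the homogeneous $\bar T_*$), one writes $T_*|_\torus=\omega+dd^c(u+v)$ where $\omega$ is K\"ahler, $u$ is continuous and small on the set of interest, and $v$ is only known to have small weak $T^*$-energy $\dnorm[T^*]{v}<\epsilon$. The $dd^c v$ contribution is then controlled not by $\|v\|_\infty$ (which may be infinite) but by the weak energy pairing $|\int (\alpha\circ f^n)\beta\, T^*\wedge dd^c v|\leq \norm[T^*]{(\alpha\circ f^n)\beta}\cdot\dnorm[T^*]{v}$, with the $T^*$-energy of $(\alpha\circ f^n)\beta$ shown to be uniformly bounded in $n$ (Lemma~\ref{lem:unifnrgbd}). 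If you replace your local-potential decomposition with this three-part decomposition of $T_*$ and substitute weak-energy bounds for your $L^\infty$ bounds on $g_*$, the rest of your estimates (the $(\dtop/\ddeg)^{n}$ decay and the invocation of Theorem~\ref{thm:ergodiccurrent}) go through as you outline.
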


\begin{proof}
Since $\mu$ is a Borel measure with full, finite mass on $\torus$, it suffices to show for any smooth compactly supported functions $\alpha,\beta:\torus\to [0,1]$ that
$$
\lim_{n\to\infty} \int_{\torus} (\alpha\circ f^n)\beta \, \mu = \left(\int \alpha\,\mu\right)\left(\int \beta\,\mu\right).
$$
\begin{lem}
\label{lem:unifnrgbd}
For all $n\geq 0$, we have
$
\norm[T^*]{(\alpha\circ f^n)\beta} \leq \norm[T^*]{\beta} + \norm[T^*]{\alpha}.
$
\end{lem}

\begin{proof}
We have
\begin{align*}
\norm[T^*]{(\alpha\circ f^n)\beta}^2 &= \int_\torus (\alpha\circ f^n)^2 d\beta\wedge d^c\beta\wedge T^* +
2\int_\torus (\alpha \circ f^n) \beta \ d(\alpha\circ f^n)  \wedge d^c\beta\wedge T^* \\ &+ \int_\torus \beta^2 d(\alpha\circ f^n)\wedge d^c(\alpha\circ f^n) \wedge T^*.
\end{align*}
The first term is non-negative, bounded above by $\norm[T^*]{\beta}^2$.  We can re-write the last term and estimate it as follows
$$
\int_\torus \beta^2 d(\alpha\circ f^n)\wedge d^c(\alpha\circ f^n) \wedge \frac{f^{*n} T^*}{\ddeg^n}
=
\ddeg^{-n}\int_\torus (f^n_*\beta)^2 d\alpha\wedge d^c\alpha\wedge T^*
\leq
\left(\frac{\dtop}{\ddeg}\right)^n \norm[T^*]{\alpha}^2.
$$
For the second term, we choose a smooth compactly supported function $\chi:\torus\to [0,1]$ such that $\chi \equiv 1$ on $\supp\beta$ and then use the Schwarz inequality
$$
\left|\int_\torus (\alpha \circ f^n) \beta \ d\alpha\wedge d^c\beta\wedge T^*\right|^2
\leq \norm[T^*]{\beta}^2
\int_\torus \chi d(\alpha\circ f^n)\wedge d^c(\alpha\circ f^n)\wedge T^*
\leq \left(\frac{\dtop}{\ddeg}\right)^n \norm[T^*]{\beta}^2\norm[T^*]{\alpha}^2.
$$
Adding up these results and taking square roots we get
$$
\norm[T^*]{(\alpha\circ f^n)\beta} \leq \norm[T^*]{\beta} + \left(\frac{\dtop}{\ddeg}\right)^{n/2} \norm[T^*]{\alpha} \leq \norm[T^*]{\beta} + \norm[T^*]{\alpha}
$$
\end{proof}

\begin{lem}
\label{lem:tminusapprox}
Given $\epsilon>0$ and a relatively compact open set $U\subset \torus$, there exist a function $v\in\dpsh(\rzt^\circ)$, a K\"ahler form $\omega$ on $\torus$ and a continuous function $u\in\psh(\omega)$ such that
\begin{itemize}
\item $\dnorm[T^*]{v} <\epsilon$.
\item $T_*|_\torus = \omega + dd^c (u+v)$.
\item $|u| <\epsilon$ on $U$.
\end{itemize}
\end{lem}

\begin{proof}
Let $T_{*,n} = \ddeg^{-n} f^n_* \bar T_*$ for $n\geq 0$.  Then on the one hand, Corollary~\ref{COR:WEAK_ENERGY_BND_TAIL} (with $T^*$ and  $T_*$ reversed) tells us that for $n$ large enough $T_* - T_{*,n} = dd^c v$ for some $v\in\dpsh(\rzt^\circ)$ with weakly finite $T^*$ energy satisfying $\dnorm[T^*]{v} < \epsilon$.  On the other hand, we have
$$
T_{*,n}|_{\torus} = dd^c g_n,
$$
where $g_n = \ddeg^{-n}f^n_*\sfn_{\bar T_*} \in\psh(\torus)$ is continuous because $f^n(\exc(f))\cap\torus = \emptyset$.  Hence we can regularize $g_n$ on $\torus$ to obtain
$$
g_n = \tilde g + u
$$
where $\omega = dd^c\tilde g$ is K\"ahler on $\torus$ and $u\in\psh(\omega)$ satisfies $|u|<\epsilon$ on $U$.
\end{proof}

Continuing the proof of Theorem \ref{thm:mixing}, we choose $\epsilon>0$ and a relatively compact open $U\subset\torus$ containing $\supp\beta$.  With $u,v,\omega$ as in Lemma \ref{lem:tminusapprox}, we then have
$$
\int (\alpha\circ f^n)\beta \,\mu 
= 
\int (\alpha\circ f^n)\beta\, T^*\wedge (\omega + dd^c u + dd^c v).
$$
We expand the right side into three integrals and deal with each separately.
From Lemmas \ref{lem:unifnrgbd} and \ref{lem:tminusapprox}, we have
$$
\left|\int(\alpha\circ f^n)\beta\, T^*\wedge dd^c v\right| \leq \norm[T^*]{(\alpha\circ f^n)\beta}\cdot\dnorm[T^*]{v} \leq \epsilon(\norm[T^*]{\alpha} + \norm[T^*]{\beta}).
$$
From Lemma \ref{lem:tminusapprox} we further obtain
\begin{eqnarray*}
\left|\int(\alpha\circ f^n)\beta\, T^*\wedge dd^c u\right|
& = &
\left|\int u\, dd^c ((\alpha\circ f^n)\beta)\wedge T^* \right| \\
& \leq &
\left|\int u(\alpha\circ f^n)\, dd^c \beta\wedge T^*\right|
+
\left|\int u\beta\, dd^c (\alpha\circ f^n)\wedge T^*\right| +\\
& &
2\left|\int u \, d\beta\wedge d^c(\alpha\circ f^n)\wedge T^*\right| \\
& \leq &
\epsilon\norm[C^2]{\beta}
+
\ddeg^{-n}\left|\int f^n_*(u \beta)\, dd^c\alpha\wedge T_*\right|
+
2 \epsilon\norm[T^*]{\beta}^{1/2}\norm[T^*]{\alpha \circ f^n}^{1/2}  \\
& \leq &
\epsilon\norm[C^2]{\beta}
+
\epsilon\left(\frac{\dtop}{\ddeg}\right)^n \norm[C^2]{\alpha}
+
2 \epsilon \left(\frac{\dtop}{\ddeg}\right)^n \norm[T^*]{\beta}^{1/2}\norm[T^*]{\alpha}^{1/2} \\
& \leq & C\epsilon,
\end{eqnarray*}
where, since $\dtop<\ddeg$, the constant $C$ depends on $\alpha$ and $\beta$ but not on $n\geq 0$. 

For the remaining integral, Theorem \ref{thm:ergodiccurrent} gives
$$
\lim_{n\to\infty} \int (\alpha\circ f^n)\beta\, T^*\wedge\omega 
= 
\left(\int \beta\,T^*\wedge\omega\right)\left(\int \alpha\,\mu\right).
$$
But
\begin{eqnarray*}
\left|\int\beta\,T^*\wedge (T_*-\omega)\right|
& \leq &
\left|\int\beta\,T^*\wedge dd^c v \right|
+
\left|\int\beta\,T^*\wedge dd^c u \right| \\
\leq
\norm[T^*]{\beta}\cdot \dnorm[T^*]{v} + \sup_U |u|\cdot \norm[C^2]{\beta}
\leq C\epsilon.
\end{eqnarray*}
Letting $\epsilon \to 0$ completes the proof.
\end{proof}

\subsection{Geometric intersection}
\label{ss:geometric}
Now we briefly describe an alternative, more geometric formulation of the wedge product $T^*\wedge T_*$ that gives us the equilibrium measure $\mu$.  Our presentation and arguments are nearly the same as the ones in \cite[\S5]{Duj06} and \cite[\S4]{Duj05}, so we refer readers to those sources for full details.

A positive closed $(1,1)$ current $S$ in an open set $U\subset\C^2$ is \emph{uniformly laminar} if it can be written $S = \int [\Delta_\alpha]\,d\nu(\alpha)$ where $\{\Delta_\alpha\}$ is a family of mutually disjoint smooth analytic disks properly embedded in $U$ and $\nu$ is a positive measure on the underlying parameter space.  One says that a second positive closed $(1,1)$ current $T$ on $U$ is \emph{uniformly woven} if we have the same sort of representation $T = \int [\Delta'_\beta]\nu'(\beta)$ but drop the condition that the disks be disjoint from each other.  If $S\in L^1_{loc}(T)$ so that $S\wedge T$ is defined, then  \cite[Proposition 2.6]{DDG11} gives the following geometric reformulation:
\begin{align}\label{EQN:GEOMETRIC_WEDGE_PRODUCT}
S\wedge T = \int [\Delta_\alpha \cap \Delta'_\beta] \,\nu(\alpha)\otimes\nu'(\beta).
\end{align}
The integrability hypotheses implies that $\Delta_\alpha$ meets $\Delta_\beta$ properly for $\nu\otimes\nu'$ almost all $(\alpha,\beta)$, and $[\Delta_\alpha\cap \Delta'_\beta]$ denotes the discrete measure obtained by placing a point mass at each point of intersection between $\Delta_\alpha$ and $\Delta'_\beta$.  One says in this circumstance that the wedge product $S\wedge S'$ is \emph{geometric}.

The equilibrium currents $T^*$ and $T_*$ are not themselves uniformly laminar or woven, but they can be well approximated from below by such currents: i.e. in any toric surface $X$, we have that $T^*$ is \emph{laminar and strongly approximable} \cite[Theorem 10.5]{DiRo24}, and $T_*$ is \emph{woven and strongly approximable} \cite[Theorem 10.6]{DiRo24}.

To make this more precise, fix the surface $X$ and a K\"ahler form $\omega$ on $X$.
Then there exists a dense open set $U_\epsilon \subset X$, a uniformly laminar current $T^*_\epsilon \leq T^*$ on $U_\epsilon$ and a uniformly woven current $T_{*,\epsilon} \leq T_*$ on $U_\epsilon$ such that
$$
\int \omega\wedge_X (T^*-T^*_\epsilon), \int \omega\wedge_X (T_*-T_{*,\epsilon}) < \epsilon^2.
$$
Since $T^*$ doesn't charge analytic disks (Corollary \ref{cor:nodisk}), neither does $T^*_\epsilon$.  Hence the transverse measure $\nu$ in the laminar presentation of $T^*_\epsilon$ has no atoms.   This observation and the geometric formula for the wedge product (\ref{EQN:GEOMETRIC_WEDGE_PRODUCT}) make clear that the measure $T^*_{\epsilon}\wedge T_{*,\epsilon} \leq T^*\wedge_X T_*$ also has no atoms, even though the right side of the inequality has atoms at each torus invariant point of $X$.  In particular, we have the sharper bound $T^*_{\epsilon} \wedge T_{*,\epsilon} \leq (T^*\wedge_X T_*)|_{X^\circ} = T^*\wedge T_*$, since $X\setminus X^\circ$ is finite.

The open set $U_\epsilon$ is a (somewhat flexible) union of mutually disjoint polydisks `of size $\epsilon$'.  The flexibility in the choice of $U_\epsilon$ allows one to further ensure that for any $\delta>0$, there is a smooth compactly supported function $\chi = \chi_{\delta,\epsilon}:U_\epsilon\to [0,1]$ such that
\begin{enumerate}
 \item $\int_X (1-\chi)\,T^*_\epsilon\wedge T_{*,\epsilon} \leq \int_X (1-\chi)\,d\mu \leq \delta$;
 \item $\norm[\infty]{d\chi}^2, \norm[\infty]{dd^c\chi} \leq C_\delta \epsilon^{-2}$, where $C_\delta$ does not depend on $\epsilon$ or $U_\epsilon$;
\end{enumerate}

\noindent
Since $\mu$ has full mass on $\torus$, we can multiply $\chi$ by an $\epsilon$-independent cutoff to further arrange that $\supp\chi$ is contained in a fixed compact set $K\subset\torus$ without affecting properties (1) (especially) and (2).

\begin{thm}\label{thm:geometricIntersection}
The wedge product $\mu = T^*\wedge T_*$ is geometric on any toric surface $X$; that is, $T^*_\epsilon\wedge T_{*,\epsilon} \nearrow \mu$ as $\epsilon\to 0$.
\end{thm}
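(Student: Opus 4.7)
The plan is to adapt the strategy of \cite[Theorem 5.2]{Duj06} to the toric setting, leveraging the weak energy framework of \S\ref{sec:products} and especially the tail estimate of Corollary \ref{COR:WEAK_ENERGY_BND_TAIL}(2).  Since $T^*_\epsilon\wedge T_{*,\epsilon}\leq\mu$ is already noted and $\mu$ is a probability measure with no mass off $\torus$ (Theorem \ref{thm:eqmeasureexists}), the claimed increasing convergence will follow---after arranging the laminar/woven approximants to be nested as $\epsilon\to 0$---once we show the total mass convergence $(T^*_\epsilon\wedge T_{*,\epsilon})(X)\to 1$.

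Fix $\delta>0$ and take $\chi=\chi_{\delta,\epsilon}$ as constructed just before the statement.  The bounds $\int(1-\chi)\,d\mu\leq\delta$ and $\int(1-\chi)\,T^*_\epsilon\wedge T_{*,\epsilon}\leq\delta$ reduce matters to controlling
$$\Delta_\epsilon\ :=\ \int\chi\,d\mu\ -\ \int\chi\,T^*_\epsilon\wedge T_{*,\epsilon}.$$
On $U_\epsilon$, set $R^*:=T^*-T^*_\epsilon$ and $R_*:=T_*-T_{*,\epsilon}$, both positive closed currents with trace mass at most $\epsilon^2$.  Expanding the wedge product gives the nonnegative decomposition
$$\Delta_\epsilon\ =\ \int\chi\,T^*\wedge R_*\ +\ \int\chi\,R^*\wedge T_{*,\epsilon}.$$

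I would treat each summand by inserting the smoother approximant $T_n:=\ddeg^{-n}f^{n*}\bar T^*$ (and its pushforward counterpart, controlled via Theorem \ref{thm:weakenergy2}).  Corollary \ref{COR:WEAK_ENERGY_BND_TAIL}(2) applied with $S=T_{*,\epsilon}$ (so that $T_*-S=R_*$) yields
$$\left|\int\chi\,(T^*-T_n)\wedge R_*\right|\ \leq\ E_{T^*}\,\ddeg^{-n/2}\,\sqrt{C_\delta},$$
which is arbitrarily small for $n$ large, uniformly in $\epsilon$.  It then remains, for each fixed large $n$, to show that the residuals $\int\chi\,T_n\wedge R_*$ and $\int\chi\,(\ddeg^{-n}f^n_*\bar T_*)\wedge R^*$ tend to zero as $\epsilon\to 0$.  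For this, note that $T_n$ and $\ddeg^{-n}f^n_*\bar T_*$ have continuous local potentials uniformly bounded in $n$ on the compact support $K\subset\torus$ of $\chi$ (Theorems \ref{thm:pullback} and \ref{THM:CONTINUITY}, together with their pushforward analogs).

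The hard part will be precisely this residual step: a direct integration by parts bounds $\int\chi\,T_n\wedge R_*$ by $\sup_K|\rpot_n|\cdot\norm[\infty]{dd^c\chi}\cdot\int\omega\wedge R_*=O(1)$, which does not vanish with $\epsilon$.  Overcoming this obstruction requires the laminar/woven geometry guaranteed by strong approximability \cite[Theorems 10.5, 10.6]{DiRo24}: the residual masses in $R^*$ and $R_*$ are concentrated in thin tube neighborhoods of the transverse boundaries of the families of disks assembling $T^*_\epsilon$ and $T_{*,\epsilon}$, so that the cutoff $\chi=\chi_{\delta,\epsilon}$ can be refined to be essentially constant along the disks of one family, producing effective smallness of $d\chi\wedge d^c\chi\wedge R_*$ rather than the mere $O(1)$ bound from property (2).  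Making this geometric cancellation quantitative mirrors Dujardin's original argument in the polynomial automorphism setting.  Once these residual bounds are shown to vanish in $\epsilon$, choosing $n=n(\epsilon)\to\infty$ sufficiently slowly yields $\Delta_\epsilon\to 0$, and letting $\delta\to 0$ completes the mass convergence and therefore the theorem.
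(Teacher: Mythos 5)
Your proposal correctly identifies the decomposition, the role of the weak energy tail estimate (Corollary \ref{COR:WEAK_ENERGY_BND_TAIL}(2)), and even the crucial cancellation $\norm[\infty]{d\chi}^2 \cdot \int\omega\wedge R_* \leq C_\delta\epsilon^{-2}\cdot\epsilon^2 = C_\delta$ that makes the tail term small in $n$ uniformly in $\epsilon$. Up to that point the structure matches the paper's argument.

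The gap is in the residual step $\int\chi\, T_n\wedge R_*\to 0$. You correctly observe that the naive integration by parts, bounding by $\sup_K|g_n|\cdot\norm[\infty]{dd^c\chi}\cdot\int\omega\wedge R_*$, only yields $O(1)$. But the fix you propose --- a geometric refinement of $\chi$ adapted to the laminar/woven disk families, so as to produce extra smallness in $d\chi\wedge d^c\chi\wedge R_*$ --- is not the paper's argument, and it is not clear it could be made rigorous: the residual currents $R^*,R_*$ are only known to have small trace mass, not to be concentrated in any specific tube neighborhood, and the disks of the two families cannot both be aligned with a single cutoff. The paper's actual fix is much simpler and does not involve the disk geometry at all. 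Since $T_n|_\torus = dd^c g_n$ with $g_n$ a \emph{continuous} psh function, one can choose a smooth $v$ with $\sup_{\supp\chi}|g_n - v|<\eta$ arbitrarily small. Writing
\[
\int\chi\, T_n\wedge R_* = \int\chi\, dd^c v\wedge R_* + \int (g_n-v)\, dd^c\chi\wedge R_*,
\]
the first term is $O(\epsilon^2)$ because $\chi\,dd^c v$ is dominated by a fixed multiple of $\omega$ and $R_*$ has trace mass $\leq\epsilon^2$ on $\supp\chi$; the second term is $\leq \eta\cdot\norm[\infty]{dd^c\chi}\cdot\int_{\supp\chi}\omega\wedge R_* \leq \eta\,C_\delta$, small by choice of $v$. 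The point you missed is that you should not bound by $\sup|g_n|$ but by $\sup|g_n-v|$ --- regularizing the continuous potential, rather than appealing to the laminar/woven structure, is what makes the residual vanish.
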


\begin{proof}
Fix a constant $\delta>0$ and for any $\epsilon>0$, let $\chi = \chi_{\delta,\epsilon}$ be as above.  Then
\begin{eqnarray*}
\int_X d\mu - T^*_\epsilon\wedge T_{*,\epsilon} & \leq & 2\delta + \int_{\torus} \chi \, (T^*\wedge T_* - T^*_\epsilon\wedge T_{*,\epsilon} )\\
& \leq & 2\delta + \int_{\torus} \chi T^*\wedge (T_* - T_{*,\epsilon}) + \int_\torus \chi\,(T^*-T^*_{\epsilon})\wedge T_*.
\end{eqnarray*}
It therefore suffices to show that each of the last two integrals tends to zero with $\epsilon$.   We show this only for the second integral.  The details for the first are similar.

Let $T_n = \ddeg^{-n} f^n_* \bar T$.  Then
$$
\int_\torus \chi \,T_*\wedge (T^*-T^*_\epsilon) 
=
\int_\torus \chi \,T_n\wedge (T^*-T^*_\epsilon) + \int_\torus \chi (T_*-T_n)\wedge(T^*-T^*_\epsilon).
$$
Corollary \ref{COR:WEAK_ENERGY_BND_TAIL} (with $T^*$ and $T_*$ switched) and properties (1) and (2) above allow us to bound the second term as follows.
\begin{eqnarray*}
\left|\int_\torus \chi (T_*-T_n)\wedge(T^*-T^*_\epsilon)\right|^2
& \leq &
E_{T_*}^2\ddeg^{-n} \int_\torus d\chi\wedge d^c\chi\wedge (T^*-T^*_\epsilon) \\
& \leq &
E_{T_*}^2\ddeg^{-n} \norm[\infty]{d\chi}^2 \int_\torus \omega\wedge (T^*-T^*_\epsilon)
\leq E_{T_*}^2 \cdot C_\delta \ddeg^{-n}.
\end{eqnarray*}
So given $\epsilon' > 0$ and fixed $n$ large enough, we have
for all $\epsilon>0$ that
$$
\int_\torus \chi T_*\wedge (T^*-T^*_\epsilon) \leq \int_\torus \chi T_n\wedge (T^*-T^*_\epsilon) + \epsilon'.
$$
We claim that the integral on the right tends to $0$ with $\epsilon$ (which completes the proof).  To see this recall that $T_n|_\torus = dd^c u$ for some continuous psh function $u$.  We can use e.g. the action of $\torus$ to regularize $u$ on a neighborhood of $\supp\chi$, obtaining a smooth function $v$ such that $\max |u-v|$ is as small as we like on $\supp\chi$.  Thus,
\begin{eqnarray*}
\int_\torus \chi\, T_n \wedge (T^*-T^*_\epsilon) 
& \leq & 
\int_\torus \chi\, dd^c v\wedge (T^*-T^*_\epsilon)
+ 
\int_\torus (u-v)\,dd^c\chi\wedge (T^*-T^*_\epsilon) \\
& \leq & 
\int_\torus \chi\, dd^c v\wedge (T^*-T^*_\epsilon)
+
(\max_{\supp\chi}|u-v|) \cdot \norm[\infty]{dd^c\chi} \int_\torus \omega\wedge (T^*-T^*_\epsilon) \\
& \leq & 
\int_\torus \chi\, dd^c v\wedge (T^*-T^*_\epsilon) + C_\delta \max_{\supp\chi} |u-v|.
\end{eqnarray*}
Since $\chi\,dd^c v$ is smooth and fixed, since $\chi \leq 1$ and since $T^*_\epsilon \nearrow T_*$ on $\supp\chi$, the first term on the right tends to $0$ with $\epsilon$.
Moreover, the second term on the right is independent of $\epsilon$ and can be made arbitrarily small, so our claim is proved.
\end{proof}

By now we have established all but the last conclusion of Theorem \ref{thm:mainthm}.  Conclusions (1) and (2) from that theorem and Theorem \ref{thm:geometricIntersection} are in fact the \emph{hypotheses} of \cite[Theorem B]{DDG11b}.  We can therefore invoke that result to infer the final conclusion of Theorem \ref{thm:mainthm}.

\begin{cor}
The metric entropy of $f$ with respect to $\mu$ is $\log\ddeg$.
\end{cor}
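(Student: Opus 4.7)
The plan is to verify that all the hypotheses of \cite[Theorem B]{DDG11b} are in place, then invoke that theorem directly to obtain the lower bound $h_\mu(f) \geq \log\ddeg$, and finally combine it with the classical upper bound to conclude equality.

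First, I would check that $f$ and $\mu$ satisfy the input requirements of \cite[Theorem B]{DDG11b}. The author's statement is quite explicit that this theorem only requires: (i) $f$ is a rational self-map of a projective surface (here $\cp^2$) with small topological degree, and (ii) $\mu$ is an $f$-invariant, mixing Borel probability measure that is the geometric intersection of suitable uniformly laminar and uniformly woven currents. Hypothesis (i) is part of our standing assumptions on $f$. For (ii), total mass $\mu(\rztO)=1$ and non-charging of curves come from Theorem~\ref{thm:eqmeasureexists}; $f$-invariance was established at the opening of \S\ref{sec:mixing}; mixing is Theorem~\ref{thm:mixing}; and the geometric product structure $T^*_\epsilon\wedge T_{*,\epsilon} \nearrow \mu$ is Theorem~\ref{thm:geometricIntersection}, which provides exactly the uniformly laminar/woven approximants required by \cite[Theorem B]{DDG11b}. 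Since these constructions take place on a fixed toric (hence projective) surface $X$, there is no compactness issue in applying the cited theorem. Also, by Conclusion (1) of Theorem~\ref{thm:mainthm} (which we have already proven) the measure $\mu$ gives no mass to $X\setminus X^\circ$ nor to any curve in $X$, so passing between the viewpoints of $\mu$ on $\rztO$ and on $X$ is harmless.

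With these hypotheses verified, \cite[Theorem B]{DDG11b} yields the bound
\[
h_\mu(f) \geq \log\ddeg(f).
\]
For the matching upper bound I would invoke the variational principle together with the Dinh--Sibony estimate \cite[Th\'eor\`eme~1]{DiSi05}, which gives
\[
h_\mu(f) \leq h_{\mathrm{top}}(f) \leq \log\ddeg(f)
\]
for any rational self-map of $\cp^2$ with small topological degree. Combining the two inequalities yields $h_\mu(f) = \log\ddeg(f)$, and consequently $h_{\mathrm{top}}(f) = \log\ddeg(f)$ as claimed in Theorem~\ref{thm:mainthm}.

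The main obstacle is not in this final deduction at all, but in having set up $\mu$ so that \cite[Theorem B]{DDG11b} applies as a black box; essentially all the real work of this paper, especially Theorems \ref{thm:eqmeasureexists}, \ref{thm:mixing}, and \ref{thm:geometricIntersection}, exists precisely to put $\mu$ into a form where the DDG machinery is directly usable. Once that foundation is in place, the corollary follows by citation.
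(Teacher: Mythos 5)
Your proposal is correct and follows the same route as the paper: the paper also simply notes that Conclusions (1), (2) of Theorem~\ref{thm:mainthm} together with Theorem~\ref{thm:geometricIntersection} are exactly the hypotheses of \cite[Theorem~B]{DDG11b} and invokes that theorem. You additionally make explicit that the lower bound $h_\mu(f)\geq\log\ddeg(f)$ is what \cite[Theorem~B]{DDG11b} provides and that the matching upper bound $h_\mu(f)\leq h_{\mathrm{top}}(f)\leq\log\ddeg(f)$ comes from \cite[Th\'eor\`eme~1]{DiSi05}; the paper states this combination in the introduction rather than in the corollary's proof, so your version is slightly more self-contained but not substantively different.
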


% 
% \section{Lyapunov exponents and equidistribution}
% \label{sec:lyapunov}
% \input{lyapunov.tex}

\nocite{}

\end{document}